\newtheorem{thm}{Theorem}[section]
\newtheorem{cor}[thm]{Corollary}
\newtheorem{lem}[thm]{Lemma}
\newtheorem{prop}[thm]{Proposition}
\newtheorem{defn}[thm]{Definition}
\newcommand{\R}{\mathbb R}
\newcommand{\CP}{\mathbb C P}
\newcommand{\Z}{\mathbb Z}
\newcommand{\C}{\mathbb C}
\newcommand{\To}{\longrightarrow}
\newcommand{\N}{\mathbb N}
\newcommand{\E}{\mathcal{E}}
\newcommand{\z}{{\bf z}}
\newcommand{\D}{\mathcal{D}}
\newcommand{\Pent}{\mathcal{P}}
\newcommand{\V}{\mathcal{V}}
\newcommand{\hyp}{\mathbb H}
\newcommand{\Out}{\text{Out}\,}
\DeclareMathOperator{\Par}{Par} \DeclareMathOperator{\Hyp}{Hyp}
\DeclareMathOperator{\Ell}{Ell} 
 \DeclareMathOperator{\mcg}{MCG}
\DeclareMathOperator{\Aut}{Aut} \DeclareMathOperator{\Inn}{Inn}
\DeclareMathOperator{\Cone}{Cone} \DeclareMathOperator{\Isom}{Isom}
\DeclareMathOperator{\Axis}{Axis} \DeclareMathOperator{\Twist}{Tw}
\DeclareMathOperator{\Tr}{Tr} 
\DeclareMathOperator{\Fix}{Fix}
\begin{document}

\title{Hyperbolic cone-manifold structures with prescribed holonomy I: punctured tori}

\author{Daniel V. Mathews}%

\date{}

\maketitle

\begin{abstract}
We consider the relationship between hyperbolic cone-manifold structures on surfaces, and algebraic representations of the fundamental group into a group of isometries. A hyperbolic cone-manifold structure on a surface, with all interior cone angles being integer multiples of $2\pi$, determines a holonomy representation of the fundamental group. We ask, conversely, when a representation of the fundamental group is the holonomy of a hyperbolic cone-manifold structure. In this paper we prove results for the punctured torus; in the sequel, for higher genus surfaces.

We show that a representation of the fundamental group of a punctured torus is a holonomy representation of a hyperbolic cone-manifold structure with no interior cone points and a single corner point if and only if it is not virtually abelian. We construct a pentagonal fundamental domain for hyperbolic structures, from the geometry of a representation. Our techniques involve the universal covering group $\widetilde{PSL_2\R}$ of the group of orientation-preserving isometries of $\hyp^2$ and Markoff moves arising from the action of the mapping class group on the character variety.
\end{abstract}

\tableofcontents

\section{Introduction}

\subsection{Background and results}

A geometric $(X, \Isom X)$ structure on an orientable manifold $M$ induces a holonomy representation $\rho: \pi_1(M) \To \Isom X$. Geometric data about the manifold is encapsulated in this representation. A natural question is: how much information? From a geometric structure it is an easy matter to obtain $\rho$; but the reverse direction is much longer and uphill. For any geometry, we may ask: given a representation $\rho: \pi_1(M) \To \Isom X$, is $\rho$ the holonomy of a geometric structure on $M$? The answer varies between different types of geometry, and depends on how broadly we define ``geometric structure": closed or bordered manifolds, boundary conditions, permissible singularities, and so on.

If we allow cone singularities, we obtain a \emph{cone-manifold structure}. A representation $\rho$ can only make sense as a holonomy of a cone-manifold structure, however, if every interior cone point has a cone angle which is an integer multiple of $2\pi$. This broadening of the notion of geometric
structure is quite a natural one to make: see, e.g., \cite{Leleu,Gallo_Kapovich_Marden}. 

In this series of papers we consider 2-dimensional hyperbolic geometry, but the question can be asked for any geometry. The present paper proves the following theorem about the punctured torus.
\begin{thm}
\label{punctured_torus_theorem}
Let $S$ be a punctured torus, and $\rho: \pi_1(S) \To PSL_2\R$ a homomorphism. The following are equivalent:
\begin{enumerate}
\item
$\rho$ is the holonomy representation of a hyperbolic cone-manifold structure on $S$ with geodesic boundary, except for at most one corner point, and no interior cone points;
\item
$\rho$ is not virtually abelian.
\end{enumerate}
\end{thm}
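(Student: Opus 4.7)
The plan is to prove the two implications separately. The forward direction (i) $\Rightarrow$ (ii) is the relatively mild half. A hyperbolic cone-manifold structure on $S$ comes equipped with a developing map $D: \widetilde{S} \to \hyp^2$ that is a $\rho$-equivariant local isometry away from the single corner point (if present). Suppose for contradiction that $\rho$ is virtually abelian. Then a finite-index subgroup of its image in $PSL_2\R$ fixes a point of $\overline{\hyp^2}$ or preserves a single bi-infinite geodesic, so the whole image preserves this point, pair of points, or geodesic. Equivariance of $D$ under $\rho$, combined with the fact that the surface $S$ is 2-dimensional and connected, would force $D$ to land in a lower-dimensional invariant set of $\rho(\pi_1 S)$, contradicting that $D$ is open. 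This is the standard elementary-versus-non-elementary dichotomy for subgroups of $PSL_2\R$.

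For the substantive direction (ii) $\Rightarrow$ (i), write $\pi_1(S) = \langle a, b\rangle$ with the boundary loop represented by the commutator $[a,b]$. The goal is to construct a hyperbolic pentagon $P \subset \hyp^2$ with vertices $v_1, \ldots, v_5$ cyclically labelled, such that $\rho(a)$ pairs sides $\overline{v_1 v_2}$ and $\overline{v_4 v_3}$, $\rho(b)$ pairs sides $\overline{v_2 v_3}$ and $\overline{v_5 v_4}$, and the remaining side $\overline{v_5 v_1}$ closes up to form either a geodesic boundary component (when $\rho([a,b])$ is hyperbolic) or an arc meeting itself at a single corner point (when $\rho([a,b])$ is elliptic). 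Gluing the paired sides via $\rho(a)$ and $\rho(b)$ then yields a hyperbolic cone-manifold structure on $S$ with holonomy conjugate to $\rho$, with no interior cone points.

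To build $P$, I would parametrize $\rho$ by its character coordinates $(x, y, z) = (\Tr \rho(a), \Tr \rho(b), \Tr \rho(ab))$, using lifts to $SL_2\R$ or to $\widetilde{PSL_2\R}$ to keep track of signs and full rotation angles. The mapping class group of the punctured torus acts on these coordinates via Markoff moves such as $(x, y, z) \mapsto (x, y, xy - z)$, realized geometrically by Dehn twists that replace the generating pair $(a, b)$ with another. The strategy is to use Markoff moves to reduce every non-virtually-abelian triple to a normal form (for example, some region defined by inequalities among $|x|, |y|, |z|$) in which the pentagon can be constructed explicitly from the axes, fixed points, and translation data of $\rho(a)$, $\rho(b)$, and $\rho(ab)$; one then transports the pentagon back along the inverse moves to obtain a pentagon for the original representation.

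The main obstacles are twofold. First, there is a proliferation of qualitative cases according to the types (hyperbolic, parabolic, elliptic) of $\rho(a)$, $\rho(b)$, $\rho(ab)$, and $\rho([a,b])$, and in each case one must verify that the candidate pentagon actually exists: that the vertices lie in $\hyp^2$ in the correct cyclic order, that the sides do not cross, and that the fifth side closes with a positive cone angle (a positive multiple of $2\pi$ is needed, not merely the correct value modulo $2\pi$). Tracking these angles precisely is what forces us to work in $\widetilde{PSL_2\R}$ rather than $PSL_2\R$, since the former remembers the full real-valued rotation of an elliptic lift. Second, one must prove that Markoff reduction terminates on every non-virtually-abelian triple and lands in the chosen normal form, and that the pentagon construction is equivariant under the moves so that pulling back is well-defined. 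Handling these two issues in tandem is where the bulk of the technical work will lie.
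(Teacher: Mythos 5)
Your forward implication (i)~$\Rightarrow$~(ii) contains a genuine error, and it is not the ``mild half'' you take it to be. The dichotomy you invoke, elementary versus non-elementary, does not coincide with virtually abelian versus not: the reducible non-abelian representations (e.g.\ $g$ parabolic or hyperbolic and $h$ hyperbolic sharing exactly one fixed point at infinity, so $\Tr[g,h]=2$) are elementary --- their whole image fixes a point of the circle at infinity --- yet they are \emph{not} virtually abelian, and the theorem (and the paper's section on the case $\Tr[g,h]=2$) asserts that they \emph{are} holonomies of the desired structures. Your argument, if valid, would apply verbatim to these representations and contradict the theorem itself. The flaw is the step ``equivariance of $D$ forces $D$ to land in a lower-dimensional invariant set'': invariance of a point, pair of points, or geodesic under the holonomy image places no constraint whatsoever on the image of the developing map, which is open regardless. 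What is actually needed, and what the paper does, is to run the obstruction through the pentagon criterion: $\rho$ is such a holonomy iff for some free basis and some $p$ the pentagon $\Pent(g,h;p)$ is non-degenerate and bounds an immersed disc. For abelian $\rho$ one has $[g^{-1},h^{-1}]p=p$, so the pentagon always degenerates; for virtually abelian but non-abelian $\rho$ the image is infinite dihedral preserving a geodesic, the character lies in the set $V$ for \emph{every} basis (invariance of $V$ under Markoff moves), and an explicit computation in Fermi coordinates along the invariant geodesic shows that $\Pent(g,h;p)$ never bounds an immersed disc. That computation is the real content of (i)~$\Rightarrow$~(ii); your proposal replaces it with a non sequitur.

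For (ii)~$\Rightarrow$~(i) your outline is essentially the paper's: a pentagonal fundamental domain with sides paired by $\rho(a)$, $\rho(b)$, plus Markoff moves on the character. But note two corrections. First, since $\kappa(x,y,z)=\Tr[g,h]$ is invariant under change of basis, no single normal-form region can absorb all representations; the paper splits into cases by the value of $\Tr[g,h]$ and needs the Markoff reduction (a greedy algorithm, with a termination estimate) only when $\Tr[g,h]>2$, where it reaches all traces in $(2,\infty)$ and then constructs the pentagon from the Gilman--Maskit classification of the configurations of the axes of $g,h,gh$; the cases $\Tr[g,h]<-2$, $=-2$, $\in(-2,2)$ and $=2$ are handled directly from lemmas locating the axes or fixed points of the commutators $[g^{\pm1},h^{\pm1}]$, with no basis change at all. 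Second, there is no need to ``transport the pentagon back along the inverse moves'': the construction lemma only requires the existence of \emph{some} free basis and basepoint, so once the pentagon exists for the new basis the holonomy is already $\rho$. (Also, the corner angle need not be a multiple of $2\pi$; that condition applies only to interior cone points, which are excluded here.) As written, your proposal defers precisely the steps that constitute the proof --- the case-by-case verification that the candidate pentagon bounds an immersed disc, and the termination of the reduction --- so the substantive direction remains a plan rather than an argument.
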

The corner angles in the cone-manifold structures described by this theorem range over all of $(0, 3\pi)$. Recall a representation is \emph{virtually abelian} if its image contains an abelian subgroup of finite index.

The sequel \cite{Me10MScPaper2} uses this result, and other considerations, to prove results about higher-genus surfaces. The result in the present paper is a complete result, describing exactly which representations are holonomy representations of the desired type. For higher-genus surfaces our results are not so complete, but still may be of interest; they apply only for certain values of the Euler class and with some additional restrictions.

By way of background, we recount some known results for various geometries, hyperbolic and not.
\begin{itemize}
\item 
\textbf{Three-dimensional hyperbolic, Euclidean, spherical geometry; manifold with boundary.} Let $M$ be a 3-manifold with nonempty boundary, and let $(X, \Isom X)$ denote 3-dimensional hyperbolic, spherical or Euclidean geometry. In \cite{Leleu} Leleu proved that a representation $\rho: \pi_1(M) \To \Isom X$ is the holonomy of an $(X, \Isom X)$-structure on $M$ if and only if $\rho$ lifts to the universal covering group $\widetilde{\Isom X}$. No cone
points are required. However the boundary need not be totally geodesic, and will in general be complicated.

\item
\textbf{Three-dimensional hyperbolic geometry; cusped manifold; cone-manifold structures.} Recall that a finite volume complete orientable hyperbolic 3-manifold $M$ is diffeomorphic to the interior of a compact 3-manifold $\bar{M}$ whose boundary consists of tori. There are various ways the question has been attacked in this case. In \cite{Mathews03} I investigated the representation varieties of simple hyperbolic knot complements $S^3
- K$ via the A-polynomial $A_K(x,y)$ of $K$. The A-polynomial encodes information about the restriction of a representation $\pi_1(S^3 - K) \To \Isom^+ \hyp^3$ to the peripheral subgroup (see \cite{CCGLS}). I found in several examples that each branch of the variety defined by $A_K(x,y)$ had a geometric interpretation describing the holonomy of hyperbolic cone-manifold structures on $S^3 - K$. It is known \cite{Hoste-Shanahan,Mathews03} that this is true for twist knots.

In other directions, one can show that there is a well-defined ``volume" associated to a representation $\pi_1(M) \To PSL_2\C$, and that it is maximised at the representation of the unique complete hyperbolic structure --- it is unique by Mostow rigidity. See, e.g. \cite{Thurston_notes,Dunfield99,Francaviglia}.

\item
\textbf{Two-dimensional complex projective geometry; closed surface; cone-manifold structures.} For an oriented closed surface $S$, Gallo--Kapovich--Marden proved in \cite{Gallo_Kapovich_Marden} that a representation $\rho: \pi_1(S) \To PSL_2\C$ is the holonomy of a complex projective cone-manifold structure if and only if $\rho$ is nonelementary. If $\rho$ lifts to a representation into $SL_2\C$, then a complete complex projective structure is possible. Otherwise a single cone point of angle $4\pi$ is sufficient.

\item
\textbf{Two-dimensional hyperbolic geometry; complete hyperbolic structures with totally geodesic boundary.} This question was answered by Goldman \cite{Goldman_thesis}. For a closed surface $S$ with $\chi(S) < 0$, a representation $\rho: \pi_1(S) \To PSL_2\R$ determines an \emph{Euler class} $\E(\rho)$. The Euler class is a 2-dimensional cohomology class on $S$, hence a multiple of the fundamental class. The Euler class may be any multiple of the fundamental class between $\chi(S)$ and $-\chi(S)$, and it parametrises the connected components of the representation space (\cite{Goldman88}). Goldman proved that $\rho$ is the holonomy of a hyperbolic structure on $S$ if and only if the Euler class is $\pm \chi(S)$ times the fundamental cohomology class. If $S$ has boundary, then the same machinery applies, and the same theorem holds, provided that each boundary curve is sent to a non-elliptic isometry. In this case we obtain a \emph{relative} Euler class. In the sequel \cite[sec. 4]{Me10MScPaper2} we discuss these ideas and in fact reprove Goldman's theorem.
\end{itemize}

If $S$ has boundary, then we may require that the boundary be totally geodesic, or piecewise geodesic with a small number of corners. Allowing arbitrarily many corners rapidly trivialises the problem, giving us great freedom to construct a developing map and hence a geometric structures.

We might also allow folding of our hyperbolic structure: allowing the developing map sometimes to preserve and sometimes to reverse orientation, with changes of orientation along geodesic folds. But with more folds allowed there is more freedom to construct a developing map, and we must restrict the number of folds tightly to avoid trivialising the problem; see e.g. \cite{Tan94}. Another way to broaden the question is to relinquish control over the boundary of a surface, not requiring it to be totally or even piecewise geodesic. Then the question is easier, but the boundary may be very complicated. Thus, the type of structure in theorem \ref{punctured_torus_theorem} --- one corner point only --- is quite natural to consider.

\subsection{Structure of this paper}

This paper is organised as follows. 

In section \ref{sec:Preliminaries} we briefly recall some preliminaries regarding geometric structures and cone-manifolds. We develop some results in hyperbolic geometry we shall need. We examine the group $\widetilde{PSL_2\R}$, using the notion of the ``twist" of a hyperbolic isometry at a point; see also \cite{Me10MScPaper0}.

In section \ref{sec:geometry_punctured_tori} we analyse the geometry of punctured tori with hyperbolic cone-manifold structures with one corner point. We show how they can be decomposed into a pentagonal fundamental domain, and conversely give a method for constructing such domains. We simply require a certain pentagon to bound an immersed disc in $\hyp^2$. We establish a relationship between our notion of ``twist" and the corner angle which arises.

In section \ref{sec:Euler_class_representation_space} we examine representation and character varieties. We describe characters of the fundamental group of the punctured torus precisely. Nielsen's theorem shows just how closely algebra and geometry are related. Changes of basis in the fundamental group have a simple description in terms of Markoff moves. We characterise virtually abelian representations in terms of the character variety, and classify reducible representations.

In section \ref{sec:construction_tori} we prove the main theorem \ref{punctured_torus_theorem}, constructing hyperbolic cone-manifold structures on punctured tori with no interior cone points and at most one corner point, for all representations $\rho$ except the virtually abelian ones. We have several cases, corresponding to the values of a single parameter, namely the trace of the holonomy of the loop around the puncture, which is natural in light of Nielsen's theorem. In the most difficult case, we must apply an algorithm to change basis in our fundamental group, using Markoff moves, until we obtain a good geometric arrangement of isometries. 

Finally, in section \ref{rigidity section}, we examine the lack of rigidity in these geometric structures --- one representation can be the holonomy for a continuous family of geometric structures.

\section{Preliminaries}
\label{sec:Preliminaries}

Throughout, let $S$ be an orientable surface. We recall some basic notions. 

\subsection{Geometric structures on manifolds}
\label{fibre bundle introduction section}

Recall that, given a model geometry $X$, with $\Isom X$ acting transitively on $X$, a geometric $(X, \Isom X)$ structure (see \cite{Thurston_book,Thurston_notes} for details) is a metric on $S$ so that every point of $S$ has a neighbourhood isometric to a standard ball neighbourhood in $X$. A geometric structure gives an atlas of coordinate charts to $X$, with transition maps in $\Isom X$, and a developing map $\D: \tilde{S} \to X$, unique up to conjugation by isometries of $X$. Taking a based loop $C$ in $S$, lifting to $\tilde{S}$, considering the developing map, composing transition maps we obtain an isometry $\rho(C) \in \Isom X$, the \emph{holonomy} of $C$. This isometry depends only on the homotopy class of $C$ and describes the action within $X$ as we walk along the developing image of this curve. Thus we obtain the \emph{holonomy map} or \emph{holonomy representation} $\rho: \pi_1(S) \To \Isom X$.

Recall $\pi_1(S)$ acts on $\tilde{S}$ by deck transformations and on $X$ (via $\rho)$ by isometries. This action is equivariant with respect to the
developing map $\D$. Now for $\alpha, \beta \in \pi_1(S)$ we have $T_\alpha \circ T_\beta = T_{\alpha \beta}$ (where $T_\cdot$ are deck transformations), provided that a composition of loops in $\pi_1(S)$ is traversed \emph{left to right} and a composition of functions is (as usual!) applied \emph{right to left}. Then
\[
    \rho(\alpha \beta) \circ \D = \D \circ T_{\alpha \beta} = \D
    \circ T_\alpha \circ T_\beta = \rho(\alpha) \circ \D \circ
    T_\beta = \rho(\alpha) \rho(\beta) \circ \D
\]
so $\rho$ is a homomorphism.

\subsection{Cone-manifolds}
\label{sec:cone-manifolds}

Recall the notion of cone-manifolds (see e.g. \cite{CHK} for details). For our purposes we only need hyperbolic cone-manifolds, and only in dimension 2. In this case a hyperbolic cone-manifold is simply a surface obtained by piecing together geodesic triangles in $\hyp^2$. Points $p$ in the interior of $S$ have neighbourhoods locally isometric to $\hyp^2$, except possibly at some vertices of the triangulation, around which the angles sum to $\theta \neq 2\pi$. Such points are called (interior) \emph{cone points}. The neighbourhood of such a cone point is isometric to a wedge of angle $\theta$ in $\hyp^2$, with sides glued (i.e. a cone). The angle $\theta$ is called the \emph{cone angle} at $p$. Letting $\theta = 2\pi(1+s)$, we call the number $s$ the \emph{order} of the cone point, following \cite{Troyanov}. If $S$ has boundary then this boundary will be piecewise geodesic. There may be vertices on the boundary around which the angles sum to $\theta \neq \pi$. Such a point is called a \emph{corner point} and $\theta$ is the \emph{corner angle}. Letting $\theta = 2 \pi (\frac{1}{2} + s )$, then $s$ is the \emph{order} of the corner point. A corner point has a neighbourhood isometric to a wedge of angle $\theta$ in $\hyp^2$ (without sides glued). A \emph{singular point} is a cone or corner point. The set of singular points is called the \emph{singular locus}. In general the singular locus of an $n$-dimensional cone-manifold is a union of totally geodesic closed simplices of dimension $n-2$. Other points are called \emph{regular points}.

Note a cone or corner angle can be any positive real number --- it can be more than $2\pi$. We will be dealing with many large cone angles. 

Cone points can be considered as ``concentrated curvature''; topology imposes limits on the curvature concentrated in cone and corner angles in a 2-dimensional hyperbolic cone-manifold. Taking a triangulation of $S$ as above, recalling that the area of a triangle with angles $\alpha, \beta, \gamma$ is $\pi - \alpha - \beta - \gamma$, and using Euler's formula, the positivity of area gives a bound on the $s_i$.
\begin{lem}
\label{Gauss-Bonnet}
    Let $S$ be a surface (with or without boundary). A hyperbolic cone-manifold structure on $S$ with cone and corner points having orders $s_i$ satisfies
    \[
        \sum s_i < - \chi(S),
    \]
    and in fact their difference $-\chi(S) - \sum s_i$ is the hyperbolic area of $S$, divided by $2\pi$.
    \qed
\end{lem}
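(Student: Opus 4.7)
The plan is to apply the hyperbolic area formula triangle by triangle to the triangulation built into the cone-manifold structure, and then collect contributions vertex-by-vertex. By the definition of hyperbolic cone-manifold given above, $S$ already comes with a decomposition into hyperbolic geodesic triangles whose vertices include all singular points; I would refine if necessary so that every corner and cone point is a vertex, and assign each vertex $v$ its order $s_v$, with $s_v = 0$ at regular points. Write $V = V_i \sqcup V_b$ for the decomposition into interior and boundary vertices, similarly $E = E_i \sqcup E_b$ for edges, and $F$ for the number of faces.

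By definition of order, the angles around an interior vertex sum to $2\pi(1+s_v)$ and those around a boundary vertex to $\pi(1+2s_v)$. Summing the hyperbolic area $\pi - \alpha - \beta - \gamma$ over all $F$ triangles and regrouping the angle contributions by vertex gives
\[
    \Area(S) = \pi F - 2\pi V_i - \pi V_b - 2\pi \sum_{v} s_v.
\]
Using $E_b = V_b$ (the boundary is a disjoint union of polygonal circles) together with the edge count $3F = 2E_i + E_b = 2E - V_b$, the combinatorial part rearranges to $\pi F - 2\pi V_i - \pi V_b = -2\pi(V - E + F) = -2\pi\chi(S)$. Hence $\Area(S) = -2\pi\bigl(\chi(S) + \sum_v s_v\bigr)$, which is exactly the identity claimed, and the strict inequality $\sum s_i < -\chi(S)$ follows immediately from positivity of area.

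The argument is essentially standard Euler-characteristic bookkeeping, so no step is a real obstacle; the only things I would be careful about are separating boundary and interior contributions of vertices and edges, and noting that a regular boundary vertex still contributes $\pi$ to the total angle sum (not $0$), since the boundary is piecewise geodesic with a straight angle there, consistent with $\theta = \pi(1+2s_v)$ and $s_v = 0$.
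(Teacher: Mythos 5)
Your proof is correct and follows essentially the same route the paper indicates: triangulate so that singular points are vertices, sum the hyperbolic triangle area formula $\pi-\alpha-\beta-\gamma$, regroup angles by vertex using the definition of order, and apply Euler's formula with the boundary edge/vertex count, with positivity of area giving the strict inequality. The careful separation of interior and boundary vertices (and the $\pi$ contribution at regular boundary vertices) is exactly the bookkeeping the paper's sketch leaves implicit.
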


In any hyperbolic cone-manifold, a sufficiently small loop around an interior cone point $v$ is homotopically trivial. Therefore, if a holonomy map is to be well-defined, the corresponding isometry of $\hyp^2$ must be the identity. This can only occur if the cone angle is an integer multiple of $2\pi$, in which case a loop about $v$, under our developing map, winds around some $\D(\tilde{v})$ a number of times but forms a closed loop. However there is no such problem with corner points, which \emph{a priori} may have any corner angle, subject to the bounds discussed above.

We recall some basic properties of curves on hyperbolic cone-manifolds (see \cite{BH} for details). Between any two points of a hyperbolic cone-manifold $S$ there is a geodesic, even though it may not be smooth and may pass through cone points; $S$ is therefore  a \emph{geodesic space}. Amongst them there are shortest geodesics. The distance between two points, defined as the infimum of the lengths of curves between them, makes $S$ into a metric space: $S$ is therefore a \emph{length space}. This distance is achieved by shortest geodesics.

Restricting to dimension $2$, a singular point in a hyperbolic cone surface has a standard neighbourhood isometric to a \emph{hyperbolic open cone} on an arc or circle $M$. The cone $\Cone(M,R)$ of radius $R$ on $M$ is $M \times [0,R)$ with $M \times \{0\}$ collapsed to a vertex. Let $[0,R)$ and $M$ have Riemannian metrics $dr$ and $d\theta$ respectively; then infinitesimal distance on $\Cone(M,R)$ is $ds^2 = dr^2 + \sinh^2 r \; d\theta^2$ (the standard form for hyperbolic distance in polar coordinates). A unit speed geodesic $C(t)$ through the vertex $x$ at $t=0$ has the form $C(t) = (m,-t)$ for $t<0$ and $C(t)=(m', t)$ for $t>0$, for some $m,m' \in M$. Such a curve is a geodesic if and only if $d_M (m,m') \geq \pi$. That is, $C$ is a geodesic if and only if it makes an angle of at least $\pi$ at $x$. (At a regular point the condition that a geodesic must make an angle of $\pi$ is well known!)

It follows that geodesics must avoid cone points with cone angles under $2\pi$. However, there are many geodesics through a cone point $x$ with cone angle over $2\pi$. Thus, unlike the situation at regular points, a geodesic segment with an endpoint at $x$ extends in infinitely many directions: see figure \ref{fig:1}. This argument applies equally if $x$ is an interior cone point or a corner point of $S$.

\begin{figure}[tbh]
\centering
\includegraphics[scale=0.5]{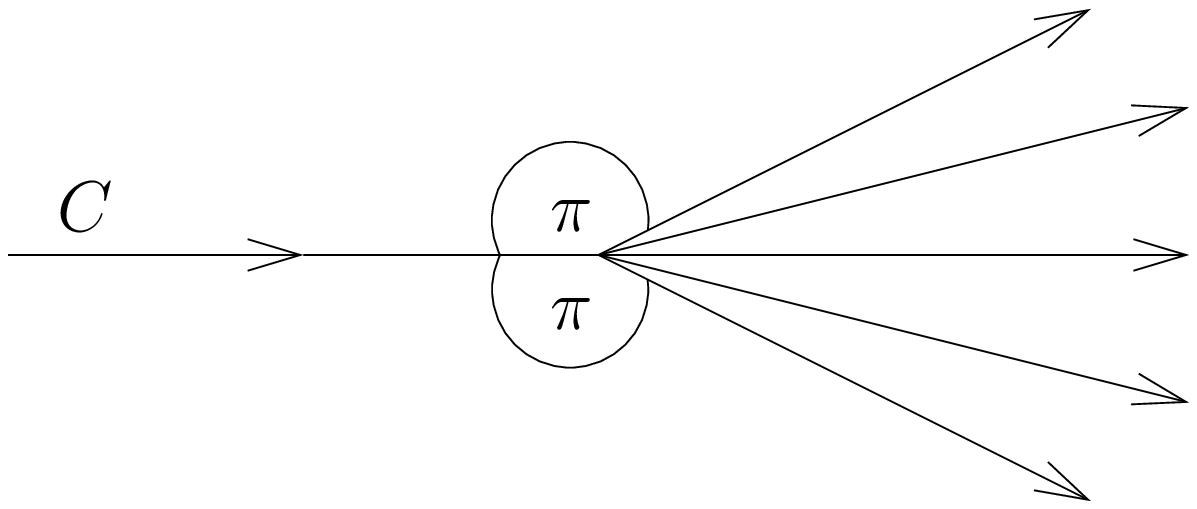}
\caption{Extensions of a geodesic segment at a large angle cone
point.} \label{fig:1}
\end{figure}

\subsection{Hyperbolic isometries}
\label{sec:hyp_geom}

We now make some preliminary considerations in plane hyperbolic geometry. For computations we work in the upper half plane. 

We will need \emph{Fermi coordinates}. Given an oriented line $l$ in $\hyp^2$ and basepoint $q$ on $l$, a point $p \in \hyp^2$ has coordinates $(x,h)$ where $x$ denotes ``distance along $l$" and $h$ denotes ``height above $l$". Precisely, from $p$ we drop a perpendicular to meet $l$ at $p'$. Then $x$ is the signed distance from $q$ to $p'$, and $h$ is the signed length of the perpendicular dropped. In this way the hyperbolic plane is identified with $\R^2$. The distance between $p_1 = (x_1, h_1)$ and $p_2 = (x_2, h_2)$ is then given by (see e.g. \cite{Buser} p. 38):
\begin{eqnarray}
\label{eqn:Fermi}
    \cosh d(p_1, p_2) = \cosh h_1 \; \cosh h_2 \; \cosh (x_2 -
    x_1) - \sinh h_1 \; \sinh h_2.
\end{eqnarray}

\label{compositions of isometries}

We shall need to consider the effect of composing several isometries; and to characterise the geometric arrangement of isometries based on the algebra of matrices in $PSL_2\R$. For the remainder of this section we have some lemmata about commutators $[g,h] = ghg^{-1}h^{-1}$ of isometries.

A proof of the following lemma may be found in \cite{Goldman03}, by computations after conjugating matrices to a simple standard form; or see our more geometric approach using the notion of hyperbolic twisting in \cite{Me10MScPaper0}.
\begin{lem}
\label{axes crossing}
    Let $g,h \in PSL_2\R$. The following are equivalent:
    \begin{enumerate}
        \item
            $g,h$ are hyperbolic and their axes cross;
        \item
            $\Tr[g,h] < 2$.
    \end{enumerate}
\qed
\end{lem}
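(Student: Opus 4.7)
My plan is to use the fact that both conditions are invariant under simultaneous conjugation of $(g,h)$ in $PSL_2\R$, reducing to three cases according to the type of $g$ (hyperbolic, parabolic, elliptic). The implication (i) $\Rightarrow$ (ii) will be handled together with the converse by extracting, in each case, a closed formula for $\Tr[g,h]$ and comparing it to $2$.

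In the hyperbolic case, conjugate so that the axis of $g$ is the imaginary axis; then $g$ is represented by $\begin{pmatrix}\lambda & 0 \\ 0 & \lambda^{-1}\end{pmatrix}$ with $\lambda>1$. Direct matrix multiplication with $h=\begin{pmatrix}a & b \\ c & d\end{pmatrix}$, $ad-bc=1$, yields the closed form
\[
\Tr[g,h] \;=\; 2 \;-\; bc\,(\lambda - \lambda^{-1})^2,
\]
so $\Tr[g,h]<2$ iff $bc>0$. I would then translate this into geometry: when $c\neq 0$, the fixed points of $h$ on $\partial\hyp^2$ are the roots of $cz^2+(d-a)z-b=0$, with discriminant $(a-d)^2+4bc$ and product $-b/c$. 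Thus $bc>0$ forces both roots to be real ($h$ is hyperbolic) and to have opposite signs (the axis of $h$ crosses the imaginary axis); conversely, if $h$ is hyperbolic and its axis crosses the axis of $g$, the fixed points straddle $0$, so $bc>0$. The degenerate case $c=0$ gives $\Tr[g,h]=2$ and forces $h$ to fix $\infty$, so $g$ and $h$ share a boundary fixed point and the axes cannot cross in $\hyp^2$.

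The parabolic case is brief: conjugating $g$ to $\begin{pmatrix}1 & 1 \\ 0 & 1\end{pmatrix}$ gives $\Tr[g,h]=2+c^2\geq 2$ by a short computation. For the elliptic case, conjugate $g$ to rotation by $2\theta$ about $i$ and apply the Fricke--Markoff identity $\Tr[g,h]=(\Tr g)^2+(\Tr h)^2+(\Tr gh)^2-(\Tr g)(\Tr h)(\Tr gh)-2$ with $\Tr g=2\cos\theta$, $\Tr h=a+d$, and $\Tr gh=(a+d)\cos\theta+(b-c)\sin\theta$. Simplification gives
\[
\Tr[g,h]-2 \;=\; \sin^2\theta\,\bigl((a+d)^2+(b-c)^2-4\bigr) \;=\; \sin^2\theta\,(a^2+b^2+c^2+d^2-2),
\]
where the second equality uses $ad-bc=1$. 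Since $a^2+d^2\geq 2ad$ and $b^2+c^2\geq -2bc$, we conclude $a^2+b^2+c^2+d^2\geq 2(ad-bc)=2$, hence $\Tr[g,h]\geq 2$.

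The main obstacle is not depth but careful bookkeeping in the hyperbolic case: one must separate the degenerate sub-cases $c=0$, $bc=0$, and $bc<0$ and check, via the discriminant/product analysis above, that $bc>0$ captures exactly ``hyperbolic with crossing axes'' rather than any larger collection of configurations (such as $h$ hyperbolic with a disjoint axis, or $h$ elliptic).
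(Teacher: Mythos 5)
Your argument is correct: the commutator-trace formula $\Tr[g,h]=2-bc(\lambda-\lambda^{-1})^2$ together with the discriminant $(d-a)^2+4bc=\Tr^2h-4$ and root-product $-b/c$ analysis does capture exactly the ``$h$ hyperbolic with axis crossing $\Axis g$'' configuration, and the parabolic and elliptic cases (plus the trivial identity case, which you should mention for completeness) give $\Tr[g,h]\geq 2$, so the equivalence holds. This is essentially the same approach the paper points to: it does not prove the lemma itself but cites Goldman's computation after conjugating to a standard form, which is exactly what you carry out, and indeed your formulas $\Tr[g,h]=2+c^2$ and $\Tr[g,h]=2+(a^2+b^2+c^2+d^2-2)\sin^2\theta$ reappear verbatim in the paper's proof of Lemma \ref{reducible_description}.
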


Note that although $g,h$ are only defined up to sign in $SL_2\R$, the commutator \emph{is} a well-defined element of $SL_2\R$, and has a well-defined trace. (In fact it is well-defined in the universal cover $\widetilde{PSL_2\R}$.) Denote by $a_\alpha, r_\alpha$ the attractive and repulsive fixed points of a hyperbolic isometry $\alpha$.

\begin{lem}
\label{hyperbolic commutator}
    Suppose $g,h \in PSL_2\R$ and $\Tr [g,h] < -2$, so that $g,h$
    are hyperbolic and their axes intersect, and $[g,h]$ is also
    hyperbolic.
    Then $\Axis [g,h]$ does not intersect the
    axis of $g$ or $h$. The fixed points of $[g,h]$ lie on the
    segment of the circle at infinity between $a_g$ and $a_h$: $a_{[g,h]}$ is closer to $a_g$, and $r_{[g,h]}$ is closer to $a_h$.
\end{lem}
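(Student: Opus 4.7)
The plan is a direct computation in a normal form, combined with a continuity argument. Since all hypotheses and the conclusion are invariant under conjugation in $PSL_2\R$, first normalize so that $a_g=\infty$, $r_g=0$, $a_h=1$; the crossing hypothesis then forces $r_h=-k$ for some $k>0$. Thus $g=\mathrm{diag}(e^s,e^{-s})$ with $s>0$, and $h$ is determined by $k$ and its translation length $t>0$. The open set $U=\{(s,t,k)\in(0,\infty)^3 : \Tr[g,h]<-2\}$ is cut out by a single polynomial inequality (essentially $4k\sinh^2 s\,\sinh^2 t>(1+k)^2$) and is easily checked to be path connected.

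Next, I compute $[g,h]$ explicitly as a matrix in $SL_2\R$ and verify the full conclusion in a single convenient example (say $s=t=k=1$). In that example the two fixed points, obtained from the quadratic $cz^2+(d-a)z-b=0$, are real, both lie in the interval $(1,\infty)=(a_h,a_g)$, and the larger of them (closer to $a_g=\infty$) is attracting since its multiplier $1/(cz+d)^2$ has modulus less than $1$. A continuity argument then propagates this to all of $U$: the two fixed points depend continuously on $(s,t,k)$, and they cannot cross any of the four ``forbidden'' points $\Fix g\cup\Fix h=\{\infty,0,1,-k\}$. Each such exclusion is a short check; for instance, $[g,h]$ fixing $\infty$ would force $h^{-1}(\infty)\in\Fix g=\{0,\infty\}$, but direct computation gives $h^{-1}(\infty)=-(ke^t+e^{-t})/(2\sinh t)<0$, and the other three cases are similar. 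Since $\partial\hyp^2\setminus\{\infty,0,1,-k\}$ splits into four arcs, both fixed points must remain in the same arc $(a_h,a_g)$ throughout $U$, with the same attracting/repelling labelling as in the example.

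Finally, $\Axis[g,h]$ joins two points of the open arc $(a_h,a_g)$, so it lies in the convex ideal triangle bounded by the sub-rays of $\Axis g$ and $\Axis h$ from $\Axis g\cap\Axis h$ to $a_g,a_h$ and by the arc itself. Hence $\Axis[g,h]$ is disjoint from $\Axis g$ and from $\Axis h$, which is the first assertion of the lemma. The main obstacle is simply bookkeeping: writing the explicit matrix for $[g,h]$, verifying the inequalities in the one example, and handling the four non-coincidence checks—each step is elementary, but the computation carries three parameters $s,t,k$ and must be organised carefully.
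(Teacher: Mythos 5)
Your overall strategy --- conjugate to a normal form, verify one explicit example, then propagate by continuity over a connected parameter region --- is workable and genuinely different from the paper's proof, which is synthetic (Matelski's trick: with $e$ the half-turn about $\Axis g \cap \Axis h$ one has $(ghe)^2=[g,h]$, so $\Axis[g,h]$ is located as the common perpendicular of two explicit lines, forming a right-angled pentagon with $\Axis g$ and $\Axis h$). Your continuity skeleton is sound: on $U$ the commutator stays hyperbolic, its two fixed points and the four points $r_g,a_g,r_h,a_h$ vary continuously and remain pairwise distinct (your non-coincidence check at $\infty$ is correct, and the other three really are analogous), so the cyclic order of all six labelled points is locally constant, and $U$ is indeed path connected; the final deduction that a geodesic with both endpoints in the open arc between $a_h$ and $a_g$ is disjoint from $\Axis g$ and $\Axis h$ is also fine.

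There is, however, a gap at the very first step. Not every pair $(g,h)$ satisfying the hypotheses can be conjugated \emph{inside} $PSL_2\R$ to your normal form $r_g=0$, $a_g=\infty$, $a_h=1$, $r_h=-k<0$: orientation-preserving conjugation preserves the cyclic order of the four fixed points on the circle at infinity, and under $\Tr[g,h]<-2$ both cyclic orders $(r_g,a_h,a_g,r_h)$ and $(r_g,r_h,a_g,a_h)$ occur --- indeed if $(g,h)$ realises the first, then $(h,g)$ realises the second, with $\Tr[h,g]=\Tr[g,h]<-2$. In the second arrangement, after normalising $r_g\mapsto 0$, $a_g\mapsto\infty$, one is forced to have $a_h<0<r_h$, a configuration your three-parameter family never reaches; so as written you prove the lemma only for one of the two mirror-image configurations. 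The repair is short, but you must state it: either conjugate by the orientation-reversing isometry $z\mapsto -\bar z$ (this preserves traces and the conclusion, which the paper explicitly asserts only ``up to cyclic permutation and reflection''), or apply the case you did prove to the pair $(h,g)$ and use $[h,g]=[g,h]^{-1}$, which swaps $a_{[g,h]}$ with $r_{[g,h]}$ and yields exactly the stated conclusion for $(g,h)$. With that addition (and with the example computation at $s=t=k=1$ actually carried out), your argument goes through.
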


The lemma states that the order of the fixed points on the circle at infinity is
\[
    a_h, \quad r_{[g,h]}, \quad a_{[g,h]}, \quad a_g, \quad r_h, \quad r_g
\]
up to cyclic permutation and reflection. See figure \ref{fig:2}. Denote by $R_l$ the reflection in the line $l$.

\begin{figure}[tbh]
 \centering
\includegraphics[scale=0.4, angle=0]{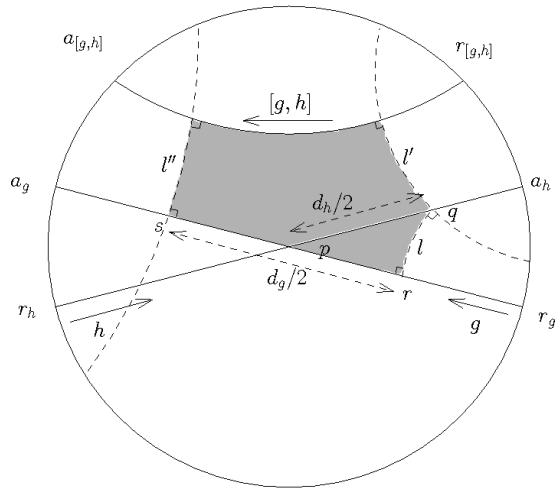}
\caption{The location of $\Axis [g,h]$ in the case $\Tr[g,h]<-2$}
\label{fig:2}
\end{figure}

\begin{proof}
We use an elegant argument of Matelski in \cite{Matelski}; there is also a proof by computation. Let $d_g, d_h$ denote the translation distance of $g,h$. Let $p \in \hyp^2$ be the point of intersection of axes of $g$ and $h$, and let $e \in PSL_2\R$ be a half-turn about $p$. Thus we have $ege = g^{-1}$ and $ehe = h^{-1}$. Consider $he$: this preserves $\Axis h$ but reverses its sense; it is a half-turn about a point $q \in \Axis h$, where $q$ lies on the same side of $p$ as $a_h$, at a distance $d_h/2$ from $p$.

Now consider $ghe$. We have $(ghe)^2 = gh(ege)(ehe) = ghg^{-1}h^{-1} = [g,h]$, which is hyperbolic. Thus $ghe$ is hyperbolic and has the same axis as $[g,h]$. So we only need show that the axis of $ghe$ lies in the desired position.

Let $l$ be the perpendicular from $q$ to $\Axis g$, and $r$ its foot. Let $l'$ be the line through $q$ perpendicular to $l$. Let $s$ be a point along $\Axis g$ on the same side of $r$ as $a_g$, and distance $d_g/2$ from $r$. Let $l''$ be the line through $s$ perpendicular to $\Axis g$. So $R_l R_{l'} = he$; the composition of two reflections in two perpendicular lines meeting at $q$ is a half-turn about $q$. And $R_{l''} R_l = g$; the composition of two reflections in lines perpendicular to $\Axis g$ being $d_g/2$ apart is a translation along $\Axis g$ by $d_g$. Thus $ghe = R_{l''} R_l^2 R_{l'} = R_{l''} R_{l'}$. So $l'$ and $l''$ do not intersect, even at infinity (otherwise $ghe$ would be elliptic or parabolic), and $\Axis ghe = \Axis [g,h]$ is the common perpendicular of $l'$ and $l''$.

Now $l, l', \Axis[g,h], l'', \Axis g$ form a right-angled pentagon, as shown in figure \ref{fig:2}. Thus $\Axis[g,h]$ must lie on the same side of $\Axis g$ as $a_h$, and on the same side of $\Axis h$ as $a_g$; and $[g,h]$ translates in the desired direction.
\qed
\end{proof}

Repeating the same argument when $[g,h]$ is parabolic, or elliptic, we have similar results.
\begin{lem}
\label{parabolic_commutator}
    Suppose $g,h \in PSL_2\R$ and $\Tr [g,h] = -2$, so that $g,h$
    are hyperbolic and their axes intersect, and $[g,h]$ is parabolic.
    Then $\Fix [g,h]$ lies on the
    segment of the circle at infinity between $a_g$ and $a_h$. The
    sense of the rotation is as shown in figure \ref{fig:3} (left).
\qed
\end{lem}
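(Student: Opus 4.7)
The plan is to adapt Matelski's argument from the proof of Lemma \ref{hyperbolic commutator} to the degenerate case $\Tr[g,h] = -2$. Let $p$ be the point where $\Axis g$ meets $\Axis h$, let $e$ be the half-turn about $p$, and define $q,r,s$ and the lines $l,l',l''$ exactly as in the hyperbolic case. The identities $ege = g^{-1}$ and $ehe = h^{-1}$ still hold, so we still have $(ghe)^2 = [g,h]$ and the reflection decomposition $ghe = R_{l''} R_{l'}$.

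The key observation is that since $[g,h] = (ghe)^2$ is parabolic (hence non-trivial), $ghe$ itself must be parabolic: the square of an elliptic element is elliptic or trivial, and the square of a hyperbolic element is hyperbolic, so neither alternative is compatible with $(ghe)^2$ being parabolic. A product $R_{l''} R_{l'}$ of two reflections in $\hyp^2$ is parabolic precisely when $l'$ and $l''$ are asymptotic, meeting at a single ideal point $x \in \partial \hyp^2$. This $x$ is then the unique fixed point of $ghe$, and hence the unique fixed point of $[g,h]$.

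In the hyperbolic case of Lemma \ref{hyperbolic commutator}, the lines $l, l', \Axis[g,h], l'', \Axis g$ form a right-angled pentagon; in the present case this pentagon degenerates, with the side $\Axis[g,h]$ collapsing to a single ideal vertex as $l'$ and $l''$ meet at $x$. What remains is a trirectangular quadrilateral with one ideal vertex, bounded by $l$, $l'$, $l''$, and $\Axis g$. The same positioning arguments place $x$ on the arc of $\partial\hyp^2$ strictly between $a_g$ and $a_h$: $l'$ lies on the $a_h$-side of $\Axis g$ (because $q\in\Axis h$ lies on the $a_h$-side of $p$), while $l''$ lies on the $a_g$-side of $\Axis h$ (because $s\in\Axis g$ lies on the $a_g$-side of $r$), so the ideal intersection $x$ is forced onto the desired arc. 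The sense of the parabolic rotation is determined by the ordered composition $R_{l''} R_{l'}$ and reads off directly from the limiting configuration, matching figure \ref{fig:3} (left).

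The main obstacle is checking that the degeneration really is clean: that $l'\ne l''$ (otherwise $ghe = \mathrm{id}$ and $[g,h]$ would be trivial, contradicting $\Tr[g,h]=-2$), and that $l', l''$ meet only at infinity rather than in $\hyp^2$ (which is ruled out by the classification of $ghe$ as parabolic rather than elliptic). Once these are in hand, the proof is essentially identical to the hyperbolic case, and may simply be stated by taking the limit $\Tr[g,h]\to -2^+$ in Lemma \ref{hyperbolic commutator}.
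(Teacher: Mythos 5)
Your proposal is correct and follows essentially the same route as the paper, which proves this lemma simply by repeating Matelski's half-turn and reflection argument from Lemma \ref{hyperbolic commutator}; you supply exactly the right degenerate bookkeeping ($ghe$ parabolic since its square is, hence $l'$ and $l''$ asymptotic, with the shared ideal point being $\Fix [g,h]$ and the right-angled pentagon collapsing onto that ideal vertex). The only quibble is the closing aside: the limit should be $\Tr[g,h] \to -2^{-}$ (approaching from the hyperbolic range $(-\infty,-2)$), not $-2^{+}$, but your direct argument does not rely on that remark.
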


\begin{lem}
\label{elliptic_commutator}
    Suppose $\Tr[g,h] \in (-2,2)$. Then $\Fix [g,h]$ lies in the region of $\hyp^2$ determined by $\Axis(g)$,
    $\Axis(h)$ which is
    bounded by the arc on the circle at infinity between $a_g$ and
    $a_h$. See figure \ref{fig:3} (right).
\qed
\end{lem}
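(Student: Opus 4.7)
The plan is to rerun Matelski's construction from the proof of Lemma \ref{hyperbolic commutator}, observing how the configuration degenerates when $[g,h]$ is elliptic. Let $p = \Axis g \cap \Axis h$ (which exists by Lemma \ref{axes crossing}, since $\Tr[g,h] < 2$), and let $e \in PSL_2\R$ be the half-turn about $p$, so that $ege = g^{-1}$, $ehe = h^{-1}$, and therefore $[g,h] = (ghe)^2$. Write $he$ as the half-turn about the point $q \in \Axis h$ at distance $d_h/2$ from $p$ on the $a_h$ side, factor it as $he = R_l R_{l'}$ with $l$ the perpendicular from $q$ to $\Axis g$ (with foot $r$) and $l'$ the perpendicular to $l$ at $q$, and factor $g = R_{l''} R_l$ with $l''$ the perpendicular to $\Axis g$ at the point $s$ at distance $d_g/2$ from $r$ on the $a_g$ side. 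Then $ghe = R_{l''} R_{l'}$, exactly as before.

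Since $\Tr[g,h] \in (-2,2)$, $(ghe)^2 = [g,h]$ is elliptic and nontrivial, so $ghe$ itself is elliptic. Hence $l'$ and $l''$ meet in a unique point $F \in \hyp^2$, and $\Fix[g,h] = \Fix(ghe) = \{F\}$. Everything then reduces to locating $F$.

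The configuration still carries right angles at $q$ (between $l$ and $l'$), at $r$ (between $l$ and $\Axis g$), and at $s$ (between $\Axis g$ and $l''$). The right-angled pentagon of the hyperbolic case collapses: its fifth side, the common perpendicular of $l'$ and $l''$, degenerates to the single vertex $F$, leaving a Lambert-type quadrilateral $qrsF$ with three right angles at $q,r,s$ and an acute angle at $F$. Such a quadrilateral lies entirely on one side of each of the geodesics extending its right-angled sides. In particular it lies on the $a_h$ side of $\Axis g$, because its vertex $q$ does, and on the $a_g$ side of $\Axis h$, because $s$ does (being between $p$ and $a_g$ along $\Axis g$, and hence in the half-plane of $\Axis h$ containing $a_g$). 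This is exactly the region bounded by $\Axis g$, $\Axis h$ and the arc at infinity from $a_g$ to $a_h$, so $F$ lies in that region.

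The main obstacle is the orientation bookkeeping in the last step: verifying that the sides of $\Axis g$ and $\Axis h$ singled out by $q$ and $s$ are indeed the $a_h$ side and the $a_g$ side, rather than one of the other three quadrants. These checks are exactly the ones already made implicitly in the proof of Lemma \ref{hyperbolic commutator} when locating $\Axis[g,h]$; no new argument is required, only a literal translation of the pentagon picture into its degenerate quadrilateral form.
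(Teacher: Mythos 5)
Your reduction is fine and matches the paper's intent (the paper literally proves this by ``repeating the same argument'' as Lemma \ref{hyperbolic commutator}): $ghe=R_{l''}R_{l'}$ is elliptic, $l'\cap l''=\{F\}$ is a single point, $\Fix[g,h]=F$, and $F$ lies on the $a_h$ side of $\Axis g$ because $l'$ and $\Axis g$ are both perpendicular to $l$, hence disjoint, and $l'$ contains $q$. The gap is in the other half of the localization, ``$F$ lies on the $a_g$ side of $\Axis h$''. You justify it by (a) the assertion that $s$ lies between $p$ and $a_g$, and (b) the assertion that the trirectangle $qrsF$ lies on one side of $\Axis h$. Both fail in general. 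For (a): the foot $r$ of the perpendicular from $q$ to $\Axis g$ lies on the $r_g$ side of $p$ whenever the angle $\phi$ at $p$ between the rays $p\,a_g$ and $p\,a_h$ is obtuse, and $d(p,r)$ can be as large as (nearly) $d_h/2$; since the elliptic condition is $\sinh(d_g/2)\sinh(d_h/2)\sin\phi<1$, which does not force $\phi$ acute, one can take $\phi$ close to $\pi$ and $d_h\gg d_g$ so that $s$, which sits at distance $d_g/2$ from $r$ towards $a_g$, is still on the $r_g$ side of $p$, i.e.\ in the half-plane of $\Axis h$ containing $r_g$, not $a_g$. For (b): $\Axis h$ is not one of the four geodesics carrying the sides of $qrsF$ (those are $l$, $\Axis g$, $l''$, $l'$); it meets the quadrilateral only at the vertex $q$, transversally, so convexity gives you nothing about $\Axis h$, and in the configuration just described the quadrilateral really does straddle $\Axis h$ (with $r,s$ on one side and $F$ on the other, consistently with the lemma). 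So as written the second containment is unproved.

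The repair is cheap and keeps you inside Matelski's argument: what the construction genuinely yields is the single statement ``$\Fix[g,h]$ lies strictly on the $a_h$ side of $\Axis g$''. Now run the identical construction for the ordered pair $(h,g)$: the half-turn $ge$ about the point of $\Axis g$ at distance $d_g/2$ from $p$ on the $a_g$ side gives $(hge)^2=[h,g]$, and the same reasoning places $\Fix[h,g]$ strictly on the $a_g$ side of $\Axis h$. Since $[h,g]=[g,h]^{-1}$ has the same fixed point, intersecting the two half-planes gives exactly the quadrant bounded by the arc between $a_g$ and $a_h$, which is the lemma. (Alternatively you could chase the trigonometry of the trirectangle to locate $F$ relative to $\Axis h$ directly, but the symmetric double application avoids all bookkeeping.)
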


\begin{figure}[tbh]
\begin{center}
$\begin{array}{c}
\includegraphics[scale=0.3]{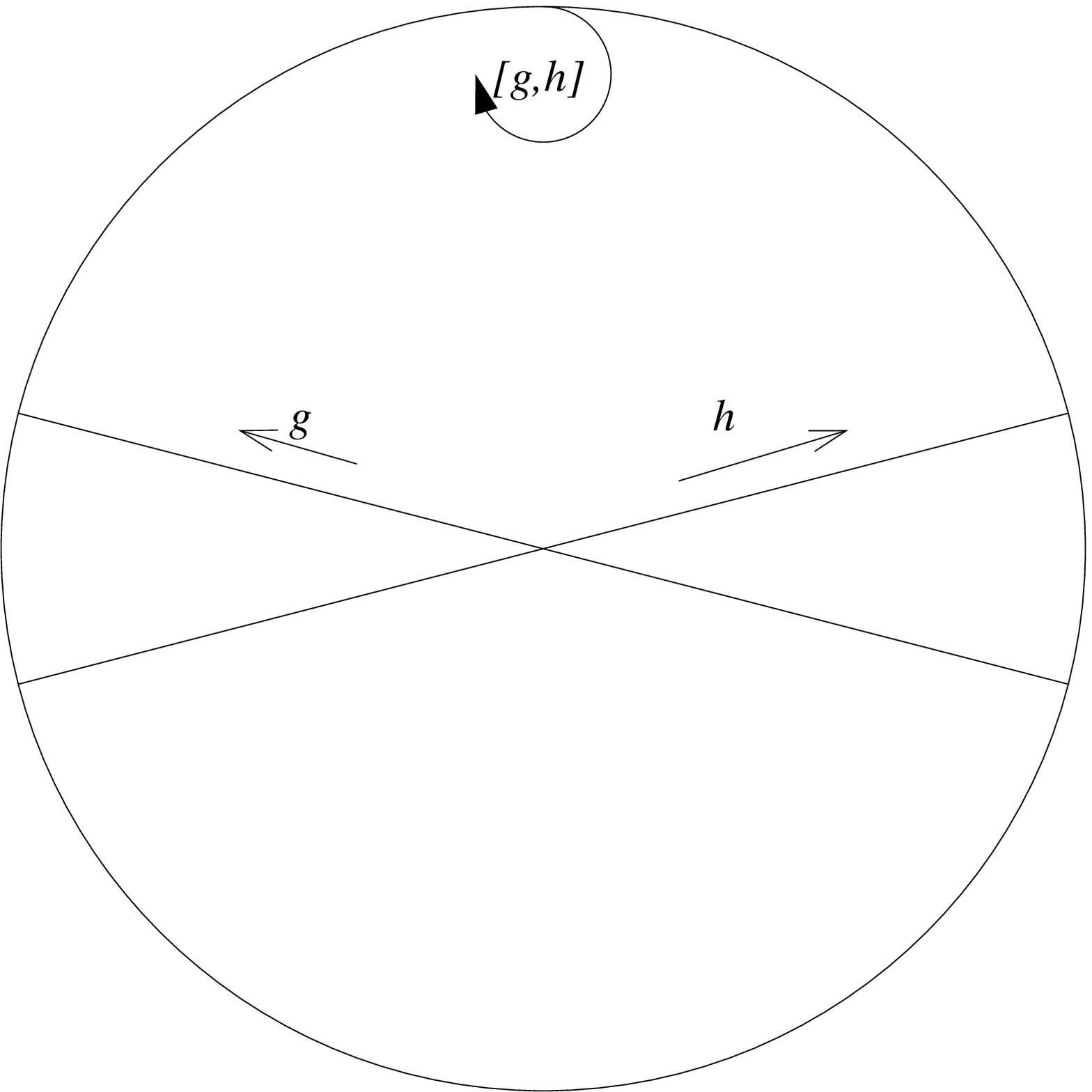}
\end{array}$
\hspace{2cm}
$\begin{array}{c}
\includegraphics[scale=0.3]{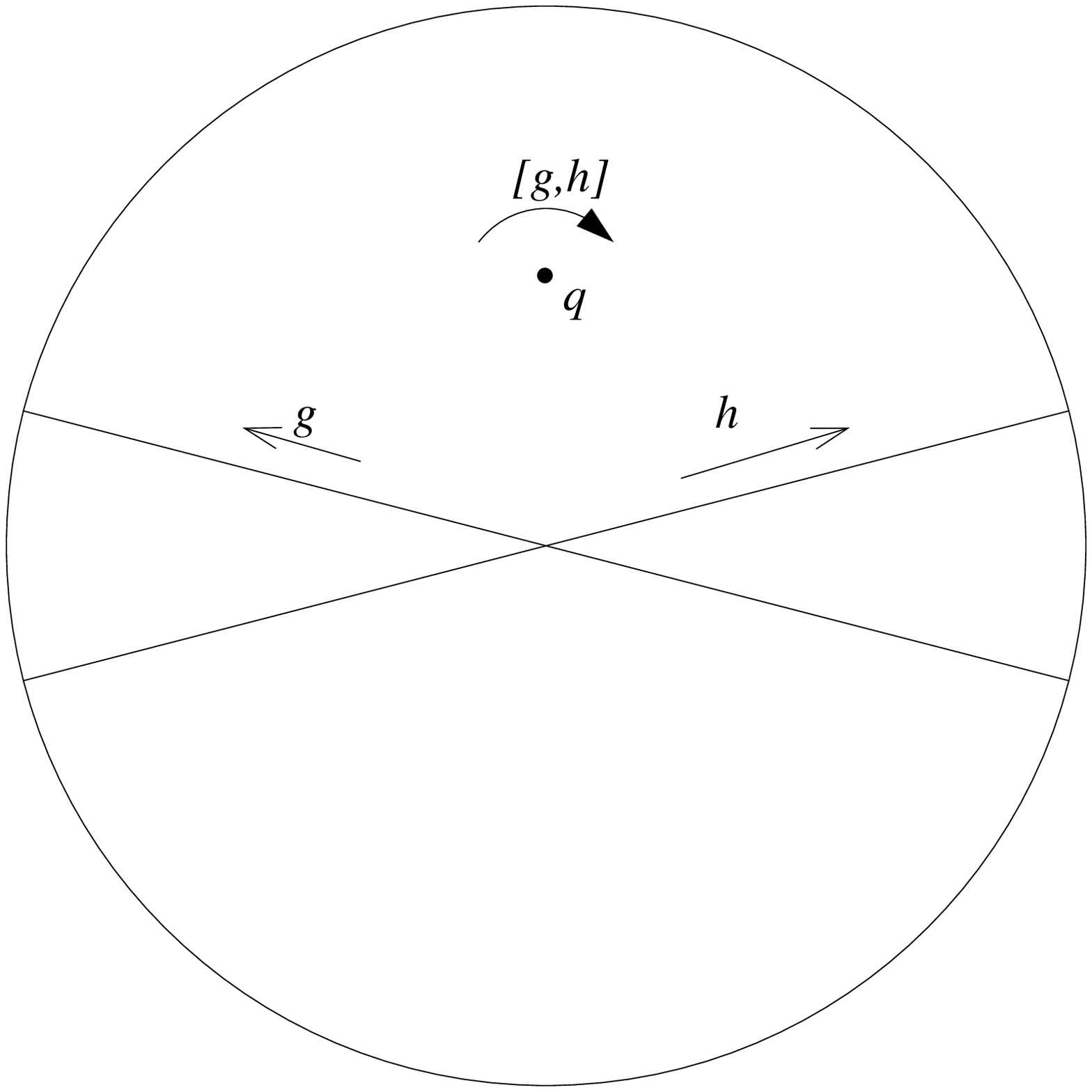}
\end{array}$
\caption{The location of $[g,h]$ in the cases $\Tr[g,h] = -2$ (left) and $\Tr[g,h] \in (-2,2)$ (right).} \label{fig:3}
\end{center}
\end{figure}

\subsection{$PSL_2\R$ and $\widetilde{PSL_2\R}$}

Fixing an arbitrary unit tangent vector $u_0$ at an arbitrary basepoint $y_0$ in $\hyp^2$, we see that a hyperbolic isometry is uniquely determined by the image of this unit tangent vector; thus we may identify the unit tangent bundle $UT\hyp^2$ with $PSL_2\R$. The universal cover of $PSL_2\R$ is denoted $\widetilde{PSL_2\R}$.

We recall some properties of $\widetilde{PSL_2\R}$; see also \cite{Goldman_thesis,Goldman88,Me10MScPaper0} for details. Topologically $PSL_2\R \cong \R^2 \times S^1$, and $\pi_1(PSL_2\R) \cong \Z$. An element $\tilde{x} \in \widetilde{PSL_2\R}$ is hyperbolic, elliptic or parabolic accordingly as is its projection in $PSL_2\R$; $\widetilde{PSL_2\R}$ can be regarded as the hyperbolic plane, with a line attached to each point, covering the circle of unit tangent vectors at that point.

Elements of $\widetilde{PSL_2\R}$ can be considered as homotopy classes of paths in $UT\hyp^2$; since the basepoint is arbitrary, every path $c: [0,1] \To UT\hyp^2$ determines an element of $\widetilde{PSL_2\R}$; the projection of $c$ to $PSL_2\R$ is the isometry sending $c(0)$ to $c(1)$.

The lifts of $1 \in PSL_2\R$ form an infinite cyclic group $\{\z^n :  n \in \Z \}$, where $\z$ is a rotation by $2\pi$. This $\z$ generates the centre of $\widetilde{PSL_2\R}$. The lifts of a general $\alpha \in  PSL_2\R$ differ by powers of $\z$ and represent paths in $UT\hyp^2$ between the same start and end
tangent vectors.

Some elements of $PSL_2\R$ have ``simplest'' lifts to $\widetilde{PSL_2\R}$. The identity in $\widetilde{PSL_2\R}$ is the simplest lift of the identity in $PSL_2\R$. For hyperbolic $\alpha \in PSL_2\R$ there exists a unique homomorphism $c: \R \To PSL_2\R$ with $c(1) = \alpha$; this gives a path which is a simplest lift. The same applies to parabolics. For elliptic $\alpha$ however there are infinitely many such homomorphisms. Suppose $\alpha$ rotates by angle $\theta$ (mod $2\pi$); then the lifts of $\alpha$ are rotations by angles $\theta + 2\pi \Z$. There are two simplest lifts of $\alpha$, one anticlockwise and one clockwise, with rotation angle lying in $(0, 2\pi)$ and $(-2\pi,0)$ respectively.

Simplest lifts of hyperbolics and parabolics are denoted $\Hyp_0, \Par_0$; then $\Hyp_n = \z^n \Hyp_0$ and $\Par_n = \z^n \Par_0$; the hyperbolic (resp. parabolic) elements of $\widetilde{PSL_2\R}$ are the disjoint union of the $\Hyp_n$ (resp. $\Par_n$). We further distinguish $\Par_n^+$ and $\Par_n^-$, the rotations about points at infinity whose projections to $PSL_2\R$ are anticlockwise and clockwise respectively. For elliptics, simplest anticlockwise and clockwise lifts are defined to lie in $\Ell_1$ and $\Ell_{-1}$ respectively. For $n>0$ let $\Ell_n = \z^{n-1} \Ell_1$ and $\Ell_{-n} = \z^{-n+1} \Ell_{-1}$. (So $\Ell_0$ is not defined and $\z \Ell_{-1} = \Ell_1$.) For $n>0$ (resp. $n<0$), $\Ell_n$ consists of all rotations through angles between $2\pi(n-1)$ and $2\pi n$ (resp. between $2\pi n$ and $2\pi (n+1)$).

We have a schematic diagram of $\widetilde{PSL_2\R}$ in figure \ref{fig:4}.
\begin{figure}[tbh]
\centering
\includegraphics[scale=0.6]{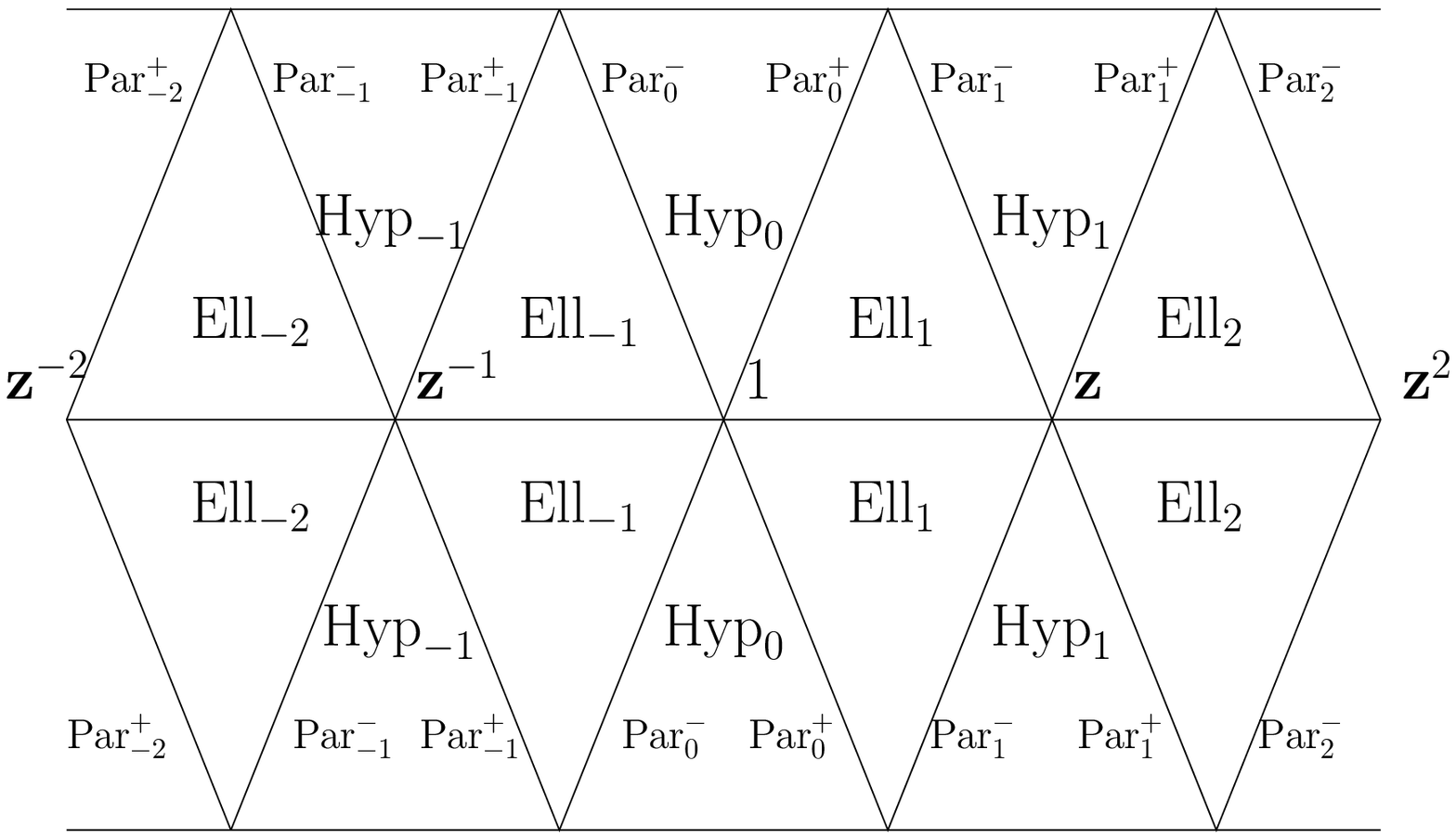}
\caption[Schematic diagram of $\widetilde{PSL_2\R}$]{Schematic
diagram of $\widetilde{PSL_2\R}$.}
\label{fig:4}
\end{figure}

Since lifts of $\alpha \in PSL_2\R$ differ by powers of $\z$, the following lemma is clear.
\begin{lem}
\label{commutator_lift}
    Let $\alpha, \beta \in PSL_2\R$. Then $[\alpha,\beta]$
    has a
    well-defined lift to $\widetilde{PSL_2\R}$. That is, any two
    sets of lifts $\tilde{\alpha}_1, \tilde{\beta}_1$ and $\tilde{\alpha}_2,
    \tilde{\beta}_2$ satisfy $[\tilde{\alpha}_1, \tilde{\beta}_1] =
    [\tilde{\alpha}_2, \tilde{\beta}_2]$.
\qed
\end{lem}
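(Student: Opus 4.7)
The plan is to exploit the fact, recalled just above the statement, that two lifts of the same element of $PSL_2\R$ differ by a power of the central generator $\z$, together with the crucial property that $\z$ lies in the centre of $\widetilde{PSL_2\R}$. There is essentially nothing else to use, and the computation will be a one-liner.

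Concretely, I would fix one choice of lifts $\tilde\alpha_1,\tilde\beta_1$ and write any other choice as $\tilde\alpha_2 = \z^m \tilde\alpha_1$ and $\tilde\beta_2 = \z^n \tilde\beta_1$ for some integers $m,n$. Because $\z$ is central, powers of $\z$ commute with $\tilde\alpha_1$, $\tilde\beta_1$ and their inverses, so I can push all the $\z$-factors to one side and watch them cancel:
\[
[\tilde\alpha_2,\tilde\beta_2] = (\z^m\tilde\alpha_1)(\z^n\tilde\beta_1)(\z^m\tilde\alpha_1)^{-1}(\z^n\tilde\beta_1)^{-1}
= \z^{m+n-m-n}\, \tilde\alpha_1 \tilde\beta_1 \tilde\alpha_1^{-1} \tilde\beta_1^{-1}
= [\tilde\alpha_1,\tilde\beta_1].
\]
Thus the lifted commutator depends neither on the choice of lift of $\alpha$ nor on the choice of lift of $\beta$, and so is a well-defined element of $\widetilde{PSL_2\R}$ determined by $\alpha,\beta \in PSL_2\R$ alone.

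There is no real obstacle here; the only thing worth emphasising is that the argument relies on $\ker(\widetilde{PSL_2\R} \to PSL_2\R) = \langle \z \rangle$ being \emph{central}, not merely normal. This is exactly what was asserted in the preceding paragraphs describing $\widetilde{PSL_2\R}$, so the lemma follows at once.
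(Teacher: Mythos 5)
Your proof is correct and is exactly the argument the paper intends: the lemma is stated as "clear" precisely because lifts differ by powers of the central element $\z$, which then cancel in the commutator. Nothing further is needed.
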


\subsection{Derivatives of Isometries of $\hyp^2$}

\label{sec:deriv_isoms}

An isometry $\alpha \in PSL_2\R$ has a derivative $D\alpha$ which we may consider as a map $UT\hyp^2 \To UT\hyp^2$. We define a notion of the \emph{twist} of an $\tilde{\alpha} \in \widetilde{PSL_2\R}$ at a point $y \in \hyp^2$; see \cite{Me10MScPaper0} for details.

First, given a unit tangent vector field $\V$ along a smooth curve $c: [0,1] \To \hyp^2$, we define the \emph{twist of $\V$ along $c$}. At $c(t)$ we consider the angle $\theta(t)$ (measured anticlockwise) from the velocity vector, to $\V(t)$. There are many choices for $\theta(0)$ (differing by $2 \pi \Z$) but choosing $\theta(0)$ arbitrarily determines continuous $\theta$ completely; $\theta(1) - \theta(0)$ is independent of this choice, and is the twist of $\V$ along $c$.

Now we define the \emph{twist of $\tilde{\alpha} \in \widetilde{PSL_2\R}$ at $y \in \hyp^2$}, denoted $\Twist(\tilde{\alpha},y)$. Let $\tilde{\alpha}$ project to $\alpha \in PSL_2\R$. Let $c: [0,1] \To \hyp^2$ be a constant speed (possibly $0$) geodesic from  $y$ to $\alpha(y)$. There is a vector field $\V: [0,1] \To UT\hyp^2$ along $c$ which lies in the homotopy class of $\tilde{\alpha}$. Then $\Twist(\tilde{\alpha},y)$ is the twist of $\V$ along $c$; this does not depend on the choice of $\V$. For $\alpha \in PSL_2\R$, $\Twist(\alpha, y)$ is defined similarly with angles modulo $2\pi$.

Thus, $\Twist(\tilde{\alpha},y)$ describes how the tangent vector at $y$ is moved by $\tilde{\alpha}$, compared to parallel translation along the geodesic from $y$ to $\tilde{\alpha}(y)$. 

In \cite{Me10MScPaper0} we prove various properties of the twist; one can easily verify the following.
\begin{itemize}
\item 
For hyperbolic $\tilde{\alpha} \in \Hyp_0$, $\Twist(\tilde{\alpha},y) = 0$ for $y \in \Axis \alpha$, and for general $y \in \hyp^2$, $\Twist(\tilde{\alpha},y) \in (-\pi,\pi)$. The twist is constant along curves of constant distance $h$ from $\Axis \alpha$. For each $\theta \in (-\pi,\pi)$ there is precisely one $h$ for which the curve at distance $h$ is the locus of points $y$ with $\Twist(\tilde{\alpha},y) = \theta$.

\item
For parabolic $\tilde{\alpha} \in \Par_0$, $\Twist(\tilde{\alpha},y)$ is constant along horocycles about $\Fix \alpha$. If $\tilde{\alpha} \in \Par_0^+$ (resp. $\Par_0^-$) then $\Twist(\tilde{\alpha},y) \in (0,\pi)$ (resp. $(-\pi,0)$). On horocycles close to $\Fix \alpha$, the twist is close to $0$. For each $\theta \in (0, \pi)$ (resp. $(-\pi, 0)$) there is precisely one horocycle which is the locus of points $y$ with $\Twist(\tilde{\alpha},y) = \theta$.

\item
For elliptic $\tilde{\alpha}$, $\Twist(\tilde{\alpha},y)$ is constant along hyperbolic circles centred at $\Fix \alpha$. Take $\tilde{\alpha} \in \Ell_1$ for convenience, so $\tilde{\alpha}$ rotates by angle $\psi \in (0, 2\pi)$. So $\Twist(\tilde{\alpha}, \Fix \alpha) = \psi$. If $\psi \in (0,\pi)$ then $\Twist(\tilde{\alpha},y)$ always lies in $[\psi, \pi)$; for each $\theta \in (\psi, \pi)$ there is precisely one hyperbolic circle centred at $\Fix \alpha$ which is the locus of $y$ with $\Twist(\tilde{\alpha},y) = \theta$. If $\psi = \pi$ then $\alpha$ is a half turn and $\Twist(\tilde{\alpha}, y) = \pi$ for all $y$. If $\psi \in (\pi, 2\pi)$ then $\Twist(\tilde{\alpha},y)$ always lies in $(\pi,\psi]$ and for each $\theta \in (\psi, \pi)$ there is precisely one hyperbolic circle centred at $\Fix \alpha$ which is the locus of $y$ with $\Twist(\tilde{\alpha},y) = \theta$.
\end{itemize}

\begin{prop}\
\label{twist_bounds}
    \begin{align*}
        \Twist(\Hyp_n, \hyp^2) &= \left( \left( 2n-1 \right) \pi,
        \left( 2n + 1 \right) \pi \right) \\
        \Twist(\Par_n, \hyp^2) &= \left( \left( 2n-1 \right) \pi,
        \left( 2n + 1 \right) \pi \right) \\
        \Twist(\Ell_n, \hyp^2) &= \left\{ \begin{array}{ll} \Bigl( (2n-2)\pi, 2n\pi \Bigr) & \text{
        for $n>0$} \\
        \Bigl( -2|n|\pi, (-2|n|+1)\pi \Bigr) &
        \text{ for $n<0$} \end{array} \right.
    \end{align*}\
    \qed
\end{prop}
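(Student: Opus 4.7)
The strategy is to reduce the full proposition to the base cases $n = 0$ for $\Hyp$ and $\Par$, and $n = \pm 1$ for $\Ell$, all of which have been collected in the bullet points immediately above, and to propagate them by the central generator $\z \in \widetilde{PSL_2\R}$.

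The key ingredient is the shift formula
\[
    \Twist(\z^k \tilde\alpha, y) = \Twist(\tilde\alpha, y) + 2\pi k
    \quad \text{for all } k \in \Z, \; \tilde\alpha \in \widetilde{PSL_2\R}, \; y \in \hyp^2.
\]
To establish this, I would use the path picture of $\widetilde{PSL_2\R}$. Let $c: [0,1] \To \hyp^2$ be the (possibly constant) geodesic from $y$ to $\alpha(y)$, and let $\V$ be the unit tangent vector field along $c$ in the homotopy class of $\tilde\alpha$, so that by definition $\Twist(\tilde\alpha, y) = \theta(1) - \theta(0)$, where $\theta(t)$ is the continuously chosen angle from $c'(t)$ to $\V(t)$. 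The element $\z^k$ is represented by a loop at the identity in $UT\hyp^2$ which wraps $k$ times around the fibre $S^1$, i.e.\ a net rotation by $2\pi k$ at the basepoint. Concatenating $\V$ with this loop applied at the endpoint $c(1)$ yields a vector field along $c$ representing $\z^k \tilde\alpha$; it agrees with $\V$ for $t < 1$ while the continuously chosen angle at $t = 1$ is augmented by $2\pi k$. Hence $\theta(0)$ is unchanged while $\theta(1)$ is increased by $2\pi k$, proving the shift formula.

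Given the shift formula, the proposition follows by direct substitution in each family. Since $\Hyp_n = \z^n \Hyp_0$ and $\Par_n = \z^n \Par_0$, and the bullet points tell us $\Twist(\Hyp_0, \hyp^2) = (-\pi, \pi)$ (attained by varying the distance $h$ to $\Axis \alpha$) and similarly that $\Twist(\Par_0, \hyp^2)$ lies in $(-\pi, \pi)$ (with $(0, \pi)$ attained on $\Par_0^+$ and $(-\pi, 0)$ on $\Par_0^-$), translating by $2\pi n$ yields the stated interval $((2n-1)\pi, (2n+1)\pi)$. For elliptics with $n > 0$, $\Ell_n = \z^{n-1} \Ell_1$; the bullet point shows that as the rotation angle $\psi$ of $\tilde\alpha \in \Ell_1$ varies through $(0, 2\pi)$ and the radial distance of $y$ from $\Fix \alpha$ varies, the twist sweeps the full interval $(0, 2\pi)$, so the shift by $2\pi(n-1)$ gives $((2n-2)\pi, 2n\pi)$. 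The case $n < 0$ is symmetric, using $\Ell_n = \z^{n+1} \Ell_{-1}$ and the mirror-image base case $\Twist(\Ell_{-1}, \hyp^2) = (-2\pi, 0)$.

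The only substantive point is the shift formula, whose proof is essentially tautological once the path-space model of $\widetilde{PSL_2\R}$ and the definition of twist via angles along geodesics are in place; the main ``obstacle'' is therefore careful bookkeeping in the path picture, which is dispatched in \cite{Me10MScPaper0}. Given the preceding bullet points for the base cases, the proposition is then a one-line computation in each of the three families.
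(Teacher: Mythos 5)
Your strategy --- reduce to the base regions $\Hyp_0$, $\Par_0$, $\Ell_{\pm 1}$ described in the bullet points and propagate by the central shift $\Twist(\z^k\tilde\alpha,y)=\Twist(\tilde\alpha,y)+2\pi k$ --- is exactly the verification the paper has in mind; the paper offers no written argument beyond citing those properties, so in approach you and the paper agree, and your proof of the shift formula is fine.

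However, your ``one-line computation'' does not actually reproduce the statement in the elliptic case $n<0$, and you should not assert that it does. Your (correct) mirror-image base case is $\Twist(\Ell_{-1},\hyp^2)=(-2\pi,0)$: indeed the simplest clockwise lift of a rotation by angle $\psi\in(-2\pi,0)$ has twist $\psi$ at its fixed point, so every value in $(-2\pi,0)$ occurs. Shifting by $\z^{n+1}$ then gives
\[
\Twist(\Ell_n,\hyp^2)=(-2\pi,0)+2\pi(n+1)=\bigl(2n\pi,\,(2n+2)\pi\bigr)=\bigl(-2|n|\pi,\,(-2|n|+2)\pi\bigr)\quad (n<0),
\]
an interval of length $2\pi$, whereas the proposition as printed asserts $\bigl(-2|n|\pi,(-2|n|+1)\pi\bigr)$, of length $\pi$. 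For $n=-1$ the printed interval $(-2\pi,-\pi)$ excludes, e.g., the twist $-\pi/2$ of a clockwise quarter-turn at its fixed point, and it is inconsistent with the reflection symmetry $\Twist(\Ell_{-n},\hyp^2)=-\Twist(\Ell_n,\hyp^2)$ that you invoke. So the printed formula is evidently a misprint (the upper endpoint should be $(-2|n|+2)\pi$), and your write-up papers over the discrepancy by claiming the symmetric computation ``gives the stated formula''; you should instead record the corrected interval (or flag the typo). A second, minor point: for parabolics the twist is never an exact multiple of $2\pi$ (it lies in $(2n\pi,(2n+1)\pi)$ on $\Par_n^+$ and $((2n-1)\pi,2n\pi)$ on $\Par_n^-$), so the asserted equality $\Twist(\Par_n,\hyp^2)=((2n-1)\pi,(2n+1)\pi)$ is only an equality up to the missing value $2n\pi$; this looseness is already in the paper, but since you write equalities you should note it, while for $\Hyp_n$ the equality is exact.
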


\subsection{Traces and commutators in $\widetilde{PSL_2\R}$}

Note that $\widetilde{PSL_2\R}$ covers $SL_2\R$, so there is a well-defined trace on $\widetilde{PSL_2\R}$. For all elliptic regions, the trace lies in $(-2,2)$; in the various other regions of $\widetilde{PSL_2\R}$ the value of the trace follows from considering the regions in figure \ref{fig:4}; see \cite{Me10MScPaper0}.
\begin{lem}
\label{trace_regions}
    \begin{align*}
        \Tr \left( \z^n \right) &= (-1)^n \cdot 2 \\
        \Tr \left( \Par_n \right) &= (-1)^n \cdot 2 \\
        \Tr \left( \Hyp_n \right) &= \left\{ \begin{array}{ll} (2,
        \infty) & \text{$n$ even} \\ (-\infty, -2) & \text{$n$ odd.}
        \end{array} \right.
    \end{align*} \
\qed
\end{lem}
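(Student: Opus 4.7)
The plan is to exploit the factorisation of the covering $\widetilde{PSL_2\R} \to PSL_2\R$ through $SL_2\R$, on which the trace is classically defined, and then propagate the trace computation using the multiplicative structure $\Hyp_n = \z^n \Hyp_0$, $\Par_n = \z^n \Par_0$.

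First I would identify the kernel of the projection $p: \widetilde{PSL_2\R} \to SL_2\R$. Since $SL_2\R \to PSL_2\R$ is a double cover (kernel $\{\pm I\}$) and $\widetilde{PSL_2\R}$ is the universal cover of both, the kernel of $p$ must be the index-two subgroup of $\pi_1(PSL_2\R) = \langle \z \rangle$, namely $\langle \z^2 \rangle$. In particular $p(\z) = -I \in SL_2\R$, so $\Tr(\z^n) = \Tr((-I)^n) = (-1)^n \cdot 2$, which handles the first line.

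Next I would compute the traces of the simplest lifts $\Par_0$ and $\Hyp_0$. By definition these lifts come from the unique one-parameter subgroup $c: \R \to PSL_2\R$ with $c(1) = \alpha$, lifted to a path from the identity in $\widetilde{PSL_2\R}$. Projecting such a path to $SL_2\R$ yields a one-parameter subgroup of $SL_2\R$ starting at $I$, i.e.\ the image sits in the identity component of the fibre over $\alpha$. Conjugating into standard form, a parabolic one-parameter subgroup consists of matrices $\begin{pmatrix} 1 & t \\ 0 & 1 \end{pmatrix}$, all of trace $+2$; a hyperbolic one-parameter subgroup consists of matrices $\begin{pmatrix} e^{t/2} & 0 \\ 0 & e^{-t/2} \end{pmatrix}$, with trace $2\cosh(t/2) \in (2,\infty)$ for $t \ne 0$. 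Hence $p(\Par_0)$ has trace $+2$ and $p(\Hyp_0)$ has trace in $(2,\infty)$.

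Finally I would combine these observations. For any $\tilde{\alpha} \in \Par_n$ we have $\tilde{\alpha} = \z^n \tilde{\beta}$ with $\tilde{\beta} \in \Par_0$, so $p(\tilde{\alpha}) = (-I)^n p(\tilde{\beta})$ has trace $(-1)^n \cdot 2$; similarly for $\Hyp_n$ the trace lies in $(-1)^n \cdot (2, \infty)$, which is $(2,\infty)$ for $n$ even and $(-\infty,-2)$ for $n$ odd. The only subtle point — and what I would regard as the main thing to pin down carefully — is verifying that the simplest-lift path really projects into the identity component of the preimage in $SL_2\R$ (rather than the $-I$-component); this follows because the one-parameter subgroup is a continuous path starting at the identity, so its image in $SL_2\R$ is a continuous path starting at $I$, and a one-parameter subgroup of $SL_2\R$ is unique with prescribed initial velocity and so must be the standard-form subgroup through $I$ after conjugation.
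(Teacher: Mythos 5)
Your argument is correct: defining the trace via the covering $\widetilde{PSL_2\R} \to SL_2\R$, checking that $\z \mapsto -I$, computing the traces of simplest lifts through the lifted one-parameter subgroups, and then propagating with $\Hyp_n = \z^n \Hyp_0$, $\Par_n = \z^n \Par_0$ is exactly the natural verification, and your $2\cosh(t/2)$ parametrisation also yields the asserted set equality $\Tr(\Hyp_0) = (2,\infty)$ rather than mere containment. The paper gives no proof of this lemma --- it asserts that the values follow from the regions of figure 4 and defers details to the companion paper --- and your computation is essentially the same bookkeeping that remark has in mind, so there is nothing to correct.
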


We now consider commutators in $PSL_2\R$. The following theorem is well-known (e.g. \cite{Milnor,Wood,Eisenbud-Hirsch-Neumann,Goldman88}; we also give proofs in \cite{Me10MScPaper0} and \cite[sections 3.5--3.7]{Mathews05}).

\begin{thm}
\label{commutator_regions}
    If $g,h \in PSL_2\R$ then (noting $[g,h]$ is well-defined in
    $\widetilde{PSL_2\R})$
    \[
        [g,h] \in \{1\} \cup \left( \bigcup_{n=-1}^1 \Hyp_n \cup
        \Ell_n \right) \cup \Par_0 \cup \Par_{-1}^+ \cup \Par_1^-.
    \]
\qed
\end{thm}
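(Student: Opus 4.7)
The plan is to control $\Twist(\widetilde{[g,h]},y)$ at a well-chosen basepoint $y\in\hyp^2$, and then read off which region $\widetilde{[g,h]}$ lies in from the ranges listed in Proposition~\ref{twist_bounds}. The geometric fact underneath is a quasi-additivity of twist under composition, whose defect is the oriented angle deficit of a hyperbolic triangle.

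First I would establish that for $\tilde\alpha,\tilde\beta\in\widetilde{PSL_2\R}$ and $y\in\hyp^2$,
\[
\Twist(\tilde\alpha\tilde\beta,\,y)\;=\;\Twist(\tilde\alpha,\,\beta(y))\;+\;\Twist(\tilde\beta,\,y)\;+\;\theta,
\]
where $\theta$ is the signed angular defect at the vertex $\beta(y)$ of the (possibly degenerate) geodesic triangle with vertices $y,\beta(y),\alpha\beta(y)$; in particular $|\theta|<\pi$. This comes from comparing parallel transport of a tangent vector along the two legs of this triangle with transport along its third side, and is the geometric content of why twist descends to $\widetilde{PSL_2\R}$.

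Next I apply this identity iteratively to $[g,h]=g\cdot h\cdot g^{-1}\cdot h^{-1}$, walking the basepoint along the orbit $y,\,h^{-1}y,\,g^{-1}h^{-1}y,\,hg^{-1}h^{-1}y,\,[g,h]y$. The contributions of $\tilde g,\tilde g^{-1}$ and of $\tilde h,\tilde h^{-1}$ pair up, using the identity $\Twist(\tilde\alpha,z)+\Twist(\tilde\alpha^{-1},\alpha(z))=0$; what remains are three triangle defects, each less than $\pi$ in absolute value, together with variations of twist between nearby basepoints which are likewise bounded. For a judicious choice of $y$ (for instance a point on $\Axis g$ or on $\Axis h$, where several twist terms vanish), one can sharpen the resulting estimate to $\big|\Twist(\widetilde{[g,h]},y)\big|<2\pi$. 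Combined with Proposition~\ref{twist_bounds}, this immediately rules out $\Hyp_n,\Ell_n,\Par_n$ for $|n|\ge 2$.

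The hard part, and the main obstacle, is eliminating the boundary parabolic classes $\Par_1^+$ and $\Par_{-1}^-$, which also fit inside a $2\pi$ twist range. Here I would split by $\Tr[g,h]$ and invoke the earlier commutator lemmas. When $\Tr[g,h]<-2$, Lemma~\ref{hyperbolic commutator} places $\Axis[g,h]$ between $a_g$ and $a_h$ and fixes the sense of translation, forcing the lift into $\Hyp_{\pm 1}$; when $\Tr[g,h]=\pm 2$, Lemma~\ref{parabolic_commutator} constrains $\Fix[g,h]$ and the sense of rotation, putting the lift in $\Par_0\cup\Par_{-1}^+\cup\Par_1^-$ and excluding $\Par_1^+\cup\Par_{-1}^-$; and when $\Tr[g,h]\in(-2,2)$, Lemma~\ref{elliptic_commutator} locates $\Fix[g,h]$ in the region cut out by $\Axis g$, $\Axis h$ and a specified arc of the circle at infinity, so evaluating twist at that fixed point lands the lift in $\Ell_{\pm 1}$. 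Bookkeeping through these cases so that every forbidden homotopy class is eliminated is what turns the rough bound $|\Twist|<2\pi$ into the precise statement of the theorem.
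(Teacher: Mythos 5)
Your strategy --- bounding $\Twist(\widetilde{[g,h]},y)$ and reading off the homotopy class from Proposition~\ref{twist_bounds} --- is in the right family: the paper itself gives no proof of Theorem~\ref{commutator_regions}, deferring to Milnor, Wood, Eisenbud--Hirsch--Neumann, Goldman and the companion paper \cite{Me10MScPaper0}, where twist/area machinery of exactly this flavour (cf.\ Proposition~\ref{prop:twist_area}) is developed. Your quasi-additivity formula is essentially correct, except that the correction term is the signed angle defect (equivalently the signed area) of the whole triangle $y,\beta(y),\alpha\beta(y)$, not a quantity attached to the single vertex $\beta(y)$; in any case $|\theta|<\pi$ holds. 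The genuine gap is at the decisive quantitative step. After iterating, the $g,g^{-1}$ and $h,h^{-1}$ contributions do \emph{not} cancel: pairing $\Twist(\tilde h^{-1},y)$ with $\Twist(\tilde h, g^{-1}h^{-1}y)$ via $\Twist(\tilde\alpha,z)+\Twist(\tilde\alpha^{-1},\alpha z)=0$ leaves the difference of twists of $\tilde h$ at two unrelated points, which for a simplest lift is only bounded by $2\pi$, and likewise for $g$. Adding three triangle defects ($<\pi$ each) gives a bound of order $6\pi$--$7\pi$, even with $y$ on $\Axis g$ or $\Axis h$; nothing in your sketch shows how a ``judicious choice of $y$'' improves this to $2\pi$. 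But the strict bound $|\Twist(\widetilde{[g,h]},y)|<2\pi$ at a single point is \emph{equivalent} to the theorem (it excludes $\z^{\pm1}$, $\Hyp_{\pm2}$, $\Ell_{\pm2}$, $\Par_{\pm2}$, and also $\Par_1^+$, $\Par_{-1}^-$, whose twist ranges are $(2\pi,3\pi)$ and $(-3\pi,-2\pi)$), so asserting it is assuming exactly what must be proved; it is the sharp Milnor--Wood-type inequality. Even the exact identity of Proposition~\ref{prop:twist_area} only bounds the twist by $3\pi$, and only when the pentagon is immersed.

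The fallback case analysis cannot close this gap. Lemmas~\ref{axes crossing}, \ref{hyperbolic commutator}, \ref{parabolic_commutator} and \ref{elliptic_commutator} are statements about the projection to $PSL_2\R$ --- locations of axes and fixed points and senses of translation or rotation --- and are therefore blind to multiplication by powers of $\z$: they cannot distinguish $\Hyp_1$ from $\Hyp_3$, or $\Par_1^-$ from $\Par_3^-$, which is precisely what the theorem asserts. They also presuppose $\Tr[g,h]<2$, so that $g,h$ are hyperbolic with crossing axes; your case split omits $\Tr[g,h]>2$ entirely (where one must show the lift is $\Hyp_0$ rather than $\Hyp_{\pm2}$) and does not handle the reducible case $\Tr[g,h]=2$, where $g$ or $h$ may be elliptic or parabolic and one must still exclude $\z^{\pm1}$ and $\Par_{\pm1}$. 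To complete an argument along your lines you need an additional mechanism that pins down the power of $\z$: for instance Milnor's or Wood's quantitative control of rotation numbers, or a Goldman-style connectedness argument showing that within each trace regime the set of pairs $(g,h)$ is connected, so that the continuously varying lift can be evaluated at one explicit model pair per case.
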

(We take $\Ell_0 = \emptyset$ for convenience.)

\begin{figure}[tbh]
\centering
\includegraphics[scale=0.6]{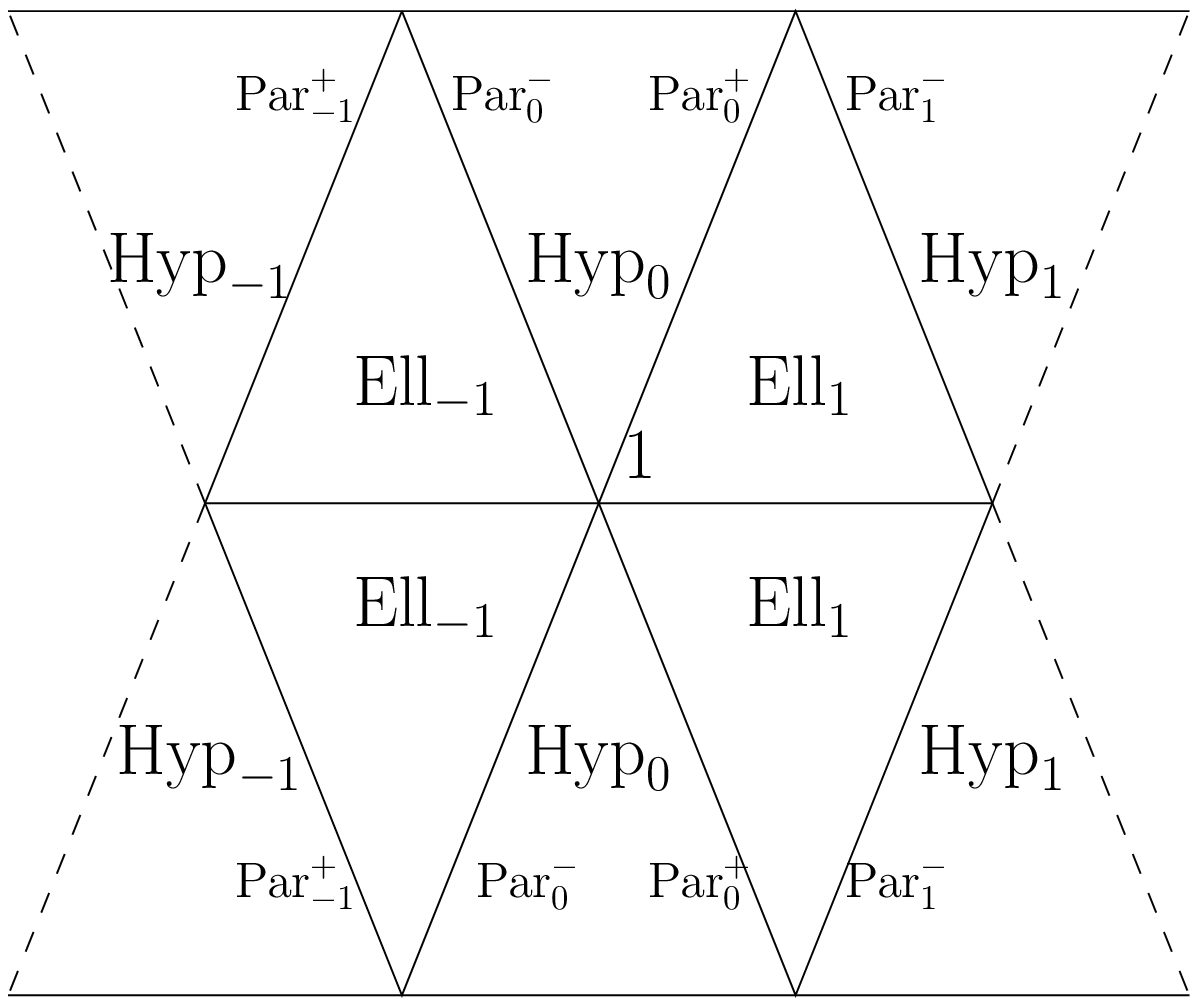}
\caption{Possible commutators in $\widetilde{PSL_2\R}$.}
\label{fig:5}
\end{figure}

\begin{cor}
\label{commutator_trace_info}
    If $g,h \in PSL_2\R$ then
    \begin{enumerate}
        \item
            $\Tr[g,h] > 2$ implies $[g,h] \in \Hyp_0$;
        \item
            $\Tr[g,h] = 2$ implies $[g,h] \in \{1\} \cup \Par_0$;
        \item
            $\Tr[g,h] \in (-2,2)$ implies $[g,h] \in
            \Ell_{-1} \cup \Ell_1$;
        \item
            $\Tr[g,h] = -2$ implies $[g,h] \in
            \Par_{-1}^+ \cup \Par_1^-$;
        \item
            $\Tr[g,h] < -2$ implies $[g,h] \in \Hyp_{-1} \cup
            \Hyp_1$.
    \end{enumerate}\
\qed
\end{cor}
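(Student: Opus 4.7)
The plan is to deduce the corollary directly from Theorem \ref{commutator_regions} by intersecting the list of possible regions with the trace constraints supplied by Lemma \ref{trace_regions}. Theorem \ref{commutator_regions} already restricts $[g,h]$, viewed in $\widetilde{PSL_2\R}$, to the set
\[
    \{1\} \;\cup\; \bigcup_{n=-1}^{1} \bigl( \Hyp_n \cup \Ell_n \bigr) \;\cup\; \Par_0 \;\cup\; \Par_{-1}^+ \;\cup\; \Par_1^-,
\]
so the task reduces to a finite case analysis on the trace.

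First, I would recall from Lemma \ref{trace_regions} the trace on each ``stratum'': $\Tr(\Hyp_n) \subset (2,\infty)$ when $n$ is even and $\subset (-\infty,-2)$ when $n$ is odd; $\Tr(\Par_n) = (-1)^n \cdot 2$; $\Tr(\z^n) = (-1)^n \cdot 2$; and the elliptic regions all have trace in $(-2,2)$. Then I would proceed case-by-case. If $\Tr[g,h] > 2$, the only permitted region from the theorem with trace exceeding $2$ is $\Hyp_0$, giving (i). If $\Tr[g,h] = 2$, the permitted strata of trace $2$ are $\{1\}$ and $\Par_0$, giving (ii). If $\Tr[g,h] \in (-2,2)$, the only elliptic regions allowed by the theorem are $\Ell_{-1}$ and $\Ell_{1}$ (noting the convention $\Ell_0 = \emptyset$), giving (iii). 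If $\Tr[g,h] = -2$, the permitted strata of trace $-2$ are $\Par_{-1}^+$ and $\Par_1^-$, giving (iv). Finally, if $\Tr[g,h] < -2$, the only permitted regions in the theorem with trace below $-2$ are $\Hyp_{-1}$ and $\Hyp_1$, giving (v).

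No step is substantive — the corollary is essentially a bookkeeping consequence, and I would expect the only mild subtlety to be justifying that the commutator lifts used to compute the trace agree with the canonical lift of $[g,h]$ in $\widetilde{PSL_2\R}$ given by Lemma \ref{commutator_lift}, and that $\z^n$ for $n \neq 0$ is explicitly excluded by Theorem \ref{commutator_regions} so that trace $\pm 2$ cases cannot spill out of the listed regions. Thus the whole proof amounts to overlaying Lemma \ref{trace_regions} onto the inclusion from Theorem \ref{commutator_regions}.
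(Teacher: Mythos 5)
Your proposal is correct and is exactly the argument the paper intends: the corollary is stated with an immediate \qed precisely because it follows by overlaying the trace values of Lemma \ref{trace_regions} (plus the fact that elliptic regions have trace in $(-2,2)$) onto the list of admissible regions in Theorem \ref{commutator_regions}, with the canonical lift of $[g,h]$ supplied by Lemma \ref{commutator_lift}. Nothing in your case analysis differs from this, so there is nothing to add.
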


\section{The Geometry of Punctured Tori}
\label{sec:geometry_punctured_tori}

Let $S$ denote a punctured torus with a hyperbolic cone-manifold structure; we saw above (lemma \ref{Gauss-Bonnet}) that $\sum s_i < -\chi(S) = 1$. We are interested in interior cone points with angles which are multiples of $2\pi$, i.e. $s_i \in \N$; but there cannot be any such cone points $s_i \geq 1$. Hence we only consider corner points. We allow $S$ to have at most one corner point $p_0$, with corner angle $\theta$; $s_i < 1$ implies $\theta \in (0, 3\pi)$.

\subsection{Pentagon decomposition}
\label{pentagon decomposition}

We demonstrate a standard decomposition of a punctured torus as described above into a hyperbolic geodesic pentagon. 

We need to be careful with the behaviour of geodesics at corner points. We have mentioned that between any $p,q$ in a hyperbolic cone-manifold there is a shortest curve joining them, which is a geodesic (section \ref{sec:cone-manifolds}). Such a shortest geodesic $C$ joining two points $p$ and $q$ must be simple (i.e. non-self-intersecting), and if $C$ intersects the boundary $\partial S$ then $C \cap \partial S$ is a disjoint union of closed segments whose endpoints are corner points with corner angles greater than $\pi$, or $p$ or $q$.

We also need the following lemma. Recall that a curve $C$ is
\emph{boundary-parallel} to a boundary component $A$ if $C$ can be
homotoped to lie entirely on $A$. In particular a null-homotopic
curve is boundary-parallel to $A$.

\begin{lem}
\label{simple non-boundary-parallel geodesic}
    Let $S$ be a 2-dimensional hyperbolic cone-manifold with no interior cone points and connected piecewise geodesic boundary with exactly one corner point $q$. Then there is a shortest closed curve $C$ based at $q$ which is not boundary-parallel. The curve $C$ is a geodesic arc, intersects no singular points in its interior, and is simple.
\end{lem}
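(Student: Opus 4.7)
The plan is to obtain $C$ as a length-minimizer among non-boundary-parallel based loops at $q$, and then verify each of the three claimed properties in turn.

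Existence is a compactness argument. Since $\chi(S)<0$ and $\partial S$ is connected, $\pi_1(S,q)$ is free of rank $2g\geq 2$ and the peripheral subgroup $\langle\partial\rangle$ is a proper cyclic subgroup, so non-boundary-parallel based loops at $q$ exist; let $L$ denote the infimum of their lengths. A minimizing sequence, parametrised by constant speed, admits a uniformly convergent subsequence by Arzel\`a--Ascoli on the compact space $S$; the limit $C$ has length at most $L$ by lower semicontinuity, and agrees with the tail of the sequence in based homotopy class (nearby rectifiable loops being joined by a straight-line homotopy in a universal cover strip), so $C$ is non-boundary-parallel and thus has length exactly $L$. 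Minimality then forces $C$ to minimise length within its own based homotopy class, which on a hyperbolic cone surface means it is a local geodesic --- i.e.\ a geodesic arc in the sense of the paper.

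To rule out singular points in the interior of $C$, note that by hypothesis the only candidate is $q$ itself. If $C(t_0)=q$ for some $t_0\in(0,1)$, split $C=C_1\cdot C_2$ at $t_0$ into two based loops at $q$ of positive length, each satisfying $|C_i|<L$. By minimality each $C_i$ must be boundary-parallel, so $[C_i]=\partial^{n_i}$ in $\pi_1(S,q)$, giving $[C]=[C_1][C_2]=\partial^{n_1+n_2}$ and contradicting the non-boundary-parallel property of $C$.

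For simplicity, suppose instead that $C$ has a transverse self-intersection at a regular point $p$, and decompose $C=A\cdot B\cdot D$ with $A\colon q\to p$, $B\colon p\to p$ a sub-loop, and $D\colon p\to q$. Form the based loops at $q$
\[
\alpha=A\cdot D,\qquad \nu=A\cdot B\cdot A^{-1},\qquad \mu=D^{-1}\cdot B\cdot D,
\]
which satisfy $[C]=[\nu][\alpha]=[\alpha][\mu]$ in $\pi_1(S,q)$. If $\alpha$ is non-boundary-parallel then $|\alpha|=|C|-|B|<L$ already contradicts minimality; otherwise $[\alpha]$ is a power of $\partial$, and the relations above force both $\nu$ and $\mu$ to be non-boundary-parallel. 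Swapping $A$ and $D$ if necessary assume $|A|\leq|D|$, so $|\nu|=2|A|+|B|\leq L$, with strict inequality --- and hence a direct contradiction --- whenever $|A|<|D|$. The borderline case $|A|=|D|$ is the main subtle point of the argument: here $|\nu|=L$, but transversality of the self-intersection at $p$ means that at the second visit of $\nu$ to $p$ the arrival tangent of $B$ and the departure tangent of $A^{-1}$ are not opposite (since $C$ was straight at $p$ at both passes and the two tangent lines there are distinct), so $\nu$ has a genuine corner at the regular point $p$ and can be strictly shortened by cutting that corner, again contradicting minimality. Everything outside this borderline case is standard first-variation reasoning.
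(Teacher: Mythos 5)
Your overall architecture is exactly the paper's: existence of a minimiser by Arzel\`a--Ascoli among non-boundary-parallel based loops, minimality forcing the limit to be a geodesic arc, a splitting argument at any interior visit to $q$, and the same surgery at a transverse self-intersection --- including the same trick in the borderline case, where the re-routed loop has the same length $L$ but a genuine corner at the crossing point and so can be strictly shortened. In that sense the proposal is the paper's proof.

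The one place where your written justification does not hold as stated is the conflation of based and free boundary-parallelism. In the paper, ``boundary-parallel'' means the loop can be \emph{freely} homotoped onto $\partial S$; for a based loop at $q$ this only says its class is \emph{conjugate} into the peripheral subgroup $\langle \delta \rangle$ of $\pi_1(S,q)$ (where $\delta$ is the based boundary class), not that it equals a power $\delta^{n}$. So in your step ruling out an interior passage through $q$, minimality only gives $[C_i]=w_i\delta^{n_i}w_i^{-1}$, and the product $[C]=[C_1][C_2]$ of two such conjugates need not be conjugate into $\langle\delta\rangle$; no contradiction follows from the algebra as you wrote it. The same issue recurs in the simplicity step: ``otherwise $[\alpha]$ is a power of $\partial$'' over-claims, and the relations $[C]=[\nu][\alpha]=[\alpha][\mu]$ do not by themselves force $\nu$ and $\mu$ to be non-boundary-parallel, since a product of two elements each conjugate into $\langle\delta\rangle$ can perfectly well be non-peripheral in a free group. (Note the free notion is the one the lemma needs: the curve produced is later cut along to give a cylinder, so ``not conjugate into the peripheral subgroup'' is the relevant condition, and you cannot retreat to the based notion to save the identity $[C_i]=\delta^{n_i}$.) The paper's proof asserts the corresponding implications directly at these points rather than via your (false) identification; to make your write-up correct you need an actual argument there --- for instance a geometric argument excluding short boundary-parallel geodesic sub-loops, or a further surgery --- rather than the equation $[C_i]=\delta^{n_i}$. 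Apart from this, your steps (lower semicontinuity, homotopy of nearby loops, geodesicity from minimality, transversality giving a corner in the equal-length case) are sound and coincide with the paper's.
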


\begin{proof}
    Since a sufficiently small neighbourhood of $q$ is
    contractible, the quantity
    \[
        d = \inf \left\{ l(\gamma) : \text{$\gamma$ a loop based at $q$
        and not boundary-parallel} \right\}
    \]
    is positive. Thus we find curves $\gamma_n$ based at $q$, not boundary-parallel, such that $l(\gamma_n) < d + 1/n$. We can apply the Arzel\`{a}-Ascoli theorem to find a subsequence of $\gamma_n$ converging uniformly to a curve $C$, based at $q$, with $l(C)=d$, homotopic to $\gamma_n$ for $n$ sufficiently large, hence not boundary-parallel. (See e.g. \cite[prop I.3.16]{BH}.)

    Since $q$ is the only singular point of $S$, $C$ can only consist of geodesic arcs from $q$ to $q$. If any arcs are boundary parallel, then $C$ can be shortened, a contradiction; so every arc is not boundary parallel. If there is more than one such arc, again $C$ can be shortened; so $C$ is a single geodesic arc and intersects $\partial S$ only at $q$ at its endpoints.

    Suppose $C$ is not simple. Then $C$ intersects itself at some point $y$ in the interior of $S$. Denote the three segments $q \to y \to y \to q$ by $\alpha, \beta, \gamma$ respectively, so $l(C) = l(\alpha) + l(\beta) + l(\gamma)$. The intersection at $y$ must be transverse: if not, the geodesic segments would coincide.

    Now $\alpha.\gamma$ is boundary parallel, else we contradict the minimality of $C$. Thus $\alpha.\beta.\alpha^{-1}$  is not boundary parallel, and the free loop $\beta$ is not boundary parallel either. Similarly, $\gamma^{-1}.\beta.\gamma$ is not boundary parallel. The minimality of $C$ then implies that $l(\alpha) + l(\beta) + l(\gamma) \leq 2 l(\alpha) + l(\beta)$ and $l(\alpha) + l(\beta) + l(\gamma) \leq l(\beta) + 2 l(\gamma)$, hence $l(\gamma) \leq l(\alpha) \leq l(\gamma)$, so $l(\alpha) = l(\gamma)$. But then $\alpha.\beta.\alpha^{-1}$ has the same length as $C$, is not boundary
    parallel, but is not a geodesic. Thus $C$ can be shortened, contradicting the minimality of $C$.
\qed
\end{proof}

Now we obtain our decomposition of a punctured torus.
\begin{prop}
    Let $S$ be a punctured torus with a hyperbolic cone-manifold
    structure with no interior cone points and at most one corner point
    $q$ with corner angle $\theta$ (let $\theta = \pi$ if $q$ is a regular
    point).
    There exist two geodesic arcs $G, H$ on $S$ based at $q$, intersecting only at $q$, such
    that cutting along $G$ and $H$ produces a topological disc which
    is isometric to an immersed disc in $\hyp^2$ bounded by a geodesic pentagon.
\end{prop}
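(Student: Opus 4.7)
The plan is to find two geodesic arcs $G,H$ based at $q$, realizing a free basis of $\pi_1(S)\cong F_2$, via two successive length-minimization arguments in the style of the preceding lemma, and then to verify that their complement in $S$ is a topological pentagon which develops to an immersed geodesic pentagon in $\hyp^2$.

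First I would apply the preceding lemma to obtain $G$, a shortest non-boundary-parallel simple geodesic arc at $q$, meeting the singular locus only at $q$ (since $S$ has no interior cone points). Since a simple arc on a surface represents a primitive element of $\pi_1$, the class $[G]$ extends to a free basis of $\pi_1(S,q)$. Applying an Arzel\`{a}-Ascoli argument identical to that of the previous lemma to the family of loops $\gamma$ at $q$ for which $\{[G],[\gamma]\}$ generates $\pi_1(S,q)$ (the infimum of lengths in this family is positive, bounded below by $\ell(G)$, by minimality of $G$), I obtain a shortest such loop $H$; by the same arguments, $H$ is a simple geodesic arc meeting the singular locus only at $q$.

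The main obstacle is showing $G\cap H=\{q\}$. Suppose instead they meet transversely at some regular interior point $y$. Split $G=G_1 G_2$ and $H=H_1 H_2$ at $y$ and form the four swapped concatenations $G_1\cdot H_2$, $H_1\cdot G_2$, $G_1\cdot H_1^{-1}$ and $G_2^{-1}\cdot H_2$, each a based loop at $q$ with a corner at $y$. Their total length is $2\bigl(\ell(G)+\ell(H)\bigr)$, and smoothing the corner at the transverse crossing $y$ yields, for each, a strictly shorter representative of its homotopy class. A case analysis of these four classes (expressed in terms of $[G]$, $[H]$, and the fundamental groupoid element relating $q$ to $y$) shows that at least one of them lies in the minimization class defining $H$ or that defining $G$, contradicting the minimality of the corresponding curve. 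Hence $G$ and $H$ are disjoint except at $q$.

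Once $G\cap H=\{q\}$ is established, $\{[G],[H]\}$ is a free basis of $\pi_1(S)$ realized by disjoint simple arcs, so the standard identification-space picture of the punctured torus gives that $S$ cut along $G\cup H$ is a topological disc $D$ whose boundary, read cyclically, is $G\cdot H\cdot G^{-1}\cdot H^{-1}\cdot\partial S$. The interior of $D$ contains no singular points, so a developing map extends to an isometric immersion $\D\colon D\to\hyp^2$; each of the five edges of $\partial D$ is geodesic in $S$ and therefore maps to a geodesic arc of $\hyp^2$, so $\D(D)$ is an immersed disc bounded by a geodesic pentagon, as required.
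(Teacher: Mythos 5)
Your strategy diverges from the paper's at the crucial point: the paper first cuts along $G$ to obtain a cylinder and then takes $H$ to be a shortest path between the corner points $q_1,q_2$ (or its initial segment to $q_3$) of the cut-open surface, so that disjointness of $H$ from the interior of $G$ is automatic. You instead minimize length over all loops at $q$ forming a basis with $[G]$ and try to establish disjointness afterwards, and this is where there is a genuine gap. Your exchange argument at a crossing $y$ is only sketched, and the ``case analysis'' it relies on does not go through as stated. Writing $g=[G]$, $h=[H]$ and $c=[G_1H_1^{-1}]$, the four loops you form represent the classes $ch$, $c^{-1}g$, $c$ and $g^{-1}ch$, and $c$ is completely uncontrolled. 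If, for instance, the $h$-exponent sum of $c$ is not one of $0,\pm 1,-2$, then none of the four classes can form a basis with $g$ (this fails already in homology), so no contradiction with the minimality of $H$ is available; the only remaining option is to contradict the minimality of $G$, which requires one of the four loops to be non-boundary-parallel \emph{and} strictly shorter than $\ell(G)$. But your length bookkeeping only gives $\ell(G_1H_2)+\ell(H_1G_2)=\ell(G_1H_1^{-1})+\ell(G_2^{-1}H_2)=\ell(G)+\ell(H)$, i.e.\ some loop of length at most $\tfrac{1}{2}\bigl(\ell(G)+\ell(H)\bigr)$, which may well exceed $\ell(G)$ since $\ell(H)\geq\ell(G)$. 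So the exchange need not contradict anything, and the disjointness step is unproved.

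The same difficulty appears one step earlier: you assert ``by the same arguments'' that the minimizer $H$ in the basis-completing class is a single simple geodesic arc, but the surgeries in the proof of lemma \ref{simple non-boundary-parallel geodesic} (replacing $C=\alpha\beta\gamma$ by $\alpha\gamma$, $\alpha\beta\alpha^{-1}$ or $\gamma^{-1}\beta\gamma$) exploit a closure property of the class of non-boundary-parallel loops (a product of boundary-parallel loops is boundary-parallel). The class ``forms a free basis with $[G]$'' has no analogous closure property, so simplicity and the single-arc property of your $H$ also require an argument you have not supplied. Your final step (two disjoint simple arcs spanning the fundamental group cut $S$ into a disc, which develops to an immersed disc bounded by a geodesic pentagon) is fine and matches the paper; it is the construction of $H$ that is incomplete, and the paper's device of minimizing inside the surface already cut along $G$ is exactly what avoids these class-bookkeeping problems.
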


\begin{proof}
Let $G$ denote a shortest closed curve through $q$ which is not boundary-parallel, guaranteed by lemma \ref{simple non-boundary-parallel geodesic}, which shows that $G$ is a geodesic arc and is simple. We cut along $G$, forming a cylinder with a hyperbolic cone-manifold structure. The two boundary components $\partial_1, \partial_2$ are piecewise geodesic. There is one corner point $q_1$ on $\partial_1$, and two corner points $q_2, q_3$ on $\partial_2$. Gluing $\partial_1$ to one of the two geodesic segments of $\partial_2$ recovers the initial surface.

Now consider the shortest curve $\gamma$ from $q_1$ to $q_2$. This curve is piecewise geodesic, with possible corners at $q_1, q_2, q_3$. It cannot pass through $q_1$ in its interior, by minimality. Nor can it pass through $q_2$ in its interior. If it consists of one geodesic segment from $q_1$ to $q_2$, then we let this curve be $H$. Otherwise $\gamma$ passes through $q_3$ on the way to $q_2$; in this case we take $H$ to be the
initial segment from $q_1$ to $q_3$.

\begin{figure}[tbh]
\begin{center}
$\begin{array}{c}
\includegraphics[scale=0.5]{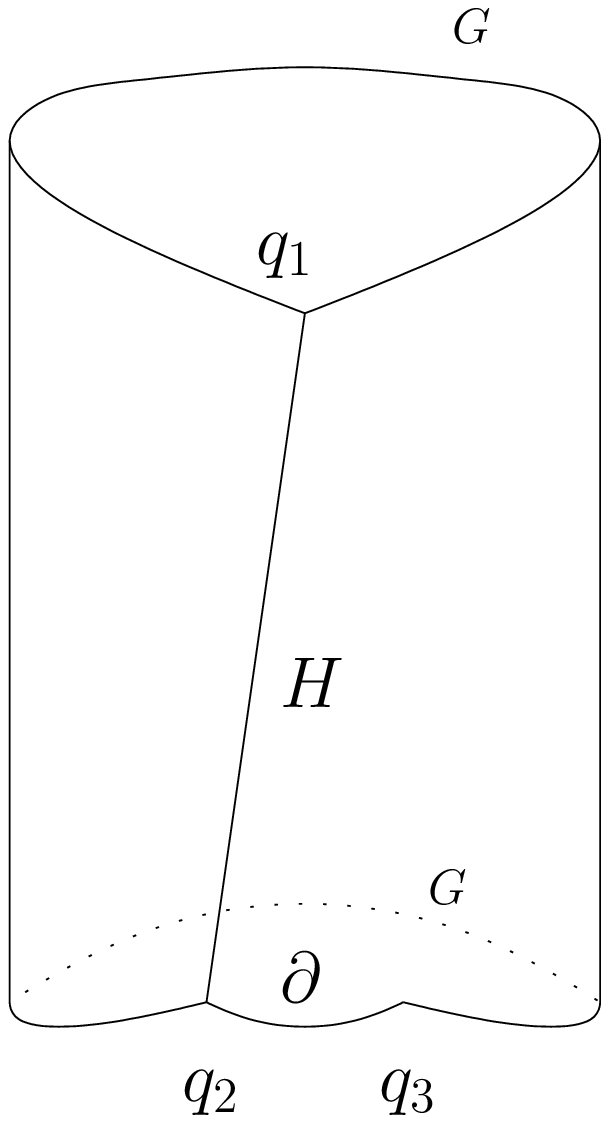}
\end{array}$
\hspace{2 cm}
$\begin{array}{c}
\includegraphics[scale=0.3]{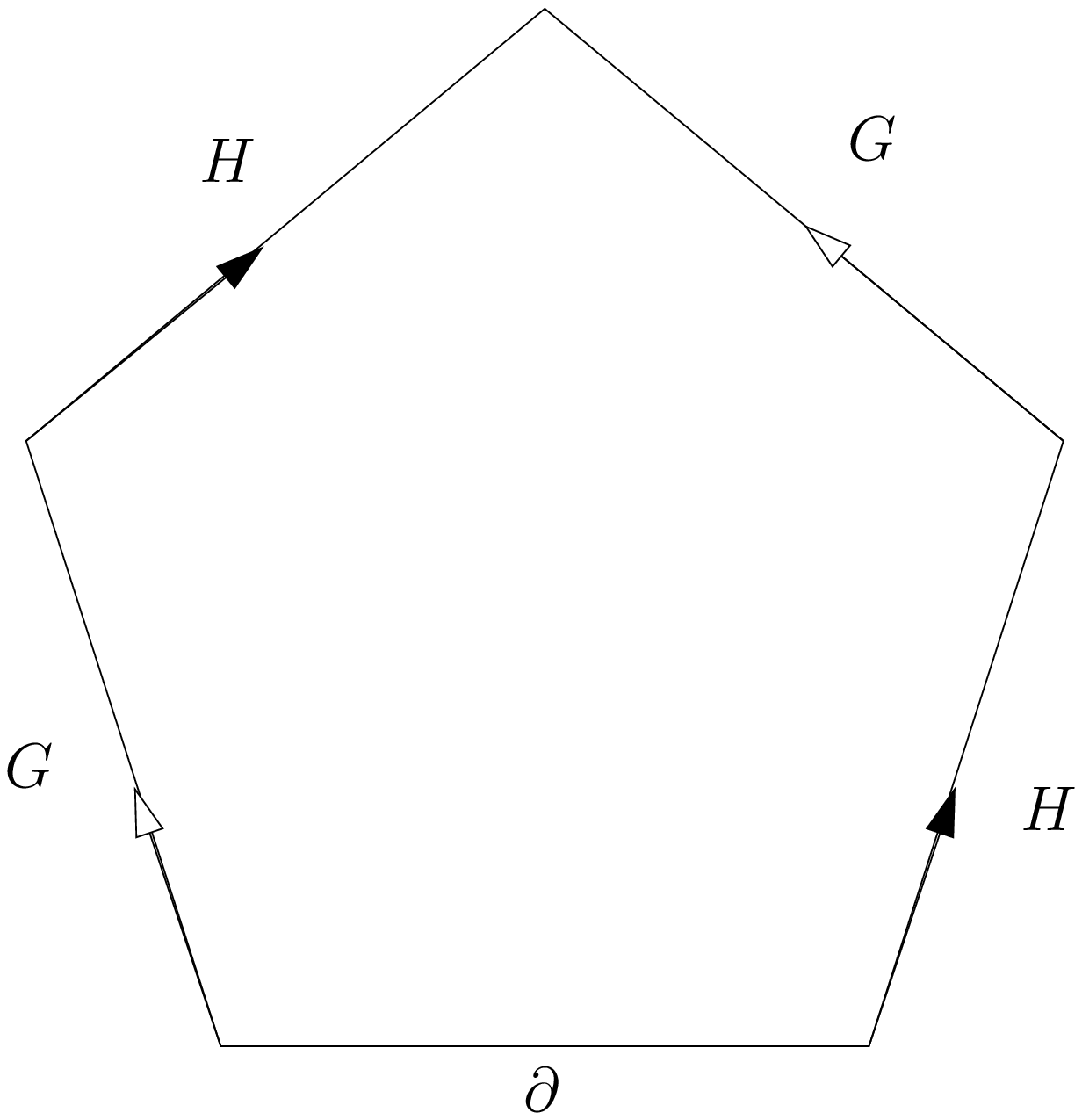}
\end{array}$
\caption{Cutting along $H$; the pentagon $\Pent$ bounds an immersed open disc}
\end{center}
\end{figure}

Thus we obtain a geodesic $H$ on $S$ which intersects $G$ only at $q$. Cutting along $G, H$ reduces $S$ to a topological disc. Since $G,H$ are geodesic arcs, the developing map of a lift of this topological disc shows that the obtained surface is isometric with an immersed open disc in $\hyp^2$ bounded by a geodesic pentagon.
\qed
\end{proof}

Consider the pentagon $\Pent$ obtained by this procedure. Two pairs of sides are identified, which correspond respectively to the curves $G$ and $H$. The
sum of the interior angles of the pentagon is equal to the corner angle $\theta$ at $q$. Furthermore, $\{G,H\}$ forms a free basis for $\pi_1(S,q)$. In $\pi_1(S,q)$, the boundary of $S$ is the commutator $[G,H]$. Note that $\Pent$ need not be a simple pentagon, if $\theta$ is large: see figure \ref{fig:7}.

\begin{figure}[tbh]
\centering
\includegraphics[scale=0.5]{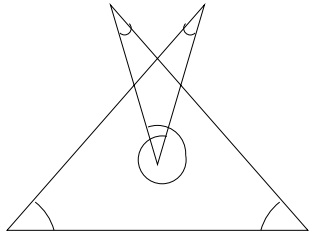}
\caption[$\Pent$ need not be a simple pentagon.]{$\Pent$ need not be
a simple pentagon, but $\Pent$ bounds an immersed
open disc.} \label{fig:7}
\end{figure}

The universal cover $\tilde{S}$ can be considered as a tessellation by copies of this pentagon according to the edge pairings. The developing map of the cone-manifold structure on $S$ is a (generally overlapping) tessellation by isometric copies of the pentagonal fundamental domain $\Pent$ in $\hyp^2$.

Let $\rho: \pi_1(S, q) \To PSL_2\R$ be the holonomy map. Choose a basepoint $\tilde{q}$ lifting $q$ in $\tilde{S}$, and its developing image $\bar{q} \in \hyp^2$. Let $\rho(G) = g$, $\rho(H) = h$. Then with $\bar{q}$ and $\Pent$ as shown in figure \ref{fig:8}, $g$ and $h$ identify pairs of sides in $\Pent$ as shown. Labelling one of the vertices $p = h^{-1} g^{-1} \bar{q}$ as shown, we can describe the other vertices of $\Pent$ as the images of $q$ under various combinations of $g$ and $h$. Thus a hyperbolic cone-manifold structure on $S$ with no interior cone points and at most one corner point gives rise to a basis $G,H$ of the fundamental group and a holonomy representation $\rho$ such that this pentagon is the boundary of an immersed open disc forming a fundamental domain.

\begin{figure}[tbh]
\centering
\includegraphics[scale=0.5]{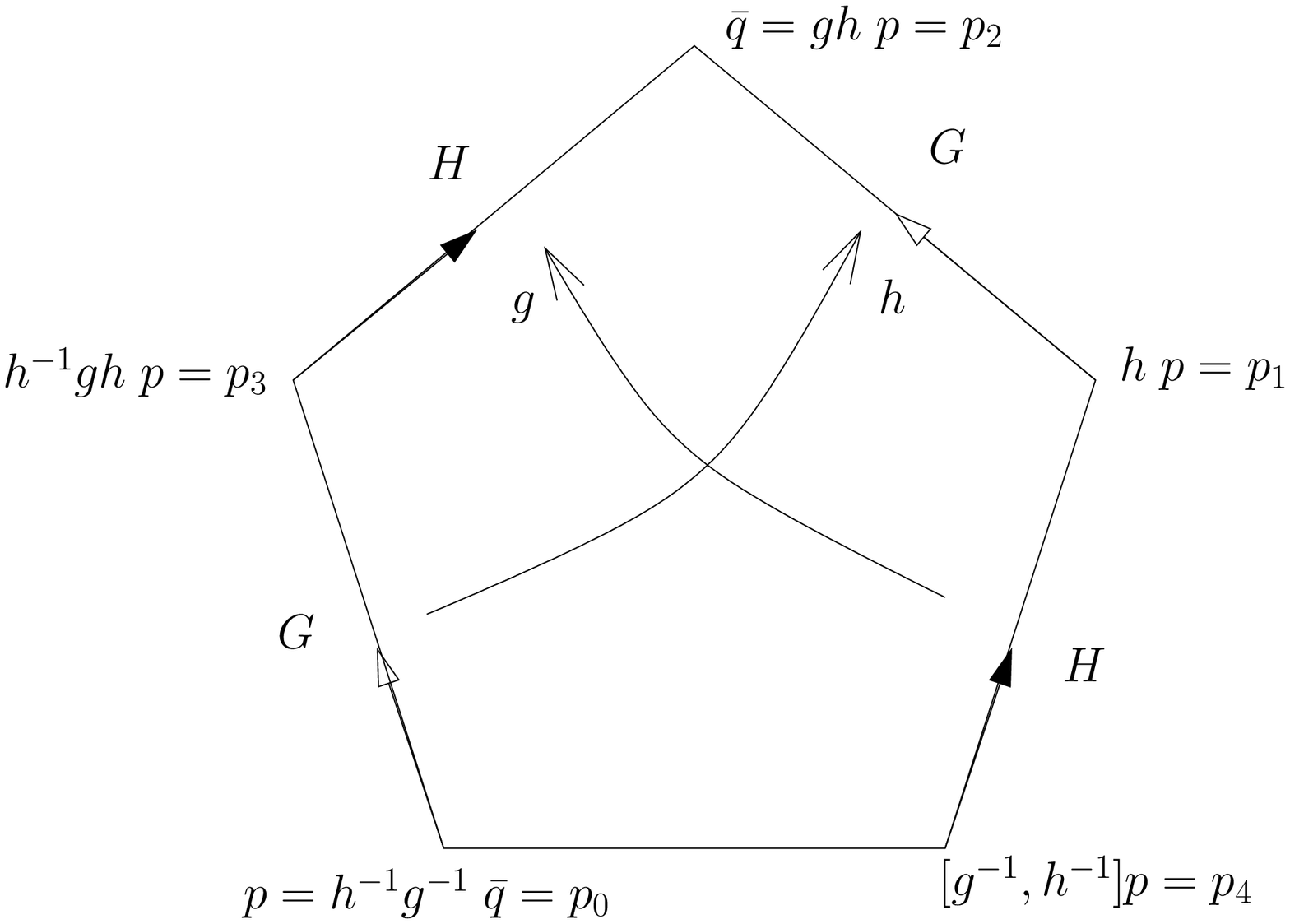}
\caption{Edge identifications in $\Pent$} \label{fig:8}
\end{figure}

Conversely, suppose we have a representation $\rho: \pi_1(S, q)
\To PSL_2\R$ and we can find a basis $G,H$ of $\pi_1(S, q)$ and a
point $p \in \hyp^2$ such that the pentagon described above is
non-degenerate and bounds an immersed open disc. Then it is clear
that this pentagon is a fundamental domain of a developing map for a
hyperbolic cone-manifold structure on $S$ with no interior cone
points and at most one corner point, with holonomy $\rho$. The rest
of the developing map is obtained by extending equivariantly. We record this fact.
\begin{defn}
\label{defn:pentagon}
    Let $g,h \in PSL_2\R$ and $p \in \hyp^2$. Then the geodesic
    pentagon in $\hyp^2$ obtained by joining the segments
\[
        p \To g^{-1} h^{-1} gh p \To hp \To g h p \To h^{-1} gh p \To p
\]
    is called \emph{the pentagon generated by $g,h$ at $p$} and is
    denoted $\Pent(g,h;p)$.
\end{defn}

\begin{lem}
\label{construction_lemma}
    Let $\rho: \pi_1(S, q) \To PSL_2\R$ be a representation. The
    representation $\rho$ is the holonomy of a hyperbolic cone
    manifold structure on $S$ with no interior cone points
    and at most one corner point if and only if there exist a
    free basis $G,H$ of $\pi_1(S, q)$ and a point $p \in \hyp^2$ such
    that $\Pent(g,h;p)$ is a non-degenerate pentagon bounding an immersed open disc in
    $\hyp^2$.
    \qed
\end{lem}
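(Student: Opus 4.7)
The plan is to verify the two directions of the equivalence separately; both are essentially contained in the discussion leading up to the statement, so the task is to package those observations as a clean proof.

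For the forward direction, assume $\rho$ is the holonomy of a hyperbolic cone-manifold structure on $S$ with no interior cone points and at most one corner point. Apply the preceding pentagon decomposition proposition to obtain geodesic arcs $G, H$ on $S$ based at $q$, intersecting only at $q$, such that cutting along $G \cup H$ yields a topological disc isometric to an immersed open disc in $\hyp^2$ bounded by a geodesic pentagon $\Pent$. Since $S$ is a punctured torus and $G, H$ are disjoint simple arcs based at the single corner point whose complement is a disc, standard surface topology gives that $\{G, H\}$ is a free basis for $\pi_1(S,q)$, with the boundary of $S$ represented by $[G,H]$. Setting $g = \rho(G)$, $h = \rho(H)$, and $p$ to be the unique vertex of $\Pent$ whose gluing pattern isolates it from $\bar{q}$ (equivalently $p = h^{-1}g^{-1}\bar{q}$), a direct reading of the edge identifications in Figure \ref{fig:8} shows that the remaining vertices are precisely $hp$, $ghp$, $h^{-1}ghp$ and $g^{-1}h^{-1}ghp$. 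Hence $\Pent = \Pent(g,h;p)$, and non-degeneracy is immediate from the fact that $\Pent$ bounds an immersed (in particular, $2$-dimensional) disc.

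For the reverse direction, suppose $G, H$ and $p$ are given with $\Pent(g,h;p)$ non-degenerate and bounding an immersed open disc $\tilde{D} \subset \hyp^2$. Identify the pairs of sides of $\Pent(g,h;p)$ via the isometries $g$ and $h$ according to the pattern of Figure \ref{fig:8}: the quotient $S$ has one unidentified side (which becomes $\partial S$), two identified pairs of sides (which become the arcs $G, H$), and all five vertices are identified to a single point $q$ on $\partial S$. A Euler characteristic count confirms $S$ is a punctured torus. The interior of $\tilde{D}$ inherits the hyperbolic metric from $\hyp^2$, and the edge identifications by isometries produce a hyperbolic structure on $S \setminus \{q\}$ with totally geodesic boundary; the only possible singularity is at $q$, whose neighbourhood is isometric to a wedge whose angle is the sum of the five interior angles of $\Pent(g,h;p)$ at its vertices. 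Extending the developing map equivariantly under $\pi_1(S, q)$ acting through $\rho$ yields a bona fide developing map on the universal cover, so the holonomy of this structure is exactly $\rho$.

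The one point requiring a little care is that $\Pent(g,h;p)$ need not be a simple polygon, as illustrated in Figure \ref{fig:7}; what we really need is that the five geodesic segments forming $\partial \Pent(g,h;p)$ bound an immersed (not necessarily embedded) open disc in $\hyp^2$. This is precisely why the hypothesis is phrased this way, and it is exactly the condition guaranteeing that the equivariant extension of the developing map is well defined and locally injective away from $q$, producing an honest cone-manifold structure with corner only at $q$. Beyond this, the argument is a routine packaging of the pentagon decomposition and standard facts about $(X, \Isom X)$-structures built from polygonal fundamental domains with isometric side identifications.
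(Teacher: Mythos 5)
Your proof is correct and follows essentially the same route as the paper, which records this lemma as a consequence of the preceding discussion: the forward direction via the pentagon decomposition proposition and the identification of the cut-open disc with $\Pent(g,h;p)$ as in figure \ref{fig:8}, and the converse by gluing the immersed pentagon as a fundamental domain and extending the developing map equivariantly so that the holonomy is $\rho$. No substantive difference or gap to report.
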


Despite its simplicity, lemma \ref{construction_lemma} will be
crucial in constructing geometric structures with prescribed
holonomy.

\subsection{Twisting and the corner angle}

We now note a relationship between the twisting involved in a holonomy representation for $S$, and the corner angle obtained. Denote the vertices of $\Pent(g,h;p)$ as 
\[
    p_0 = p, \quad p_1 = h p, \quad p_2 = g h p, \quad p_3 = h^{-1} g h p, \quad p_4 = g^{-1}h^{-1}gh p.
\]
(Note that the $p_i$ are \emph{not} labelled in cyclic order around the pentagon.) Let the corresponding angles of the pentagon be $\theta_0, \ldots, \theta_4$, so that their sum is equal to the corner angle $\theta$. Orient $\Pent(g,h;p)$ so that the induced boundary orientation is given by the sequence of vertices in definition \ref{defn:pentagon}. Denote by $\Delta[\Pent]$ its signed area. In \cite{Me10MScPaper0} we prove the following proposition.
\begin{prop}
\label{prop:twist_area}
If $\Pent(g,h;p)$ is nondegenerate and bounds an immersed disc, then
\[
\Twist \Big( \left[ g^{-1}, h^{-1} \right], p \Big) = \Delta[ \Pent(g,h;p) ].
\]
\qed
\end{prop}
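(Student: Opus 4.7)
The strategy is to realize the element $[g^{-1}, h^{-1}] \in \widetilde{PSL_2\R}$ by a concrete concatenated path in $UT \hyp^2$ that traces the broken geodesic through the pentagon vertices, and then to match its twist with the signed area $\Delta[\Pent]$ via Gauss--Bonnet. By Lemma \ref{commutator_lift} the commutator is a well-defined element of $\widetilde{PSL_2\R}$ regardless of which lifts of $g,h$ we choose, so we may take any lifts and combine them as $g^{-1} \cdot h^{-1} \cdot g \cdot h$.

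First, I would apply (from \cite{Me10MScPaper0}) an additivity law for twists under composition: for lifts $\tilde\alpha_k \cdots \tilde\alpha_1$ applied successively from $p = y_0$ to $y_i = \alpha_i \cdots \alpha_1 p$, the twist of the product at $p$ equals the sum of the individual twists $\Twist(\tilde\alpha_i, y_{i-1})$ plus an angular correction term $\Omega$ accounting for comparing the broken path $y_0 \to y_1 \to \cdots \to y_k$ with the direct geodesic $y_0 \to y_k$. Applied to the four-factor product realizing $[g^{-1},h^{-1}]$ with factors $h, g, h^{-1}, g^{-1}$, this places $y_i = p_i$ for $i = 0, 1, 2, 3, 4$, and expresses $\Twist([g^{-1},h^{-1}], p_0)$ as a sum of four individual twists plus $\Omega$, where $\Omega$ is built from interior angles of the geodesic triangles in the fan triangulation of the polygon $p_0 p_1 p_2 p_3 p_4$ from $p_0$.

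Second, I would close the broken path by adjoining the reverse direct geodesic $p_4 \to p_0$, producing a closed piecewise-geodesic loop $L$ in $\hyp^2$. Parallel transport around $L$ rotates vectors by an angle equal (up to sign) to the signed area enclosed, by Gauss--Bonnet in the constant-curvature space $\hyp^2$. Comparing isometry transport to parallel transport around $L$ gives a direct geometric identity that packages the individual twist contributions together with $\Omega$ into the signed area of $L$.

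Third, I would reconcile the signed area of $L$ with $\Delta[\Pent]$. The pentagon boundary traverses the vertices in cyclic order $p_0 \to p_4 \to p_1 \to p_2 \to p_3 \to p_0$, whereas $L$ traverses them in the order $p_0 \to p_1 \to p_2 \to p_3 \to p_4 \to p_0$; the two oriented cycles differ by the boundaries of the two geodesic triangles $\triangle(p_0, p_4, p_1)$ and $\triangle(p_0, p_3, p_4)$. The signed areas of these triangles are absorbed by the remaining individual twist terms using the pentagon's edge identifications: $h$ carries $\overline{p_0 p_3}$ to $\overline{p_1 p_2}$ and $g$ carries $\overline{p_4 p_1}$ to $\overline{p_3 p_2}$, so the twists of $h^{\pm 1}$ and $g^{\pm 1}$ at corresponding vertices pair naturally with the compensating triangle angles. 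Summing all contributions yields $\Twist([g^{-1},h^{-1}], p_0) = \Delta[\Pent(g,h;p_0)]$.

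The main obstacle is the possibly non-simple, immersed nature of $\Pent(g,h;p)$, which forces all Gauss--Bonnet statements, triangle-area formulas, and twist-additivity identities to be used in signed form, with rigorous orientation and sign tracking. These signed versions are precisely what is established in \cite{Me10MScPaper0}; once they are in hand, the proof is an exercise in reconciling the bookkeeping above.
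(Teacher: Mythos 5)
You should note first that this paper contains no proof of Proposition \ref{prop:twist_area}: it is quoted from the companion paper \cite{Me10MScPaper0}, so there is no in-text argument to compare against, and your proposal has to stand on its own. That said, your route --- realising $[g^{-1},h^{-1}]$ by the broken geodesic through $p_0,\dots ,p_4$, expressing its twist as a sum of individual twists plus angular corrections, closing the path and converting holonomy of parallel transport into signed area via Gauss--Bonnet, then transferring from the fan loop $L$ to the pentagon boundary homologically --- is the natural one and is consistent in spirit with how the twist machinery of \cite{Me10MScPaper0} is used in this paper (lemma \ref{pentagon_twist} is exactly this proposition combined with the area identity $3\pi-\theta$). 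Your edge identifications are correct, and the chain-level bookkeeping is essentially right, except for one orientation slip: with the convention that reversing an edge negates it, the difference of the two oriented cycles is $\partial\triangle(p_0,p_4,p_1)+\partial\triangle(p_0,p_4,p_3)$, whereas you wrote the second triangle as $\triangle(p_0,p_3,p_4)$, i.e.\ with the opposite orientation.

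The genuine gap is exactness. The left-hand side is a real number, determined by the canonical lift of the commutator to $\widetilde{PSL_2\R}$ (lemma \ref{commutator_lift}), whereas ``parallel transport around $L$ rotates vectors by the enclosed signed area'' is, as stated, an identity in $SO(2)$, i.e.\ only modulo $2\pi$. Proving the proposition modulo $2\pi$ is strictly weaker than the statement: by theorem \ref{commutator_regions} and proposition \ref{twist_bounds} the twist of a commutator can lie anywhere in an interval of length greater than $2\pi$ (it ranges over $(-3\pi,3\pi)$), and $\Delta[\Pent(g,h;p)]$ likewise ranges over an interval of length $6\pi$, so a congruence does not pin down the integer. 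That integer is precisely where the hypothesis that $\Pent(g,h;p)$ bounds an immersed disc must do real work --- for instance through Gauss--Bonnet for an immersed disc with piecewise geodesic boundary, where the turning number of the boundary is $+1$ --- yet your sketch never indicates where this hypothesis is invoked: every step you describe would apply verbatim to an arbitrary nondegenerate pentagon, for which the right-hand side is not even defined without a winding-number interpretation and the $2\pi$-ambiguity is genuinely present. Deferring ``the signed versions'' of the additivity and Gauss--Bonnet statements wholesale to \cite{Me10MScPaper0} at exactly this point is circular in effect, since that is the reference in which the proposition is proved; to turn your plan into a proof you must formulate and establish the exact, lifted (real-valued) versions of those identities yourself and exhibit explicitly how the immersed-disc hypothesis fixes the multiple of $2\pi$.
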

The area of $\Pent(g,h;p)$ is just $3\pi - \theta$. Thus:
\begin{lem}
\label{pentagon_twist}
Suppose $\Pent(g,h;p)$ is nondegenerate and bounds an immersed disc.
\begin{enumerate}
\item 
If the segment $p \to [g^{-1},h^{-1}]p$ bounds $\Pent(g,h;p)$ on its left, then $\theta = 3 \pi - \Twist([g^{-1},h^{-1}],p)$. 
\item
If the segment $p \to [g^{-1},h^{-1}]p$ bounds $\Pent(g,h;p)$ on its right, then $\theta = 3 \pi + \Twist([g^{-1},h^{-1}],p)$.
\end{enumerate}
\qed
\end{lem}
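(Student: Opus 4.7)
The plan is to combine the signed-area identity from Proposition \ref{prop:twist_area} with the unsigned-area computation stated just before the lemma. Since the immediate work has essentially been done in those two inputs, the only real task is to sort out signs from the orientation convention.

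First I would note that the five interior angles of $\Pent(g,h;p)$ sum to the corner angle $\theta$: this is because the vertices $p_0, p_1, p_2, p_3, p_4$ all descend to the single corner point $q$ of $S$ under the identifications given by $g$ and $h$, and the five corresponding interior angles are precisely the pieces into which a small neighbourhood of $q$ is cut by the arcs $G$ and $H$. Applying the Gauss--Bonnet formula for a geodesic polygon bounding an immersed disc in $\hyp^2$ (with the area interpreted with multiplicity, so that it is additive under cutting into triangles) gives unsigned area $3\pi - \theta$, which is positive since $\theta \in (0,3\pi)$.

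Next I would pin down the sign of $\Delta[\Pent(g,h;p)]$. By the orientation convention fixed just above Proposition \ref{prop:twist_area}, the boundary of $\Pent(g,h;p)$ is oriented in the order $p_0 \to p_4 \to p_1 \to p_2 \to p_3 \to p_0$ of Definition \ref{defn:pentagon}, and the signed area is positive precisely when this order agrees with the standard counterclockwise orientation of the immersed disc. The initial edge of this sequence is exactly the segment from $p$ to $[g^{-1},h^{-1}]p = p_4$ appearing in the hypothesis. Thus, in case (i), where the pentagon lies to the left of this segment, the prescribed boundary orientation is counterclockwise and $\Delta[\Pent(g,h;p)] = +(3\pi - \theta)$; in case (ii), where it lies to the right, the orientation is clockwise and $\Delta[\Pent(g,h;p)] = -(3\pi - \theta)$.

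Finally, Proposition \ref{prop:twist_area} gives
\[
\Twist\bigl([g^{-1},h^{-1}],p\bigr) = \Delta[\Pent(g,h;p)],
\]
so in case (i) we get $\Twist([g^{-1},h^{-1}],p) = 3\pi - \theta$, i.e.\ $\theta = 3\pi - \Twist([g^{-1},h^{-1}],p)$, and in case (ii) we get $\Twist([g^{-1},h^{-1}],p) = \theta - 3\pi$, i.e.\ $\theta = 3\pi + \Twist([g^{-1},h^{-1}],p)$. The only delicate point is this orientation bookkeeping; once Proposition \ref{prop:twist_area} and the area formula are granted, there is no further computational obstacle.
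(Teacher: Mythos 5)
Your proposal is correct and follows essentially the same route as the paper, which deduces the lemma immediately from Proposition \ref{prop:twist_area} together with the observation that the pentagon's angles sum to $\theta$ so its (unsigned) area is $3\pi - \theta$; you have merely made the orientation/sign bookkeeping explicit. No gaps.
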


\section{Representations and character varieties}
\label{sec:Euler_class_representation_space}
\label{sec:char_var_measure}

\subsection{Generalities}

Take a general surface $S$ and let $G = SL_2\R$ or $PSL_2\R$ (much of what we say applies to very general $G$, see \cite{Goldman84}). The \emph{representation variety} $R_G(S)$ describes all homomorphisms $\rho: \pi_1(S) \To G$. When $G=SL_2\R$, we may take a presentation for $\pi_1(S)$ with one relator; then a choice of homomorphism $\rho$ amounts to choosing for each generator a matrix in $SL_2\R$, such that the matrices satisfy the condition of the relator. The entries of the matrices can be considered as coordinate variables, so that $R_{SL_2\R}(S)$ is the solution set of some polynomial equations, a closed algebraic set.

Every $SL_2\R$-representation projects to a $PSL_2\R$-representation. When $S$ is a punctured torus, $\pi_1(S)$ is free so every $PSL_2\R$-representation lifts to an $SL_2\R$-representation; and $R_{PSL_2\R}$ is an obvious quotient of $R_{SL_2\R}$. This is not true for general surfaces; the sequel deals with such details. Henceforth in this paper, we write $R(S)$ to denote $SL_2\R$-representations.

The \emph{character} $\chi$ of an $SL_2\R$-representation $\rho$ is the function $\chi: \pi_1(S) \To \R$ given by $\chi(G) = \Tr (\rho(G))$. By using
trace relations, it can be shown that $\chi$ is determined by its values at only finitely many elements $\gamma_1, \ldots, \gamma_m \in \pi_1(S)$. We can then define a function $t: R(S) \To \R^m$ by $t(\rho) = \left( \Tr(\rho(\gamma_1)), \ldots, \Tr(\rho(\gamma_m)) \right)$; the \emph{character variety} is $X(S) = t(R(S)) = \text{Im}(t)$. It can be shown that $X(S)$ is a closed algebraic set. For details see \cite{Culler-Shalen}.

There is an action of $SL_2\R$ on $R(S)$ by conjugation, and we can consider the quotient space $R(S)/SL_2\R$. We can think of this quotient space as the moduli space of isomorphism classes of flat principal $SL_2\R$-bundles over $S$. In general it has singularities. The character variety can be considered as an ``algebraic" version of this quotient. Away from singularities, the character variety and this quotient can be identified.

\label{action of modular group}

There is an action of $\Aut \pi_1(S)$ on the representation and character varieties from the right, given by pre-composition: $\phi \in \Aut \pi_1(S)$ acts on a representation $\rho$ to give $\rho \circ \phi$, and descends to an action on the character variety. Such an action changes nothing in terms of the underlying geometry, but the representation and character change.

Since traces are invariant under conjugation, the action of $\Inn \pi_1(S)$ on $X(S)$ is trivial and we consider the action of $\Out \pi_1(S) = \Aut\pi_1(S)/\Inn \pi_1(S)$. Points in $X(S)$ which are related under this action ought to be considered as equivalent in terms of the underlying geometry.

In the present paper we are only concerned with punctured tori; in the sequel we consider higher-genus surfaces. For punctured tori, we can describe the character variety, and the action of $\Out \pi_1(S)$, explicitly.

\subsection{Characters of punctured torus representations}
\label{sec:algebra_tori}
\label{Characters_of_representations}

We now analyse representations $\pi_1(S) \To PSL_2\R$ and $SL_2\R$, where $S$ denotes a punctured torus. We do not consider geometric structures. We take a basepoint $q \in \partial S$.

Let $G,H$ be a basis, $\pi_1(S)= \langle G,H \rangle$. A representation $\rho: \pi_1(S) \To PSL_2\R$ or $SL_2\R$ is determined by $\rho(G)$ and $\rho(H)$. A representation into $PSL_2\R$ obviously lifts to $SL_2\R$, and we have two choices each for the lifts of $\rho(G)$ and $\rho(H)$. For now consider $\rho$ as
a representation into $SL_2\R$ and denote $\rho(G) = g$, $\rho(H) = h$.

We have stated that the character of $\rho$ is determined by the value of $\Tr \circ \rho$ at finitely many elements of $\pi_1(S)$. For the punctured torus with $\pi_1(S) = \langle G, H \rangle$, it is sufficient to consider only the three elements $G, H, GH$. For any word $W$ in $G, H$ and their inverses, we can write $\Tr(\rho(W))$ as a polynomial in $(x,y,z) = (\Tr g, \; \Tr h, \; \Tr gh)$ (see e.g. \cite{Magnus80,Fricke_Klein}).
For instance we have the important relation
\[
    \Tr[g,h] = \Tr^2 g + \Tr^2 h + \Tr^2 gh - \Tr g \Tr h \Tr gh -
    2
\]
and hence we define the polynomial
\[
    \kappa(x,y,z) = x^2 + y^2 + z^2 - xyz - 2.
\]
following notation of \cite{Goldman88,Goldman03}. For details see also \cite{Magnus80,Culler-Shalen,Fricke_Klein} or \cite[3.4]{Maclachlan_Reid}.

It is a classical result that if $\rho$ is \emph{irreducible} and defines the same triple $(\Tr g, \Tr h, \Tr gh)$ as another representation $\rho'$, then $\rho$ and $\rho'$ are conjugate; so that the triple $(\Tr g, \Tr h, \Tr gh)$ defines the pair $g,h \in SL_2\R$ uniquely up to conjgacy: see \cite{Goldman88,Fricke,Fricke_Klein}. (We shall characterise the triples arising from reducible representations below.) Recall a representation into $SL_2\R$ is \emph{reducible} if its image is a set of matrices which, acting via linear transformations on $\C^2$, leaves invariant
a line in $\C^2$. Thus in principle, for irreducible $\rho$ it is possible to deduce all
the geometry of $g$ and $h$, considered as isometries of the
hyperbolic plane, from the triple $(\Tr g, \Tr h, \Tr gh)$. This
motivates results such as the lemmata of section \ref{compositions of isometries}.

The set of all $(x,y,z) = (\Tr g, \Tr h, \Tr gh) \in \R^3$ is the
character variety $X(S)$ of $S$. It is not all of $\R^3$, and the
following theorem describes $X(S)$ exactly. We refer to \cite[thm.
4.3]{Goldman88} for a proof.
\begin{thm}[Goldman, \cite{Goldman88}]
\label{character variety}
    Given $(x,y,z) \in \R^3$, there exist $g,h \in SL_2\R$
    such that \[(\Tr g, \Tr h, \Tr gh) = (x,y,z)\] if and only if
    \[
    \Tr[g,h] = x^2 + y^2 + z^2 - xyz - 2 \geq 2 \quad \text{or at least one of } |x|, |y|, |z| \text{ is } \geq 2.
    \]
    \qed
\end{thm}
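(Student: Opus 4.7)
The starting point is Fricke's trace identity
\[
\Tr[g,h] = x^2 + y^2 + z^2 - xyz - 2 = \kappa(x,y,z),
\]
which holds for every $g, h \in SL_2\R$ with $(\Tr g, \Tr h, \Tr gh) = (x,y,z)$. Hence the polynomial inequality $\kappa(x,y,z) \geq 2$ in the theorem is just $\Tr[g,h] \geq 2$ in the realisable case, and the task reduces to determining precisely which triples $(x,y,z) \in \R^3$ arise as $(\Tr g, \Tr h, \Tr gh)$ for some pair of real matrices.

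\textbf{Necessity $(\Rightarrow)$.} Suppose $(x,y,z) = (\Tr g, \Tr h, \Tr gh)$ for some $g, h \in SL_2\R$. If any of $|x|, |y|, |z| \geq 2$, the second alternative of the condition is satisfied. Otherwise $g$, $h$, $gh$ are all elliptic; in particular neither $g$ nor $h$ is hyperbolic, so Lemma \ref{axes crossing} forces $\Tr[g,h] \geq 2$, i.e.\ $\kappa(x,y,z) \geq 2$.

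\textbf{Sufficiency $(\Leftarrow)$.} Construct $g, h \in SL_2\R$ explicitly, splitting by cases.
\begin{enumerate}
\item If $|x| > 2$, conjugate $g$ to $\mathrm{diag}(\lambda, \lambda^{-1})$ with $\lambda + \lambda^{-1} = x$. The linear system $a + d = y$, $\lambda a + \lambda^{-1} d = z$ for the diagonal entries of $h = \begin{pmatrix} a & b \\ c & d \end{pmatrix}$ is nondegenerate (since $\lambda \neq \lambda^{-1}$) and has a unique real solution; then pick $b, c \in \R$ with $bc = ad - 1$, which is always possible.
\item If $|x| = 2$, take $g$ parabolic (or $g = \pm I$ in the degenerate subcase forcing $z = \pm y$) and solve analogously; the cases $|y| \geq 2$ or $|z| \geq 2$ reduce to (i) after permuting the roles of $g$, $h$, $gh$.
\item If $|x|, |y|, |z| < 2$ and $\kappa(x,y,z) \geq 2$, place $g$ as a rotation fixing $i \in \H^2$ and $h$ as a rotation fixing some point $\lambda i$ on the positive imaginary axis, using the $SO(2)$-stabiliser of $i$ to bring $\Fix h$ onto this ray. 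A direct matrix computation then yields
\[
\Tr(gh) = \tfrac{xy}{2} - \epsilon \, s_a t_b \bigl(\lambda + \lambda^{-1}\bigr), \qquad s_a = \sqrt{1 - x^2/4}, \quad t_b = \sqrt{1 - y^2/4},
\]
with $\epsilon \in \{+1,-1\}$ determined by the senses of the two rotations. Choosing $\epsilon = \sgn(xy/2 - z)$, the required value $\lambda + \lambda^{-1} = |z - xy/2|/(s_a t_b)$ is $\geq 2$ precisely when $\kappa(x,y,z) \geq 2$, and the equation $\lambda + \lambda^{-1} = u \geq 2$ then has a positive real root $\lambda = (u + \sqrt{u^2 - 4})/2$.
\end{enumerate}

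\textbf{Main obstacle.} The substantive step is case (iii) of sufficiency: one must pick the correct $SL_2\R$-conjugacy representative for each of $g$ and $h$ (each trace in $(-2,2)$ corresponds to two $SL_2\R$-conjugacy classes of elliptics, distinguished by sense of rotation) and match the sign $\epsilon$ so that the equation for $\lambda$ admits a positive solution. The necessity direction collapses to one line via Lemma \ref{axes crossing}, and the hyperbolic/parabolic cases of sufficiency reduce to routine linear algebra.
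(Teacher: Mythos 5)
Your proposal is correct, but note that the paper does not prove Theorem \ref{character variety} at all: it is quoted from Goldman and the reader is referred to \cite[thm.\ 4.3]{Goldman88}, so there is no internal argument to compare against. What you give is a self-contained Fricke-style verification: necessity via Lemma \ref{axes crossing} (if some trace lies in $(-2,2)$ then one of $g,h$ is not hyperbolic, so $\Tr[g,h]\geq 2$), and sufficiency by explicit normal forms ($g$ hyperbolic-diagonal when $|x|>2$, $g$ parabolic or $\pm I$ when $|x|=2$, and two elliptics with fixed points on the imaginary axis in the all-elliptic case). I checked the key computation: with $\Tr g = x$, $\Tr h = y$ and fixed points $i$ and $\lambda i$, one gets $\Tr(gh) = \tfrac{xy}{2} - \epsilon\, s_a t_b(\lambda+\lambda^{-1})$, and the identity
\[
\left(z - \tfrac{xy}{2}\right)^2 - 4\left(1-\tfrac{x^2}{4}\right)\left(1-\tfrac{y^2}{4}\right) = \kappa(x,y,z) - 2
\]
shows that the needed value $\lambda+\lambda^{-1} = \abs{z-xy/2}/(s_a t_b)$ is $\geq 2$ exactly when $\kappa(x,y,z)\geq 2$, as you assert; it would be worth displaying this one-line identity, since it is the crux of case (iii). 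Two further points deserve an explicit sentence rather than a gesture: (a) the reduction of $|y|\geq 2$ or $|z|\geq 2$ to case (i) uses that realisability is symmetric in the three coordinates, which follows from the substitutions $(g,h)\mapsto(h,g)$ and $(g,h)\mapsto(gh,h^{-1})$ together with $\Tr(hg)=\Tr(gh)$; (b) in case (ii) with $c$ forced to be $z\mp y$, the degenerate subcase $c=0$ is exactly where $g=\pm I$ is needed, which you do flag. With those small additions the argument is complete, and it is in the spirit of the classical trace computations Goldman's cited proof relies on.
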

Thus the set of $(x,y,z) \in \R^3$ without corresponding representations (i.e $\R^3 \backslash X(S)$) are those with $\kappa(x,y,z) < 2$ and $-2 < x, y, z < 2$: see figure \ref{fig:9}. (Actually if $g,h$ exist but $\Tr[g,h] < 2$ then $\Tr[g,gh] < 2$ also, so by lemma \ref{axes crossing} all of $g,h,gh$ give hyperbolic isometries
of $\hyp^2$ and hence \emph{all} $|x|,|y|,|z| > 2$.) 

\begin{figure}[tbh]
\centering
\includegraphics[scale=0.3, angle=-90]{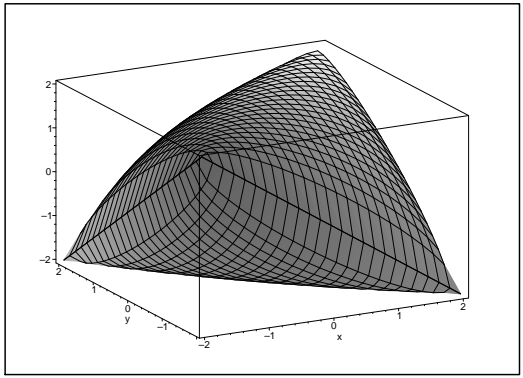}
\caption[$\R^3 \backslash X(S)$, i.e. the region of $\R^3$ without
representations]{$\R^3 \backslash X(S)$, i.e. the region of $\R^3$
without representations: (strictly) inside this curved
tetrahedron-like surface.} \label{fig:9}
\end{figure}

For representations $\pi_1(S) \To PSL_2\R$, the character variety
can be described simply also. There are four ways to  lift $\rho(G),
\rho(H)$ into $SL_2\R$, which are related by sign changes. Thus we
simply take the character variety $X(S)$ of representations into
$SL_2\R$ modulo the equivalence relation
\[
    (x,y,z) \sim (-x,-y,z) \sim (-x,y,-z) \sim (x,-y,-z)
\]
induced by these four possible lifts. The notion of reducibility
still makes sense: elements of $PSL_2\R$ act via linear
transformations on $\C^2$ up to a reflection in the origin, hence on
$\CP^1$, so the idea of an invariant line still makes sense. And for
representations into $PSL_2\R$ the value of $\kappa(x,y,z) =
\Tr[g,h]$ is well-defined, even if the signs of $x,y,z$ are
ambiguous.

The reducible representations have a simple characterisation: see
\cite{Culler-Shalen,Goldman03} for a proof.
\begin{prop}
\label{reducible representation}
    The representation $\rho: \pi_1(S) \To PSL_2\R$ or $SL_2\R$ is
    reducible if and only if $\Tr [g,h] = 2$, i.e. iff the character
    $(x,y,z)$ of $\rho$ satisfies $\kappa(x,y,z)=2$.
    \qed
\end{prop}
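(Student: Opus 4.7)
The plan is to prove the equivalence by a direct matrix computation after reducing one generator to Jordan form. Since $\pi_1(S)$ is free on $\{G,H\}$, every $PSL_2\R$-representation lifts to $SL_2\R$, the notion of reducibility coincides under the projection $SL_2\R \to PSL_2\R$, and $\Tr[g,h]$ depends only on the $PSL_2\R$-projection; so it suffices to treat the $SL_2\R$ case, and we may even conjugate over $\C$ since ``preserves a line'' is a $\C$-linear condition.

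For the easy direction, suppose $\rho$ is reducible. Conjugate over $\C$ so that the common invariant line is spanned by $(1,0)^T$; then $g$ and $h$ are simultaneously upper triangular in $SL_2\C$, the commutator of two such matrices is unipotent upper triangular, and so $\Tr[g,h] = 2$.

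For the converse, assume $\Tr[g,h] = 2$. If $g = \pm I$ every line is $g$-invariant and any eigenline of $h$ works. Otherwise conjugate $g$ to Jordan form and write $h = \left(\begin{smallmatrix} a & b \\ c & d \end{smallmatrix}\right)$ with $ad - bc = 1$. In the diagonal case $g = \mathrm{diag}(\lambda, \lambda^{-1})$ with $\lambda \ne \pm 1$, a direct computation yields $\Tr[g,h] = 2 - bc(\lambda - \lambda^{-1})^2$, so the hypothesis forces $bc = 0$ and $h$ becomes triangular, sharing one of the coordinate axes (an eigenline of $g$) as an invariant line. In the parabolic case $g = \pm\left(\begin{smallmatrix} 1 & 1 \\ 0 & 1 \end{smallmatrix}\right)$, the analogous computation gives $\Tr[g,h] = 2 + c^2$, forcing $c = 0$, so $h$ is upper triangular and preserves the unique eigenline of $g$. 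In every case $g$ and $h$ share an invariant line, so $\rho$ is reducible.

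The argument is entirely routine; the only real work is the two commutator trace calculations. No step is a serious obstacle, though one should confirm the $\C$-conjugation step and the $PSL_2\R \leftrightarrow SL_2\R$ bookkeeping --- both of which are either standard or already noted in the text, since $\Tr[g,h]$ and reducibility are $PSL_2\R$-invariants.
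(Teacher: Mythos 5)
Your argument is correct. Note, though, that the paper does not actually prove this proposition itself --- it cites Culler--Shalen and Goldman for the proof --- so there is no in-paper argument to compare against directly. What your computation most closely resembles is the paper's later proof of Lemma \ref{reducible_description}, which uses exactly the same commutator-trace calculations: the parabolic normal form giving $\Tr[g,h] = 2 + c^2$, and the hyperbolic diagonal form giving $\Tr[g,h] = 2 - bc\left(\lambda - \lambda^{-1}\right)^2$ (there written as $2ad - (r^2 + r^{-2})bc$). The one place you diverge is the elliptic case: the paper keeps everything real, writes $g$ as a rotation about $i$, and obtains $\Tr[g,h] = 2 + (a^2+b^2+c^2+d^2-2)\sin^2\theta$, whereas you diagonalize $g$ over $\C$ and fold it into the same formula as the hyperbolic case. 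Your route is legitimate precisely because the paper's definition of reducibility is an invariant line in $\C^2$, so conjugating in $GL_2\C$ loses nothing, and it has the small advantage of handling hyperbolic and elliptic $g$ uniformly; the forward direction (simultaneously triangularizable $\Rightarrow$ unipotent commutator) and the $PSL_2\R$/$SL_2\R$ bookkeeping via the well-definedness of $[g,h]$ are also handled correctly. The identification with $\kappa(x,y,z)=2$ is immediate from the Fricke trace identity already stated in the paper, so nothing is missing.
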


Note this implies that an abelian representation in $SL_2\R$ is reducible. For representations into $PSL_2\R$ we see $[g,h] = \pm I$; but by the classification in corollary \ref{commutator_trace_info} this implies $\Tr[g,h] = 2$.

We have now defined the character variety $X(S) \subset \R^3$.
Points with $\kappa(x,y,z) = 2$ describe reducible representations,
which include abelian representations. Points with $\kappa(x,y,z)
\neq 2$ describe irreducible representations, hence describe a
conjugacy class of representations precisely. For $t \neq 2$, we denote the
space of all representations (up to conjugacy) with $\Tr[g,h] = t$
by $X_t(S) = \kappa^{-1}(t) \cap X(S)$:
this is a \emph{relative} character variety of $S$.

\subsection{Nielsen's Theorem}
\label{sec:Nielsen}

When $S$ is a punctured torus, $\Out \pi_1(S)$ has a particularly explicit geometric interpretation. Every homeomorphism of $S$ which preserves a basepoint determines an automorphism of $\pi_1(S)$. A general homeomorphism of $S$ determines an automorphism of $\pi_1(S)$, up to conjugacy, i.e. an outer automorphism. On the other hand, we have the following theorem: see \cite{Goldman03}, and for further details \cite{Stillwell,Nielsen27}.
\begin{thm}
    Any automorphism of $\pi_1(S) = \langle G, H \rangle$ is induced from a homeomorphism of $S$.
    \qed
\end{thm}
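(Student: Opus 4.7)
The plan is to invoke a generating set for $\Aut F_2$ due to Nielsen and to realize each of the generators by an explicit self-homeomorphism of $S$.

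Since $S$ deformation retracts onto the wedge of two circles represented by the basis curves $G, H$, the fundamental group is free: $\pi_1(S,q) = F_2 = \langle G,H \rangle$. A classical theorem of Nielsen (which I would cite from \cite{Nielsen27,Stillwell}) asserts that $\Aut F_2$ is generated by the three transformations
\begin{align*}
\sigma: G \mapsto H,\ H \mapsto G, \qquad \iota: G \mapsto G^{-1},\ H \mapsto H, \qquad \tau: G \mapsto GH,\ H \mapsto H.
\end{align*}
I would take this finite-generation statement as a black box; all I really need from it is that some explicit, geometrically realizable list generates $\Aut F_2$.

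Next, I would model $S$ concretely as a unit square with opposite sides identified in the usual torus pattern and with a small open disc removed from the interior, far from the edges; the two pairs of identified edges represent $G$ and $H$ respectively. I then realize each Nielsen generator by a homeomorphism of this model. The generator $\sigma$ is induced by the rigid $\pi/2$-rotation of the square about its centre, which exchanges the two pairs of identified edges and hence swaps $G$ with $H$. The generator $\iota$ is induced by the reflection of the square in an axis parallel to the $H$-sides, which reverses the orientation of the $G$-edges while preserving the $H$-edges setwise. The generator $\tau$ is induced by a Dehn twist along a simple closed curve in the homotopy class of $H$, chosen to intersect $G$ transversely once; the standard local picture of a Dehn twist then yields $G \mapsto GH$ and $H \mapsto H$. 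In each case the map is isotopic to a homeomorphism fixing the basepoint, and a direct verification on $\{G,H\}$ confirms that the induced automorphism is the prescribed one.

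Finally, given any $\phi \in \Aut \pi_1(S)$, I would express $\phi$ as a finite word in $\sigma^{\pm 1}, \iota^{\pm 1}, \tau^{\pm 1}$ and take the corresponding composition of the three model homeomorphisms; this composition is a self-homeomorphism $f$ of $S$ with $f_* = \phi$. The main obstacle, hidden in the appeal to Nielsen's theorem, is the finite generation of $\Aut F_2$ by those three transformations; once that is granted, the realization of each generator is a direct geometric construction on the square model.
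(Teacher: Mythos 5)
Your proposal is correct in outline, and it is essentially the classical argument: the paper itself offers no proof of this statement (it simply cites Nielsen, Stillwell and Goldman), and the standard proof in those sources is the one you describe --- realize a Nielsen generating set of $\Aut F_2$ by explicit self-homeomorphisms of the one-holed torus and compose. Two details deserve tidying. First, the quarter-turn of the square does not give the clean swap $G \leftrightarrow H$: on homology it acts by $\begin{pmatrix} 0 & -1 \\ 1 & 0 \end{pmatrix}$, i.e.\ $G \mapsto H$, $H \mapsto G^{-1}$; either use the reflection in a diagonal of the square instead, or observe that the automorphisms you do realize still generate $\Aut F_2$ together with $\iota$ and $\tau$ (also place the removed disc at the centre of the square so the rotation and reflections genuinely preserve it). Second, your model homeomorphisms need not fix the basepoint $q \in \partial S$ (the rotation and reflections move points of the boundary circle), so after isotoping the basepoint back each generator is a priori realized only up to an inner automorphism; this is harmless, since every inner automorphism is itself induced by a basepoint-preserving homeomorphism (push $q$ around the conjugating loop), hence the subgroup of $\Aut \pi_1(S)$ induced by homeomorphisms contains $\Inn \pi_1(S)$ and representatives of the Nielsen generators, and is therefore all of $\Aut \pi_1(S)$. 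The genuinely nontrivial input --- finite generation of $\Aut F_2$ by those elementary transformations --- is correctly isolated as a black box; it is exactly this rank-two coincidence (every automorphism preserves the peripheral class $[G,H]$ up to inversion and conjugacy, the paper's theorem of Nielsen) that makes the realization theorem true for the punctured torus while, as the paper notes, it fails for all other surfaces with boundary.
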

There is then an isomorphism
\[
    \mcg (S) = \frac{\text{Homeo}\, (S)}{\text{Isotopy}} \cong \frac{\Aut \pi_1(S)}{\Inn \pi_1(S)} = \Out \pi_1(S).
\]
Here $\mcg(S)$ is the mapping class group, i.e. the group of homeomorphisms of $S$ up to isotopy. These homeomorphisms need not be fixed on the boundary (of course $\partial S$ must be sent to itself, as a set). So, for instance, a Dehn twist about the boundary is equivalent to the identity. A similar result is true for closed surfaces, but for no other surfaces with boundary: this is the Dehn--Nielsen theorem (see e.g. \cite{Stillwell,Nielsen27}).

We also have the following algebraic theorem of Nielsen: see \cite{Nielsen18}, \cite[thm. 3.9]{Magnus_Karrass_Solitar} or \cite[prop. 5.1]{Lyndon_Schupp}. The proof relies upon Nielsen's description of the automorphism group of the free group on two generators.
\begin{thm}[Nielsen]
\label{Nielsen_thm}
    An automorphism $\phi$ of $\langle G,H \rangle$ takes $[G,H]$ to a conjugate
    of itself or its inverse $[H,G]$.
    \qed
\end{thm}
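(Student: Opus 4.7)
The plan is to reduce to a finite verification via Nielsen's theorem that $\Aut\langle G, H\rangle$ is generated by a small set of elementary transformations. Specifically, Nielsen showed that every automorphism of the free group $F_2 = \langle G, H\rangle$ is a composition of the three elementary automorphisms
\[
\sigma: (G,H) \mapsto (H,G), \qquad \iota: (G,H) \mapsto (G^{-1}, H), \qquad \tau: (G,H) \mapsto (GH, H).
\]
I would take this generating set as the starting point; this is the nontrivial input and the main obstacle, but it is a classical result available from the references cited in the statement.

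Once we have generators, let
\[
\mathcal{C} = \bigl\{ w [G,H] w^{-1} : w \in F_2 \bigr\} \cup \bigl\{ w [H,G] w^{-1} : w \in F_2 \bigr\}
\]
denote the set of conjugates of $[G,H]$ and $[H,G]$. The key observation is that $\mathcal{C}$ is closed under any automorphism $\phi$ \emph{provided} it carries $[G,H]$ into $\mathcal{C}$: indeed, for any $w \in F_2$,
\[
\phi(w [G,H] w^{-1}) = \phi(w) \, \phi([G,H]) \, \phi(w)^{-1},
\]
which is again a conjugate of whichever element of $\{[G,H], [H,G]\}$ that $\phi([G,H])$ is conjugate to.

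It therefore suffices to verify the theorem for each of $\sigma, \iota, \tau$ separately, and then proceed by induction on the length of a word in these generators. For $\sigma$ we have $\sigma([G,H]) = [H,G] \in \mathcal{C}$. For $\tau$ a direct computation gives
\[
\tau([G,H]) = (GH) H (GH)^{-1} H^{-1} = GH^2 H^{-1} G^{-1} H^{-1} = [G,H],
\]
which is itself in $\mathcal{C}$. For $\iota$ we compute
\[
\iota([G,H]) = G^{-1} H G H^{-1} = G^{-1} (HGH^{-1}G^{-1}) G = G^{-1} [H,G] G \in \mathcal{C}.
\]
(The analogous transvection $G \mapsto G,\ H \mapsto HG$ and the inversion in $H$ are handled by the same computation after applying $\sigma$; they need not be in the generating set.) Since each elementary generator sends $[G,H]$ into $\mathcal{C}$, the induction closes and every $\phi \in \Aut\langle G,H\rangle$ satisfies $\phi([G,H]) \in \mathcal{C}$, as required.
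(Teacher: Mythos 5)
Your proof is correct and is essentially the argument the paper defers to its cited references (Nielsen, Magnus--Karrass--Solitar, Lyndon--Schupp): reduce to Nielsen's elementary generators of $\Aut\langle G,H\rangle$ and verify that each sends $[G,H]$ to a conjugate of $[G,H]^{\pm 1}$. The only point to tidy is that a word in your generators may also involve $\tau^{-1}$ (while $\sigma,\iota$ are involutions, $\tau$ is not), which is handled either by the one-line check $\tau^{-1}([G,H])=(GH^{-1})H(HG^{-1})H^{-1}=[G,H]$ or by observing that the set of automorphisms carrying $[G,H]$ into $\mathcal{C}$ is a subgroup, so generation suffices.
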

Thus there is an algebraic notion of orientation of bases; we call the automorphism $\phi$ either \emph{orientation-preserving} or \emph{orientation-reversing} as $[G,H]$ is taken respectively to a conjugate of itself or of $[H,G]$.

\subsection{The action on the character variety}
\label{action of modular group 2}

Now consider the effect of changing basis $(G,H) \mapsto (G',H')$ on a representation $\rho: \pi_1(S) \To SL_2\R$ by pre-composition, as discussed in section \ref{action of modular group}. The underlying geometry does not change but the character changes $(x,y,z) = (\Tr g, \Tr h, \Tr gh) \mapsto (\Tr g', \Tr h', \Tr g'h') = (x',y',z')$. Since trace is invariant under conjugation, this action descends to an action of $\Out \pi_1(S) \cong \mcg(S)$. Points in $X(S)$ which are related under this action ought to be considered as equivalent; we will now describe this equivalence relation.

By Nielsen's theorem \ref{Nielsen_thm}, $[G,H]$ is conjugate to $[G',H']^{\pm 1}$, so $\Tr[g,h] = \Tr[g',h']$ and $\kappa(x,y,z) = \kappa(x',y',z')$. That is, $(x,y,z)$ and $(x',y',z')$ lie on the same level set of the polynomial $\kappa$. The level set $\kappa(x,y,z)=t$ amounts to fixing the trace on the boundary; this is the relative character variety $X_t(S)$.

The group $\mcg (S) \cong \Out \pi_1(S)$ is well known to be isomorphic to $GL_2\Z$, viewing the punctured torus $S$ as a quotient of the Euclidean plane by two linearly independent translations, with a lattice removed. There are natural identifications between: bases of $H_1(S) \cong \Z \oplus \Z$; pairs of free isotopy classes of simple closed curves on $S$ spanning $H_1(S)$; and conjugacy classes of bases of $\pi_1(S)$. The group $\mcg(S) \cong \Out \pi_1(S) \cong GL_2\Z$ acts simply and transitively on these objects, by the usual matrix multiplication on $\Z \oplus \Z$. Take a basis $(G,H)$ for $\pi_1(S)$, and identify the conjugacy class $[(G,H)]$ of this basis with the basis $((1,0), (0,1))$ of $H_1(S) \cong \Z \oplus \Z$.

It is well known that $GL_2\Z$ has a small set of generators, for instance
\[
    \begin{bmatrix} 1 & 0 \\ 0 & -1 \end{bmatrix}, \quad
    \begin{bmatrix} -1 & 0 \\ 0 & -1 \end{bmatrix}, \quad
    \begin{bmatrix} 0 & -1 \\ 1 & -1 \end{bmatrix}, \quad
    \begin{bmatrix} 1 & 1 \\ 0 & 1 \end{bmatrix}.
\]
It follows from the above that any two conjugacy classes of bases of $\pi_1 (S)$ are related by some combination of the matrices above, considered as elements of $\Out \pi_1(S) \cong \mcg (S)$. We will consider the actions of these matrices on $X(S)$ separately.

\begin{enumerate}

\item {\bf The matrix $M = \begin{bmatrix} 1 & 0 \\ 0 & -1
\end{bmatrix}$.}

As an element of the mapping class group, $M$ is orientation-reversing and an involution. If $G,H$ are as in figure \ref{fig:10}, then $M$ acts topologically (not metrically) as a reflection in a plane intersecting $S$ in an arc and circle, sending $[(G,H)] \mapsto [(G,H^{-1})]$. Letting $G',H'$ be the image of $G,H$ under the automorphism, and letting $(x,y,z) = (\Tr g, \Tr h, \Tr gh)$, $(x',y',z') = (\Tr g', \Tr h', \Tr g'h')$ denote the respective characters obtained, we have $(G',H',G'H') = (G, H^{-1}, GH^{-1})$ so it follows that
\[
    (x',y',z') = (\Tr g', \Tr h', \Tr g'h') = (\Tr g, \Tr h^{-1}, \Tr gh^{-1}) = (x,y,xy-z).
\]
Here we use a standard trace relation $\Tr gh^{-1} = \Tr g \Tr h - \Tr gh$. Since $M =
M^{-1}$, the actions of $M$ and $M^{-1}$ on $X(S)$ are both given by $(x,y,z) \mapsto (x,y,xy-z)$.

\begin{figure}[tbh]
\centering
\includegraphics[scale=0.5]{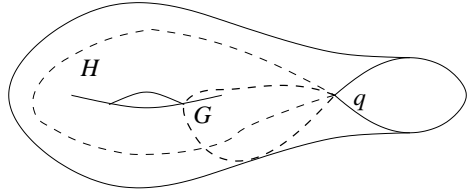}
\caption{A standard set of basis curves on $S$} \label{fig:10}
\end{figure}

\item {\bf The matrix $-I$.}

As an element of $\mcg(S)$, $-I$ gives a homeomorphism which is isotopic to an involution, corresponding topologically (not metrically) to a rotation of $\pi$ about an axis intersecting $S$ in $3$ points. After adjusting to find a representative fixing $\partial S$, we see $M$ is represented by the automorphism $(G,H) \mapsto (G^{-1},H^{-1})$. The induced action of $-I$ on the character variety $X(S)$ is trivial.

\item {\bf The matrix $M = \begin{bmatrix} 0 & -1 \\ 1 & -1
\end{bmatrix}$.}

    This matrix is of order $3$, represented by the automorphism $(G,H) \mapsto (H, H^{-1} G^{-1})$. We have
    \[
        (x',y',z') = (\Tr g', \Tr h', \Tr g'h') = (\Tr h, \Tr h^{-1} g^{-1}, \Tr g^{-1}) = (y,z,x).
    \]
    So the actions of $M,M^{-1}$ on $X(S)$ are given by $(x,y,z) \mapsto (y,z,x), (z,x,y)$.

\item {\bf The matrix $M = \begin{bmatrix} 1 & 1 \\ 0 & 1
\end{bmatrix}$.}

    The automorphism $(G,H) \mapsto (G, GH)$ represents $M$ and corresponds to a Dehn twist about $G$. We have $(x',y',z') = (\Tr g', \Tr h', \Tr g'h') = (\Tr g, \Tr gh, \Tr g^2 h)$ which can easily be computed in terms of $x,y,z$. Similarly we have $M^{-1}$ represented by the automorphism $(G, G^{-1} H)$ and can
    again compute $(x',y',z') = (\Tr g, \Tr g^{-1} h', \Tr h)$. The actions of $M, M^{-1}$ on $X(S)$ are given by
    \[
        (x,y,z) \mapsto (x,z,xz-y), (x,xy-z,y).
    \]

\end{enumerate}

Putting these together now gives the following result.
\begin{prop}
Let $\rho_1, \rho_2: \pi_1 (S) \To SL_2\R$ be irreducible
representations and let $(G_1, H_1)$, $(G_2, H_2)$ be free bases of
$\pi_1 (S)$ such that $\rho_i$ has character $(x_i,y_i, z_i)$ with
respect to the basis $(G_i, H_i)$.The following are equivalent:
\begin{enumerate}
    \item
        There exists $\phi \in \Aut \pi_1(S)$ such that
        $\rho_1 \circ \phi$ and $\rho_2$ are conjugate representations
        into
        $SL_2\R$;
    \item
        $(x,y,z) \sim (x', y', z')$ under the
        equivalence relation generated by permutation of coordinates and the
        relation $(x,y,z) \sim (x,y,xy-z)$.
\end{enumerate}
\qed
\end{prop}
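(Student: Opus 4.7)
The proof should proceed by assembling facts already developed in the paper. The preceding analysis has computed the induced action on characters of the four matrices generating $GL_2\Z \cong \Out\pi_1(S)$; my plan is to first observe that each such action lies in the equivalence relation generated by permutations and the basic move $(x,y,z)\mapsto(x,y,xy-z)$, and then translate conjugacy of representations into equality of characters via the classical fact (cited before Theorem~\ref{character variety}) that irreducible $SL_2\R$-representations of $\pi_1(S)$ are determined up to conjugation by their character triple.

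First I would verify the generating claim by direct inspection. The action of $\begin{bmatrix}1&0\\0&-1\end{bmatrix}$ is the basic move itself; the action of $-I$ is trivial; the action of $\begin{bmatrix}0&-1\\1&-1\end{bmatrix}$ is a cyclic permutation; and the two Dehn-twist actions $(x,y,z)\mapsto(x,z,xz-y)$ and $(x,y,z)\mapsto(x,xy-z,y)$ are visibly obtained by composing a transposition of two coordinates with the basic move. Since $\Out\pi_1(S)\cong GL_2\Z$ is generated by these four matrices (and inner automorphisms act trivially on characters), any automorphism $\phi\in\Aut\pi_1(S)$ induces on $X(S)$ a transformation lying in the equivalence relation of~(2).

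For (1)$\Rightarrow$(2), assume $\rho_1\circ\phi$ is conjugate to $\rho_2$. Conjugation preserves the trace function, so evaluating at $G_2,H_2,G_2H_2$ gives $\Tr\rho_1(\phi(G_2))=x_2$, $\Tr\rho_1(\phi(H_2))=y_2$, $\Tr\rho_1(\phi(G_2H_2))=z_2$. Thus $(\phi(G_2),\phi(H_2))$ is a basis with respect to which $\rho_1$ has character $(x_2,y_2,z_2)$, while $(x_1,y_1,z_1)$ is the character of $\rho_1$ with respect to $(G_1,H_1)$. The change of basis is an element of $\Aut\pi_1(S)$, so by the preceding paragraph the two triples are related by a sequence of permutations and basic moves.

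For (2)$\Rightarrow$(1), suppose $(x_1,y_1,z_1)\sim(x_2,y_2,z_2)$. Realize the connecting sequence of moves as a composition of the four generator actions, and let $\phi_0\in\Aut\pi_1(S)$ be the corresponding automorphism, so that the character of $\rho_1$ with respect to the basis $(\phi_0(G_1),\phi_0(H_1))$ equals $(x_2,y_2,z_2)$. Let $\psi\in\Aut\pi_1(S)$ be the unique automorphism sending $(G_1,H_1)$ to $(G_2,H_2)$; then $\rho_2\circ\psi$ also has character $(x_2,y_2,z_2)$ with respect to $(G_1,H_1)$. By the classical Fricke rigidity cited above, $\rho_1\circ\phi_0$ and $\rho_2\circ\psi$ are conjugate in $SL_2\R$, whence $\rho_1\circ(\phi_0\psi^{-1})$ is conjugate to $\rho_2$ (since precomposition preserves conjugacy), and $\phi=\phi_0\psi^{-1}$ is the desired element of $\Aut\pi_1(S)$. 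The main subtlety, and the only place careful thought is needed, is distinguishing the roles of $\phi_0$ (which implements the move on triples computed in a \emph{fixed} basis) and $\psi$ (which relates the two given bases), so that conjugacy of representations — rather than merely equality of character triples in different bases — is obtained; once this bookkeeping is arranged the argument is formal, since Nielsen's theorem~\ref{Nielsen_thm} guarantees $\kappa$ is preserved along the way and hence no level-set obstruction arises.
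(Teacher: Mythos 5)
Your proposal is correct and follows essentially the same route as the paper: the paper's ``proof'' is precisely the assembly of the generator computations for $\Out\pi_1(S)\cong GL_2\Z$ acting on characters together with the classical Fricke rigidity for irreducible representations, which is what you carry out. Your explicit bookkeeping with $\phi_0$ and $\psi$ just fills in details the paper leaves implicit.
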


For representations into $PSL_2\R$ we must also add sign-change
relations.
\begin{prop}
\label{Markoff_moves} Let $\rho_1, \rho_2: \pi_1(S) \To PSL_2\R$ be
irreducible representations and let $(G_1, H_1)$, $(G_2, H_2)$ be
free bases of $\pi_1 (S)$. Choosing lifts of $g_i, h_i$ into
$SL_2\R$ arbitrarily, let $\rho_i$ have character $(x_i, y_i, z_i)$
with respect to the basis $(G_i, H_i)$. The following are
equivalent:
\begin{enumerate}
    \item
        There exists $\phi \in \Aut \pi_1(S)$ such that
        $\rho_1 \circ \phi$ and $\rho_2$ are conjugate representations in
        $PSL_2\R$;
    \item
        $(x,y,z) \sim (x', y', z')$ under the
        equivalence relation generated by permutation of coordinates and the
        relations $(x,y,z) \sim (x,y,xy-z)$ and $(x,y,z) \sim (-x,-y,z)$.
\end{enumerate}
\qed
\end{prop}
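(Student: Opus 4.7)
The strategy is to leverage the preceding $SL_2\R$ proposition by tracking how the choice of lift from $PSL_2\R$ to $SL_2\R$ affects the character. The crucial structural fact is that $\pi_1(S)$ is free on two generators, so any homomorphism $\rho : \pi_1(S) \to PSL_2\R$ has exactly four lifts to $SL_2\R$, obtained by independently multiplying the images of $G$ and $H$ by $\pm I$. Under these four choices, the character $(x,y,z)$ takes on precisely the four values $(x,y,z)$, $(-x,-y,z)$, $(-x,y,-z)$, $(x,-y,-z)$, which is exactly the orbit of the sign-change relations appearing in (ii). Moreover, for any $\phi \in \Aut \pi_1(S)$, every lift of $\rho \circ \phi$ to $SL_2\R$ has the form $\tilde\rho' \circ \phi$ for some lift $\tilde\rho'$ of $\rho$, since $\phi(G_2),\phi(H_2)$ is another free basis on which the signs of a lift can be prescribed arbitrarily.

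For (i)$\Rightarrow$(ii), suppose $A \in PSL_2\R$ conjugates $\rho_1 \circ \phi$ to $\rho_2$. Pick any lift $\tilde A \in SL_2\R$. Then $\tilde A(\tilde\rho_1 \circ \phi)\tilde A^{-1}$ is some lift of $\rho_2$, and by the observation above it agrees with $\tilde\rho_2$ up to a sign change on the generators $G_2,H_2$; equivalently, by modifying the signs of $\tilde\rho_1$ on $G_1,H_1$ we can replace $\tilde\rho_1$ by another lift $\tilde\rho_1^{\ast}$ of $\rho_1$ so that $\tilde A (\tilde\rho_1^{\ast} \circ \phi) \tilde A^{-1} = \tilde\rho_2$ exactly in $SL_2\R$. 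The preceding $SL_2\R$ proposition then says that the character of $\tilde\rho_1^{\ast}$ at $(G_1,H_1)$ is related to $(x_2,y_2,z_2)$ by permutations and the Markoff move $(x,y,z) \sim (x,y,xy-z)$. The character of $\tilde\rho_1^{\ast}$ differs from $(x_1,y_1,z_1)$ by a sign change, and composing this sign change with the Markoff/permutation chain yields the required equivalence.

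For (ii)$\Rightarrow$(i), we reverse the argument. Given an equivalence $(x_1,y_1,z_1) \sim (x_2,y_2,z_2)$ under the enlarged relation, we first absorb any sign-change steps at the end by replacing $\tilde\rho_2$ with a different lift of $\rho_2$ whose character is the sign-adjusted triple; the remaining equivalence is then purely by permutations and the Markoff move. The preceding proposition produces $\phi \in \Aut \pi_1(S)$ making $\tilde\rho_1 \circ \phi$ conjugate in $SL_2\R$ to this new lift of $\rho_2$. Projecting to $PSL_2\R$ gives $\rho_1 \circ \phi$ conjugate to $\rho_2$, as required.

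The only point that requires genuine attention is the interplay between automorphisms and lifts: namely, that every lift of $\rho \circ \phi$ arises as $\tilde\rho' \circ \phi$ for some lift $\tilde\rho'$ of $\rho$, and that lifting a $PSL_2\R$ conjugator $A$ to $SL_2\R$ lets us bootstrap from $PSL_2\R$-conjugacy to $SL_2\R$-conjugacy once the lifts of $\rho_1$ and $\rho_2$ are suitably adjusted. Both of these rely essentially on $\pi_1(S)$ being free of rank two and on $\phi$ being an automorphism, so that any choice of signs on the generators extends to a unique lift of the whole representation.
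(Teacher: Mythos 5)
Your argument is correct and is essentially the paper's (implicit) proof: the proposition is stated there as an immediate consequence of the preceding $SL_2\R$ proposition together with the observation that the four lifts of a $PSL_2\R$ representation of the free group $\pi_1(S)$ alter the character exactly by the even sign changes, which is precisely the reduction you carry out, including the key point that lifts of $\rho\circ\phi$ are of the form $\tilde\rho'\circ\phi$ because sign characters pull back under automorphisms. The one step you pass over quickly --- rearranging an arbitrary chain of moves so that all sign changes are absorbed into the choice of lift of $\rho_2$ --- is justified by the easy check that the sign changes commute with the Markoff move $(x,y,z)\mapsto(x,y,xy-z)$ and are normalized by coordinate permutations, i.e. the semidirect product structure $PGL_2\Z \ltimes \left( \frac{\Z}{2} \oplus \frac{\Z}{2} \right)$ that the paper itself records.
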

We call triples of numbers with this relation \emph{Markoff triples}. (Classically, this term denotes solutions to $\kappa(x,y,z)=-2$, but we use it more broadly: see \cite{Bowditch_McShane,Bowditch96}) The equivalence relation can be considered as the action of a semidirect product $PGL_2\Z \ltimes \left( \frac{\Z}{2} \oplus \frac{\Z}{2} \right)$. See \cite{Goldman03}. 

If we restrict our attention to orientation-preserving changes of
basis, then we may not consider all of the moves above. In
particular, we cannot transpose coordinates freely. But we can
certainly apply the relations given by the action of matrices
(ii)--(iv) above; and if we are considering representations into
$PSL_2\R$, then we may apply the sign-change relations also. While
at times we will need to consider the orientation of a basis, we
will always consider the above machinery without
orientation-preserving restrictions.

\subsection{Abelian and virtually abelian representations}
\label{virtually abelian reps}

Consider abelian representations: we have seen above that all abelian representations are reducible (over $\C$). Conversely, a character of a reducible representation is also the character of an abelian representation: a reducible representation $\rho$ can be taken to map $G,H$ to upper triangular matrices; ignoring the top right entry gives an abelian representation $\rho'$ taking $G,H$ to diagonal matrices, with the same character.

It is easy to see that the image of an abelian representation $\rho:
\pi_1(S) \To PSL_2\R$ consists of one of the following:
\begin{enumerate}
    \item
        elliptics which all rotate about the same point, and the identity;
    \item
        parabolics with the same fixed point at infinity, and the identity;
    \item
        hyperbolics with the same axis, and the identity;
    \item
        the identity alone.
\end{enumerate}

Now consider virtually abelian representations, i.e. those whose image contains an abelian subgroup of finite index. Define the set $V \subset \R^3$ as
    \[
        V = \left\{ 0 \times 0 \times \R \backslash [-2,2]
        \right\} \cup \left\{ 0 \times \R \backslash [-2,2] \times
        0 \right\} \cup \left\{ \R \backslash [-2,2] \times 0
        \times 0 \right\}.
    \]
We can easily verify, checking the conditions of theorem
\ref{character variety}, that $V \subset X(S)$. Further, using
\ref{character variety}, the set of
points in $X(S)$ with two coordinates equal to $0$ is precisely $V$,
taken together with the six points $(0,0, \pm 2)$, $(0, \pm 2, 0)$,
$(\pm 2, 0, 0)$. (If $(0,0,z) \in X(S)$ then $\kappa(x,y,z) = z^2 -
2 \geq 2$ is equivalent to $|z| \geq 2$.) We can also see from above
that no abelian representations have characters in $V$.

A geometric description of representations with characters
in $V$ can easily be given.
\begin{lem}
\label{va1}
    Let $g,h \in PSL_2\R$. The following are equivalent:
    \begin{enumerate}
    \item
        We may lift $g,h$ to $SL_2\R$ so that
        $(\Tr(g), \Tr(h), \Tr(gh)) \in V$.
    \item
        Two of $\{g,h,gh\}$ are half-turns about points $q_1 \neq
        q_2 \in \hyp^2$ and the third is a nonzero translation
        along the axis $q_1 q_2$.
    \end{enumerate}
\qed
\end{lem}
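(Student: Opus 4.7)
My plan is to unpack the definition of $V$ and translate each equivalent statement into standard facts about isometry types.

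First I unpack what it means for $(\Tr g, \Tr h, \Tr gh) \in V$. By definition of $V$, exactly two of the three coordinates are $0$ and the third lies in $\R \setminus [-2,2]$. In $SL_2\R$, a lift of an element with trace $0$ is a half-turn (elliptic rotation by $\pi$, since $2\cos(\theta/2)=0$ forces $\theta = \pi$), while a lift with $|\Tr|>2$ is hyperbolic, corresponding to a nontrivial translation in $PSL_2\R$. Thus condition (i) is equivalent to: two of the isometries $g, h, gh$ are half-turns (call their fixed points $q_1, q_2$) and the third is hyperbolic.

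For (ii) $\Rightarrow$ (i), note that a half-turn has a lift of trace $0$ in $SL_2\R$, and a nontrivial hyperbolic translation has trace with absolute value greater than $2$. Choosing such lifts for the two half-turns puts the relevant coordinates to $0$ and the third outside $[-2,2]$, which is precisely being in $V$.

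For (i) $\Rightarrow$ (ii), there are three symmetric sub-cases according to which coordinate is nonzero. Consider first the case $\Tr g = \Tr h = 0$, $|\Tr gh| > 2$: then $g, h$ are half-turns about $q_1 = \Fix g$ and $q_2 = \Fix h$, and $gh$ is hyperbolic. If $q_1 = q_2$, then $g, h$ both lie in the elliptic subgroup fixing $q_1$, so $gh$ is elliptic or the identity, contradicting $gh$ being hyperbolic. Hence $q_1 \neq q_2$, and the composition of two half-turns about distinct points is a well-known standard fact: it is translation along the unique geodesic through $q_1$ and $q_2$, by $2d(q_1, q_2)$. The remaining two sub-cases reduce to this one: if $\Tr g = \Tr gh = 0$, then $g$ and $gh$ are half-turns, and writing $h = g^{-1}(gh) = g(gh)$ (since $g$ is an involution in $PSL_2\R$) expresses $h$ as a composition of two half-turns, with $\Fix g \neq \Fix gh$ by the same non-hyperbolicity argument applied to $h$; symmetrically for $\Tr h = \Tr gh = 0$.

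Essentially nothing here is a real obstacle: the only nontrivial geometric input is the classical identification of the composition of two half-turns about distinct points with a translation along the line joining them; everything else is a bookkeeping exercise with traces and the classification of elements of $PSL_2\R$.
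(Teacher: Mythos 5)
Your proof is correct, and it is exactly the routine verification the paper has in mind when it states Lemma \ref{va1} with no written proof (trace $0$ in $SL_2\R$ characterises half-turns, $|\Tr|>2$ characterises hyperbolics, and the composition of half-turns about two distinct points is a nonzero translation along the geodesic joining them, with the remaining cases reduced via $g^{-1}=g$ for an involution). Nothing is missing; in particular your observation that the ``axis $q_1q_2$'' condition comes for free in (i)$\Rightarrow$(ii) and is not needed for (ii)$\Rightarrow$(i) is accurate.
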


Note that in this situation, the subgroup $\langle g,h \rangle$ of
$PSL_2\R$ is an infinite dihedral group consisting of translations
along $q_1 q_2$ and half-turns about points on $q_1 q_2$. It
therefore contains an index 2 subgroup of translations along $q_1
q_2$, which is abelian. So $\rho$ in this case is indeed virtually
abelian. It is easy to see that $V$ is preserved by Markoff moves, sign changes, and permutations of coordinates, hence:

\begin{lem}
\label{va2}
    Let $\rho$ be a representation with
    $(\Tr(g), \Tr(h), \Tr(gh))
    \in V$. Let $G',H'$ be another basis of $\pi_1(S)$.
    Then $(\Tr(g'), \Tr(h'), \Tr(g'h')) \in
    V$ also.
\qed
\end{lem}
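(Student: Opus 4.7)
The plan is to invoke Proposition \ref{Markoff_moves} and check that each generator of the equivalence relation preserves the set $V$ coordinate-wise. First I would observe that any $(x,y,z) \in V$ yields $\kappa(x,y,z) = x^2 + y^2 + z^2 - xyz - 2 > 2$, since two of the three coordinates vanish and the nonzero one has absolute value greater than $2$. Hence by Proposition \ref{reducible representation} the representation $\rho$ is irreducible, and Proposition \ref{Markoff_moves} applies: any two triples arising from bases related by an element of $\Aut \pi_1(S)$ are related by a sequence of the generating moves
\[
(x,y,z) \mapsto (x,y,xy-z), \qquad (x,y,z) \mapsto (-x,-y,z), \qquad \text{and permutations of coordinates.}
\]

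Next I would verify, generator by generator, that $V$ is closed under each move. Permutations of coordinates cyclically map the three ``axial'' components of $V$ to one another, so they preserve $V$. The sign change $(x,y,z) \mapsto (-x,-y,z)$ leaves any zero coordinate zero, and the nonzero coordinate (if located in the $z$-slot) is fixed, while if it is in the $x$- or $y$-slot it merely changes sign; either way the condition that the nonzero entry lies in $\R \setminus [-2,2]$ is preserved. For the Markoff move $(x,y,z) \mapsto (x,y,xy-z)$, observe that on every point of $V$ at least one of $x,y$ is zero, so $xy = 0$ and the move collapses to $(x,y,z) \mapsto (x,y,-z)$; this again preserves the location of the zero coordinates and negates the nonzero one without changing its absolute value.

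Since each generator preserves $V$, so does any composition of them. Thus the orbit of $(\Tr g, \Tr h, \Tr gh) \in V$ under change of basis lies entirely in $V$, and in particular $(\Tr g', \Tr h', \Tr g'h') \in V$, as required. There is no real obstacle here: the only point worth flagging is the reducibility check that lets us apply Proposition \ref{Markoff_moves}, after which the argument is a direct verification on three generators.
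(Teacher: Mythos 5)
Your proof is correct and follows essentially the same route as the paper, which simply observes that $V$ is preserved by Markoff moves, sign changes, and permutations of coordinates; your generator-by-generator check (using $xy=0$ on $V$ to collapse the Markoff move to $(x,y,z)\mapsto(x,y,-z)$) fills in exactly that verification. The preliminary irreducibility check via $\kappa>2$ is a careful touch for invoking Proposition \ref{Markoff_moves} as stated, though for the direction you use (a change of basis induces these moves on characters) it is not really needed, since the trace computations of section \ref{action of modular group 2} hold for arbitrary representations.
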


In fact, this is a complete characterisation of virtually
abelian representations.

\begin{lem}
\label{virtually abelian but not abelian}
    Let $\rho: \pi_1(S) \To PSL_2\R$ be a representation. The character
            \[
                (x,y,z) = (\Tr g, \Tr h, \Tr gh) \in V
            \]
    if and only if $\rho$ is virtually abelian but not abelian.
\end{lem}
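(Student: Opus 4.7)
The plan is to prove the two implications separately. The ``only if'' direction is quick, using lemma \ref{va1}; the ``if'' direction proceeds by classifying which of the four types of abelian subgroups of $PSL_2\R$ listed at the start of section \ref{virtually abelian reps} can contain a chosen normal abelian finite-index subgroup $N$ of the image $\Gamma$ of $\rho$.

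For ``only if'', assume $(x,y,z) \in V$. Lemma \ref{va1} gives (after relabelling if necessary) that two of $\{g, h, gh\}$ are half-turns about distinct points $q_1, q_2 \in \hyp^2$ and the third is a nonzero translation along the axis $\ell = \overline{q_1 q_2}$. Hence $\Gamma$ lies inside the infinite dihedral stabilizer of $\ell$, whose subgroup of translations along $\ell$ is abelian of index two, so $\rho$ is virtually abelian. Two half-turns about distinct points of $\hyp^2$ do not commute, so $\rho$ is not abelian.

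For ``if'', assume $\rho$ is virtually abelian but not abelian, and let $\Gamma$ denote its image. First replace the given abelian finite-index subgroup by the intersection of its (finitely many) conjugates, obtaining a \emph{normal} abelian $N \trianglelefteq \Gamma$ of finite index. Then $N$ falls into one of the four types: (i) elliptics fixing a common $p \in \hyp^2$; (ii) parabolics fixing a common $q \in \partial \hyp^2$; (iii) hyperbolics with a common axis $\ell$; or (iv) trivial. Type (iv) would make $\Gamma$ a finite subgroup of $PSL_2\R$, hence cyclic, hence abelian, contradicting our assumption. Type (i) is excluded because, for any nontrivial $n \in N$, the conjugate $gng^{-1}$ fixes $g(p)$, so the condition $gng^{-1} \in N$ forces $g(p) = p$; then $\Gamma$ lies in the abelian group $\mathrm{Stab}(p) \cong PSO(2)$, a contradiction. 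For type (ii) the same normalizer argument places $\Gamma$ inside the Borel subgroup fixing $q$; working in the upper half plane with $q = \infty$, the dilation projection $\pi : \mathrm{Stab}(\infty) \to \R^+$ is trivial on $N$ (parabolics only), and so the quotient map factors through $\Gamma/N$, which is finite, forcing $\pi(\Gamma)$ to be a finite subgroup of $\R^+$, necessarily trivial; thus $\Gamma$ consists of translations and is abelian, a contradiction.

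Only case (iii) survives. The normalizer argument now shows $\Gamma$ preserves $\ell$ setwise, so $\Gamma$ lies in the stabilizer of $\ell$, which consists of translations along $\ell$ together with half-turns about points of $\ell$. Non-abelianity forces $\Gamma$ to contain some half-turn. Then each of $g = \rho(G)$ and $h = \rho(H)$ is either a translation along $\ell$ or a half-turn on $\ell$, and they cannot both be translations. A short check of the three remaining cases (both half-turns; $g$ translation and $h$ half-turn; or vice versa) shows that $(\Tr g, \Tr h, \Tr gh)$ lands on one of the three coordinate axes of $\R^3$ outside $[-2,2]$: for example if $g, h$ are both half-turns then $\Tr g = \Tr h = 0$ and $gh$ is a nontrivial translation with $|\Tr gh| > 2$; the other two cases are symmetric. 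In every case the triple lies in $V$. I expect the main obstacle to be the case (ii) analysis, where the key point is that any hyperbolic in the Borel subgroup acts on parabolics by a nontrivial dilation, so that mixing hyperbolics and a translation-only normal subgroup would produce infinitely many cosets and violate finite index.
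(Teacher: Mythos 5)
Your proposal is correct and follows essentially the same route as the paper: pass to a normal finite-index abelian subgroup of the image, run a case analysis over the four types of abelian subgroups of $PSL_2\R$, derive a contradiction in the trivial, elliptic and parabolic cases, and in the hyperbolic-axis case place the image in the infinite dihedral stabilizer of the axis to conclude the character lies in $V$ (the paper cites lemma \ref{va1} where you check the traces by hand). The only differences are cosmetic: the paper rules out the parabolic case by noting a power of a hyperbolic would be a hyperbolic element of the normal subgroup, where you use the dilation homomorphism to $\R^+$, and it handles the finite-image case via a centre-of-mass fixed point rather than quoting that finite subgroups of $PSL_2\R$ are cyclic.
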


\begin{proof}
    We have already established that $(x,y,z) \in V$ is the
    character of a virtually abelian but not abelian representation. So let
    $\Lambda = \rho(\pi_1(S)) \subset PSL_2\R$ be virtually abelian but not abelian. So there is a
    finite index subgroup $F$ of $\Lambda$ which is abelian. Let
    $F$ have index $n>1$ in $\Lambda$. Note if $\alpha, \beta \in
    \Lambda$ lie in the same left coset of $F$ then $\alpha F
    = \beta F$ and $F \alpha^{-1} = F \beta^{-1}$, so $\alpha F
    \alpha^{-1} = \beta F \beta^{-1}$. Hence there are only finitely
    many conjugate subgroups of $F$ in $\Lambda$; by taking their
    intersection we obtain
    a \emph{normal} finite index abelian subgroup of $\Lambda$.
    Passing to this subgroup, we may assume $F$ is normal.

    Let $\Fix(F)$ denote the set of points in $\overline{\hyp^2}$ fixed
    by every element of $F$. I claim $\Fix(F)$ is invariant under
    the action of $\Lambda$. Take $h \in \Lambda$ and $p \in
    \Fix(F)$; we must show $h(p) \in \Fix(F)$. So take $f \in F$;
    then $h^{-1}fh \in F$ by normality; so $h^{-1}fh (p) = p$, hence
    $f(h(p)) = h(p)$. So $h(p) \in \Fix(F)$ as desired.
    We now split into cases according
    to the possibilities for $F$.

    \noindent \textbf{Case (i).} Suppose $F = \{1\}$, so $\Lambda$ is
    finite, so every element has finite order, hence is elliptic or the identity.
    Take an arbitrary point $q \in \hyp^2$ and let $p$ be the centre of
    mass of the (finite) orbit of $q$ under $\Lambda$ (see \cite[2.5.19]{Thurston_book} for
    more details). Then $p$ is fixed by every element of $\Lambda$.
    So every element of $\Lambda$ is the identity or an elliptic
    fixing $p$. Hence $\Lambda$ is abelian, a contradiction.

    \noindent \textbf{Case (ii).} Assume $F$ consists of
    the identity and elliptics fixing a point $q$, so $\Fix(F) = q$.
    So every element of $\Lambda$ fixes $q$, and $\Lambda$ consists
    of the identity and elliptics fixing $q$. Thus $\Lambda$ is
    abelian, a contradiction.

    \noindent \textbf{Case (iii).} Assume $F$ consists of the identity and parabolics
    with fixed point $q$, so $\Fix(F) = q$. So every element of
    $\Lambda$ fixes $q$. There cannot exist a hyperbolic $h \in \Lambda$,
    for then $h^n \in F$ would be hyperbolic. So $\Lambda$ consists of the identity and
    parabolics fixing $q$, a contradiction.

    \noindent \textbf{Case (iv).} Now assume $F$ consists of the
    identity and
    hyperbolic isometries with axis $l$, so $\Fix(F)$ consists of the endpoints of $l$ at infinity. So every
    element of $\Lambda$ is either the
    identity, or hyperbolic with axis $l$, or elliptic of order $2$
    with fixed point on $l$. If there are no elliptics then
    $\Lambda$ is abelian and we have a contradiction. Otherwise
    the translations (and the identity) form an index-2 subgroup
    of $\Lambda$. The pair $g,h$ (where $G,H \in \pi_1 (S)$ is a basis) must
    contain at least one half turn; hence the triple
    $g,h,gh$ contains exactly two half turns about distinct points on $l$, and one hyperbolic element
    translating along $l$.
    By lemma \ref{va1} the character of $\rho$ with
    respect to this basis lies in $V$.
\qed
\end{proof}

\subsection{Reducible representations}

We have seen (proposition \ref{reducible representation}) that the reducible representations are precisely those with $\Tr[g,h] = 2$. We can classify these more explicitly. These include abelian representations. From the previous section, all the representations which are virtually abelian, but not abelian, have character in $V$, hence have $\Tr[g,h] > 2$. So we have immediately:
\begin{lem}
\label{reducible_va_abelian}
    A reducible virtually abelian representation $\rho: \pi_1(S) \To
    PSL_2\R$ is abelian.
    \qed
\end{lem}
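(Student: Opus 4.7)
The plan is to combine the two characterizations already established: reducibility via the trace of the commutator, and virtual-abelian-but-not-abelian via the set $V$.

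First I would recall that by proposition \ref{reducible representation}, a reducible representation $\rho$ satisfies $\kappa(x,y,z) = \Tr[g,h] = 2$, where $(x,y,z) = (\Tr g, \Tr h, \Tr gh)$ is the character of $\rho$ with respect to some free basis $G,H$ of $\pi_1(S)$.

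Next, I would argue by contradiction: suppose $\rho$ is virtually abelian but not abelian. By lemma \ref{virtually abelian but not abelian}, the character $(x,y,z)$ lies in $V$. Any point of $V$ has two of its three coordinates equal to $0$ and the third coordinate $w$ satisfying $|w| > 2$. Substituting into $\kappa$, we find
\[
\kappa(x,y,z) = w^2 - 2 > 2,
\]
which contradicts $\Tr[g,h] = 2$ from the previous paragraph. Hence $\rho$ cannot be virtually abelian but not abelian, so $\rho$ must be abelian.

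This argument is essentially a one-line consequence of the previously proved machinery; there is no serious obstacle, the only subtlety being to verify that all triples in $V$ satisfy $\kappa > 2$, which is immediate from the explicit description of $V$ as the union of three axis-segments outside $[-2,2]$.
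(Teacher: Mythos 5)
Your proposal is correct and matches the paper's own argument: the paper likewise combines proposition \ref{reducible representation} (reducible $\Leftrightarrow \Tr[g,h]=2$) with lemma \ref{virtually abelian but not abelian} (virtually abelian but not abelian $\Leftrightarrow$ character in $V$, where $\kappa > 2$) to get the immediate contradiction. The verification that $\kappa(x,y,z) = w^2 - 2 > 2$ on $V$ is exactly the same computation the paper uses.
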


We will now describe the non-abelian reducible representations
rather explicitly.
\begin{lem}
\label{reducible_description}
    Let $\rho$ be a non-abelian reducible representation $\pi_1(S)
    \To PSL_2\R$ and let $G,H$ be a basis of $\pi_1(S)$. Then one of
    the following occurs:
    \begin{enumerate}
        \item
            one of $g,h$ is hyperbolic and the other is parabolic,
            and $g,h$ share a fixed point at infinity;
        \item
            $g,h$ are both hyperbolic, sharing exactly one fixed
            point at infinity.
    \end{enumerate}
\end{lem}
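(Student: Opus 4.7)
The plan is to convert reducibility into the existence of a common fixed point of $g$ and $h$ in $\CP^1 = \partial \hyp^2 \cup \hyp^2 \cup (\text{conjugate lower half plane})$, and then split into cases according to whether this fixed point is real. Since $\pi_1(S)$ is free, $\rho$ lifts to $SL_2\R$, and reducibility gives a common eigenvector $v \in \C^2$ of $g$ and $h$; the corresponding point $z_0 \in \CP^1$ is fixed by both isometries. Because $g, h$ are real matrices, $\bar{z}_0$ is also common-fixed. We will use throughout that, since $\rho$ is non-abelian, neither $g$ nor $h$ is the identity and $g, h$ do not commute.

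First I would dispose of the case $z_0 \notin \RP^1$. Then $z_0$ and $\bar z_0$ are distinct non-real points of $\CP^1$, so the unique real point of $\hyp^2$ determined by the pair $\{z_0, \bar z_0\}$ is a common fixed point of both $g$ and $h$ in the interior of $\hyp^2$. Thus each of $g, h$ must be an elliptic rotation about this common center (or the identity). But two rotations about the same point lie in the common compact stabilizer $SO(2) \subset PSL_2\R$, so they commute, contradicting non-abelianness. Hence $z_0 \in \RP^1$.

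Therefore $g$ and $h$ share a fixed point at infinity. Each of $g, h$ is consequently either parabolic with unique fixed point $z_0$, hyperbolic with $z_0$ as one of its two fixed points at infinity, or the identity; the identity is already excluded. I would then rule out the two cases that force commutativity. If both $g$ and $h$ are parabolic fixing $z_0$, conjugating $z_0$ to $\infty$ places them simultaneously in the unipotent subgroup $\{z \mapsto z + c\}$, which is abelian; if both are hyperbolic with \emph{both} fixed points in common (same axis), conjugating their common axis to $0, \infty$ places them in the diagonal subgroup $\{z \mapsto \lambda z\}$, again abelian. Both subcases contradict non-abelianness. The only remaining configurations are exactly (i) one of $g, h$ hyperbolic, the other parabolic, sharing the fixed point $z_0$, and (ii) both $g, h$ hyperbolic, sharing exactly one fixed point at infinity, as required. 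The main obstacle is the case analysis just described: one needs to be vigilant that sharing one fixed point in $\CP^1$ (via a common eigenvector) can, for real matrices, secretly force sharing a conjugate pair, which is what eliminates elliptic behaviour entirely from the non-abelian reducible case.
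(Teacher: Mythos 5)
Your proof is correct, but it takes a genuinely different route from the paper. You work straight from the definition of reducibility: a common invariant line in $\C^2$ gives a common fixed point $z_0 \in \CP^1$ of $g$ and $h$, and you split on whether $z_0$ is real. The non-real case is killed because $g,h$ then fix a common interior point of $\hyp^2$, so both lie in the (abelian) elliptic stabilizer of that point; the real case leaves only parabolic/hyperbolic possibilities fixing $z_0$, and the two abelian configurations (both parabolic with the same fixed point, both hyperbolic with the same axis) are excluded since their stabilizer subgroups (unipotent, diagonal) are abelian, leaving exactly cases (i) and (ii). The paper instead routes everything through the trace characterisation of reducibility ($\Tr[g,h]=2$, proposition \ref{reducible representation}) and does explicit matrix computations after conjugating $g$ to a normal form, splitting on whether $g$ is elliptic, parabolic or hyperbolic: e.g.\ for elliptic $g$ it computes $\Tr[g,h]=2+(a^2+b^2+c^2+d^2-2)\sin^2\theta$ and deduces $h$ rotates about the same point, and for parabolic or hyperbolic $g$ the condition $\Tr[g,h]=2$ forces $c=0$ or $bc=0$ respectively. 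Your argument is more conceptual, avoids the trace computations entirely, and makes transparent why elliptics cannot occur in the non-abelian reducible case (a non-real common eigenvector secretly forces a common elliptic centre); its only extra input is the lift to $SL_2\R$, which is free here since $\pi_1(S)$ is free. The paper's computational version has the virtue of being self-contained in the coordinates it uses throughout, and of staying close to the quantity $\Tr[g,h]$ that drives the case division in the main construction. One small phrasing nit: "the unique real point of $\hyp^2$ determined by the pair $\{z_0,\bar z_0\}$" would be cleaner as "whichever of $z_0,\bar z_0$ lies in the upper half-plane", but the substance is right.
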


\begin{proof}
    If $g$ or $h$ is the identity then $\rho$ is trivially abelian.
    Suppose $g$ is elliptic. Then we may conjugate in $PSL_2\R$ so
    that the fixed point of $g$ lies at $i$ in the upper half plane
    model. Then we may write
    \[
        g = \pm \begin{bmatrix} \cos \theta & - \sin \theta \\ \sin
        \theta & \cos \theta \end{bmatrix}, \quad h =
        \pm \begin{bmatrix} a & b \\ c & d \end{bmatrix}.
    \]
    where $\sin \theta \neq 0$. We obtain
    \[
        \Tr[g,h] = 2 + (a^2 + b^2 + c^2 + d^2 - 2) \sin^2 \theta
    \]
    hence as $\Tr[g,h] = 2$ and $\sin \theta \neq 0$,
    \[
        a^2 + b^2 + c^2 + d^2 = 2 = 2(ad - bc).
    \]
    Thus $(a-d)^2 + (b+c)^2 = 0$, so $a=d$ and $b=-c$. With determinant $1$, then $h$ is a rotation about $i$, so $g,h$ commute and $\rho$ is abelian.

    If $h$ is elliptic, then we apply the same argument noting
    $\Tr[h,g] = \Tr[g,h]$.

    Hence each of $g,h$ is hyperbolic or parabolic.
    Suppose first that one of $g,h$ is
    parabolic, without loss of generality $g$. Then we may conjugate
    in $PSL_2\R$, and replacing $G$ with $G^{-1}$ if necessary we have
    \[
        g^{\pm 1} = \pm \begin{bmatrix} 1 & 1 \\ 0 & 1 \end{bmatrix}, \quad
        h = \pm \begin{bmatrix} a & b \\ c & d \end{bmatrix}.
    \]
    We can calculate $\Tr[g,h] = \Tr[g^{-1},h] = 2+c^2$ but
    $\rho$ is reducible,
    so $\Tr[g,h] = 2$. Thus $c=0$ and $h$ is upper triangular, hence $h$ fixes
    $\infty$ in common with $g$. If $h$ is parabolic then $\rho$ is
    abelian, since $g,h$ are parabolics with the same fixed point.
    So $h$ is hyperbolic.

    Suppose now that both $g,h$ are hyperbolic. We may conjugate so $g$ has fixed points at infinity $\{0, \infty\}$ in the upper half-plane model, and may then write
    \[
      g = \pm \begin{bmatrix} r & 0 \\ 0 & r^{-1} \end{bmatrix}, \quad h = \pm \begin{bmatrix} a & b \\ c & d \end{bmatrix}
    \]
    where $r \neq \pm 1$. We obtain
    \[
      2 = \Tr[g,h] = 2ad - \left( r^2 + r^{-2} \right) bc
    \]
    which, since $ad-bc = 1$, gives $bc(r-r^{-1})^2 = 0$. As $r \neq \pm 1$ then we have $b=0$ or $c=0$, but not both: if $b=c=0$ then $\rho$ is abelian. Thus $h$ shares exactly one fixed point at infinity with $g$.
\qed
\end{proof}

\section{The Construction of Punctured Tori}
\label{sec:construction_tori}

\subsection{Statement and preliminaries}

Throughout this section, as usual, let $S$ be a punctured torus, and let $G,H$ be a basis of $\pi_1(S)$, with a basepoint $q$ chosen on the boundary. Let $\rho: \pi_1 (S) \To PSL_2\R$ be a representation, and let $\rho(G)=g, \rho(H)=h$. We prove theorem \ref{punctured_torus_theorem}.  The strategy of the proof is as follows. We take some lift of $g,h,gh$ into $SL_2\R$ and let $(x,y,z) = (\Tr g, \Tr h, \Tr gh) \in X(S)$ be the character of $\rho$, which is well-defined up to the equivalence relations $(x,y,z) \sim (-x,-y,z) \sim (-x,y,-z) \sim (x,-y,-z)$. Then we have $\Tr[g,h] = \kappa(x,y,z) = x^2 + y^2 + z^2 - xyz - 2$, which is well-defined regardless of the choice of lift into $SL_2\R$; indeed $[g,h]$ gives a well-defined element of $\widetilde{PSL_2\R}$. The proof is split into cases according to the value of $\Tr[g,h]$.

In section \ref{sec:less_than_-2} we treat the case $\Tr[g,h] \in (-\infty, -2)$. By corollary \ref{commutator_trace_info}, we see $\Tr[g,h] < -2$ implies that $[g,h] \in \Hyp_1 \cup \Hyp_{-1}$. We will construct a hyperbolic cone-manifold structure with a preferred orientation, accordingly as $[g,h] \in \Hyp_1$ or $\Hyp_{-1}$.

In section \ref{sec:-2} we treat $\Tr[g,h] = -2$. In this case we have, similarly, from corollary \ref{commutator_trace_info}, $[g,h] \in \Par_1^-$ or $\Par_{-1}^+$. Again we will construct a hyperbolic cone-manifold structure with a preferred orientation accordingly as $[g,h] \in \Par_1^-$ or $\Par_{-1}^+$.

In section \ref{sec:-2_to_2} we consider $\Tr[g,h] \in (-2,2)$. In this case from corollary \ref{commutator_trace_info}, $[g,h] \in \Ell_{-1}$ or $\Ell_1$. We find cone-manifold structures of one of the two possible orientations accordingly as $[g,h] \in \Ell_1$ or $\Ell_{-1}$.

In section \ref{sec:2} we treat $\Tr[g,h] = 2$. From corollary \ref{commutator_trace_info}, $[g,h] \in \{1\} \cup \Par_0$. By proposition \ref{reducible representation}, these are precisely the reducible representations. Some of these representations are virtually abelian (in fact abelian, using lemma \ref{virtually abelian but not abelian}); we will prove these are not holonomy representations. For the other reducible representations we will find a cone-manifold structure of a preferred orientation, accordingly as $[g,h] \in \Par_0^+$ or $\Par_0^-$.

In section \ref{sec:more_than_2} we consider the most difficult case, $\Tr[g,h] > 2$. Some of these representations are virtually abelian, and we will eliminate these. For the other representations, by corollary \ref{commutator_trace_info} we have $[g,h] \in \Hyp_0$. There is no preferred orientation; we do not specify it in advance.

\subsection{The case $\Tr[g,h] < -2$: complete and discrete}
\label{sec:less_than_-2}

From lemma \ref{axes crossing}, if $\Tr[g,h] < 2$ then $g,h$ are both hyperbolic and their axes cross. If $\Tr[g,h] < -2$ then this commutator is hyperbolic. By four applications of lemma \ref{hyperbolic commutator}, the arrangement of axes of various commutators is as shown in figure \ref{fig:11}.

Taking an arbitrary point $p \in \Axis [g,h]$ we investigate the arrangement of $\Pent(g,h;p)$. In general we have $\alpha (\Axis \beta) = \Axis (\alpha \beta \alpha^{-1})$. So $h p$ lies on $\Axis [h,g^{-1}] = \Axis [g^{-1}, h]$. Similarly, $g h p \in \Axis [g, h]$ and $h^{-1}gh \in \Axis [g,h^{-1}]$. Obviously $[g^{-1},h^{-1}] p \in \Axis [g^{-1},h^{-1}]$. Given the arrangement of axes, it is clear that $\Pent(g,h;p)$ bounds an embedded disc.

\begin{figure}[tbh]
\begin{center}
\includegraphics[scale=0.35]{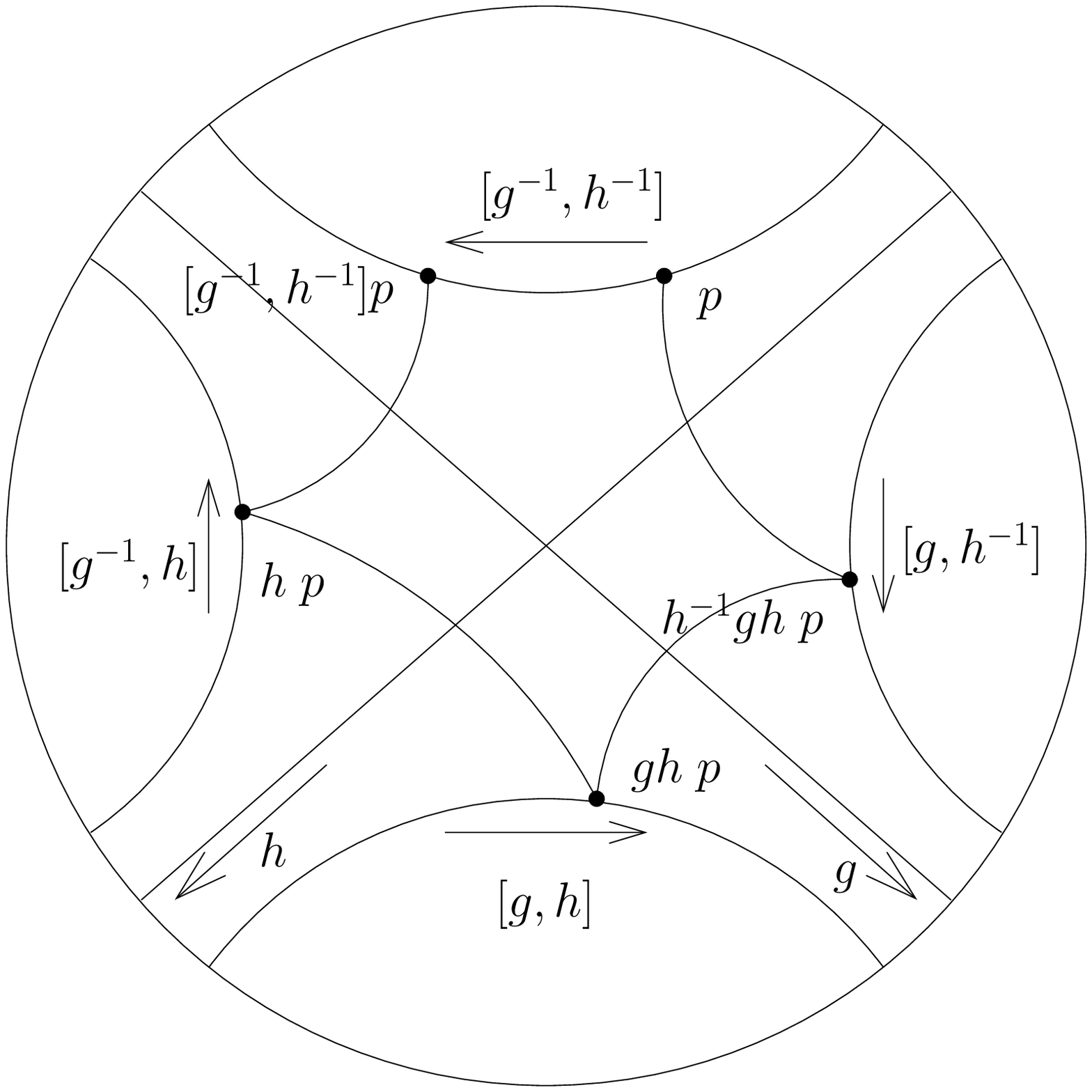}
\caption{The arrangement of axes of commutators if $\Tr[g,h] < -
2$; $\Pent(g,h;p)$ then bounds an embedded disc.}
\label{fig:11}
\end{center}
\end{figure}

By lemma \ref{construction_lemma}, this gives a hyperbolic cone-manifold structure on $S$ with no interior cone points and one corner point. Now $[g^{-1},h^{-1}]$, as a hyperbolic isometry, simply translates along $\Axis[g^{-1},h^{-1}]$. So $\Twist([g^{-1},h^{-1}],p)$ is a multiple of $2\pi$; since $[g,h] \in \Hyp_{\pm 1}$ is conjugate to $[g^{-1},h^{-1}]$, we have $\Twist([g^{-1},h^{-1}],p) = \pm 2\pi$. By lemma \ref{pentagon_twist}, the corner angle $\theta = 3\pi \pm 2\pi$. By lemma \ref{Gauss-Bonnet}, $\theta \in (0, 3\pi)$. So $\theta = \pi$. That is, the corner point at $q$ is actually no corner at all, and we have obtained a hyperbolic structure on $S$ with totally geodesic boundary.

The above construction works for any basis of $\pi_1(S)$, and any point on $\Axis [g^{-1},h^{-1}]$. It is clear why: $\rho$ is the holonomy of a complete hyperbolic structure on $S$ and is discrete. By choosing $p$ inside or outside the convex core, we may extend or truncate the surface with geodesic boundary, as described in section \ref{rigidity section}.

Either orientation of the torus is possible, depending on the arrangement of $g$ and $h$. If the axes of $g,h$ intersect in the manner of figure \ref{fig:11}, then $p \to [g^{-1},h^{-1}] p$ bounds $\Pent(g,h;p)$ to its left; hence $\partial S$ traversed in the direction of $[G,H]$ bounds $S$ on its left; and the twist of $[g^{-1},h^{-1}]$ at $p$ is positive, $\Twist([g^{-1},h^{-1}],p) = 2\pi$, so (from proposition \ref{twist_bounds}) $[g,h] \in \Hyp_{1}$. If we choose $p$ sufficiently close to $\Axis[g^{-1},h^{-1}]$, $\Pent(g,h;p)$ still bounds an embedded disc, and by lemma \ref{pentagon_twist}, $\theta = 3\pi - \Twist([g^{-1},h^{-1}],p)$. In the case where the axes of $g,h$ intersect in the opposite manner, we obtain oppositely oriented results.
\begin{prop}
\label{orientation_1}
    Let $\rho$ be a representation and $G,H$ a basis of $\pi_1(S)$
    with $\Tr[g,h] < -2$. Suppose $[g,h] \in \Hyp_1$ (resp.
    $\Hyp_{-1}$). Then $\rho$ is the holonomy of a complete
            hyperbolic structure in which $\partial S$, traversed in the
            direction homotopic to $[G,H]$, bounds $S$ on its left (resp.
            right).
            The axes of $g,h$ intersect in the manner shown in figure
            \ref{fig:11} (resp. the opposite manner).
            For $p \in \Axis
            [g^{-1},h^{-1}]$ we have $\Twist([g^{-1},h^{-1}],p) = 2\pi$ (resp. $-2\pi$).
            For $p$ sufficiently close to $\Axis[g^{-1},h^{-1}]$, we obtain a
            hyperbolic cone-manifold structure on $S$ with one
            corner point. The
            corner angle is given by $\theta = 3\pi - \Twist([g^{-1},h^{-1}],p)$
            (resp. $3\pi + \Twist([g^{-1},h^{-1}],p)$).
\qed
\end{prop}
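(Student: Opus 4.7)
The plan is to formalize the arguments already sketched in the paragraphs preceding the statement and to pin down the orientation correspondence by inspection of the two possible configurations of crossing axes.

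First I would invoke the preliminary analysis: by lemma \ref{axes crossing} the axes of $g$ and $h$ cross (since $\Tr[g,h]<2$), and by four applications of lemma \ref{hyperbolic commutator} applied to the pairs $(g,h)$, $(g^{-1},h)$, $(g,h^{-1})$, $(g^{-1},h^{-1})$, the four commutator axes sit in the positions of figure \ref{fig:11}. For any $p\in\Axis[g^{-1},h^{-1}]$, the five vertices of $\Pent(g,h;p)$ are forced, by the identity $\alpha(\Axis\beta)=\Axis(\alpha\beta\alpha^{-1})$, onto these four axes, and a direct inspection of figure \ref{fig:11} shows that the pentagon bounds an embedded disc. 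By lemma \ref{construction_lemma} this produces a hyperbolic cone-manifold structure on $S$ whose holonomy is $\rho$ and whose only possible singular point is the corner at $q$.

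Second, to pin down the corner angle, I would use that $[g^{-1},h^{-1}]$ is conjugate to $[g,h]\in\Hyp_{\pm 1}$, hence is a hyperbolic translation with $p\in\Axis[g^{-1},h^{-1}]$; proposition \ref{twist_bounds} then forces $\Twist([g^{-1},h^{-1}],p)=\pm2\pi$. Combining this with lemma \ref{pentagon_twist} gives $\theta=3\pi\mp2\pi\in\{\pi,5\pi\}$, and lemma \ref{Gauss-Bonnet} ($\theta\in(0,3\pi)$) forces $\theta=\pi$. So the putative corner is a regular boundary point and the structure is complete with totally geodesic boundary.

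Third, for the orientation claim, the two possible configurations of the crossing axes of $g,h$ differ by an orientation-reversing isometry of $\hyp^2$. Fixing the configuration in figure \ref{fig:11}, one reads off directly that the oriented boundary segment $p\to[g^{-1},h^{-1}]p$ bounds $\Pent(g,h;p)$ on its left; since the boundary of $S$ traversed in the direction of $[G,H]$ develops to this segment, it bounds $S$ on its left. The same inspection shows the tangent vector at $p$ is rotated anticlockwise by $2\pi$ along the translation $[g^{-1},h^{-1}]$, so $\Twist([g^{-1},h^{-1}],p)=+2\pi$; by proposition \ref{twist_bounds} this places $[g,h]\in\Hyp_1$. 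The opposite configuration is obtained by applying an orientation-reversing isometry and reverses all signs, giving the $\Hyp_{-1}$ case.

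Finally, to extend from $p$ on the axis to $p$ nearby, I would note that the condition that $\Pent(g,h;p)$ be nondegenerate and bound an immersed open disc is an open condition on $p\in\hyp^2$, so a small neighbourhood of $\Axis[g^{-1},h^{-1}]$ still yields valid fundamental domains; the corner angle formula $\theta=3\pi\mp\Twist([g^{-1},h^{-1}],p)$ then follows from lemma \ref{pentagon_twist} applied with the side assignment determined above. The main obstacle is purely bookkeeping: matching the abstract orientation (direction of $[G,H]$ on $\partial S$) with the concrete geometric layout in $\hyp^2$ and the sign of the lift to $\widetilde{PSL_2\R}$; everything else reduces to citing the results already assembled in sections \ref{sec:Preliminaries} and \ref{sec:geometry_punctured_tori}.
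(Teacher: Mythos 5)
Your proposal is correct and follows essentially the same route as the paper: four applications of lemma \ref{hyperbolic commutator} to get the axis configuration of figure \ref{fig:11}, placement of $p$ on (or near) $\Axis[g^{-1},h^{-1}]$ so that $\Pent(g,h;p)$ bounds an embedded disc, lemma \ref{construction_lemma}, and then $\Twist([g^{-1},h^{-1}],p)=\pm2\pi$ combined with lemmas \ref{pentagon_twist} and \ref{Gauss-Bonnet} to force $\theta=\pi$, with the orientation read off from which side the segment $p\to[g^{-1},h^{-1}]p$ bounds the pentagon. The only cosmetic difference is that you phrase the perturbation off the axis as an openness argument, which is exactly what the paper implicitly uses.
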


\subsection{The case $\Tr[g,h] = -2$: parabolics and cusps}
\label{sec:-2}

This case proceeds similarly to the previous case. By corollary \ref{commutator_trace_info}, $[g,h]$ lies in $\Par_1^-$ or $\Par_{-1}^+$. The isometries $g,h$ are still hyperbolic and their axes cross. Using lemma \ref{parabolic_commutator} four times, we have the situation of figure \ref{fig:12}.

\begin{figure}[tbh]
\centering
\includegraphics[scale=0.35]{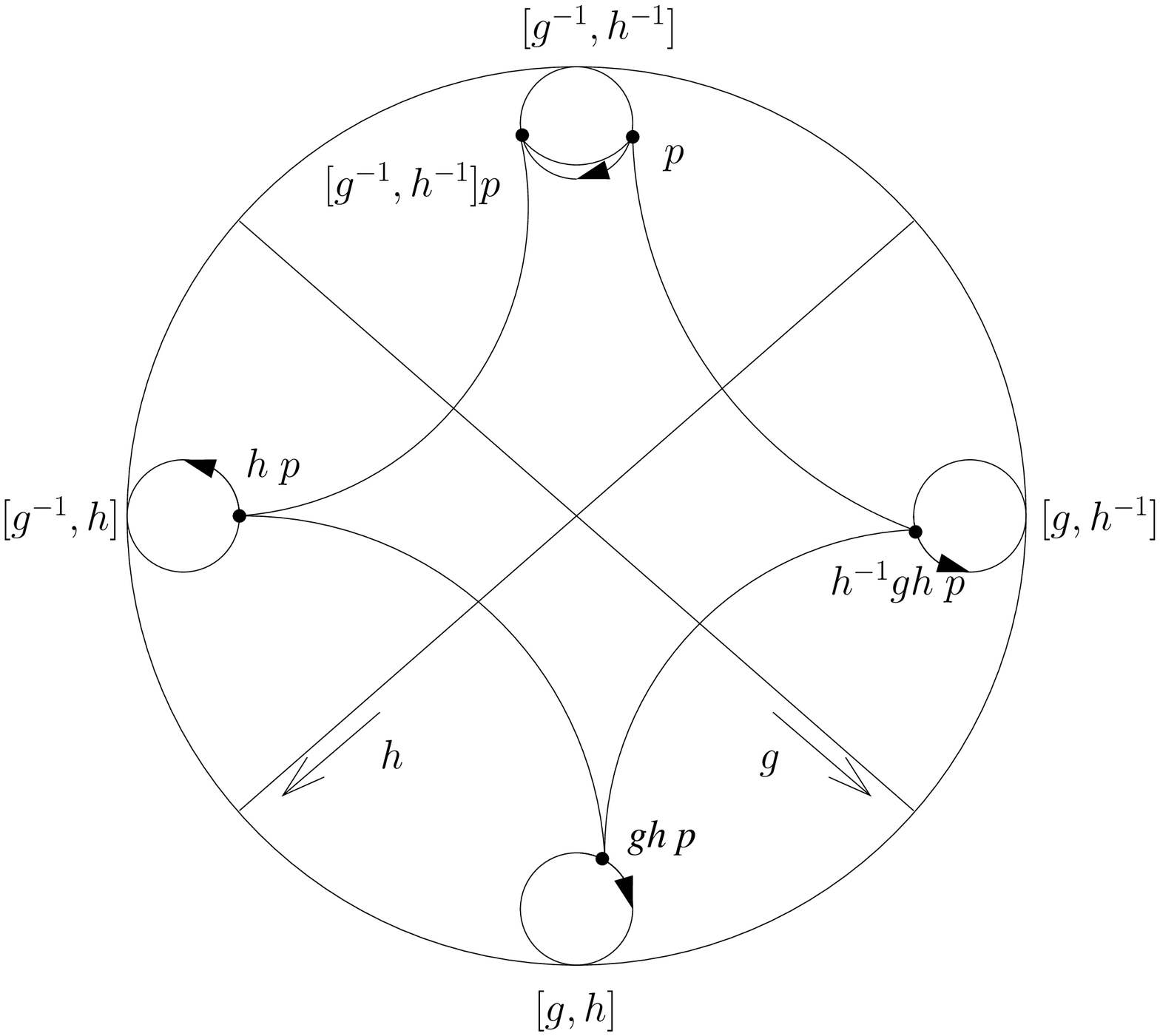}
\caption{The situation when $\Tr[g,h] = -2$.} \label{fig:12}
\end{figure}

Let $r = \Fix [g^{-1},h^{-1}]$. Then $h r = \Fix [g^{-1}, h]$, $g h r = \Fix [g, h]$, and $h^{-1}g h r = \Fix [g, h^{-1}]$. Obviously $[g^{-1},h^{-1}] r = r$. So taking $p=r$ gives $\Pent(g,h;p)$ an ideal quadrilateral, with degenerate ``boundary" edge, and bounds an embedded disc.

This gives a complete hyperbolic structure on $S$, where the boundary has become a cusp; $\rho$ is a discrete representation. Taking $p \in \hyp^2$
truncates this underlying surface and gives a cone-manifold structure on $S$ with no interior cone points and one corner point. As in the previous case, any basis $G,H$ will suffice.

Consider horocycles along which $[g^{-1},h^{-1}]$ translates. As $p$ approaches $\Fix [g^{-1},h^{-1}]$, since $[g^{-1},h^{-1}] \in \Par_1^- \cup \Par_{-1}^+$, $\Twist([g^{-1},h^{-1}],p)$ approaches $\pm 2\pi$. Thus by lemma \ref{pentagon_twist}, the corner angle $\theta$ is close to $\pi$. That is, the further out to infinity we choose $p$, the ``flatter" the corner angle obtained. 

The same argument regarding orientations as in the previous case gives the following proposition.
\begin{prop}
\label{orientation_2}
    Let $\rho$ be a representation and $G,H$ a basis of $\pi_1(S)$
    with $\Tr[g,h] = -2$. Suppose $[g,h] \in \Par_1^-$ (resp.
    $\Par_{-1}^+$). Then $\rho$ is the holonomy of a hyperbolic cone-manifold structure on $S$ with no interior cone points and one
            corner point and $\partial S$, traversed in the direction homotopic to
            $[G,H]$, bounds $S$ on its left (resp. right).
            The axes of $g,h$ intersect in the manner shown in figure
            \ref{fig:12} (resp. the opposite manner).
            As $p$ approaches the fixed point at infinity of $[g^{-1},h^{-1}]$, $\Twist([g^{-1},h^{-1}],p)$ approaches
            $2\pi$ from below (resp. $-2\pi$ from above).
            The
            corner angle is given by $\theta = 3\pi - \Twist([g,h],p)$
            (resp. $3\pi + \Twist([g,h],p)$).
\qed
\end{prop}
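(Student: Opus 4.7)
The plan is to mirror the proof of Proposition~\ref{orientation_1}, with parabolic commutators replacing hyperbolic ones at each step. There are three tasks: (a) place the fixed points of the four conjugate commutators on the circle at infinity; (b) verify that $\Pent(g,h;p)$ bounds an embedded disc, so that Lemma~\ref{construction_lemma} applies; and (c) read off the orientation and the corner-angle formula. Since $\Tr[g,h]=-2<2$, Lemma~\ref{axes crossing} shows $g,h$ are hyperbolic with crossing axes, while Corollary~\ref{commutator_trace_info} forces $[g,h]\in \Par_1^- \cup \Par_{-1}^+$.

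For step (a), I would apply Lemma~\ref{parabolic_commutator} to each of $[g^{\pm 1},h^{\pm 1}]$; the arguments remain pairs of hyperbolics whose axes cross, so each fixed point lies on the prescribed arc of the circle at infinity between the attractive fixed points of the two generators. Writing $r=\Fix[g^{-1},h^{-1}]$ and using the general identity $\alpha\beta\alpha^{-1}(\alpha x)=\alpha(x)$ together with the rewrite $h[g^{-1},h^{-1}]h^{-1}=[g^{-1},h]^{\pm 1}$ etc., one checks that $hr$, $ghr$, $h^{-1}ghr$ are the fixed points of $[g^{-1},h]$, $[g,h]$, $[g,h^{-1}]$ respectively. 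This reproduces Figure~\ref{fig:12}.

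For step (b), first take $p=r$: the ``boundary'' segment $p\to [g^{-1},h^{-1}]p$ collapses to a single ideal point, and $\Pent(g,h;r)$ degenerates to an ideal quadrilateral whose vertices are the four fixed points above. Inspection of Figure~\ref{fig:12} shows this bounds an embedded open disc, realising a complete hyperbolic structure on $S$ with the puncture pinched to a cusp. For $p$ pushed slightly into $\hyp^2$ along a horocycle about $r$, $\Pent(g,h;p)$ is a small perturbation of this ideal quadrilateral: it is non-degenerate and, by continuity of the bounding map, still embeds. Lemma~\ref{construction_lemma} then supplies a cone-manifold structure on $S$ with no interior cone points and exactly one corner point.

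For step (c), the orientation analysis is identical to that of Proposition~\ref{orientation_1}: the cyclic order of the axes of $g,h$ in Figure~\ref{fig:12} determines on which side the disc lies, and hence whether $\partial S$ traversed as $[G,H]$ bounds $S$ on its left or right; this in turn pairs with $[g,h]\in \Par_1^-$ versus $\Par_{-1}^+$, since conjugation within $\widetilde{PSL_2\R}$ preserves the $\Par_n^{\pm}$-strata and $[g^{-1},h^{-1}]$ is conjugate to $[g,h]$. Because the twist of a parabolic is constant on horocycles about its fixed point and tends to $0$ on horocycles near $\Fix$ (cf.\ the bullet list in Section~\ref{sec:deriv_isoms}), Proposition~\ref{twist_bounds} gives $\Twist([g^{-1},h^{-1}],p)\in(\pi,2\pi)$ or $(-2\pi,-\pi)$ accordingly, tending to $\pm 2\pi$ as $p\to r$ along horocycles. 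Lemma~\ref{pentagon_twist} then converts this into $\theta=3\pi\mp\Twist([g^{-1},h^{-1}],p)$, which in particular approaches $\pi$ from above as $p$ retreats toward the cusp. The main delicacy is ensuring the pentagon remains embedded over the range of $p$ used; this is handled by restricting to horocycles sufficiently close to $r$ and invoking continuity from the degenerate ideal case.
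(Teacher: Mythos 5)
Your steps (a) and (c) coincide with the paper's: four applications of Lemma~\ref{parabolic_commutator} give the configuration of Figure~\ref{fig:12}, the points $hr$, $ghr$, $h^{-1}ghr$ are the fixed points of $[g^{-1},h]$, $[g,h]$, $[g,h^{-1}]$, and with $p=r$ the pentagon degenerates to an ideal quadrilateral bounding an embedded disc; the twist and corner-angle bookkeeping is also as in the paper. The gap is in step (b). ``By continuity of the bounding map, still embeds'' is not a valid inference, because the limit configuration is degenerate exactly where simplicity could fail: the boundary edge $p \To [g^{-1},h^{-1}]p$ collapses to the ideal point $r$, and the two edges adjacent to it, $h^{-1}ghp \To p$ and $[g^{-1},h^{-1}]p \To hp$, meet at $r$ in the limit. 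A family of closed curves converging to a simple closed curve need not be simple near a pinched vertex. Concretely, normalise $r=\infty$ in the upper half-plane, so that $[g^{-1},h^{-1}]$ is $z \mapsto z+c$; for $p$ high above the real axis, $\Pent(g,h;p)$ is simple if and only if the sign of $c$ is compatible with the order of $h(\infty)$, $gh(\infty)$, $h^{-1}gh(\infty)$ on $\R$ --- with the wrong sign the two long edges would cross near $\infty$ and the pentagon would not bound an immersed disc, even though it still converges to the embedded ideal quadrilateral. So the sense of the parabolic translation (the extra content of Lemma~\ref{parabolic_commutator}, ``the sense of the rotation,'' recorded in Figure~\ref{fig:12}) must actually be used, and your argument never invokes it at this step. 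Nor can it be recovered from the twist range quoted in step (c): Proposition~\ref{prop:twist_area} and Lemma~\ref{pentagon_twist} presuppose that the pentagon already bounds an immersed disc.

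There are two ways to close the gap. Either make the sign check explicit in the normalised picture above (a short computation once the senses shown in Figure~\ref{fig:12} are imported), or argue as the paper does: the ideal quadrilateral exhibits $\rho$ as the discrete holonomy of a complete hyperbolic structure on $S$ in which the boundary has become a cusp, and for $p$ in the cusp region $\Pent(g,h;p)$ is a fundamental domain for the truncation of this complete surface along the simple geodesic loop around the cusp based at the image of $p$; embeddedness then comes from the complete structure rather than from a limiting argument. That some such input is genuinely needed is illustrated by the elliptic case of Section~\ref{sec:-2_to_2}, where the same ``perturb the degenerate configuration'' move produces a simple pentagon only for a restricted (roughly half-disc) set of perturbations of $p$.
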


\subsection{The case $\Tr[g,h] \in (-2,2)$}
\label{sec:-2_to_2}

By lemma \ref{elliptic_commutator} the fixed points of $[g,h]$, $[g^{-1},h]$, $[g,h^{-1}]$ and $[g^{-1}, h^{-1}]$
are as shown in figure \ref{fig:13}. Let $r = \Fix [g^{-1},h^{-1}]$.

\begin{figure}[tbh]
\centering
\includegraphics[scale=0.4]{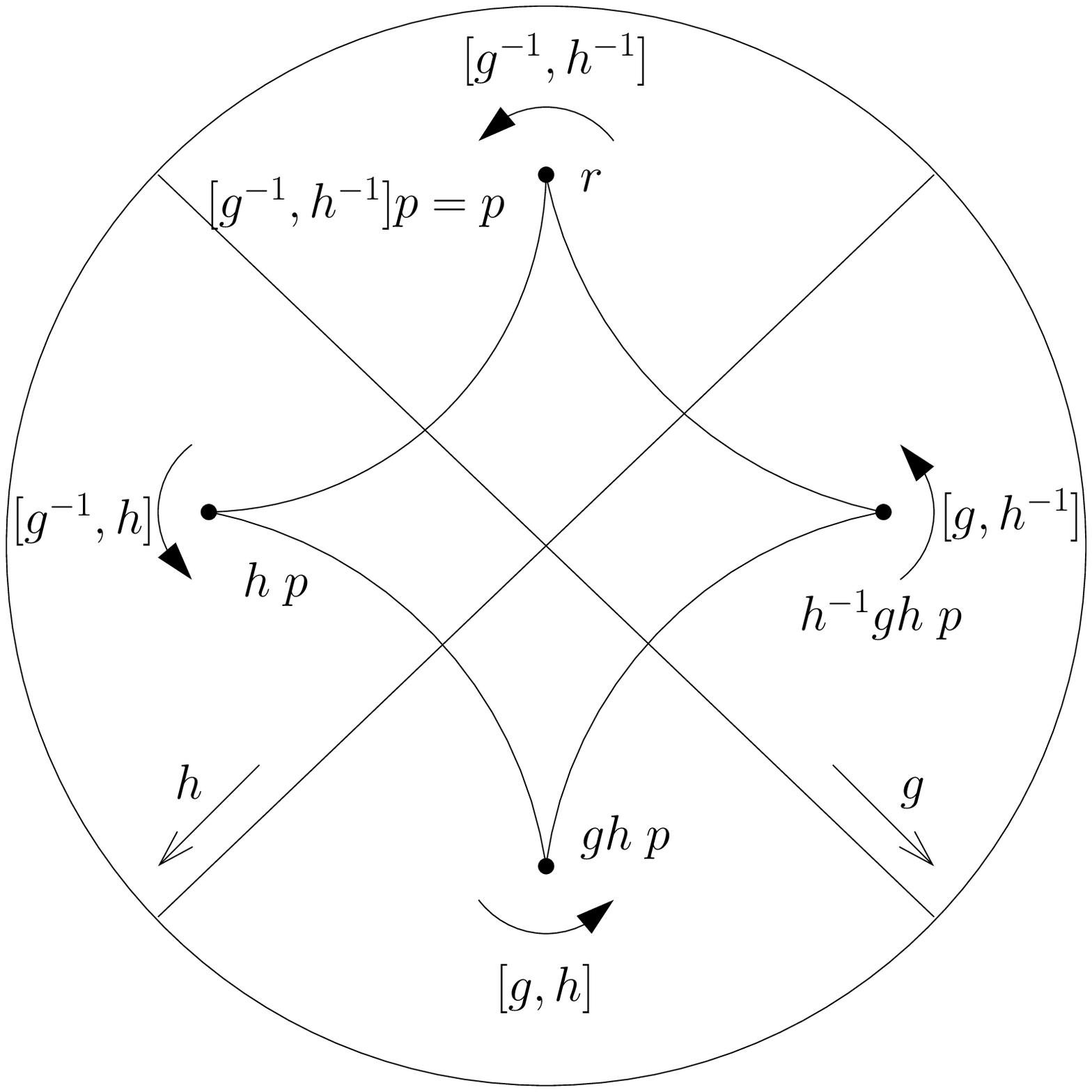}
\caption{The situation when $\Tr[g,h] \in (-2,2)$ and $p=r$.}
\label{fig:13}
\end{figure}

Letting $p = r$, $\Pent(g,h;p)$ degenerates to a quadrilateral, and $\rho$ is the holonomy of a hyperbolic cone-manifold structure on a (non-punctured) torus, with a single cone point. We perturb $p$ away from the fixed point $r$, in a direction so that $\Pent(g,h;p)$ bounds an embedded or immersed disc. For sufficiently small $\epsilon$, consider a small circle $C_\epsilon(r)$ of radius $\epsilon$ about $r$, and ask: for which $p \in C_\epsilon(r)$ does $\Pent(g,h;p)$ bound an embedded or immersed disc?

First a remark about orientation. From corollary \ref{commutator_trace_info} we know $[g,h] \in \Ell_{-1} \cup \Ell_1$. In the situation of figure \ref{fig:13}, a unit vector chase shows $\Twist([g^{-1},h^{-1}],r) > 0$, so by proposition \ref{twist_bounds} $[g,h], [g^{-1},h^{-1}] \in \Ell_1$. If the axes of $g,h$ intersect in the opposite manner then $\Twist([g^{-1},h^{-1}],r) < 0$ and $[g^{-1}, h^{-1}] \in \Ell_{-1}$. We will treat the case shown, i.e.
$[g,h] \in \Ell_1$; the other case is mirror reversed.

The gist of the idea is that, with $r, hr, h^{-1}ghr$ drawn as in figure \ref{fig:14}, with $hr$ to the left and $h^{-1}ghr$ to the right, we choose $p_0$ so that it is ``more right'' and $[g^{-1}, h^{-1}]p_0$ is ``more left'' (these ``directions'' are only to be taken in a vague sense). From proposition \ref{twist_bounds} we have $\Twist([g^{-1},h^{-1}],r) \in (0, 2\pi)$. The details work out somewhat differently if $\Twist([g^{-1},h^{-1}],r) \in (0, \pi]$ or $\Twist([g^{-1},h^{-1}],r) \in [\pi, 2\pi)$, and so we treat these two cases separately.

First assume $\Twist([g^{-1},h^{-1}],r) \in [\pi, 2\pi)$. Let $\varphi = 2\pi - \Twist([g^{-1},h^{-1}],r)$, so that $\varphi \in (0, \pi]$ and for $p$ on $C_\epsilon(r)$, the angle $\angle p r ([g^{-1},h^{-1}]p) = \varphi$, as shown in figure \ref{fig:14}. Let $\alpha$ denote the angle $\angle (h r)r(h^{-1}gh r)$. As $p$ moves around $C_\epsilon(r)$, its images under $h$, $gh$, $h^{-1}gh$, $[g^{-1}, h^{-1}]$ move around $C_\epsilon(h r)$, $C_\epsilon(g h r)$, $C_\epsilon(h^{-1}gh r)$, $C_\epsilon (r)$ respectively with the same angular velocity.

\begin{figure}[tbh]
\centering
\includegraphics[scale=0.4]{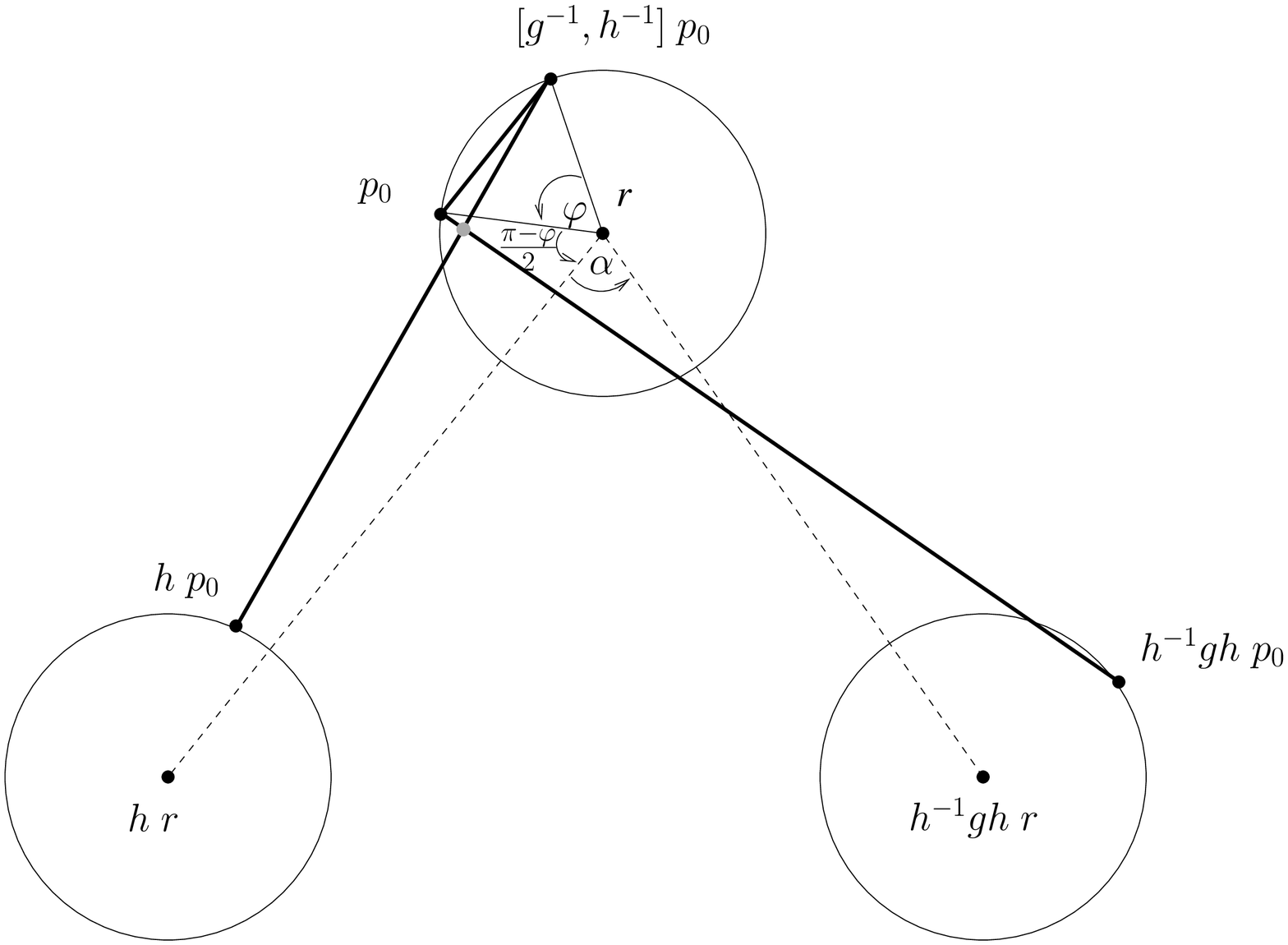}
\caption[The situation in $\Pent(g,h;p)$ for $p \in
C_\epsilon(r)$]{The situation in $\Pent(g,h;p)$ for $p \in
C_\epsilon(r)$. (These lines are all hyperbolic geodesics.)}
\label{fig:14}
\end{figure}

We rotate $p$ around $C_\epsilon(r)$ to the point $p_0$ lying $\frac{\pi-\varphi}{2} \in [0, \pi/2)$ clockwise of the point where $C_\epsilon(r)$ intersects the geodesic segment $r \To h r$, as in figure \ref{fig:14}. It follows that $p_0$ and $[g^{-1},h^{-1}]p_0$ both lie the same perpendicular (hyperbolic) distance from the line through $r$ and $hr$.

We claim that, while for this $p_0$ the pentagon $\Pent(g,h;p_0)$ is self-intersecting (as shown), for any $p$ lying anticlockwise of and close to $p_0$, we may take $\epsilon$ sufficiently small so that $\Pent(g,h;p)$ is simple (i.e. non-self-intersecting).

It is clear that most sides of $\Pent(g,h;p)$ pose no problem for simplicity; to show $\Pent(g,h;p)$ is simple it is sufficient to show that the segment
$[g^{-1},h^{-1}]p \To hp$ does not intersect $h^{-1}ghp \To p$. Consider the heights of various points with respect to the line $r \To hr$. It is sufficient to show that, in the arrangement of figure \ref{fig:15}, the segment $hp \To [g^{-1},h^{-1}]p$ lies above the segment $p \To [g^{-1},h^{-1}]p$. (The point $h^{-1}ghp$ lies far below $r \To hr$.) But for $p$ anticlockwise of $p_0$, by definition $p$ is lower than $[g^{-1},h^{-1}]p$ with respect to $r \To h r$. By taking $\epsilon$ sufficiently small, the segment $[g^{-1},h^{-1}]p \To h p$ can be made arbitrarily flat, rising by a height at most $2\epsilon$ over some fixed distance; and then it will lie above the segment $p \To [g^{-1},h^{-1}]p$ as required.

\begin{figure}[tbh]
\centering
\includegraphics[scale=0.4]{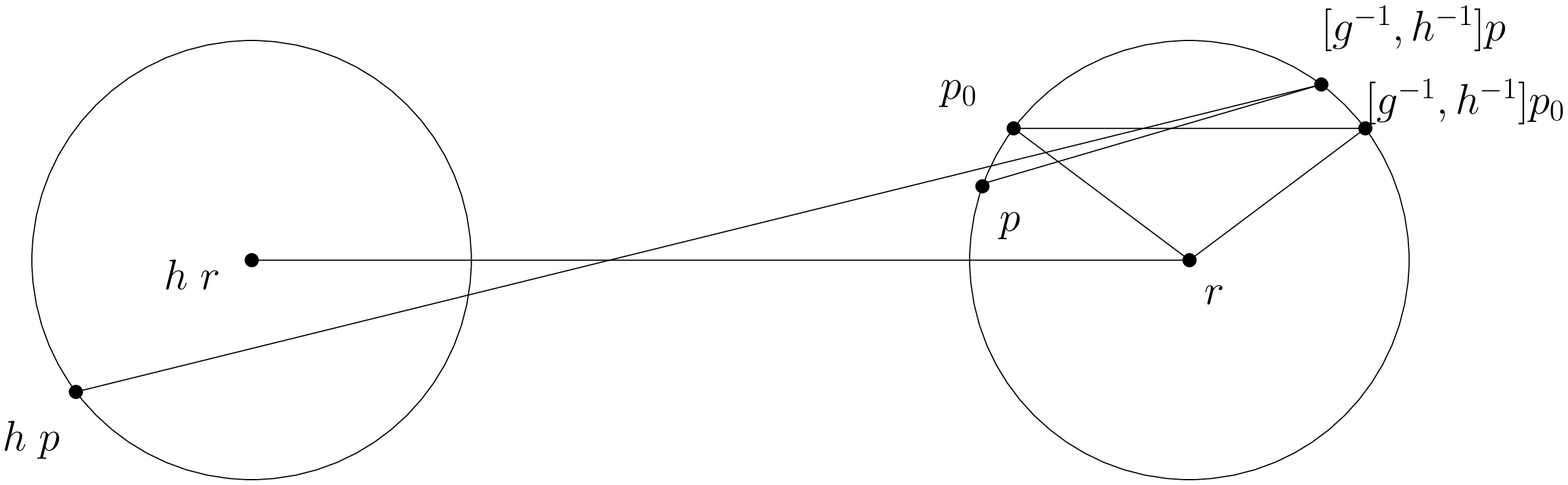}
\caption{Points in $\Pent(g,h;p)$ relative to the line $h r \To r$}
\label{fig:15}
\end{figure}

By a similar argument, we may rotate $p$ anticlockwise until $[g^{-1},h^{-1}]p$ lies $\frac{\pi-\varphi}{2}$ anticlockwise past the intersection of $C_\epsilon(r)$ with the segment $r \To h^{-1} gh r$. While $\Pent(g,h;p)$ is not simple for this $p=p_0$, for any $p$ up to this point, we may take $\epsilon$ sufficiently small so that $\Pent(g,h;p)$ is simple.

Thus we have found an open arc of angle $\pi+\alpha$ of directions from $r$, and for each direction there exists $\epsilon$ such that perturbing $p$ in this direction by a distance less than $\epsilon$ gives $\Pent(g,h;p)$ non-degenerate and simple: see figure \ref{fig:16}.

\begin{figure}[tbh]
\centering
\includegraphics[scale=0.4]{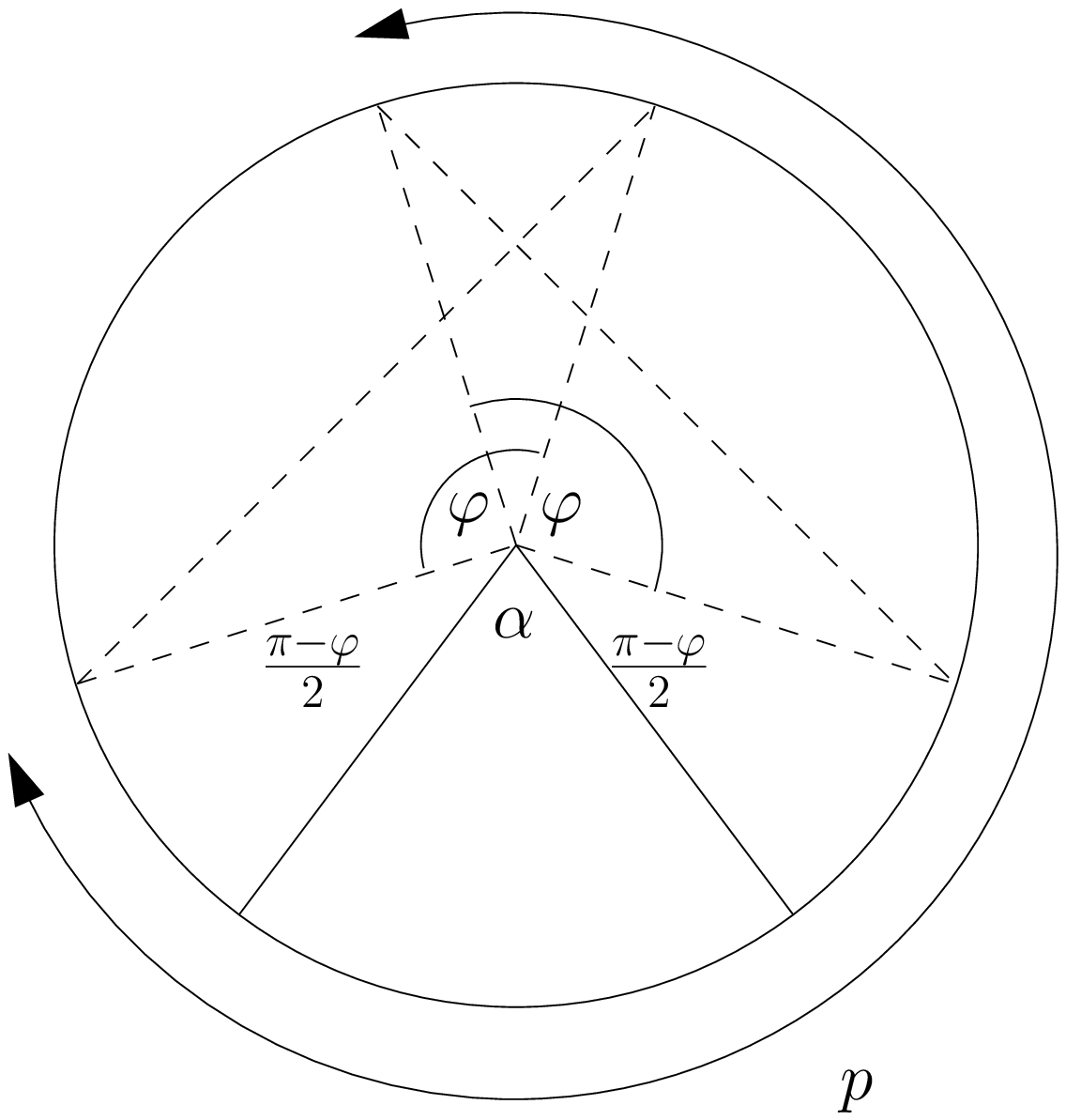}
\caption{Directions $p$ may be perturbed when
$\Twist([g^{-1},h^{-1}],r) \in [\pi, 2\pi)$.} \label{fig:16}
\end{figure}

In particular, there is a \emph{closed} arc of angle $\pi$ in which $p$ may be chosen such that $\Pent(g,h;p)$ is simple; and then by compactness we may choose an $\epsilon$ uniformly.

Next we consider the case $\Twist([g^{-1},h^{-1}],r) \in (0, \pi]$; the argument is similar. Let $\varphi = \Twist([g^{-1},h^{-1}],r)$, so $\varphi \in (0, \pi]$. Again we rotate $p$ around $C_\epsilon(r)$ and want $\Pent(g,h;p)$ simple. Rotate $p$ to the point $p_0$ where $[g^{-1},h^{-1}]p_0$ lies $\frac{\pi-\varphi}{2}$ clockwise of the intersection of $C_\epsilon(r)$ with the segment $r \To h^{-1}gh r$: see figure \ref{fig:17}. For $p$ clockwise of $p_0$, considering heights of points with respect to the line $r \To h^{-1}ghr$, $\epsilon$ may be taken sufficiently small so that $\Pent(g,h;p)$ is simple. And similarly for all $p$ clockwise up to $p_1$, lying $\frac{\pi-\varphi}{2}$ anticlockwise of the intersection point of $C_\epsilon(r)$ and the segment $r \To hr$. Again we obtain an open arc of angle $\pi+\alpha$ in which $\Pent(g,h;p)$ can be made simple: see figure \ref{fig:18}. And again there is a closed arc of angle $\pi$, and a uniform $\epsilon$, giving good choices for $p$.

\begin{figure}[tbh]
\centering
\includegraphics[scale=0.4]{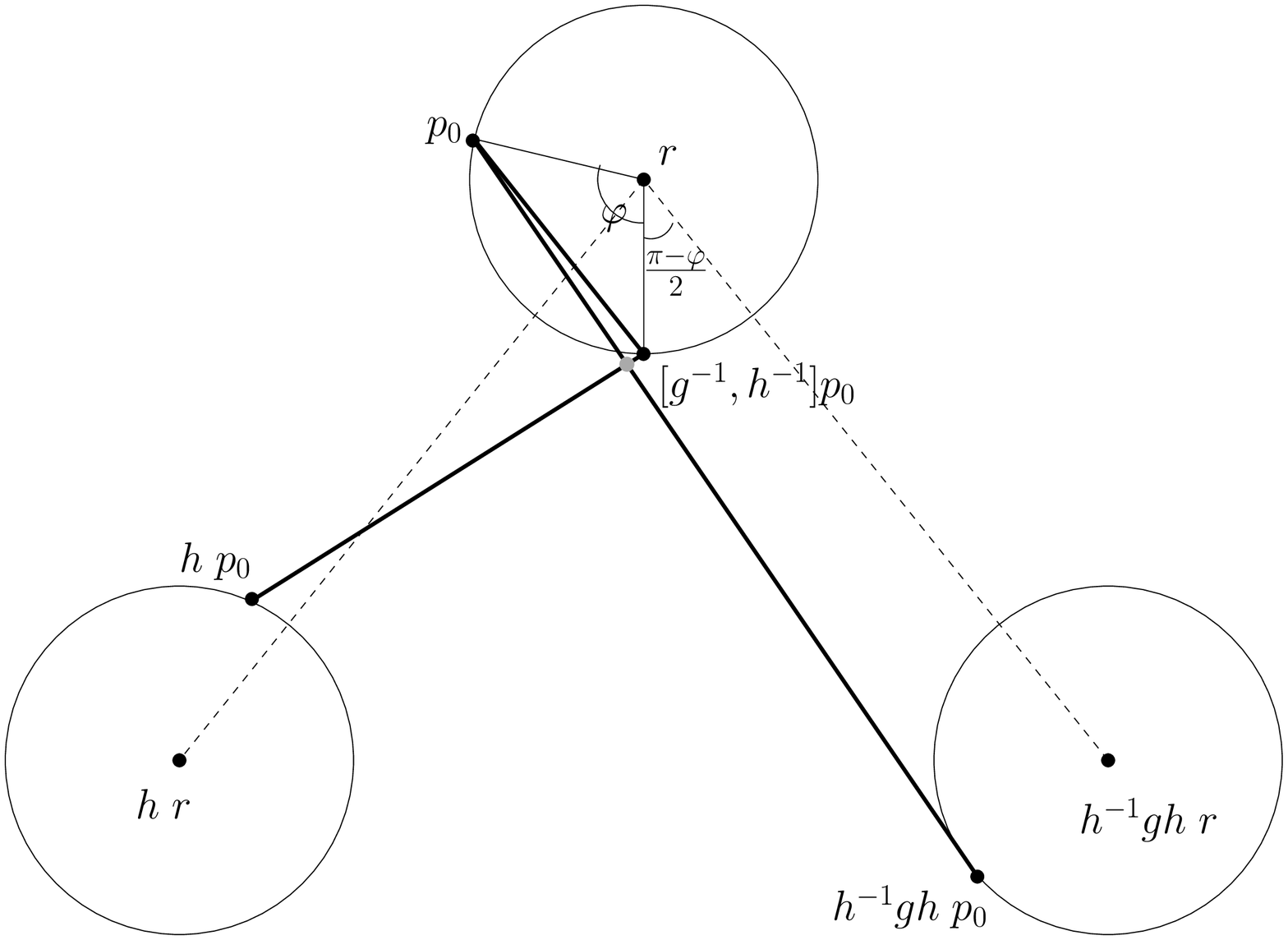}
\caption{The situation of $\Pent(g,h;p)$ in the case
$\Twist([g^{-1},h^{-1}],r) \in (0, \pi]$. Note $\Pent(g,h;p)$ may or may not contain $r$ in its interior, depending on the position of $p$.} \label{fig:17}
\end{figure}

\begin{figure}[tbh]
\centering
\includegraphics[scale=0.4]{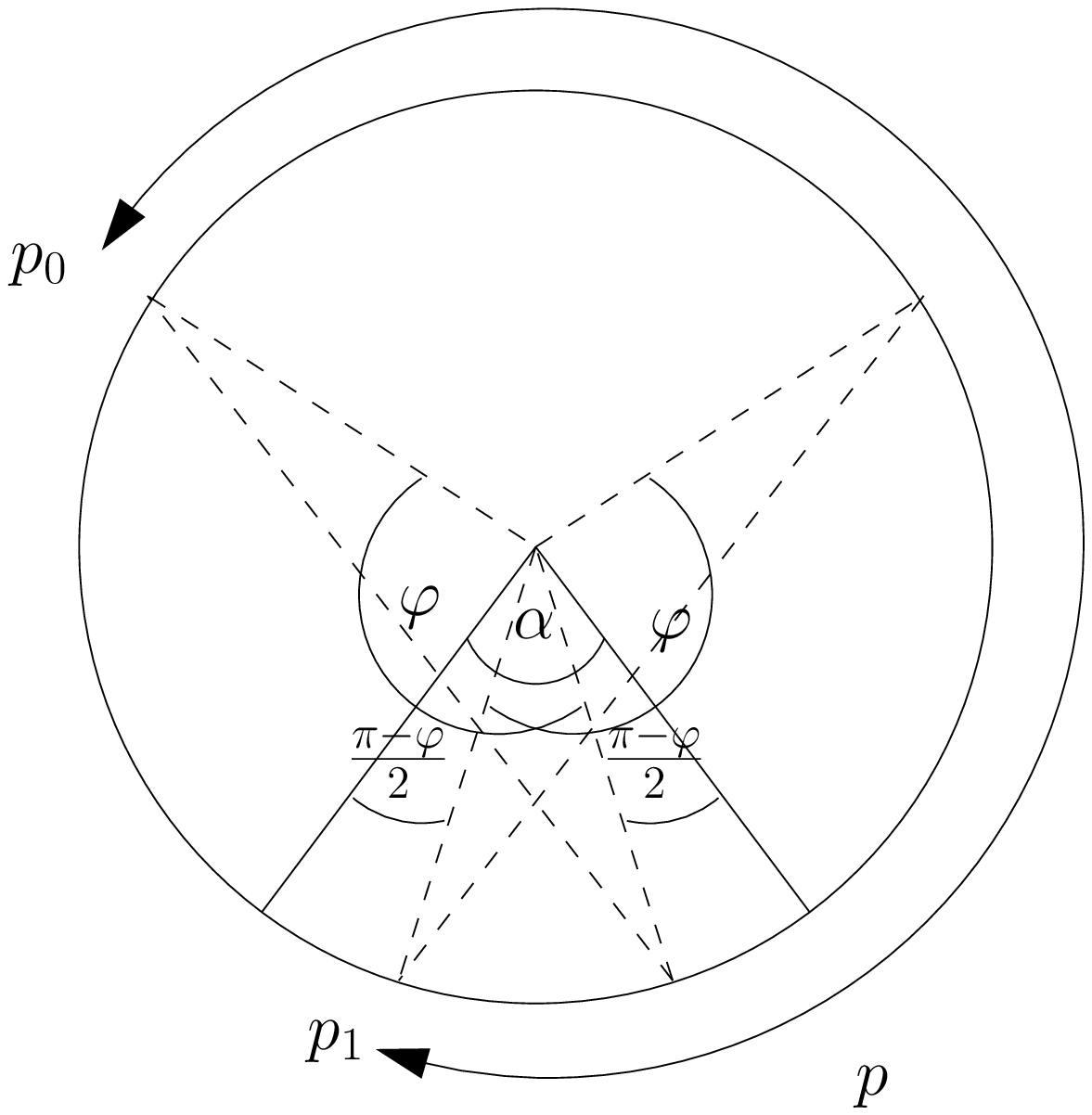}
\caption{Directions $p$ may be perturbed when
$\Twist([g^{-1},h^{-1}],r) \in (0, \pi]$.} \label{fig:18}
\end{figure}

Note the flexibility in choice of $p$: there is a closed semicircular disc of radius
$\epsilon$ with centre $r$ in which $p$ may be chosen arbitrarily
(except that $p \neq r$!). Note also that the above works for any basis $G,H$.

We can calculate the corner angles obtained, as previously; by lemma \ref{pentagon_twist} it is $\theta = 3\pi - \Twist([g^{-1},h^{-1}],p)$ in either of the above two cases. Thus $\theta$ is either less or more than $2\pi$ respectively in these cases; whether the corner angle is large or small is inherent in the rotation angle of $[g,h]$. We record our conclusions.

\begin{prop}
\label{elliptic_construction}
    Let $\rho: \pi_1(S) \To PSL_2\R$ be a representation with
    $\Tr[g,h] \in (-2,2)$. Suppose $[g,h] \in \Ell_1$ (resp.
    $\Ell_{-1}$). Let $r$ denote the fixed point of $[g,h]$. Then
    there exists a closed semicircular disc $D_\epsilon(r)$
    with centre $r$ such
    that if $p$ is chosen anywhere in this disc, except $r$, then
    $\Pent(g,h;p)$ is simple and non-degenerate, giving a hyperbolic
    cone-manifold structure on $S$ with no cone points and one
    corner point of angle $\theta$. The boundary $\partial S$,
    traversed in the direction $[G,H]$ bounds $S$ on its left (resp. right).
    The corner angle $\theta = 3\pi - \Twist([g^{-1},h^{-1}],p)$ (resp. $3\pi + \Twist([g^{-1},h^{-1}],p)$) lies in $(\pi, 3\pi)$; it lies in $(\pi, 2\pi]$ or $[2\pi, 3\pi)$ accordingly as the rotation angle of $[g^{-1},h^{-1}]$ lies in $[\pi, 2\pi)$ or $(0, \pi]$ (resp. $(-2\pi,-\pi]$ or $[-\pi,0)$).
    \qed
\end{prop}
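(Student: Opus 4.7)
The plan is to verify the hypotheses of Lemma \ref{construction_lemma} by exhibiting, for each $p$ in the asserted semicircular disc, a pentagon $\Pent(g,h;p)$ which is simple and non-degenerate. First I would locate the vertices: by Lemma \ref{elliptic_commutator} the fixed point $r$ of $[g,h]$ lies in the region bounded by $\Axis g$, $\Axis h$ and the arc at infinity from $a_g$ to $a_h$; then, using $\alpha(\Fix \beta) = \Fix(\alpha \beta \alpha^{-1})$, the points $hr$, $ghr$, $h^{-1}ghr$ are fixed points of $[g^{-1},h]$, $[g,h]$, $[g,h^{-1}]$ respectively, all arranged symmetrically around the configuration of axes (figure \ref{fig:13}). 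Setting $p=r$ collapses one edge of the pentagon, so perturbation is essential.

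Next, I would separate cases according to whether $\Twist([g^{-1},h^{-1}],r)$ lies in $(0,\pi]$ or $[\pi,2\pi)$, since Proposition \ref{twist_bounds} places this twist in $(0,2\pi)$ for $[g,h]\in\Ell_1$ and these two subranges give qualitatively different pictures for the segment $p \to [g^{-1},h^{-1}]p$. In either case, as $p$ traces out a small circle $C_\epsilon(r)$, the four images $hp$, $ghp$, $h^{-1}ghp$, $[g^{-1},h^{-1}]p$ trace corresponding circles of radius $\epsilon$ about their respective centres with equal angular velocity. I would identify two critical positions $p_0, p_1$ on $C_\epsilon(r)$, determined in terms of $\varphi$ (the reflex or direct version of the twist) and the angle $\alpha = \angle(hr)r(h^{-1}ghr)$, between which the only edge of $\Pent(g,h;p)$ that could cause self-intersection, namely $[g^{-1},h^{-1}]p \to hp$, remains well-separated from the edge $h^{-1}ghp \to p$.

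The technical heart is a height comparison relative to the geodesic $r \to hr$ (and symmetrically $r \to h^{-1}ghr$): one shows that for $p$ strictly between $p_0$ and $p_1$, the height of $[g^{-1},h^{-1}]p$ sits on the correct side of the height of $p$, while the segment $[g^{-1},h^{-1}]p \to hp$ is nearly flat, varying in height by $O(\epsilon)$ over a fixed length, so shrinking $\epsilon$ separates the two problematic edges. This yields an open arc of directions of angular width $\pi+\alpha$ in which, for $\epsilon$ small, $\Pent(g,h;p)$ is a simple immersed pentagon; compactness on any closed subarc of width $\pi$ supplies a uniform $\epsilon$ and produces the closed semicircular disc $D_\epsilon(r)$. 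This step is the main obstacle, since it requires careful bookkeeping of which vertex lies above or below each relevant geodesic and uses the smallness of $\epsilon$ decisively.

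Finally, with $\Pent(g,h;p)$ simple and non-degenerate, Lemma \ref{construction_lemma} delivers a hyperbolic cone-manifold structure on $S$ with no interior cone points and at most one corner point, and Lemma \ref{pentagon_twist} converts twist into corner angle: in the orientation where $[g,h]\in \Ell_1$, the segment $p \to [g^{-1},h^{-1}]p$ bounds $\Pent$ on its left, giving $\theta = 3\pi - \Twist([g^{-1},h^{-1}],p)$, which lies in $(\pi,3\pi)$ because $\Twist([g^{-1},h^{-1}],p)\in(0,2\pi)$ by Proposition \ref{twist_bounds}; the subdivision into $(\pi,2\pi]$ and $[2\pi,3\pi)$ follows directly from the two twist subranges identified above. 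The $\Ell_{-1}$ case is the mirror image, handled verbatim after reversing orientation, giving $\theta = 3\pi + \Twist([g^{-1},h^{-1}],p)$ and the corresponding rotation-angle dichotomy.
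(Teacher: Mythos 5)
Your proposal is correct and follows essentially the same route as the paper: perturbing $p$ on a small circle about $r=\Fix[g^{-1},h^{-1}]$, splitting into the twist subranges $(0,\pi]$ and $[\pi,2\pi)$, locating the critical positions via $\varphi$ and $\alpha$, separating the edges $[g^{-1},h^{-1}]p \to hp$ and $h^{-1}ghp \to p$ by a height comparison with $\epsilon$ small, then using compactness of a closed arc of width $\pi$ to get a uniform $\epsilon$, and finishing with Lemma \ref{construction_lemma} and Lemma \ref{pentagon_twist}. This matches the paper's argument in both structure and detail, including the mirror treatment of the $\Ell_{-1}$ case.
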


\subsection{The case $\Tr[g,h] = 2$: reducible representations}
\label{sec:2}

By proposition \ref{reducible representation}, $\rho$ is reducible precisely when $\Tr[g,h] = 2$. Thus abelian representations are reducible. By lemma
\ref{reducible_va_abelian}, reducible virtually abelian representations are abelian. We will show that abelian representations do not give cone-manifold structures of the desired type; and we will show that the reducible non-abelian (hence not virtually abelian) representations do give cone-manifold structures of the desired type.

\begin{lem}
    An abelian representation is not the holonomy of any hyperbolic cone
    manifold structure on $S$ with no interior cone points and at most
    one corner point.
\end{lem}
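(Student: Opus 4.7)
The plan is to invoke lemma \ref{construction_lemma}, which reduces the existence of such a cone-manifold structure to the existence of a free basis $G,H$ of $\pi_1(S)$ and a point $p \in \hyp^2$ for which the pentagon $\Pent(g,h;p)$ is non-degenerate and bounds an immersed open disc. To prove the lemma, I would show that the pentagon is degenerate for \emph{every} basis and every choice of $p$, so no such data exists.

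The key observation is that abelianness of the image is basis-independent. For any free basis $(G',H')$ of $\pi_1(S)$, the images $g' = \rho(G')$ and $h' = \rho(H')$ lie in the abelian subgroup $\rho(\pi_1(S)) \subset PSL_2\R$, so they commute; equivalently $[g',h'] = 1$, acting as the identity on $\hyp^2$. In particular $(g')^{-1}(h')^{-1}g'h' \cdot p = p$ for every $p \in \hyp^2$.

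Now I would read off the pentagon from definition \ref{defn:pentagon}: its first edge joins $p$ to $(g')^{-1}(h')^{-1}g'h'p$. By the previous observation this edge is of length zero, so $\Pent(g',h';p)$ collapses to at most the quadrilateral with vertices $p,\, h'p,\, g'h'p,\, (h')^{-1}g'h'p$. Thus the pentagon fails to be non-degenerate for every basis and every basepoint. Lemma \ref{construction_lemma} then immediately yields the conclusion.

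There is essentially no obstacle here: the whole content is the observation that the vanishing commutator persists under change of basis, which is automatic since the image group is fixed. One might momentarily worry that a clever change of basis could rescue non-degeneracy, but the abelian property of $\rho(\pi_1(S))$ makes this impossible.
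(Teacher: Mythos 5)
Your proposal is correct and follows essentially the same route as the paper: abelianness forces $[g^{-1},h^{-1}]$ (equivalently $g^{-1}h^{-1}gh$) to act trivially for every basis, so the edge of $\Pent(g,h;p)$ from $p$ to $g^{-1}h^{-1}ghp$ degenerates for every $p$, and lemma \ref{construction_lemma} concludes. The paper's proof is exactly this observation, stated more briefly.
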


\begin{proof}
    Let $\rho$ be abelian. So for any basis $G,H$ of $\pi_1(S)$ (with
    basepoint on $\partial S$), $g,h$ commute. Hence for any $p \in \hyp^2$, $p = [g^{-1},h^{-1}]p$, so
    $\Pent(g,h;p)$ has a degenerate boundary edge. By lemma
    \ref{construction_lemma}, $\rho$ is not the holonomy of any such
    cone-manifold structure.
\qed
\end{proof}

Now consider $\rho$ non-abelian and reducible; we construct a hyperbolic cone-manifold structure. Lemma \ref{reducible_description} describes the situation: there is a parabolic/hyperbolic case, and a hyperbolic/hyperbolic case. 

First consider the parabolic/hyperbolic case. After possibly reordering $G,H$ and replacing them with their inverses, we may conjugate and assume that in the upper half-plane model $g(z) = z+1$ and $h(z) = ez$, where $e>1$. Let $p = x + iy = (x,y) \in \hyp^2$. The vertices of $\Pent(g,h;p)$ are: $p = (x,y)$; $hp = \left( ex, ey \right)$; $ghp = \left( ex + 1, ey \right)$; $h^{-1}ghp = \left( x + \frac{1}{e}, y \right)$; and $[g^{-1}, h^{-1}] p = \left( x + \frac{1}{e} - 1, y \right)$. We obtain the situation of figure \ref{fig:19}. For \emph{any} choice of $p \in \hyp^2$, the pentagon $\Pent(g,h;p)$ is non-degenerate and bounds an embedded disc. So by lemma \ref{construction_lemma} we have a desired hyperbolic cone-manifold structure.

\begin{figure}[tbh]
\begin{center}
$\begin{array}{c}
\includegraphics[scale=0.4]{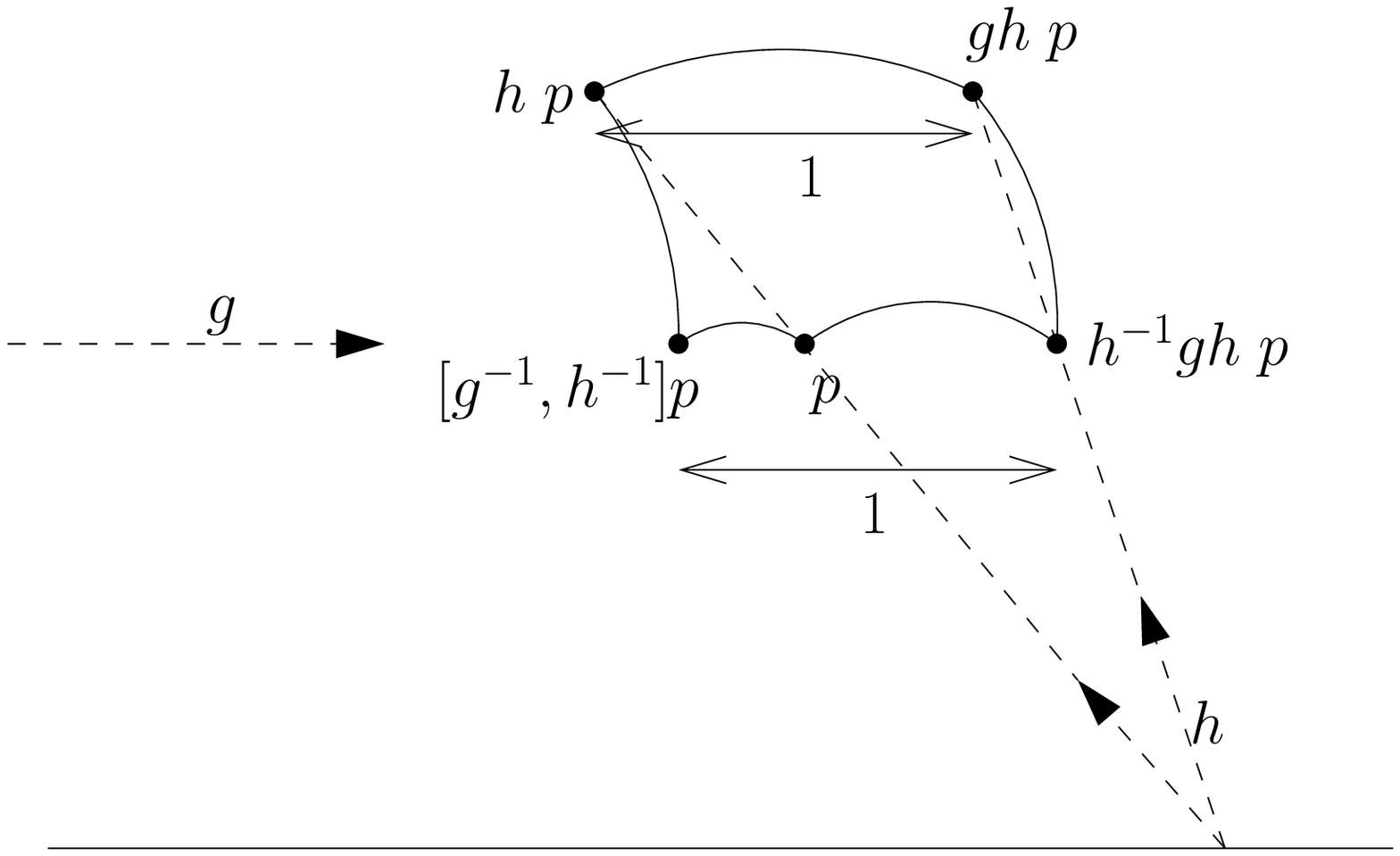}
\end{array}$
\caption{$\Pent(g,h;p)$ when $\Tr[g,h] = 2$, non-abelian, $g$ parabolic.} \label{fig:19}
\end{center}
\end{figure}

Now assume both $g,h$ are hyperbolic, with precisely one common fixed point. Again after possibly reordering and replacing $G,H$ with their inverses, we may conjugate and assume $g(z) = az$ and $h(z) = e(z+1)$, where $a,e > 1$. The fixed points at infinity of $h$ are then $\{\frac{e}{1-e}, \infty\}$.
Let $p = (x,y)$; we compute $h p = \left( e(x+1), ey \right)$, $gh p = \left( ae(x+1), aey \right)$, $h^{-1} gh p = \left( a(x+1)-1, ay \right)$, $[g^{-1},h^{-1}] p = \left( x + 1 - \frac{1}{a}, y \right)$.

Under our assumptions, $\frac{e}{1-e} < 0$, so we have a situation as in figure \ref{fig:20}. Choosing $p$ to lie above the fixed point $\frac{e}{1-e}$ of $h$, i.e. $x = \frac{e}{1-e}$, we see that $h p$ lies directly above $p$, along the (Euclidean and hyperbolic) line $\frac{e}{1-e} \to p$. Then $gh p$ lies above $h p$, along the Euclidean line $0 \to h p$; and $h^{-1} gh p$ lies below $gh p$, in the Euclidean segment $\frac{e}{1-e} \to gh p$. In particular, the line through $\frac{e}{1-e}, p, h p$ splits the plane with $gh p, h^{-1} gh p$ on its left, with the four hyperbolic segments $p \to h p \to gh p \to h^{-1} gh p \to p$ forming a non-degenerate simple quadrilateral. To show that $\Pent(g,h;p)$ is simple it is sufficient that $[g^{-1},h^{-1}]p$ lies right of the line $\frac{e}{1-e} \to p \to h p$. But $p$ and $[g^{-1},h^{-1}]p$ lie at the same height, so it is sufficient that $[g^{-1},h^{-1}]p$ lies to the right of $p$, i.e. $1 - \frac{1}{a} > 0$, which is true since $a>1$. Hence $\Pent(g,h;p)$ bounds an embedded disc and we have our cone-manifold structure.

\begin{figure}[tbh]
\centering
\includegraphics[scale=0.4]{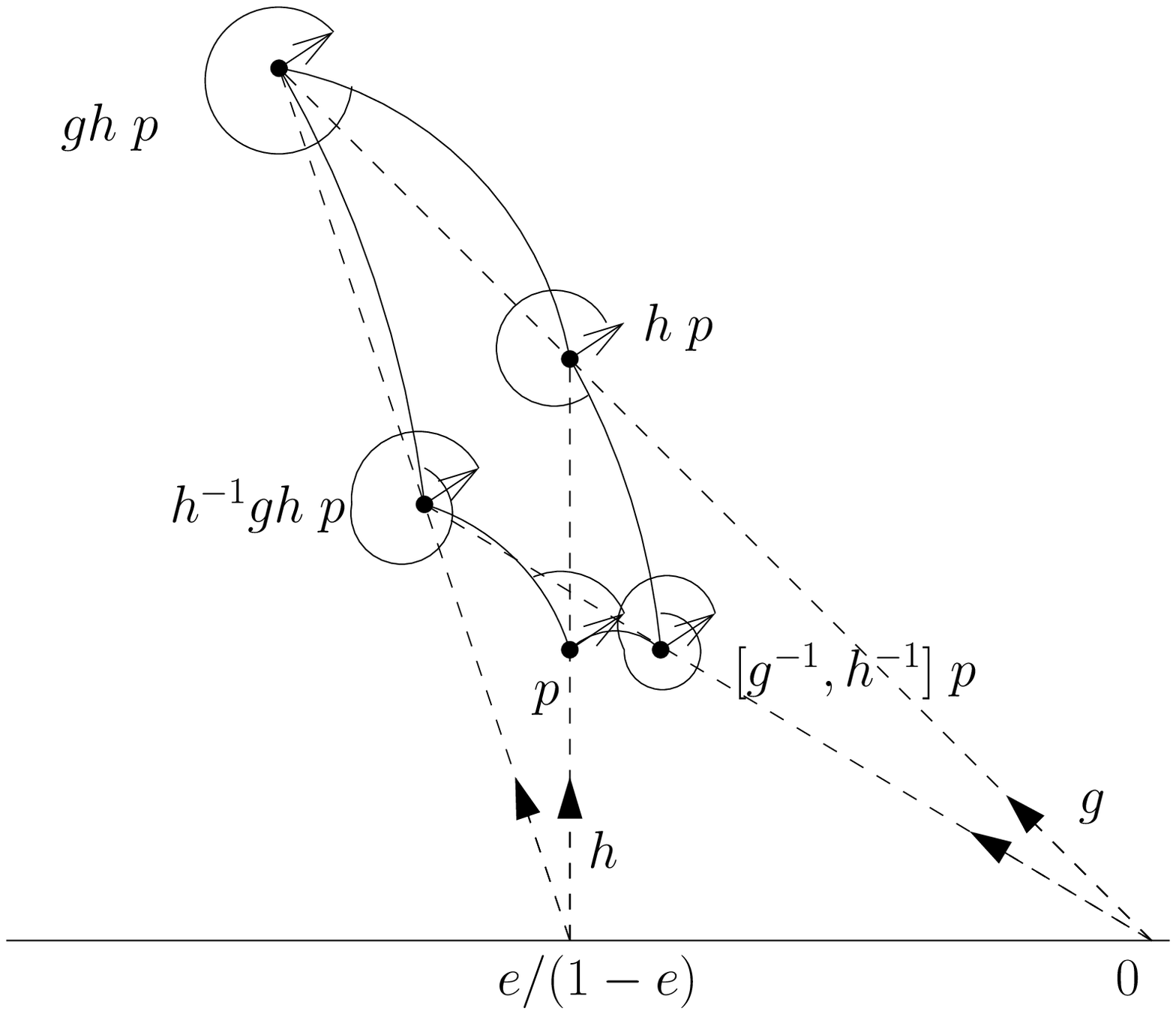}
\caption[$\Pent(g,h;p)$ when $\Tr{[g,h]} = 2$, non-abelian, $g,h$
hyperbolic]{$\Pent(g,h;p)$ when $\Tr[g,h] = 2$,
non-abelian, $g,h$ hyperbolic. A unit vector chase shows
$\theta > 2\pi$.} \label{fig:20}
\end{figure}

Note that almost any basis is good enough; at most we reordered the basis or replaced them with inverses. And there is freedom in the choice of
$p$ also: completely arbitrary in the parabolic/hyperbolic case; in the hyperbolic/hyperbolic case, $p$ can be placed arbitrarily along a certain line in
$\hyp^2$. Certainly $p$ can be chosen arbitrarily close to $\Fix [g^{-1},h^{-1}]$.

In both cases, $p \to [g^{-1},h^{-1}]p$ bounds $\Pent(g,h;p)$ on its right, iff $\partial S$ traversed in the
direction of $[G,H]$ bounds $S$ on its right, iff $[g^{-1},h^{-1}]$ is parabolic, fixing $\infty$, translating to the
left, i.e. $[g,h] \in \Par_0^-$.

As for the corner angle $\theta$, we may perform a unit vector chase and obtain $\theta \in (2\pi, 3\pi)$ for the above constructions: see e.g. figure \ref{fig:20}. Applying lemma \ref{pentagon_twist}, we obtain $\theta = 3\pi \pm \Twist([g^{-1},h^{-1}],p)$ depending on the orientation of $S$.

\begin{prop}
\label{reducible_construction}
    Let $\rho: \pi_1(S) \To PSL_2\R$ be a representation with
    $\Tr[g,h] =2$ for some basis $G,H$ of $\pi_1(S)$. Then $\rho$
    is the holonomy of a hyperbolic cone
    manifold structure on $S$ with no cone points and at most one
    corner point if and only if $\rho$ is not virtually abelian, i.e. $[g,h]
    \in \Par_0$. A fundamental domain for the developing map is
    given by $\Pent(g',h';p)$ where $(G',H')$ is obtained from
    $(G,H)$ at most by reordering and replacing with inverses.
    Suppose $[g',h'] \in \Par_0^+$ (resp. $\Par_ 0^-$). The
    point $p$ may be chosen arbitrarily close to the fixed point at
    infinity of $[g'^{-1},h'^{-1}]$.
    Then the boundary $\partial S$, traversed in the direction of
    $[G,H]$, bounds $S$ on its left (resp. right).  The corner angle
    $\theta = 3\pi - \Twist([g^{-1},h^{-1}],p)$ (resp. $3\pi +
    \Twist([g^{-1},h^{-1}],p)$) lies in $(2\pi, 3\pi)$.
    \qed
\end{prop}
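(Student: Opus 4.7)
The plan is to prove the equivalence by splitting on whether $\rho$ is virtually abelian, and then, in the non-virtually-abelian case, using the explicit classification of reducible representations from lemma \ref{reducible_description} to write $g,h$ in a convenient standard form and check the hypothesis of lemma \ref{construction_lemma} directly.

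First I would dispose of the abelian case. If $g,h$ commute then $[g^{-1},h^{-1}] = 1$, so $[g^{-1},h^{-1}]p = p$ for every $p \in \hyp^2$; thus the ``boundary'' edge $p \to [g^{-1},h^{-1}]p$ of $\Pent(g,h;p)$ is degenerate for every choice of basepoint. Moreover, this degeneracy is basis-independent: any other basis $(G',H')$ again gives commuting $g',h'$, since abelianness is a property of $\rho$, not of the basis. Hence by lemma \ref{construction_lemma} no such cone-manifold structure exists. Combined with lemma \ref{reducible_va_abelian}, which says that reducible virtually abelian representations are abelian, this shows that a reducible virtually abelian $\rho$ is not a holonomy of the desired type.

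Now suppose $\rho$ is reducible and non-abelian. By lemma \ref{reducible_description}, after at most reordering $(G,H)$ and/or replacing them with inverses, we are in one of two cases. \emph{Case (i):} $g$ parabolic, $h$ hyperbolic, sharing a fixed point at infinity. After conjugating in $PSL_2\R$ we may take $g(z) = z+1$, $h(z) = ez$ with $e>1$. A direct computation gives the five vertices of $\Pent(g,h;p)$ listed in the sketch preceding the proposition, and inspection shows that for \emph{every} $p = (x,y) \in \hyp^2$ this pentagon is non-degenerate and bounds an embedded disc. \emph{Case (ii):} both $g,h$ hyperbolic, sharing exactly one fixed point. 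After conjugating we may take $g(z) = az$, $h(z) = e(z+1)$ with $a,e>1$. Here one must be slightly more careful: I would choose $p$ with real part $x = e/(1-e)$, the common vertical line through $\Fix h \cap \R$. Then $hp$ lies directly above $p$, $ghp$ lies above $hp$ along the Euclidean ray from $0$, and $h^{-1}ghp$ lies on the Euclidean segment from $e/(1-e)$ to $ghp$; this makes $p,hp,ghp,h^{-1}ghp$ the vertices of a simple non-degenerate quadrilateral. Simplicity of the full pentagon then reduces to checking $[g^{-1},h^{-1}]p$ lies to the right of the line through $e/(1-e)$, $p$, $hp$, and since $[g^{-1},h^{-1}]p$ has the same height as $p$ and real part $x + 1 - 1/a > x$, this is immediate from $a>1$. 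Lemma \ref{construction_lemma} then yields the desired cone-manifold structure with fundamental domain $\Pent(g,h;p)$.

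Having produced the structure, I would then read off the remaining conclusions. For the orientation statement, the direction in which $[g^{-1},h^{-1}]$ translates along its invariant horocycle (anticlockwise or clockwise about the fixed point at infinity) determines on which side the segment $p \to [g^{-1},h^{-1}]p$ bounds the pentagon, and hence the side on which $\partial S$ lies; this matches the cases $[g,h] \in \Par_0^{\pm}$. For the corner angle, the pentagon has area $3\pi - \theta$ by Gauss--Bonnet, and an application of lemma \ref{pentagon_twist} gives $\theta = 3\pi \mp \Twist([g^{-1},h^{-1}],p)$. Since in both subcases a direct unit-vector chase at one vertex (for instance, as in figure \ref{fig:20}) shows $\theta > 2\pi$, and since lemma \ref{Gauss-Bonnet} gives $\theta < 3\pi$, we conclude $\theta \in (2\pi, 3\pi)$. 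Finally, to see $p$ may be chosen arbitrarily close to $\Fix [g'^{-1},h'^{-1}]$: in case (i) any $p$ works, and in case (ii) the permitted locus is an entire vertical line which passes through the fixed point at infinity of $[g'^{-1},h'^{-1}]$, so $p$ can be taken arbitrarily high up that line.

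The main obstacle is the hyperbolic/hyperbolic subcase of step two, where non-degeneracy and simplicity of the pentagon are not automatic and require the careful placement of $p$ on the specific vertical line; the parabolic/hyperbolic subcase and the orientation/angle bookkeeping are essentially mechanical once the standard forms are fixed.
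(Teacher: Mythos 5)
Your proposal is correct and follows essentially the same route as the paper: dispose of the abelian (hence, by lemma \ref{reducible_va_abelian}, virtually abelian) case via the degenerate boundary edge of $\Pent(g,h;p)$ and lemma \ref{construction_lemma}, then use lemma \ref{reducible_description} to reduce to the same two standard forms $g(z)=z+1,\ h(z)=ez$ and $g(z)=az,\ h(z)=e(z+1)$, with the same placement of $p$ on the vertical line through $e/(1-e)$ in the hyperbolic/hyperbolic case, and the same bookkeeping via lemma \ref{pentagon_twist} and a unit-vector chase for the orientation and the bound $\theta\in(2\pi,3\pi)$. The observation that the permitted locus of $p$ runs up to the fixed point $\infty$ of $[g^{-1},h^{-1}]$ matches the paper's justification that $p$ may be chosen arbitrarily close to that fixed point.
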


\subsection{The case $\Tr[g,h] > 2$}
\label{sec:more_than_2}

We now come to the most difficult case. This case includes virtually abelian representations. The abelian representations all belong to the case $\Tr[g,h] = 2$; by lemma \ref{virtually abelian but not abelian}, the representations which are virtually abelian but not abelian are precisely those with $(\Tr g, \Tr h, \Tr gh) \in V$, in the notation of section \ref{virtually abelian reps}, and hence $\Tr[g,h] > 2$. Our proof is in the following three subsections, which respectively prove the following three results.

\begin{prop}
\label{virtually_abelian_not_holonomy}
    Let $\rho: \pi_1(S) \To PSL_2\R$ be a representation which
    is virtually abelian but not abelian.
    Then $\rho$ is not the holonomy of any hyperbolic cone
    manifold structure on $S$ with no interior cone points
    and at most one corner point.
\end{prop}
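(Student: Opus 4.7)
The plan is to argue by contradiction using Lemma~\ref{construction_lemma}: suppose there exist a basis $(G,H)$ of $\pi_1(S)$ and a point $p \in \hyp^2$ such that $\Pent(g,h;p)$ is non-degenerate and bounds an immersed open disc. By Lemma~\ref{virtually abelian but not abelian} the character $(\Tr g, \Tr h, \Tr gh)$ lies in $V$, and by Lemma~\ref{va2} this persists under any change of basis. Applying Lemma~\ref{va1}, I obtain the uniform geometric picture: two of $g, h, gh$ are half-turns about distinct points $q_1, q_2 \in \hyp^2$ and the third is a nontrivial translation along the line $l = q_1 q_2$. In particular $\rho(\pi_1(S))$ preserves $l$ setwise, so every vertex of $\Pent(g,h;p)$, being the image of $p$ under an element of the image of $\rho$, lies at the same perpendicular distance $d$ from $l$ as $p$ itself.

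First I would dispose of the easy case $d = 0$: every vertex lies on the geodesic $l$, so every side of the pentagon (a geodesic between two points of $l$) lies on $l$, and the pentagon is degenerate, contradicting the hypothesis.

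The substantive case is $d > 0$. I would first exploit Corollary~\ref{commutator_trace_info}, which places $[g^{-1},h^{-1}]$ in $\Hyp_0$---i.e.\ a simplest-lift hyperbolic translation along $l$---so that by Proposition~\ref{twist_bounds} we have $\Twist([g^{-1},h^{-1}],p) \in (-\pi, \pi)$. Proposition~\ref{prop:twist_area} then identifies this twist with the signed area of the pentagon. Since $D^2$ is connected, the Jacobian sign of the immersion is constant, so the unsigned area of the immersed disc coincides with $|\Twist|$; combined with Gauss--Bonnet (Lemma~\ref{Gauss-Bonnet}), this forces the putative corner angle $\theta = 3\pi - \mathrm{area}$ to lie in $(2\pi, 3\pi)$.

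The final step---the heart of the argument and the main obstacle---is to promote this necessary condition to an actual contradiction. My approach is to examine the preimage $\iota^{-1}(l) \subset D^2$: generically this is a $1$-submanifold with four endpoints on $\partial D^2$ (the four transversal crossings of $l$ by the pentagon edges whose endpoints lie on opposite hypercycles), possibly together with closed curves in the interior. I would enumerate the two possible non-crossing pairings of the four endpoints, determine the resulting subregions of $D^2$ and the sides of $l$ to which they map, and show that the forced triangle/polygon areas in $\hyp^2$ cannot be reconciled with the Gauss--Bonnet accounting of the pentagon's interior angles. The difficulty is that this case analysis must be carried out uniformly across the three sub-cases of Lemma~\ref{va1} (depending on which pair of $\{g, h, gh\}$ consists of half-turns), which have different vertex patterns on the two hypercycles despite sharing the same qualitative zigzag structure.
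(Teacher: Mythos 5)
Your setup matches the paper's: reduce via Lemma~\ref{construction_lemma} to showing that $\Pent(g,h;p)$ never bounds an immersed disc, use Lemmas~\ref{virtually abelian but not abelian}, \ref{va2} and \ref{va1} to get the dihedral picture (two half-turns about $q_1 \neq q_2$ on an invariant geodesic $l$, one translation along $l$), note that all vertices lie on the two hypercycles at distance $d$ from $l$, and dispose of $d=0$ by degeneracy. The twist/area step is also correct as far as it goes: Corollary~\ref{commutator_trace_info} and Propositions~\ref{twist_bounds}, \ref{prop:twist_area} do force $\theta \in (2\pi,3\pi)$ for any putative structure. But this is only a necessary condition and contradicts nothing --- the paper constructs many legitimate structures with corner angle in exactly that range (e.g.\ the reducible non-abelian case and the $\Tr[g,h]>2$ case), so no contradiction can come from the angle bound alone.

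The genuine gap is the final step, which you yourself flag as ``the heart of the argument and the main obstacle'': the analysis of $\iota^{-1}(l)$, the enumeration of non-crossing pairings of its four boundary endpoints, and the claimed incompatibility with a Gauss--Bonnet accounting are only a plan, not a proof, and it is not clear the plan closes --- note in particular that a non-simple pentagon \emph{can} bound an immersed disc (figure~\ref{fig:7}), so boundary self-crossings per se are not an obstruction, and your proposed bookkeeping would have to detect something finer. The paper's actual argument is much more direct and avoids this machinery entirely: it takes Fermi coordinates along $l$, writes down the isometries explicitly in each of the three configurations of Lemma~\ref{va1} (e.g.\ in case (i), $g(y,z)=(-y+a,-z)$, $h(y,z)=(-y,-z)$), computes all five vertices of $\Pent(g,h;p)$, and observes that one intermediate vertex ($ghp$ in case (i), $h^{-1}ghp$ in case (ii)) lies on the hypercycle at height $d$ strictly between $p$ and $[g^{-1},h^{-1}]p$, yet on the opposite side of the geodesic boundary edge $p \To [g^{-1},h^{-1}]p$ from the two vertices at height $-d$; this specific arrangement of the five vertices relative to that edge is what rules out an immersed disc. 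To complete your proof you would either have to carry out your preimage analysis in full (uniformly over the three sub-cases, as you note), or replace it with an explicit vertex computation of this kind.
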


\begin{prop}
\label{complicate_basis}
    Let $G,H$ be a basis of $\pi_1(S)$ and let $\rho: \pi_1(S) \To PSL_2\R$ be a representation
    with $\Tr[g,h] > 2$ which is not virtually abelian. Then there
    exists a basis $G',H'$ of $\pi_1(S)$ such that
    \[
        (x,y,z) = (\Tr g', \Tr h', \Tr g'h') \in
        (2, \infty)^3.
    \]
\end{prop}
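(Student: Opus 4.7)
The plan is to apply the moves of Proposition \ref{Markoff_moves}---permutations, sign changes $(x,y,z) \sim (-x,-y,z)$, and Markoff moves $(x,y,z) \sim (x, y, xy - z)$---to transform the character $(x, y, z) = (\Tr g, \Tr h, \Tr gh)$ into a representative in $(2, \infty)^3$, and then take $(G', H')$ to be the composition of the associated automorphisms of $\pi_1(S)$.

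First I would use a sign change to arrange $x, y \geq 0$. If $z < 0$, the single Markoff move $z \mapsto xy - z$ produces $xy - z \geq 0$, and a further sign normalization yields $x, y, z \geq 0$, which I maintain throughout. Then I would run the following greedy algorithm. If $x, y, z > 2$, halt. If exactly two of the entries exceed $2$, say $x, y > 2$ and $z \leq 2$, apply $z \mapsto xy - z$; since $xy > 4$ and $z \leq 2$, the new value exceeds $2$ and we halt with $(x, y, z) \in (2, \infty)^3$. Otherwise at least two coordinates lie in $[0, 2]$; apply a Markoff move to one of them to strictly increase it, sign-renormalize, and iterate.

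The non-trivial content is termination. By Lemma \ref{va2}, the Markoff-invariant subset $V \subset \R^3$ of virtually abelian characters is preserved by all the moves, so the non-virtually-abelian hypothesis keeps the orbit in $X(S) \setminus V$, that is, away from triples with two zero entries. I would argue that the quantity ``number of coordinates exceeding $2$'' cannot strictly decrease forever along the algorithm, invoking the discreteness of the Markoff orbit on the level set $\kappa^{-1}(t)$ for $t > 2$ (a classical feature of the dynamics of the modular action on the character variety, cf.\ \cite{Goldman03}). A boundary case such as $(2,2,0)$ illustrates the subtlety: the greedy move $z \mapsto xy - z$ gives $(2, 2, 4)$, still outside $(2, \infty)^3$, but then successive Markoff moves on $x$ and $y$ yield $(6, 2, 4)$ and $(6, 22, 4) \in (2, \infty)^3$. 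The hypothesis $\kappa > 2$ rules out the Markoff-fixed configurations $(2, y, y)$ (where $\kappa = 2$), while the non-v.a.\ hypothesis rules out $(a, 0, 0)$, together removing the only genuine obstructions to termination.

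The main obstacle is the termination analysis in the regime where two or more coordinates lie in $[0, 2]$. Here a Markoff move on a small coordinate may produce a new value still in $[0, 2]$, so the ``number $> 2$'' count can stall temporarily. The hard work is to show that each such stall is necessarily broken after a bounded number of further moves: one must track how the three entries redistribute under successive Markoff moves and verify, using $\kappa > 2$ and the non-v.a.\ hypothesis to exclude periodic behaviour, that within finitely many iterations one of the coordinates is forced strictly above $2$, at which point the greedy step of case (ii) completes the algorithm.
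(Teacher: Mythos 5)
Your overall strategy is the same as the paper's: reduce the proposition via Proposition \ref{Markoff_moves} to a purely algebraic statement about triples with $\kappa(x,y,z)>2$ not in $V$, and run a greedy combination of sign changes, permutations and Markoff moves until all three coordinates exceed $2$ (this is exactly Lemma \ref{complication_algorithm} and the region-by-region algorithm in the paper). The steps you do carry out (normalising signs, and the observation that once two coordinates exceed $2$ a single move $z\mapsto xy-z>4-2=2$ finishes) match the paper's treatment of regions $R_2$, $R_3$, $R_5$, $R_7$.

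However, the part you defer is precisely where all the work lies, and the justification you sketch for it does not hold up. First, it is not automatic that ``a Markoff move on a small coordinate strictly increases it'': the paper needs the technical lemma that if $0\le x\le y\le z\le 2$ and $x^2+y^2+z^2-xyz>4$ then $yz-x>y$, which is what keeps the greedy sequence nonnegative and ordered. Second, even granting strict increase, termination does not follow without a \emph{uniform} lower bound on the increments; the paper gets this by viewing $x$ and $x'=yz-x$ as the two roots of $t^2-yzt+y^2+z^2-\kappa-2=0$ and bounding the gap $x'-x=\sqrt{\Delta}$ below, e.g.\ $\sqrt{\Delta}=\sqrt{(y^2-4)(z^2-4)+4\kappa-8}\ge 2\sqrt{\kappa-2}>0$ while all small coordinates lie in $[0,2]$, and by a monotone-discriminant variant in the region with one coordinate already above $2$. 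Your proposed substitute --- ``discreteness of the Markoff orbit on the level set $\kappa^{-1}(t)$ for $t>2$'' --- is not a classical fact you can invoke: for $t\in(2,18)$ the modular group action on $\kappa^{-1}(t)\cap[-2,2]^3$ is (by Goldman) ergodic, so orbits there are typically dense rather than discrete, and in any case discreteness of the full orbit would not by itself force your particular greedy sequence to leave $[0,2]^3$ or to stop stalling. So the termination argument is a genuine gap; filling it requires the explicit quadratic/discriminant estimates (or an equivalent quantitative monotonicity), which is exactly the content of the paper's proof.
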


\begin{prop}
\label{explicit_construction}
    Let $\rho: \pi_1(S) \To PSL_2\R$ be a representation
    which is not virtually abelian, and suppose
    there exists a basis $G,H$ of $\pi_1(S)$ such that $\Tr
    [g,h] > 2$ and $(x,y,z) = (\Tr g, \Tr h, \Tr gh) \in (2, \infty)^3$.
    Then $\rho$ is the
    holonomy of a hyperbolic cone-manifold structure on
    $S$ with no interior cone points and at most one corner point.
\end{prop}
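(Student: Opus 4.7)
The plan is to apply lemma \ref{construction_lemma}: it suffices to exhibit a point $p \in \hyp^2$ such that $\Pent(g,h;p)$ is a non-degenerate pentagon bounding an immersed open disc.

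First I would unpack the geometric consequences of the hypothesis $(x,y,z) \in (2,\infty)^3$. All three of $g, h, gh$ are hyperbolic with well-defined axes. Since $\Tr[g,h] = \kappa(x,y,z)$ is invariant under Markoff moves, applying lemma \ref{axes crossing} to the bases $(G,H)$, $(G,GH)$ and $(H,GH)$ shows that the axes of any two of $\{g, h, gh\}$ are pairwise disjoint; they cannot be asymptotic since this would force $\Tr[g,h] = 2$ via lemma \ref{reducible_description}, contradicting our hypothesis. By corollary \ref{commutator_trace_info} the four commutators $[g^{\pm 1}, h^{\pm 1}]$ all lie in $\Hyp_0$, are mutually conjugate, and share a common translation distance; the elements $g$, $h$ and their combinations cyclically permute the four axes $\Axis[g^{\pm 1}, h^{\pm 1}]$. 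The axes of $g$, $h$, $gh$ themselves bound a right-angled hexagonal region familiar from the pair-of-pants picture.

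Next I would choose $p$ initially on $\Axis[g^{-1}, h^{-1}]$. Then $p_0 = p$ and $p_4 = [g^{-1},h^{-1}]p$ both lie on this axis, while $p_1 = hp \in \Axis[g^{-1}, h]$, $p_2 = ghp \in \Axis[g, h]$, and $p_3 = h^{-1}ghp \in \Axis[g, h^{-1}]$. This gives a degenerate ``quadrilateral'' whose fifth edge collapses onto a segment of $\Axis[g^{-1},h^{-1}]$, and perturbing $p$ transversely off this axis breaks the degeneracy. By lemma \ref{pentagon_twist} together with proposition \ref{twist_bounds} applied to $\Hyp_0$, the resulting corner angle is $\theta = 3\pi \mp \Twist([g^{-1},h^{-1}],p)$, lying in $(2\pi, 3\pi)$ and approaching $3\pi$ as $p$ nears the axis, consistent with the Gauss--Bonnet bound of lemma \ref{Gauss-Bonnet}.

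The main obstacle is verifying that the perturbed pentagon truly bounds an \emph{immersed} open disc: since $\theta$ is close to $3\pi$ the pentagon is intrinsically self-intersecting, much as in the elliptic case of section \ref{sec:-2_to_2}. To handle this I would place $g, h$ in a convenient normal form in the upper half-plane (taking $\Axis g$ to be the imaginary axis and parametrising $\Axis h$ by the length of the common perpendicular and the translation distances $d_g, d_h$); the hypothesis $(x,y,z) \in (2, \infty)^3$ then translates into explicit inequalities on these parameters. A direct computation of the positions of $p_0, \ldots, p_4$ should identify an open arc of perturbation directions and a small enough $\epsilon$ such that, for $p$ on the correct side of $\Axis[g^{-1},h^{-1}]$ and within distance $\epsilon$ of it, consecutive edges of $\Pent(g,h;p)$ meet only at their shared vertices, so that the pentagon bounds an immersed disc. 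The cyclic symmetry of the four commutator axes, combined with the pair-of-pants arrangement of $\Axis g, \Axis h, \Axis gh$, should make this verification tractable, though the argument will parallel the detailed case analysis carried out in propositions \ref{elliptic_construction} and \ref{reducible_construction}.
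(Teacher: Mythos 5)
Your reduction to lemma \ref{construction_lemma} and your preliminary observations (all of $g,h,gh$ hyperbolic, axes pairwise disjoint and non-asymptotic, $[g,h]\in\Hyp_0$) are fine, but the heart of your plan --- take $p$ on $\Axis[g^{-1},h^{-1}]$ and perturb --- does not work, and the step you defer (``a direct computation should identify an open arc of perturbation directions'') is precisely the content of the proposition, left unproved. The analogy with the elliptic and parabolic cases breaks down: there the fifth edge collapses to a point at $\Fix[g^{-1},h^{-1}]$ and one genuinely has a quadrilateral to perturb, whereas here the commutator has no fixed point in $\hyp^2$, and for $p$ on its axis the fifth edge is a segment of definite length. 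Worse, since $[g,h]\in\Hyp_0$ we have $\Twist([g^{-1},h^{-1}],p)=0$ for $p$ on the axis, so by proposition \ref{prop:twist_area} the pentagon cannot bound an immersed disc there (its signed area would vanish); for $p$ near the axis the twist is near $0$, so any immersed disc bounded by $\Pent(g,h;p)$ would have area near $0$ while its vertices $hp$, $ghp$, $h^{-1}ghp$ stay a definite distance from $\Axis[g^{-1},h^{-1}]$ --- there is no reason such a sliver exists, and you give none. Note also that lemma \ref{hyperbolic commutator}, which is what controls the relative position of the four commutator axes in the $\Tr[g,h]<-2$ case you are imitating, requires $\Tr[g,h]<-2$ and is unavailable here, so you have no control on where those four axes sit.

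A second, related error: the axes of $g,h,gh$ do \emph{not} bound a common region in the pair-of-pants pattern under your hypotheses. By the Gilman--Maskit theorem \ref{GM}, that configuration occurs (for $C(g,h)\in(0,1)$) exactly when $\Tr g\,\Tr h\,\Tr gh<-8$, whereas $(x,y,z)\in(2,\infty)^3$ forces the product to exceed $8$; this is why the paper's lemma \ref{axes_configurations} isolates configurations (i)--(iii) of figure \ref{fig:24} and excludes (iv). The paper's proof then proceeds quite differently from your sketch: in each of the three admissible configurations it chooses $p$ adapted to intersections of conjugated axes --- e.g.\ in case (i), $p=h^{-1}g^{-1}(r)$ with $r=\Axis(gh)\cap\Axis(hg)$, so that all five vertices lie on $\Axis(gh)\cup\Axis(hg)$, and in cases (ii), (iii), $p=h^{-1}(\Axis g\cap\Axis(h^{-1}gh))$ and its analogue --- and verifies embeddedness directly from the ordering of fixed points at infinity. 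To repair your argument you would need either to adopt such a configuration-by-configuration construction, or to carry out in full the normal-form computation you only gesture at, for a choice of $p$ that is not tied to $\Axis[g^{-1},h^{-1}]$.
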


\subsubsection{Virtually abelian degeneration}

We prove proposition \ref{virtually_abelian_not_holonomy}. Let $\rho$ be a representation which is virtually abelian but not abelian, hence with character in $V$; in fact, by lemma \ref{va2}, with character in $V$ for any basis $G,H$ of $\pi_1(S)$. By lemma \ref{construction_lemma} then it suffices to prove the following.

\begin{lem}
    Let $g,h \in PSL_2\R$ such that $(\Tr g, \Tr h, \Tr gh) \in V$. Then for any $p \in \hyp^2$, the pentagon $\Pent(g,h;p)$ does not bound an immersed open disc in $\hyp^2$.
\end{lem}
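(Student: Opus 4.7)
The plan is to assume, for contradiction, that some $\Pent(g,h;p)$ bounds an immersed open disc in $\hyp^2$, and then derive a contradiction via case analysis. By lemma \ref{va1}, the condition $(\Tr g,\Tr h,\Tr gh)\in V$ forces two of $\{g,h,gh\}$ to be half-turns about two distinct points $q_1,q_2$ on a common geodesic axis $l$, with the third being a nontrivial hyperbolic translation along $l$. I would split the argument into three cases, according to which two of $g,h,gh$ are the half-turns.

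First, if $p\in l$ then every element of $\langle g,h\rangle$ preserves $l$, so all five pentagon vertices lie on $l$; $\Pent(g,h;p)$ degenerates to a subset of a geodesic and trivially cannot bound an immersed open disc. Assume therefore $p\notin l$, so that all five vertices lie on the union $C_+\cup C_-$ of the two equidistant curves at signed distance $\pm d(p,l)$ from $l$. In each of the three cases, I would write the vertices $p_0,\ldots,p_4$ explicitly in Fermi coordinates about $l$, noting the sign pattern of the heights: in case 1 ($g,h$ half-turns), the signs of $(p_0,p_4,p_1,p_2,p_3)$ along the pentagon are $(+,+,-,+,-)$, so the boundary crosses $l$ four times; in cases 2 and 3 the pattern is $(+,+,+,-,-)$ or $(+,+,-,-,+)$, with two crossings.

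Next, invoke proposition \ref{prop:twist_area}: under the standing hypothesis, the signed area must equal $\Twist([g^{-1},h^{-1}],p)$. The hypothesis $(\Tr g,\Tr h,\Tr gh)\in V$ combined with theorem \ref{character variety} gives $\Tr[g,h]=\kappa(x,y,z)>2$, so by corollary \ref{commutator_trace_info} we have $[g,h]$ and its conjugate $[g^{-1},h^{-1}]$ in $\Hyp_0$. Moreover, $[g^{-1},h^{-1}]$ is itself a translation along $l$: a short computation using $g^2=1=h^2$ (in case 1), $g^2=1=(gh)^2$ (case 2), or $h^2=1=(gh)^2$ (case 3) identifies it with $(gh)^{\pm 2}$, $h^{\pm 2}$, or $g^{\pm 2}$ respectively. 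Proposition \ref{twist_bounds} then yields $|\Twist([g^{-1},h^{-1}],p)|<\pi$, so the signed area is strictly less than $\pi$ in absolute value, forcing any would-be corner angle $\theta=3\pi-|\Delta[\Pent]|$ to lie in $(2\pi,3\pi)$.

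The main obstacle, and the heart of the proof, is translating this bound into an actual geometric obstruction rather than merely a numerical constraint. My plan is to show in each case that the explicit configuration of $p_0,\ldots,p_4$ on $C_+\cup C_-$, together with the edge identifications from the dihedral group $\langle g,h\rangle$, forces non-adjacent pentagon edges to cross at a point with the wrong local orientation to be realised as an immersion of a disc. Concretely, using the projection $\pi\colon\hyp^2\to l$ to the nearest point on $l$, one tracks how the pentagon winds along $l$ and over $C_\pm$; the signed turning number of the closed boundary curve, computed from the exterior angles and corrected by area via Gauss-Bonnet, fails to equal $\pm 2\pi$, ruling out any immersed disc filling. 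The hard part is bookkeeping the case analysis cleanly, since the self-intersection pattern of $\Pent(g,h;p)$ depends on the position of $p$ relative to $q_1,q_2$ and splits into subcases within each of the three configurations from lemma \ref{va1}.
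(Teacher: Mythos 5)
Your setup matches the paper's: reduce via lemma \ref{va1} to three cases of half-turn pairs, dispose of $p$ on the common axis $l$, and compute the five vertices in Fermi coordinates about $l$. But the decisive step is missing. The middle portion of your argument (invoking proposition \ref{prop:twist_area} and proposition \ref{twist_bounds} to get $|\Twist([g^{-1},h^{-1}],p)|<\pi$, hence a would-be corner angle in $(2\pi,3\pi)$) cannot produce a contradiction: corner angles in $(2\pi,3\pi)$ genuinely occur for immersed (even embedded) pentagons --- that is exactly what is constructed in sections \ref{sec:2} and \ref{sec:more_than_2} --- so this is a dead end, as you half-acknowledge. What remains is your ``heart of the proof'': that the turning number of $\partial\Pent(g,h;p)$ fails to be $\pm 2\pi$. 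This is precisely the claim that needs proving, and you do not prove it; you state it as a plan and explicitly defer the ``bookkeeping'' of subcases depending on the position of $p$ relative to $q_1,q_2$. The sign pattern of the heights of the vertices, which is all you extract from the Fermi coordinates, is not by itself an obstruction: a geodesic pentagon whose vertices alternate sides of a line can perfectly well bound an immersed disc. So as written the proposal is an unexecuted strategy, not a proof.

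The information you are not using is the $x$-coordinates (positions along $l$) of the vertices, and this is what makes the paper's argument short. In case (i), with $g(y,z)=(-y+a,-z)$, $h(y,z)=(-y,-z)$ and $p=(\alpha,d)$, one finds $[g^{-1},h^{-1}]p=(\alpha+2a,d)$ and $ghp=(\alpha+a,d)$: the vertex $ghp$ lies on the \emph{same} equidistant curve as $p$ and $[g^{-1},h^{-1}]p$ and strictly \emph{between} them, hence on the opposite side of the geodesic segment $p \To [g^{-1},h^{-1}]p$ from its two neighbours $hp$ and $h^{-1}ghp$ (which lie at height $-d$). Both edges incident to $ghp$ therefore cross the edge $p \To [g^{-1},h^{-1}]p$, and this local configuration rules out an immersed disc filling directly, with no turning-number computation and no subcases in $p$ (the conclusion is uniform in $\alpha$, $d\neq 0$ and the sign of $a$). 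Case (ii) is identical with $h^{-1}ghp$ playing the protruding role, and case (iii) is analogous. If you want to salvage your Gauss--Bonnet/turning-number route you would still have to carry out exactly this kind of vertex-configuration analysis to evaluate the exterior angles, so you may as well use the betweenness observation and conclude immediately.
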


\begin{proof}
    By lemma \ref{va1}, two of $\{g,h,gh\}$ are half-turns about distinct points $q_1, q_2 \in \hyp^2$, and the third is hyperbolic with axis $q_1 q_2$. There are three possible cases:
    \begin{enumerate}
        \item
            $g,h$ are half-turns, $gh$ is hyperbolic.
        \item
            $h,gh$ are half-turns, $g$ is hyperbolic.
        \item
            $g,gh$ are half-turns, $h$ is hyperbolic.
    \end{enumerate}
    In each case, all of $g,h,gh$ preserve the line $q_1 q_2$. As $[g,h]$ also preserves this line and $\Tr[g,h]>2$, $[g,h]$ is hyperbolic with axis $q_1 q_2$.

    \textbf{ Case (i).} If $p \in q_1 q_2$ then all vertices of $\Pent(g,h;p)$ lie on $q_1 q_2$ and the pentagon clearly cannot bound an immersed disc. Consider Fermi coordinates on $\hyp^2$ with axis $q_1 q_2$, and let $p = (\alpha, d)$. With these coordinates, $g(y,z) = (-y+a,-z)$ and $h(y,z) = (-y,-z)$, for some nonzero $a \in \R$. Then we compute $h p = (-\alpha, -d)$, $gh p = (\alpha + a,d)$, $h^{-1}gh p = (-\alpha - a, -d)$ and $[g^{-1},h^{-1}]p = (\alpha + 2a, d)$.

    Regardless of the signs of $\alpha$ and $a$, the point $gh p$ lies between $p$ and $[g^{-1},h^{-1}]p$ on the curve at height $d$ from $q_1 q_2$. Since $a \neq 0$ these three points are distinct. But $gh p$ lies on the opposite side of the geodesic segment $p \To [g^{-1},h^{-1}]p$ from the points $hp$ and $h^{-1}ghp$. It follows that $\Pent(g,h;p)$ does not bound an immersed disc. See figure \ref{fig:21}.

\begin{figure}[tbh]
\begin{center}
$\begin{array}{c}
\includegraphics[scale=0.35]{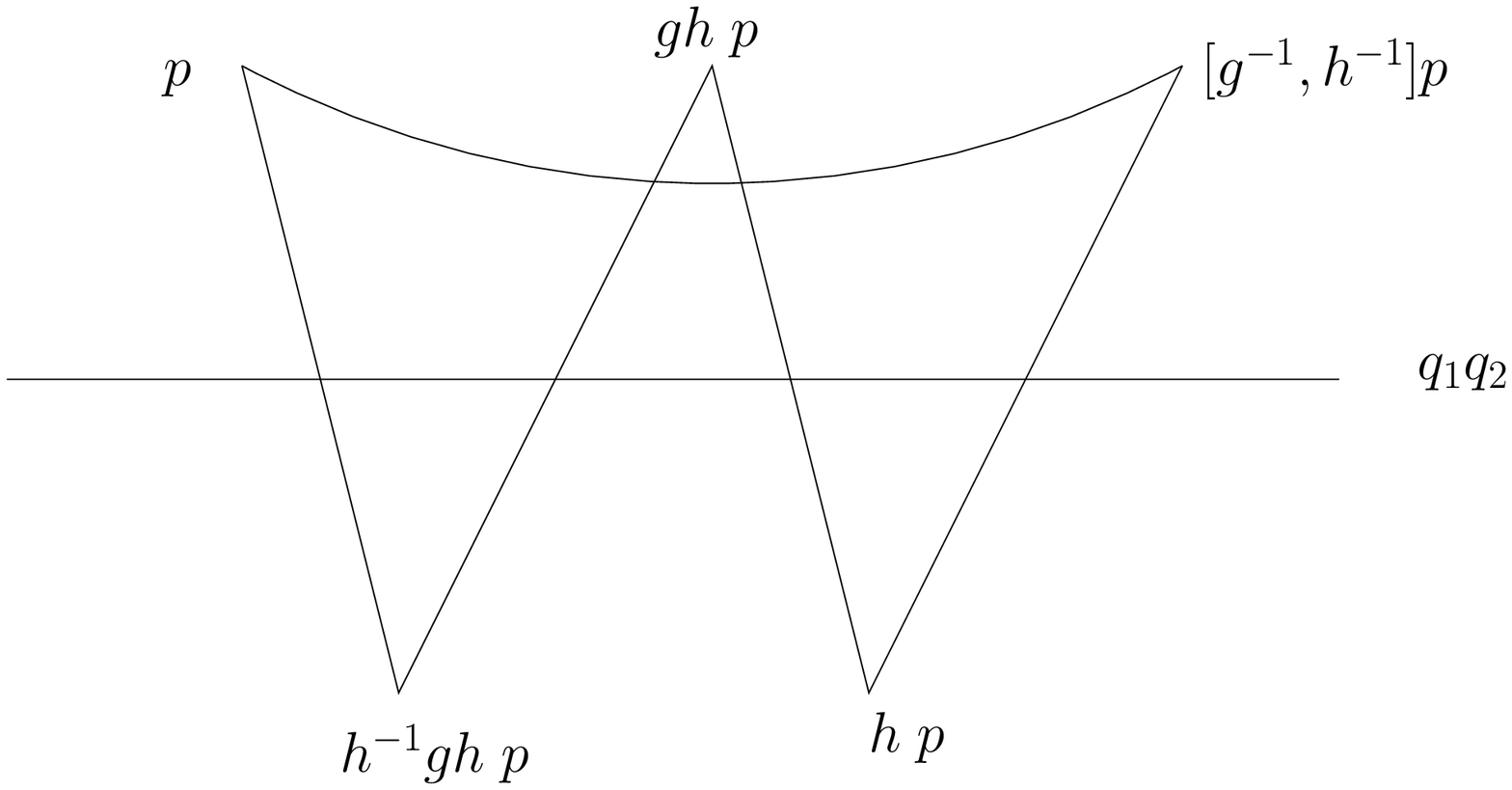}
\end{array}$
\hspace{1 cm}
$\begin{array}{c}
\includegraphics[scale=0.35]{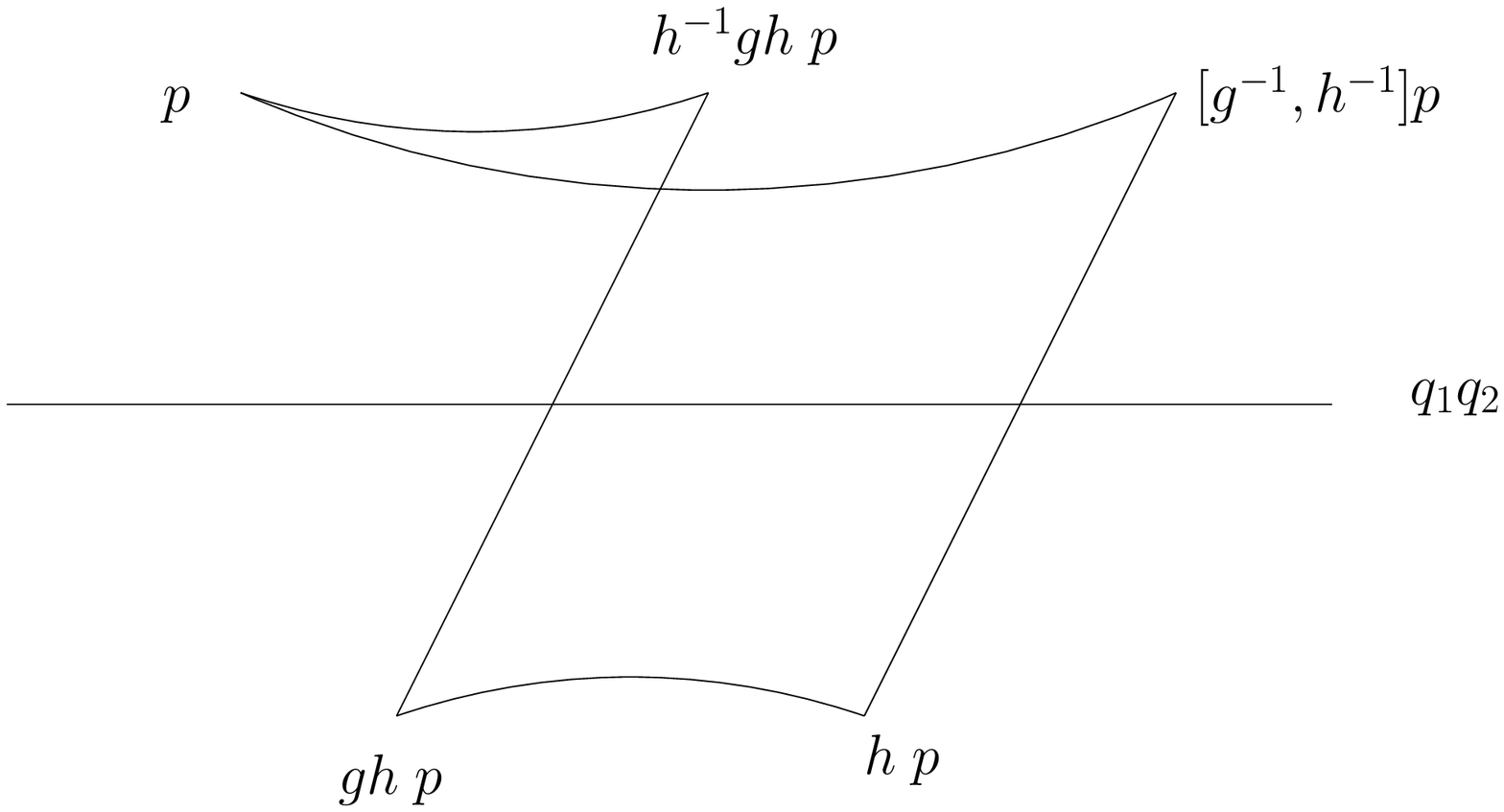}
\end{array}$
\caption{$\Pent(g,h;p)$ does not bound an immersed disc. Left: case (i). Right: case (ii).} \label{fig:21}
\end{center}
\end{figure}

    \textbf{ Case (ii).} Again take Fermi coordinates with axis $q_1 q_2$. We may assume that $g(y,z) = (y+c,z)$ and $h(y,z) = (-y,-z)$.
    Let $p = (\alpha, d)$; if $d=0$ $\Pent(g,h;p)$ lies on $q_1 q_2$ and cannot bound an immersed disc. We compute $hp = (-\alpha, -d)$, $ghp = (-\alpha + c,-d)$, $h^{-1}ghp = (\alpha - c,d)$, $[g^{-1}, h^{-1}] p = (\alpha - 2c,d)$. Now $h^{-1}ghp$ lies between $p$ and $[g^{-1},h^{-1}]p$ at height $d$. But $h^{-1}ghp$ lies on the opposite side of the geodesic segment $p \To [g^{-1},h^{-1}]p$ from $hp$ and $ghp$. Again $\Pent(g,h;p)$ cannot bound an immersed disc: see figure \ref{fig:21}.

    \textbf{ Case (iii).} This is similar to case (ii).
\qed
\end{proof}

\subsubsection{An algorithm to increase traces}
\label{sec:algorithm_to_increase_traces}

We now prove proposition \ref{complicate_basis}; so let $G,H$ be a basis and $\rho$ a non-virtually-abelian representation with $\Tr[g,h]>2$. Applying lemma \ref{virtually abelian but not abelian}, the character $(x,y,z)$ of $\rho$ has $(x,y,z) \in \kappa^{-1}(2, \infty) \backslash V$. We wish to change basis until $(x,y,z) \in (2, \infty)^3$.

We have fully investigated the effect of changes of basis on characters $(x,y,z) = (\Tr g, \Tr h, \Tr gh)$ in section \ref{action of modular group 2}; by proposition \ref{Markoff_moves}, proposition \ref{complicate_basis} is reduced to the following purely algebraic claim.

\begin{lem}
\label{complication_algorithm}
    Let $(x,y,z) \in \R^3$ satisfy $x^2 + y^2 + z^2 - xyz > 4$ and
    $(x,y,z) \notin V$. Then under the equivalence relation
    generated by permutations of coordinates and
    \[
        (x,y,z) \sim (x,y,xy-z), \quad (x,y,z) \sim (-x,-y,z),
    \]
    we have $(x,y,z) \sim (x',y',z')$ for some
    $(x',y',z') \in (2,\infty)^3$.
\end{lem}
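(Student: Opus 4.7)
The plan is to first use sign changes and Markoff moves to reduce to the strictly positive octant, and then run a greedy algorithm that repeatedly replaces a minimum coordinate using the Markoff move $(x,y,z)\mapsto(yz-x,y,z)$ (or one of its permuted analogues).

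I first normalize signs. Because the sign-change relation $(x,y,z)\sim(-x,-y,z)$ flips two signs at a time, up to permutation I may assume $x,y\geq 0$. If $z<0$, then $(x,y,z)\mapsto(x,y,xy-z)$ produces a third coordinate $xy-z>0$ (even if $x$ or $y$ vanishes). Because $\kappa(0,0,w)=w^{2}-2>2$ iff $|w|>2$, the hypotheses $(x,y,z)\notin V$ and $\kappa>2$ preclude having two vanishing coordinates, so at most one coordinate is zero; a single Markoff move at any surviving zero coordinate then produces $(x,y,z)\in(0,\infty)^{3}$ still with $\kappa>2$ and still outside $V$.

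The key algebraic input is the following monotonicity estimate: if $x=\min(x,y,z)\leq 2$ and $\kappa(x,y,z)>2$, then $yz>2x$, so the Markoff move replacing $x$ by $yz-x$ strictly increases the minimum-position coordinate. To prove it, assume WLOG $x\leq y\leq z$ and view $\kappa-2$ as the quadratic
\[
    g(z)=z^{2}-xyz+(x^{2}+y^{2}-4)
\]
in $z$; a direct computation yields
\[
    g(2x/y)=\frac{(y^{2}-4)(y^{2}-x^{2})}{y^{2}},
\]
which is non-positive when $y\leq 2$. Since $g$ is a parabola with minimum at $z=xy/2\leq x\leq 2x/y$, it stays non-positive throughout $[xy/2,2x/y]$, and the hypothesis $\kappa>2$ rules out $z\leq 2x/y$, i.e.\ $yz>2x$. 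The remaining case $y>2$ is immediate from $yz\geq yx>2x$. A sharpening of the same analysis (viewing $\kappa-2$ as a quadratic in $x$) shows: when both $y,z\geq 2$, the elementary inequality $\sqrt{(y^{2}-4)(z^{2}-4)}\leq yz-4$ (equivalent to $(y-z)^{2}\geq 0$) forces the smaller root $r_{-}$ to satisfy $r_{-}\leq yz-2$, so $\kappa>2$ with $x\leq 2\leq r_{+}$ gives $x<r_{-}\leq yz-2$ and the move produces a coordinate $yz-x>2$.

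Now iterate: while some coordinate is $\leq 2$, apply a Markoff move at a position achieving the minimum. By the estimate the minimum-position coordinate strictly increases each step, so the triple's minimum is non-decreasing, and strictly increases whenever it is uniquely attained. The minimum cannot be attained at all three positions at once, for $x=y=z=m$ with $\kappa>2$ would require $3m^{2}-m^{3}>4$, which fails for every $m>0$. Once two coordinates are $\geq 2$, the sharpened estimate yields $x'>2$ in one further move, completing the reduction to $(2,\infty)^{3}$. The main obstacle is finite termination: the strict increment $yz-2x$ is not uniformly bounded below on the non-compact level surface $\{\kappa=c\}\cap(0,\infty)^{3}$, and orbits could \emph{a priori} drift along the boundary component of $V$. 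I handle this by linearising the composition of two consecutive Markoff moves near a boundary ideal point $(0,0,z_{0})$ with $z_{0}=\sqrt{c+2}>2$: the resulting $2\times 2$ matrix on the $(x,y)$-plane has determinant $1$ and trace $z_{0}^{2}-2=c>2$, hence a real eigenvalue strictly greater than $1$. This hyperbolic expansion away from $V$, combined with compactness of any interior region in $(0,\infty)^{3}$ bounded away from $V$, forces the orbit to exit $\{\min\leq 2\}$ after finitely many iterations.
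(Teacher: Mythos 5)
Your reduction to the positive octant and your two algebraic estimates are correct, and your overall strategy is essentially the paper's: the inequality $yz>2x$ is a variant of the paper's lemma that $yz-x>y$ on $[0,2]^3$, and your ``sharpened estimate'' is its one-move step for the region $[-2,2]\times(2,\infty)^2$ (it should read $r_-\ge 2$, equivalently $r_+\le yz-2$, but the conclusion $yz-x>2$ is right). The genuine gap is exactly where you say the main obstacle lies: finite termination. The eigenvalue observation does not deliver what you claim. In the regime where the two smallest coordinates are $\le 2<z$, the third coordinate is frozen, and the two-move composition is the \emph{exact} linear map $M=\left(\begin{smallmatrix}-1 & z\\ -z & z^2-1\end{smallmatrix}\right)$ with $\det M=1$ and $\Tr M=z^2-2>2$, so it has eigenvalues $\lambda>1>\lambda^{-1}>0$; but the contracting eigenvector is $(z,\,1+\lambda^{-1})$, which lies in the \emph{open positive quadrant}. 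Hence there are positive-quadrant orbits of $M$ converging to $(0,0)$, and ``hyperbolic expansion away from $V$'' is simply not a consequence of the trace/determinant computation: you must rule out the stable direction. This can be done — e.g.\ the greedy orbit in this regime obeys $u_{n+1}=zu_n-u_{n-1}$ with positive increasing terms, which forces a nonzero expanding component and geometric growth; or, as the paper does in its region $R_4$, one notes that the per-step increment $\sqrt{(y_n^2-4)(z_n^2-4)+4\kappa-8}$ of the sum is non-decreasing along the orbit (since $y_n$ is non-decreasing and $z_n$ constant), hence bounded below by its positive initial value — but nothing of this kind appears in your sketch.

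The compactness clause is likewise too vague to close the argument: $\{\kappa=c\}\cap\{\min\le 2\}\cap(0,\infty)^3$ is \emph{not} compact (for instance $(2,y,y+\sqrt{c-2})$ lies on it for every large $y$), you never name a monotone quantity whose per-step increase is uniformly positive on an identified compact set, and you do not exclude the orbit lingering near, or returning towards, the corner $(0,0,\sqrt{c+2})$. Ironically you already hold the key pieces: since the minimum coordinate is non-decreasing it stays $\ge m_0>0$, which keeps the orbit a definite distance from $V$; the part of the level set where at least two coordinates lie in $[m_0,2]$ is bounded, hence compact, and on it the increment $yz-2x$ of the sum is continuous and positive, hence uniformly bounded below, so the orbit leaves it in finitely many steps; the remaining case (two coordinates $\ge 2$) is finished by your sharpened estimate — with the small caveat that a coordinate exactly equal to $2$ can survive your last move, so one further move may be needed to land in the open cube $(2,\infty)^3$. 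Assembled that way your ingredients do give a proof close in spirit to the paper's region-by-region argument; as written, the termination step — the heart of the lemma — is not proved.
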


We will give an algorithm to obtain such an $(x',y',z')$. This algorithm is essentially the opposite of the algorithm used by Goldman in \cite{Goldman03}; it is a greedy algorithm. We define the following subsets of $\R^3$, each to be treated separately.
\begin{align*}
    R_1 &= (2, \infty) \times (2, \infty) \times (2, \infty) \\
    R_2 &= (-\infty, -2) \times (2, \infty) \times (2, \infty) \\
    R_3 &= [-2,2] \times (2, \infty) \times (2, \infty) \\
    R_4 &= [0,2] \times [0,2] \times (2, \infty) \\
    R_5 &= [-2,0] \times [0,2] \times (2, \infty) \\
    R_6 &= [0,2] \times [0,2] \times [0,2]\\
    R_7 &= [-2,0] \times [0,2] \times [0,2]
\end{align*}
Since sign changes on two coordinates and permutations of coordinates are valid moves, we may reorder $(x,y,z)$ so that $|x| \leq |y| \leq |z|$; and then change signs until $y,z \geq 0$. This point lies in some $R_i$. Thus every point in $\R^3$ is equivalent to a point in $\cup R_i$. We will show that every point in $R_i$ for $2 \leq i \leq 7$ (and lying in $\kappa^{-1}(2,\infty) \backslash V$), is equivalent to a point in some $R_j$, for $j<i$. It follows that every point in $\kappa^{-1}(2,\infty) \backslash V$ is equivalent to a point in $R_1$, proving lemma \ref{complication_algorithm}.

We will always proceed by a greedy algorithm: permute coordinates so that $x \leq y \leq z$ and then apply the Markoff move $(x,y,z) \mapsto (yz-x,y,z)$. So it is worth examining this algebra first. Recall that
\[
    \kappa(x,y,z) = \Tr[g,h] = x^2 + y^2 + z^2 - xyz-2 > 2
\]
where $\kappa$ is invariant under any automorphism of the free
group; in particular under a change of basis $(g,h) \mapsto
(g^{-1},h)$. Letting $x'=yz-x$ be the number replacing $x$ after
the Markoff move is applied, we see that $x,x'$ are the roots of
the quadratic in $t$
\[
    t^2 - yzt + y^2 + z^2 - \kappa - 2 = 0
\]
where $\kappa > 2$ is a constant. Here we think of $y,z$ as
constants. The quadratic has discriminant $\Delta$ and roots $x,x'$ given by
\[
    \Delta = (y^2 - 4)(z^2 - 4) + \underbrace{4 \kappa - 8}_{>0}, \quad 
    x, x' = \frac{ yz \pm \sqrt{ \Delta } }{2},
\]
and turning point at $t=yz/2$. We now turn to each of the regions
$R_2$ through to $R_7$ in turn.

\begin{itemize}
\item 
{\bf The region $R_7$.} After possibly reordering coordinates we may assume $-2 \leq x \leq 0 \leq y \leq z \leq 2$. We now simply take $(x',y',z') = (yz-x,y,z)$, in which all coordinates are non-negative, so that $(x',y',z')$ (after reordering coordinates) lies in $R_4$ or $R_6$.

\item {\bf The region $R_6$.} After possibly reordering coordinates we may assume $0 \leq x \leq y \leq z \leq 2$. We need a technical lemma, which could be an undergraduate exercise.\footnote{The relevant undergraduate exercise is: ``minimise the function $f(x,y,z) = yz-x-y$ subject to the constraints $0 \leq x \leq y \leq z \leq 2$ and $x^2 + y^2 + z^2 - xyz - 4 \geq 0$''. The constraints define a connected compact region in $\R^3$ bounded by the surfaces $x=0$, $x=y$, $y=z$, $z=2$ and $x^2 + y^2 + z^2 - xyz - 4 = 0$, and the exercise is straightforward.

The referee gives the following more elegant argument. Suppose $0 \leq x \leq y \leq z \leq 2$ and $x^2 + y^2 + z^2 - xyz > 4$. Find $0 \leq p \leq q \leq \frac{\pi}{2}$ such that $z=2 \cos p$ and $y = 2 \cos q$. The quadratic condition on $x$ says that $x$ lies outside $[2 \cos (p+q), 2 \cos (p-q)]$ (by computing the sum and product of the bounds of this interval). In other words, $x = 2 \cos r$ for some $r \in (p+q, \frac{\pi}{2})$. To prove $yz-x>y$ it is therefore enough to check $2 \cos q \cdot 2 \cos p - 2 \cos (p+q) \geq 2 \cos q$, i.e. $2 \cos (q-p) \geq 2 \cos q$, which is clear since $0 \leq q-p \leq q \leq \frac{\pi}{2}$.}

\begin{lem}
    Suppose $0 \leq x \leq y \leq z \leq 2$, and $x^2 + y^2 + z^2
    - xyz > 4$. Then $yz-x > y$.
\qed
\end{lem}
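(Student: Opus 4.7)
The plan is to exploit the quadratic structure of the hypothesis $x^2 + y^2 + z^2 - xyz > 4$ viewed as a strict inequality in $x$ alone, with $y$ and $z$ treated as parameters. Rearranging the goal to $x < yz - y$ shows that the task is really to obtain an upper bound on $x$; the hypothesis says precisely that $x$ lies strictly outside the closed interval bounded by the two roots of $t^2 - yzt + (y^2+z^2-4) = 0$. A direct computation shows the discriminant equals $(4-y^2)(4-z^2)$, which is non-negative on $[0,2]^2$, so the roots
\[
x_\pm \;=\; \frac{yz \pm \sqrt{(4-y^2)(4-z^2)}}{2}
\]
are real, and their sum equals $yz$.

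The main step will be the auxiliary inequality $x_+ \geq y$. I plan to prove it by rewriting it as $\sqrt{(4-y^2)(4-z^2)} \geq y(2-z)$, observing that both sides are non-negative (since $z \leq 2$), squaring, and reducing to $(2-z)\bigl((2+z)-y^2\bigr) \geq 0$, which holds because $y^2 \leq z^2 \leq z+2$ for $z \in [0,2]$.

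Once $x_+ \geq y$ is in hand, the proof finishes quickly: from $x \leq y \leq x_+$ and the strict exclusion of $x$ from $[x_-, x_+]$, the case $x > x_+$ is ruled out (it would force $x > y$), so $x < x_-$, and then
\[
yz - x \;>\; yz - x_- \;=\; x_+ \;\geq\; y,
\]
which is exactly the claim. The main potential obstacle is the algebra verifying $x_+ \geq y$, but it reduces to an elementary polynomial inequality on a compact region, with no conceptual difficulty beyond the calculation.
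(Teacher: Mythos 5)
Your proof is correct, and it is essentially the referee's argument recorded in the paper's footnote: both treat the constraint as a quadratic in $x$, observe that $x$ must lie strictly outside the interval between its two real roots, use $x \le y \le$ (larger root) to force $x$ below the smaller root, and conclude via the Vieta relation that $yz - x$ exceeds the larger root, which is at least $y$. The only difference is cosmetic — the paper substitutes $y = 2\cos q$, $z = 2\cos p$ so that the roots become $2\cos(p\pm q)$ and the key inequality becomes $\cos(q-p) \ge \cos q$, whereas you verify the same key inequality ($x_+ \ge y$) by squaring the explicit radical, reducing to $(2-z)\bigl((2+z)-y^2\bigr) \ge 0$.
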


Define inductively and greedily the sequence $(x_n, y_n, z_n)$ by setting $(x_0,y_0,z_0) = (x,y,z)$ and letting $(x_{n+1},y_{n+1},z_{n+1})$ be
the triple obtained by taking $\left\{ y_n z_n - x_n, y_n, z_n \right\}$ and reordering so $x_{n+1} \leq y_{n+1} \leq z_{n+1}$.

The lemma tells us that $y_n z_n - x_n \geq y_n$, so that all coordinates remain non-negative. At most one of $(x,y,z)$ can be zero: if two are zero then $\rho$ is virtually abelian; if three are zero we have a contradiction to $\kappa(x,y,z) > 2$. The first Markoff move makes all coordinates positive,
after which they remain positive and non-decreasing. The sum $x_n + y_n + z_n$ is strictly increasing, and in fact $(x_{n+1} + y_{n+1} + z_{n+1}) - (x_n + y_n + z_n)$ is given by the difference between the roots $x_n - x'_n$ of the quadratic described above, which is
\[
    \sqrt{\Delta} = \sqrt{ (y_n^2 - 4)(z_n^2 - 4) + 4 \kappa - 8} \geq 2\sqrt{\kappa - 2}.
\]
The inequality follows since, if one of $x_n,y_n,z_n$
becomes larger than 2 then our point lies in a different region
(namely $R_4$) and we have completed the argument. Otherwise
$y_n^2-4, z_n^2-4 \leq 0$ and their product is non-negative.

Thus the sum $x_n + y_n + z_n$ increases each iteration by at
least $2 \sqrt{\kappa - 2} > 0$. It follows that after a finite
number of steps this sum becomes larger than 6, and hence one of
the coordinates becomes larger than 2, moving our point into
$R_4$.

\item {\bf The region $R_5$.} Here $-2 \leq x \leq 0 \leq y \leq 2 < z$. We take $(x',y',z') =
(yz-x,y,z)$. Now all coordinates are non-negative and $z>2$ so that
$(x',y',z')$, after permuting coordinates to put them in ascending
order, lies in $R_4$ or $R_3$.

\item {\bf The region $R_4$.} After possibly permuting coordinates we may assume that $0 \leq x \leq y \leq 2 < z$. The character is virtually abelian iff $y=0$, so assume $y > 0$.

Applying the move $(x,y,z) \mapsto (x',y,z)$, where $x, x'=yz-x$ are the roots of the quadratic above, we see that $yz-x > 2y - x \geq y$ so that all coordinates remain non-negative at each stage and at least one coordinate is greater than 2. If two coordinates become greater than $2$, i.e. $yz-x > 2$, then we are in the region $R_3$. Otherwise, the new triple is, in ascending order, $(y,yz-x,z)$ where $0 \leq y < yz-x \leq 2 < z$.

We show that a finite number of these moves suffices to make two coordinates greater than 2, applying a greedy algorithm. Let $(x_0,y_0,z_0) = (x,y,z)$ and for $n \geq 0$ inductively let $(x_{n+1},y_{n+1},z_{n+1})$ be the triple obtained by taking $\left( y_n z_n - x_n, y_n, z_n \right)$ and ordering coordinates. From above we either enter $R_3$ or $(x_{n+1}, y_{n+1}, z_{n+1}) = (y_n, x_n', z_n)$.  The difference $x'_n - x_n = \sqrt{\Delta}$, where $\Delta$ is the discriminant of the appropriate quadratic; $\Delta>0$ since $x_n < x'_n$ are real roots. From the above paragraph we see $x_n < y_n$ strictly for $n \geq 1$, and hence $x_{n+1} = y_n > x_n = y_{n-1}$, i.e. $x_n$ and $y_n$ are strictly increasing for $n \geq 1$, and increasing for $n \geq 0$.

Clearly $z_n$ is constant while we remain in $R_4$; the other two coordinates are strictly increasing, and 
\[
 \left( x_{n+1} + y_{n+1} \right) - \left( x_n + y_n \right) = x_n' - x_n = \sqrt{\Delta} = \sqrt{ (y_n^2-4)(z_n^2-4) + 4 \kappa - 8 }.
\]
Now $0 < y_n \leq 2$ and $z>2$, so that the product $(y_n^2-4)(z_n^2-4)$ is negative. The factor $(z_n^2-4)$ is a positive constant, and the other factor $y_n^2-4$ increases towards $0$ as $y_n$ increases. Thus the product $(y_n^2-4)(z_n^2-4)$ increases with $n$ and $x_n' - x_n \geq \sqrt{(y_0^2-4)(z_0^2-4) + 4 \kappa - 8}$, which is positive as it is the discriminant of the quadratic with $x_0 \neq x_0'$ as roots. Thus $x_n + y_n$ increases by at least this amount each time. After a finite number of moves then $x_n + y_n > 4$, so at least one of $x_n,y_n$ becomes larger than $2$.

\item {\bf The region $R_3$.} Here we may assume $-2 \leq x \leq 2 < y < z$ after reordering. Now simply take $(x',y',z') = (yz-x,y,z)$. Clearly
$y,z>2$ and $x'=yz-x > 2 \times 2 - 2 = 2$. So $(x',y',z') \in R_1$.

\item {\bf The region $R_2$.} Applying a sign change manoeuvre, we have $x,y,z < -2$. Now we apply
a Markoff move and a sign change $(x,y,z) \mapsto (yz-x,y,z) \mapsto (yz-x,-y,-z)$. Clearly $-y,-z>2$ and $yz>4$, $-x>2$ imply $yz-x>6>2$. Thus
$(x',y',z') \in R_1$.

\end{itemize}

This concludes the proof of lemma \ref{complication_algorithm} and
hence proposition \ref{complicate_basis}. Note that in fact the change of basis can be taken to be
orientation-preserving. If necessary, we simply make the change of
basis $(G,H) \mapsto (H,G)$ say, which on $X(S)$ maps $(x,y,z)
\mapsto (y,x,z)$.

\subsubsection{Explicit construction}

We now have a basis $G,H$ of $\pi_1(S)$ such that $\left( \Tr g, \Tr h, \Tr gh \right) \in (2, \infty)^3$, so that $g,h,gh$ are hyperbolic isometries of $\hyp^2$. We will first explain the significance of the fact that all traces are greater than $2$.

From lemma \ref{axes crossing} we see that the axes of $g$ and $h$
are disjoint. Since $\Tr[g,gh] = \Tr[h,gh] = \Tr[g,h]$, the axes of
$g,h,gh$ are all disjoint. These axes cannot share a fixed point at
infinity either, for then $\Tr[g,h] = \pm 2$.

We will rely on results of Gilman and Maskit in
\cite{Gilman_Maskit}. Let $C(g,h)$ denote the cross ratio
\[
    C(g,h) = \frac{ (r_g-a_h)(a_g-r_h) }{ (r_g - r_h)(a_g-a_h) }
\]
in the upper half plane model (recall $r_g$ and $a_g$ denote repulsive and attractive fixed points of $g$). This quantity just tells us the orientation of the axes
of $g,h$ with respect to each other. (Note this is the reciprocal of
the definition in \cite{Gilman_Maskit}; but the definition in that
paper conflicts with their theorem; and certainly with their figure
2. Rewriting their definition of cross-ratio seems better than
rewriting their theorem.) If we normalise so that $r_g = 0$, $a_g = \infty$, $r_h = 1$ then $a_h = C(g,h)$.

\begin{figure}[tbh]
\centering
\includegraphics[scale=0.35]{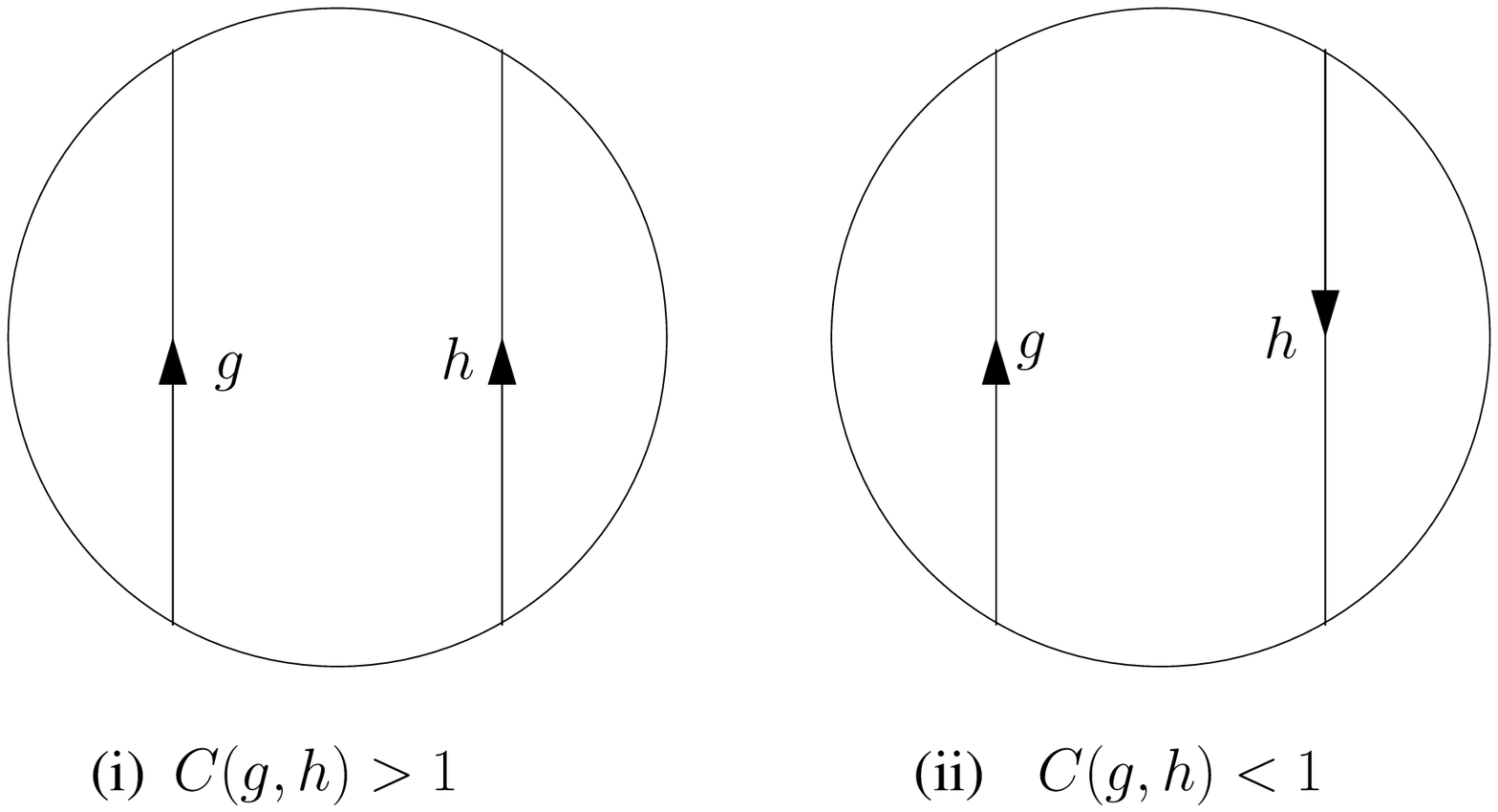}
\caption{Different respective orientations of axes of $g,h$.}
\label{fig:22}
\end{figure}

\begin{lem}
    Suppose $g,h \in PSL_2\R$ are hyperbolic and $\Tr[g,h]>2$. Then
    $C(g,h) \in (1, \infty)$ iff the axes of $g,h$ are oriented as
    in figure \ref{fig:22}(i), and $C(g,h) \in (0,1)$ iff the axes
    are oriented as in figure \ref{fig:22}(ii).
\end{lem}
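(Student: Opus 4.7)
The plan is to use direct computation in the upper half-plane model, exploiting the triple transitivity of $PSL_2\R$ on $\partial \hyp^2$ to put the fixed points in standard position.

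First I would normalize: since $\Tr[g,h] > 2$, by lemma \ref{axes crossing} the axes of $g$ and $h$ are disjoint, so their four fixed points $r_g, a_g, r_h, a_h$ are distinct. Using the transitive action of $PSL_2\R$ on ordered triples of points in $\partial\hyp^2$, conjugate so that $r_g = 0$, $a_g = \infty$, $r_h = 1$. Then substitute into the cross-ratio formula: treating the $\infty$ term by the usual convention (cancelling $a_g$ between numerator and denominator),
\[
C(g,h) = \frac{(r_g - a_h)(a_g - r_h)}{(r_g - r_h)(a_g - a_h)} = \frac{-a_h}{-1} = a_h.
\]
So the normalization identifies the cross ratio with the coordinate of $a_h$ on $\R$.

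Next I would determine the range of $C(g,h)$. Since the axes of $g,h$ are disjoint, the two pairs of fixed points $\{0, \infty\}$ and $\{1, a_h\}$ must be non-interleaving on the circle $\partial \hyp^2 = \R \cup \{\infty\}$; equivalently, $a_h$ must lie on the same side of $0$ as $r_h = 1$, forcing $a_h > 0$. Moreover $a_h \neq 1$ (distinct fixed points), so $C(g,h) \in (0,1) \cup (1,\infty)$.

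Finally, I would match each subcase to the figure by drawing the picture explicitly. The axis of $g$ is the positive imaginary axis, oriented upward from $0$ to $\infty$. The axis of $h$ is the semicircle with endpoints $1$ and $a_h$ on the positive real axis, oriented from $1$ to $a_h$. In the case $a_h = C(g,h) > 1$, traversing the axis of $h$ goes from $1$ rightward over the top of the semicircle and down to $a_h$; in the case $0 < C(g,h) < 1$, it goes from $1$ leftward over and down to $a_h$. Comparing the direction of travel along the axis of $h$ to the upward direction along the axis of $g$ reveals which of the two relative orientations in figure \ref{fig:22} occurs; by convention $C(g,h) > 1$ matches configuration (i) and $C(g,h) \in (0,1)$ matches configuration (ii). This is the routine step — there is no real obstacle, just bookkeeping of which way each arrow points. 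The only subtlety worth flagging is the sign convention for $C$, which the paper has already reconciled with the figure of Gilman--Maskit.
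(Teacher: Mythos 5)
Your route is in substance the paper's route: the paper also proves this lemma by putting the four fixed points on the real axis in the upper half-plane and computing $C(g,h)$ from their positions (it parametrises the configuration of figure \ref{fig:22}(i) by gap lengths $\alpha,\beta,\gamma$ and checks the resulting expression exceeds $1$), and the normalisation you use, $(r_g,a_g,r_h)\mapsto(0,\infty,1)$ giving $C(g,h)=a_h$, is exactly the remark the paper makes immediately before the lemma. Your version is slightly cleaner in that the dichotomy $a_h\lessgtr 1$ replaces the paper's inequality manipulation, and you make explicit the useful point that disjointness of the axes forces $a_h>0$, so $C(g,h)\in(0,1)\cup(1,\infty)$ and no third case arises.

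One step is wrong as stated, though easily repaired. $PSL_2\R$ is \emph{not} transitive on ordered triples of points of $\partial\hyp^2$: it preserves the cyclic orientation of the boundary circle, so it acts simply transitively only on triples of a fixed orientation class. Consequently the normalisation $r_g=0$, $a_g=\infty$, $r_h=1$ is available by conjugation in $PSL_2\R$ only when the triple $(r_g,a_g,r_h)$ has the same cyclic orientation as $(0,\infty,1)$; for the other configurations (e.g. cyclic order $r_g,a_g,a_h,r_h$ on the circle) you need an orientation-reversing element, i.e. $PGL_2\R$. The repair: note that the cross ratio, being a rational expression with real coefficients in the four fixed points, is invariant under all of $PGL_2\R$, so you may normalise with a possibly orientation-reversing conjugacy at the cost of replacing the configuration by its mirror image; then observe that the dichotomy the lemma draws is mirror-invariant (it is detected by whether $a_g$ and $a_h$ are adjacent on the circle, i.e. not separated by $r_g,r_h$ — the feature distinguishing figure \ref{fig:22}(i) from (ii)), so the matching to the figure is unaffected. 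Alternatively, treat both orientation classes of the triple separately. A second, smaller point: ``the axes are disjoint, so the four fixed points are distinct'' needs the extra observation that a common ideal endpoint would force $\Tr[g,h]=2$, contradicting $\Tr[g,h]>2$; the paper records exactly this in the paragraph preceding the lemma, so you should cite or reproduce that remark rather than attribute distinctness to disjointness alone.
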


\begin{proof}
    In the situation of figure \ref{fig:22}(i) we may project
    to the upper half plane as in figure \ref{fig:23}. With
    lengths along the real axis $\alpha, \beta, \gamma$ as labelled
    then we have
\begin{figure}[tbh]
\centering
\includegraphics[scale=0.5]{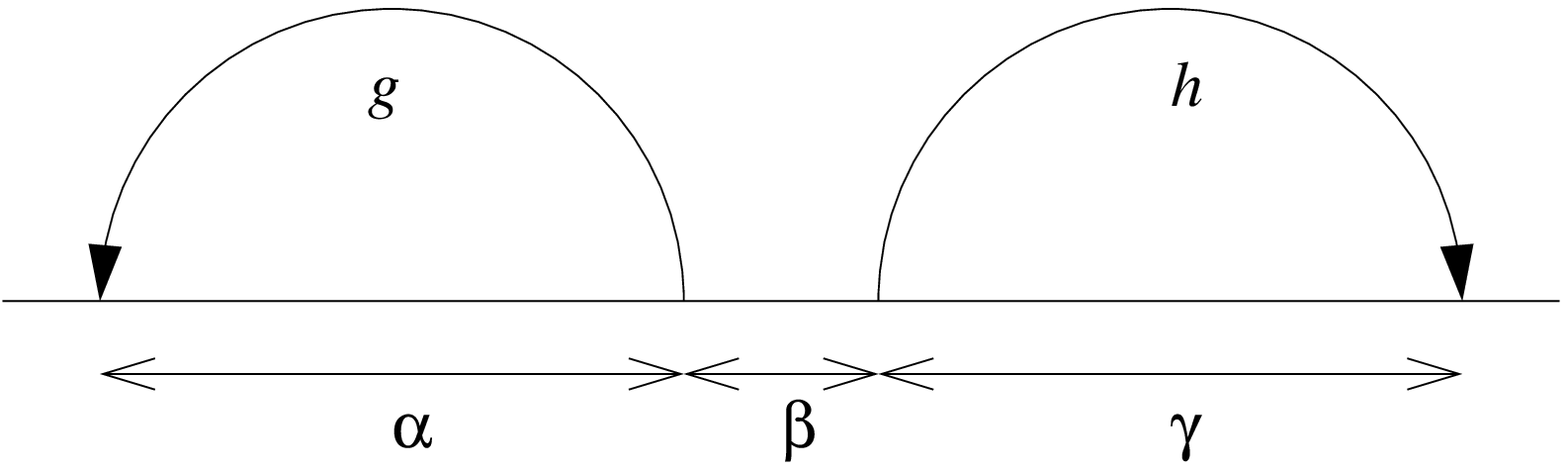}
\caption{$C(g,h)>1$, in the upper half plane.} \label{fig:23}
\end{figure}
    \[
        C(g,h) = \frac{ (r_g-a_h)(a_g-r_h) }{ (r_g - r_h)(a_g-a_h)
        } = \frac{(\beta+\gamma)(\alpha+\beta)}{\beta(\alpha + \beta
        + \gamma)} > 1.
    \]
    The inequality follows since $(\beta + \gamma)(\alpha + \beta) =
    \alpha \beta + \alpha \gamma + \beta^2 + \beta \gamma > \alpha
    \beta + \beta^2 + \beta \gamma = \beta (\alpha + \beta +
    \gamma)$. In the situation of figure \ref{fig:22}(ii) a
    similar computation gives $C(g,h) \in (0,1)$.
\qed
\end{proof}

\begin{lem}
\label{axes_configurations}
    Let $g,h \in PSL_2\R$ where $g,h,gh$ are hyperbolic and
    $\Tr[g,h]>2$. The possible arrangements of the axes of $g,h,gh$
    are shown in figure \ref{fig:24}, and have the
    following descriptions:
    \begin{enumerate}[(i)]
        \item
            $C(g,h) \in (1, \infty)$ and $\Tr(g) \Tr(h)
            \Tr(gh) > 8$;
        \item
            $C(g,h) \in (0,1)$ and $\Tr(g) \Tr(h)
            \Tr(gh) > 8$;
        \item
            $C(g,h) \in (0,1)$ and $\Tr(g) \Tr(h)
            \Tr(gh) > 8$;
        \item
            $C(g,h) \in (0,1)$ and $\Tr(g) \Tr(h)
            \Tr(gh) < -8$.
    \end{enumerate}
\begin{figure}[tbh]
\centering
\includegraphics[scale=0.5]{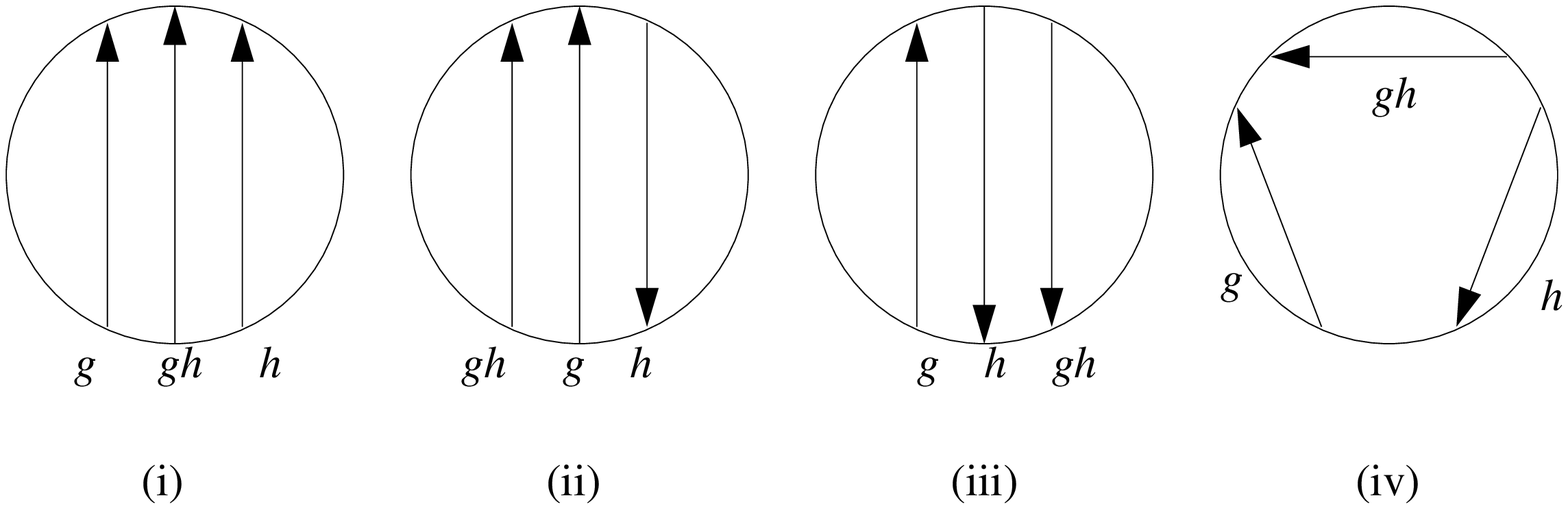}
\caption{The possible arrangements of axes of $g,h,gh$.}
\label{fig:24}
\end{figure}
\end{lem}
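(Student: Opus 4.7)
The strategy is to combine the cross-ratio computation just proven with direct matrix manipulations to enumerate the geometric configurations. Since $\Tr[g,h] > 2$ together with the standard identities $\Tr[g,gh] = \Tr[h,gh] = \Tr[g,h]$, lemma \ref{axes crossing} immediately gives that the axes of $g$, $h$, and $gh$ are pairwise disjoint in $\hyp^2$, with six distinct endpoints on $\partial\hyp^2$ (no shared ideal point, else some commutator would have trace $\pm 2$). The preceding cross-ratio lemma then splits the problem into the subcases $C(g,h) \in (1,\infty)$ and $C(g,h) \in (0,1)$, corresponding to the two relative orientations of $\Axis g$ and $\Axis h$ depicted in figure \ref{fig:22}.

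To locate $\Axis gh$ and compute $\Tr(g)\Tr(h)\Tr(gh)$, I would normalize by conjugating in $PSL_2\R$ so that
\[
g = \pm\begin{pmatrix}\lambda & 0 \\ 0 & \lambda^{-1}\end{pmatrix}, \qquad \lambda>1,
\]
with $r_g=0$, $a_g=\infty$. After a further conjugation by $z\mapsto -z$ if necessary (which preserves $\Axis g$), one may take $r_h, a_h > 0$, so that $C(g,h)\gtrless 1$ iff $r_h \lessgtr a_h$. Parametrising $h$ by its fixed points and its dilation factor, a direct matrix computation yields both $\Tr(gh)$ and the fixed points of $gh$ as explicit rational expressions, from which one reads off on which side of $\Axis g$ and $\Axis h$ the axis of $gh$ lies, and the sign of its attractive and repulsive endpoints.

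In the parallel case $C(g,h)\in(1,\infty)$, coherent orientation of all three axes forces $\Tr(gh)$ to have the same sign as $\Tr g$ and $\Tr h$, producing case (i) and the inequality $\Tr(g)\Tr(h)\Tr(gh)>8$ (which follows because each absolute trace exceeds $2$). In the anti-parallel case $C(g,h)\in(0,1)$, the possible positions of $\Axis gh$ relative to $\Axis g$ and $\Axis h$ give the three remaining configurations; the sign and magnitude of $\Tr(g)\Tr(h)\Tr(gh)$ then serves as the invariant distinguishing which of (ii), (iii), (iv) occurs, with the exceptional case (iv) being the one in which the natural choice of $SL_2\R$ lift makes $\Tr(gh)$ negative. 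Note that $\Tr(g)\Tr(h)\Tr(gh)$ is well-defined on $PSL_2\R$, because flipping the sign of an $SL_2\R$ lift of $g$ or $h$ changes exactly two of the three trace factors.

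The main obstacle is the careful bookkeeping required to confirm that the four listed configurations exhaust all possibilities and that the trace-product condition correctly distinguishes them. In particular, one must verify that within $C(g,h)\in(0,1)$ there is no ``fifth'' configuration, which amounts to checking that the computed fixed points of $gh$ always land in one of the three specified arcs of the boundary circle. An alternative, which avoids some of this casework, is to invoke the classification of hyperbolic two-generator subgroups in \cite{Gilman_Maskit} directly, matching their parameters to the four cases here; either way, once the normalization is fixed, the verification is a finite algebraic check.
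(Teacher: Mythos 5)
Your setup (pairwise disjoint axes with six distinct ideal endpoints, and the case split by $C(g,h)$) matches the paper, but the decisive content of the lemma --- that the four pictures in figure \ref{fig:24} are the \emph{only} possible arrangements, with the stated trace inequalities --- is exactly the part you defer to an unexecuted ``finite algebraic check''. It is not finite: after normalising $g$ you still have a multi-parameter family (the fixed points of $h$ and both translation lengths), and you must establish inequalities over that entire region to locate the fixed points of $gh$ and their attracting/repelling roles. Nothing in the proposal actually rules out the other combinatorial placements of a geodesic disjoint from $\Axis g$ and $\Axis h$ (for instance $\Axis gh$ lying beyond one of the two axes in the $C>1$ case, or any of the admissible positions occurring with the opposite orientation), nor derives the sign statements: the assertion that ``coherent orientation forces $\Tr(gh)$ to have the same sign as $\Tr g$ and $\Tr h$'' is precisely the nontrivial content of theorem \ref{GM}(i) of Gilman--Maskit, not something that follows from the picture. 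The paper treats these two points differently: it quotes Gilman--Maskit for the trace-product criteria, and proves exhaustiveness synthetically by writing $g = R_\gamma R_\beta$, $h = R_\beta R_\alpha$ with $\beta$ the common perpendicular of $\Axis g$ and $\Axis h$, so that $gh = R_\gamma R_\alpha$ and $\Axis gh$ is the common perpendicular of $\alpha$ and $\gamma$; combined with the disjointness coming from lemma \ref{axes crossing}, this pins $\Axis gh$ to the positions shown. Your computational route could in principle be completed, but as written the core argument is missing rather than merely compressed.

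Two further slips. First, the normalisation ``conjugate by $z \mapsto -z$ to make $r_h, a_h > 0$'' is not available: the pointwise stabiliser of $\{0,\infty\}$ in $PSL_2\R$ acts on the boundary by positive scalings, and the map you want is orientation-reversing, which would mirror exactly the configurations (and the attracting/repelling data) you are trying to classify. Second, the claim that the sign and magnitude of $\Tr(g)\Tr(h)\Tr(gh)$ distinguishes which of (ii), (iii), (iv) occurs overstates both what is true and what is needed: cases (ii) and (iii) carry the identical description ($C(g,h) \in (0,1)$ and product $>8$), the paper explicitly declines to offer any trace criterion separating them, and only (iv) is singled out by the product being $<-8$, via theorem \ref{GM}(ii), which characterises it geometrically by the three axes bounding a common region.
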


(We say nothing about what trace criteria might distinguish cases (ii) and (iii).) The proof will use the following theorem of Gilman and Maskit.
\begin{thm}[Gilman--Maskit \cite{Gilman_Maskit}]
\label{GM}
    Let $g,h$ be hyperbolic isometries such that $g,h$ have no fixed
    points in common, the axes of $g$ and $h$ do not intersect, and
    $gh$ is also hyperbolic.
    \begin{enumerate}
        \item
            If $C(g,h) \in (1, \infty)$ then $\Tr(g) \Tr(h) \Tr(gh) > 8$.
        \item
            If $C(g,h) \in (0,1)$ then $\Tr(g) \Tr(h) \Tr(gh) < -8$
            if and only if the axes of $g, h, gh$ bound a common
            region in $\hyp^2$.
    \end{enumerate}
    \qed
\end{thm}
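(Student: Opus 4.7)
The plan is to first establish that the three axes $\Axis(g)$, $\Axis(h)$, $\Axis(gh)$ are pairwise disjoint and their six fixed points at infinity are distinct; then to dichotomize on the value of $C(g,h)$ and invoke the Gilman--Maskit theorem \ref{GM} to determine the sign of $\Tr(g)\Tr(h)\Tr(gh)$; and finally to enumerate the remaining geometric possibilities to match figure \ref{fig:24}.

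First I would verify disjointness. Since $\Tr[g,h] > 2$, lemma \ref{axes crossing} forces $\Axis(g)$ and $\Axis(h)$ to be disjoint. Cyclic invariance of trace gives $\Tr[g,gh] = \Tr[h,gh] = \Tr[g,h] > 2$ (using $\Tr(M) = \Tr(M^{-1})$ in $SL_2\R$ for the second identity), so the axes in the other two pairs are also disjoint. If two of $g, h, gh$ shared a fixed point at infinity their commutator would be parabolic with trace $\pm 2$, contradicting $\Tr[g,h] > 2$; so the six fixed points are distinct.

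The preceding lemma then gives a dichotomy: either $C(g,h) \in (1,\infty)$, matching figure \ref{fig:22}(i), or $C(g,h) \in (0,1)$, matching figure \ref{fig:22}(ii). In the first case theorem \ref{GM}(1) gives $\Tr(g)\Tr(h)\Tr(gh) > 8$ directly, and I would argue by examining the constraints on $\Axis(gh)$ (using non-crossing with the other two axes, together with the analogous cross-ratio data for the pairs $(g,gh)$ and $(h,gh)$) that the configuration is forced to be (i). In the second case theorem \ref{GM}(2) says $\Tr(g)\Tr(h)\Tr(gh) < -8$ exactly when the three axes bound a common region in $\hyp^2$. Because $g, h, gh$ are hyperbolic we have $|\Tr g|, |\Tr h|, |\Tr gh| > 2$ and hence $|\Tr(g)\Tr(h)\Tr(gh)| > 8$, so the product never lies in $[-8, 8]$. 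Thus the two sub-cases are: the axes bound a common region and $\Tr(g)\Tr(h)\Tr(gh) < -8$ (configuration (iv)), or they do not and $\Tr(g)\Tr(h)\Tr(gh) > 8$ (configurations (ii) or (iii)).

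The main obstacle is the combinatorial enumeration showing that figure \ref{fig:24} is exhaustive: the four pictures must be the only cyclic orderings of the six fixed points on $\partial \hyp^2$ consistent with pairwise non-crossing and the prescribed cross-ratio data. This is a finite check, but for the case $C(g,h) \in (0,1)$ without a common region one must verify carefully that the third axis admits exactly two placements (configurations (ii) and (iii)) rather than more or fewer; the cases (i) and (iv) should be uniquely determined by their trace data, so most of the care goes into the middle two sub-cases.
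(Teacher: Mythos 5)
Your proposal does not prove the stated theorem: it is circular. The statement to be proved is Theorem \ref{GM} itself (the Gilman--Maskit result relating the cross-ratio $C(g,h)$ and the configuration of the three axes to the sign of $\Tr(g)\Tr(h)\Tr(gh)$), but in your second paragraph you ``invoke the Gilman--Maskit theorem \ref{GM} to determine the sign of $\Tr(g)\Tr(h)\Tr(gh)$'' --- that is, you assume exactly the conclusion that needs to be established. What you have actually sketched is a proof of Lemma \ref{axes_configurations}, the classification of axis arrangements in figure \ref{fig:24}; indeed you import the hypothesis $\Tr[g,h]>2$, which is a hypothesis of that lemma and not of Theorem \ref{GM}, and your stated goal (``to enumerate the remaining geometric possibilities to match figure \ref{fig:24}'') is the conclusion of that lemma, not of the theorem. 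In the paper the logical order is the reverse of yours: Theorem \ref{GM} is quoted from Gilman--Maskit without proof and is then used, together with a reflection decomposition $g=R_\gamma R_\beta$, $h=R_\beta R_\alpha$, $gh=R_\gamma R_\alpha$, to prove Lemma \ref{axes_configurations}.

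The genuinely missing content is any argument tying the trace product to the geometry. Your only remark in that direction --- that hyperbolicity gives $\abs{\Tr g},\abs{\Tr h},\abs{\Tr gh}>2$ and hence $\abs{\Tr(g)\Tr(h)\Tr(gh)}>8$ --- shows the product avoids $[-8,8]$ but says nothing about its sign, which is the whole point of both parts of the theorem. A non-circular proof would have to compute: for instance, normalise so that $\Axis(g)$ and $\Axis(h)$ have prescribed endpoints on $\partial\hyp^2$ (placing $r_g=0$, $a_g=\infty$, $r_h=1$, $a_h=C(g,h)$), write $g,h$ as explicit matrices in $SL_2\R$ with traces $\pm(\lambda+\lambda^{-1})$, $\pm(\mu+\mu^{-1})$, compute $\Tr(gh)$ as a function of $\lambda,\mu$ and $C(g,h)$, and analyse its sign and size according to whether $C(g,h)>1$ or $0<C(g,h)<1$ and, in the latter case, whether the three axes bound a common region; alternatively one can use the half-turn or reflection factorisations with the standard trace identities. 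None of this appears in your proposal, so as a proof of Theorem \ref{GM} it has no content beyond restating the theorem.
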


\begin{proof}[of lemma \ref{axes_configurations}]
    From the above theorem and discussion, it is clear that figures \ref{fig:24}(i)--(iv) correspond to the cross ratios and products of traces as shown. But we must show that these figures are the only possible geometric arrangements of axes. We
    use the result that a hyperbolic isometry translating distance $d$ along an axis $l$ is the composition of two reflections, in
    lines perpendicular to $l$ spaced $d/2$ apart. Denote by $R_l$ the reflection in the line $l$. 

    Suppose $C(g,h)>1$. Let $\beta$ denote the common perpendicular of $\Axis g$ and $\Axis h$, and choose perpendiculars $\alpha, \gamma$ so that $g =
    R_\gamma R_\beta$ and $h = R_\beta R_\alpha$. Then $gh = R_\gamma R_\alpha$. Since $gh$ is hyperbolic, $\alpha, \gamma$ do not intersect, and their common perpendicular is the axis of $gh$. As $\Tr[g,gh] = \Tr[h,gh] = \Tr[g,h]>2$, lemma \ref{axes crossing} says that $\Axis gh$ is disjoint from $\Axis g$ and $\Axis h$. Thus, as shown in figure \ref{fig:25} (left), $\Axis gh$ must pass through the region
    bounded by $\Axis g$ and $\Axis h$, with the orientation shown. This is the situation of figure \ref{fig:24}(i).

    Now suppose $C(g,h) < 1$. Let
    $\alpha, \beta, \gamma$ be perpendiculars as before. We see by
    varying the possible positions of $\alpha$ and $\gamma$, and
    noting that $\Axis gh$ must be disjoint from $\Axis g$ and
    $\Axis h$, that there are precisely three possible locations
    for $\Axis gh$, namely those shown.
\qed
\end{proof}

Returning to the problem at hand, we have a basis with $\Tr g, \Tr
h, \Tr gh > 2$. So lemma \ref{axes_configurations} tells us that the
cases we must consider are precisely those in figure
\ref{fig:24}(i),(ii),(iii). We will explicitly show how to
choose $p$ so that $\Pent(g,h;p)$ is a non-degenerate simple
pentagon bounding an embedded disc.

\begin{itemize}

\item {\bf Case (i).} Assume $g,h,gh$ have axes as shown in figure \ref{fig:24}(i). Note that the axis of $hg$ is the image of the axis of $gh$ under either $h$ or $g^{-1}$. Thus $r_{hg}$ lies between $r_{gh}$ and $r_g$; and $a_{hg}$ lies between $a_{gh}$ and $a_h$. So $\Axis hg$ is arranged as shown in
figure \ref{fig:25} (right).

\begin{figure}[tbh]
\begin{center}
$\begin{array}{c}
\includegraphics[scale=0.4]{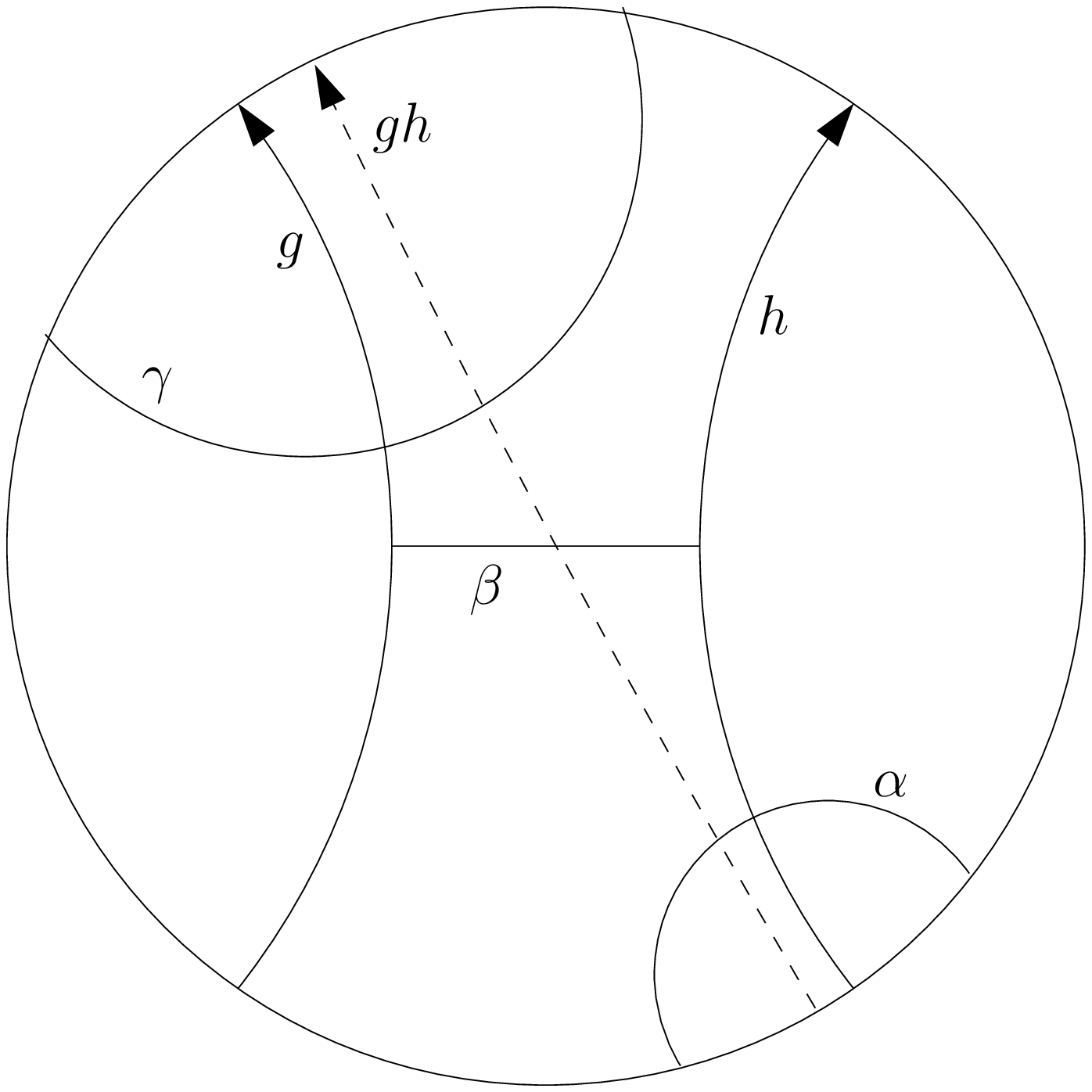}
\end{array}$
\hspace{1 cm}
$\begin{array}{c}
\includegraphics[scale=0.4]{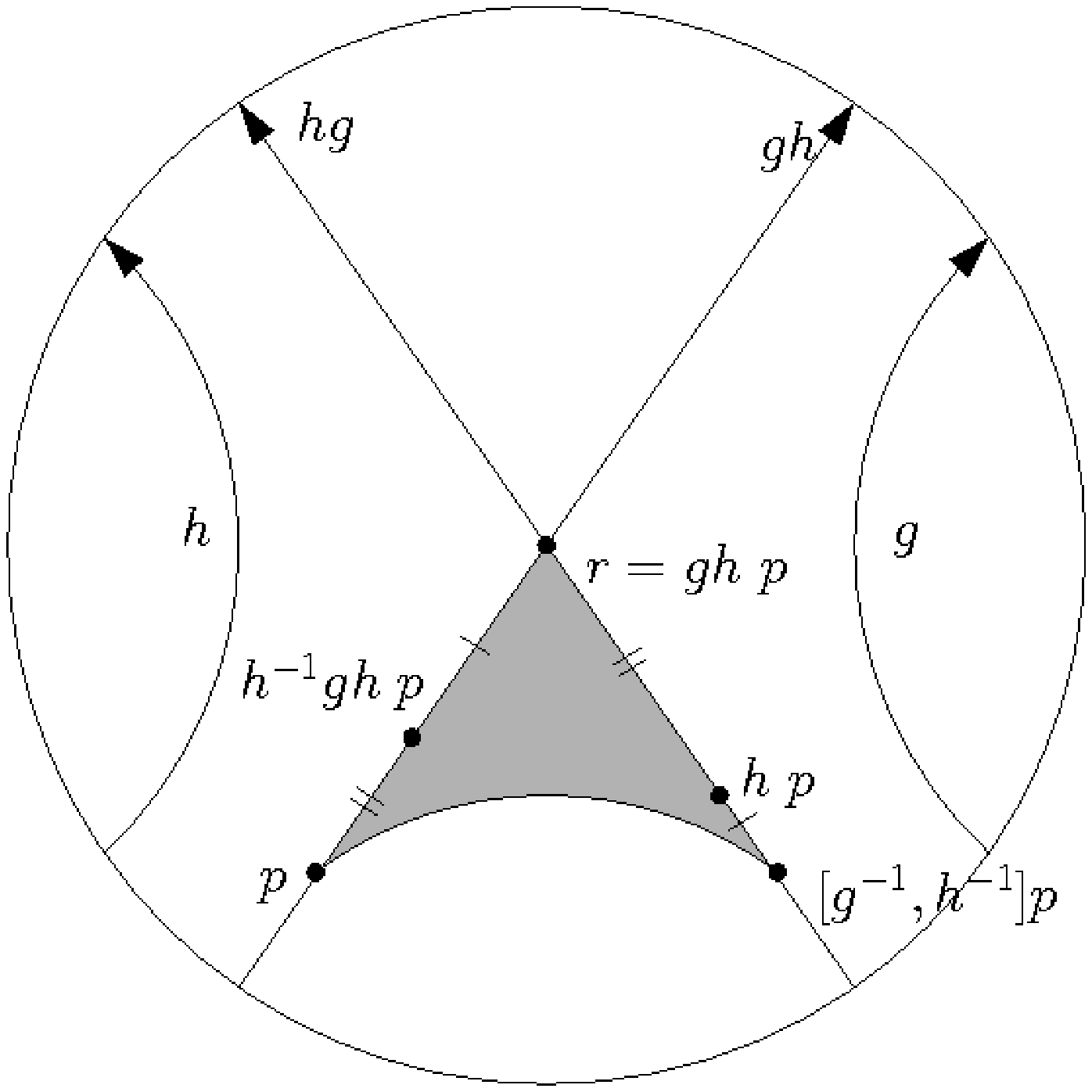}
\end{array}$
\caption{Left: the situation when $C(g,h)>1$. Right: construction in case (i).}
\label{fig:25}
\end{center}
\end{figure}

Let $r$ be the intersection of the axes of $gh$ and $hg$, and let $p = h^{-1}g^{-1}(r)$. Then we have immediately: $p \in \Axis gh$; $h p \in h(\Axis h^{-1} g^{-1}) = \Axis g^{-1} h^{-1} = \Axis hg$; $gh p = r = \Axis(hg) \cap \Axis(gh)$; $h^{-1} gh p \in h^{-1} (\Axis hg) = \Axis gh$; and $[g^{-1}, h^{-1}] p \in g^{-1} (\Axis gh) = \Axis hg$. Since $p = (gh)^{-1}(r)$, $p$ lies on $\Axis gh$ on the same side of $r$ as $r_{gh}$. Similarly $[g^{-1},h^{-1}]p = (hg)^{-1} (r)$ lies on $\Axis hg$ on the same side of $r$ as $r_{hg}$. Considering the action of $h$, we see that $h^{-1} gh p = h^{-1}(r)$ lies on $\Axis
gh$ on the same side of $r$ as $r_{gh}$; and similarly $h p$ lies on $\Axis hg$ on the same side of $r$ as $r_{hg}$. Further, since $h$ maps the directed segment $(h^{-1} ghp, p)$ to the directed segment $(gh p, h p)$, we see that $h^{-1} gh p$ lies on the same side of
$p$ as $r$. Similarly, $hp$ lies on the same side of $[g^{-1},h^{-1}]p$ as $r$. So $\Pent(g,h;p)$ appears as in figure \ref{fig:25} (right), and it is non-degenerate bounding an embedded disc.

Note $\Pent(g,h;p)$ contains two straight angles, so $\theta \in (2\pi, 3\pi)$. From corollary \ref{commutator_trace_info}, $\Tr[g,h]>2$ implies $[g,h]
\in \Hyp_0$, so we have by proposition \ref{twist_bounds} $\Twist([g^{-1},h^{-1}],p) \in (-\pi,\pi)$. From lemma \ref{pentagon_twist} we have $\theta = 3\pi \pm \Twist([g^{-1},h^{-1}],p)$; and $\Twist([g^{-1},h^{-1}],p)>0$ if and only if $p \to [g^{-1},h^{-1}]p$ bounds $\Pent(g,h;p)$ on its left, i.e. $\partial S$ traversed in the direction of $[G,H]$ bounds $S$ on its left.

\item {\bf Case (ii).} Assume $g,h,gh$ are arranged as in figure \ref{fig:24}(ii). Again $\Axis hg$, being the image of $\Axis gh$ under $h$ or $g^{-1}$, lies on the same side of $\Axis g$ as $\Axis gh$. The axes of $gh$ and $hg$ may or may not intersect; we do not care. See figure \ref{fig:26}.

\begin{figure}[tbh]
\begin{center}
$\begin{array}{c}
\includegraphics[scale=0.4]{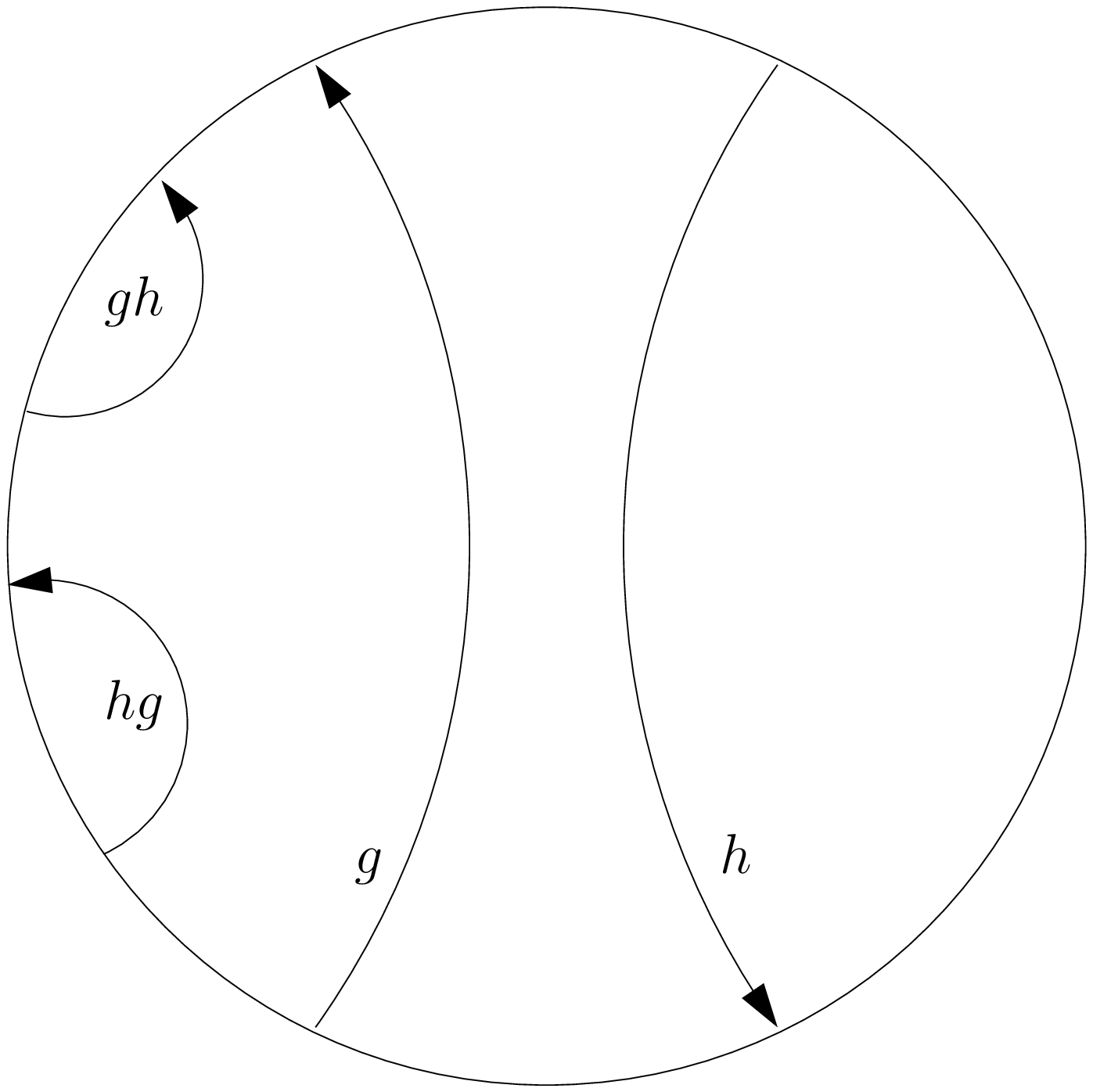}
\end{array}$
\hspace{1 cm}
$\begin{array}{c}
\includegraphics[scale=0.4]{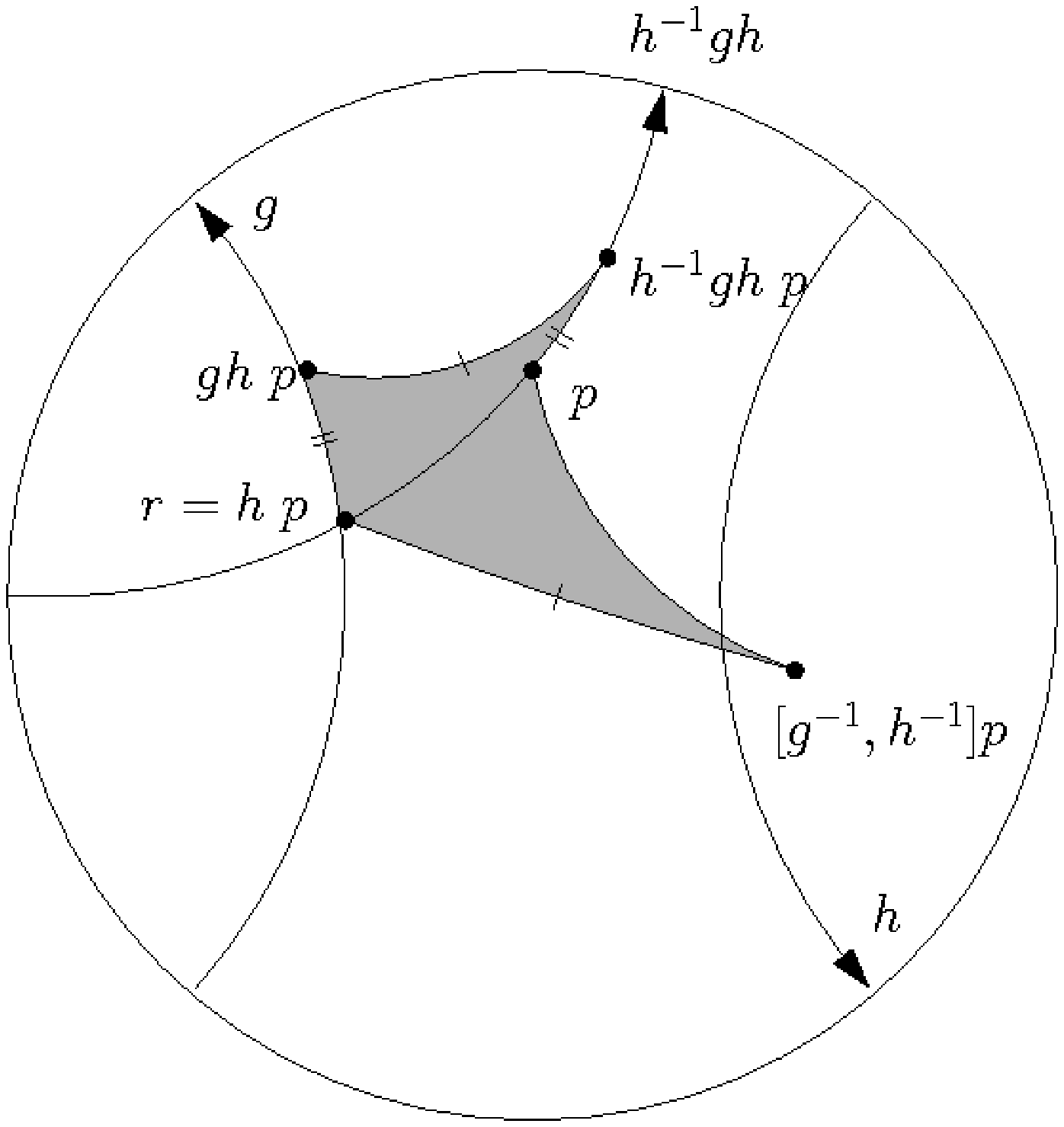}
\end{array}$
\caption{Axes of $g,h,gh,hg$ in case (ii); and construction.} \label{fig:26}
\end{center}
\end{figure}

Now $h^{-1}(a_{hg}) = a_{gh}$. Thus $h^{-1}(r_g)$ lies between $r_g$ and $a_{gh}$, in the same arc of the circle at infinity as $a_{hg}$. Also $h^{-1}(a_g)$ lies in the arc between $a_g$ and $r_h$. Since $h^{-1}(\Axis g) = \Axis h^{-1}gh$, we have the arrangement of axes as shown on the right of figure \ref{fig:26}

Let $r = \Axis g \cap \Axis h^{-1}gh$, and let $p=h^{-1} r$. Then we
have immediately: $p \in h^{-1}(\Axis g) = \Axis h^{-1}gh$; $h p = r = \Axis g \cap \Axis h^{-1} gh$; $ghp  \in \Axis g$; $h^{-1} ghp \in h^{-1}(\Axis g) = \Axis h^{-1}gh$. Considering the action of $h$, we see that $p=h^{-1}r$ lies to the same side of $r$ as $h^{-1}gh p$. Since $h$ maps the directed segment $(h^{-1}ghp , p)$ to $(ghp,hp)$, we see that $h^{-1}ghp$ lies on the opposite side of $p$ as $hp = r$. Now $h^{-1}ghp$ lies to the right of $\Axis g$ in the diagram shown, so $[g^{-1},h^{-1}]p$ lies to the right of $\Axis g$ also. And considering the action of $g^{-1}$, the image of $\Axis h^{-1}gh$ under $g^{-1}$ is disjoint from $\Axis h^{-1}gh$ and lies below it. So $[g^{-1},h^{-1}]p$ lies to the right of $\Axis g$ and also below $\Axis h^{-1} gh$. Hence $\Pent(g,h;p)$ is as shown in figure \ref{fig:26}, and is non-degenerate, bounding an embedded disc.

Examining figure \ref{fig:26}, we may chase unit vectors and see that $\Twist([g^{-1},h^{-1}],p) < 0$. By lemma \ref{pentagon_twist} we have $\theta = 3 \pi + \Twist([g^{-1},h^{-1}],p)$. If the opposite orientation occurs then $\Twist([g^{-1},h^{-1}],p) > 0$ and $\theta = 3\pi - \Twist([g^{-1},h^{-1}],p)$.

\item {\bf Case (iii).} This is similar to case (ii). By a similar argument as in case (ii), we deduce that $\Axis hg$ lies on the same side of $\Axis h$ as
$\Axis gh$. Thus, similarly to case (ii), we deduce that $\Axis g^{-1}hg$ lies as shown in figure \ref{fig:27}. Let $r$ be the intersection of $\Axis h$ and $\Axis g^{-1}hg$, and let $p=h^{-1}g^{-1}h r$. Then we have $h^{-1} gh p = r$, so $ghp \in \Axis h$ in the direction shown in figure \ref{fig:27}. The
segment $hp \to [g^{-1},h^{-1}]p$ is the image of $ghp \to h^{-1}ghp$ under $g^{-1}$, hence is a segment on $g^{-1} \Axis h = \Axis g^{-1} hg$ in the arrangement shown in figure \ref{fig:27}. Finally as $hp$ lies to the left of $\Axis h$, $p$ lies to the left of $\Axis h$, translated along the constant
distance curve from $\Axis h$ through $h p$. It follows that $p$ lies above $\Axis g^{-1}hg$. So $\Pent(g,h;p)$ lies as shown and is non-degenerate, bounding an embedded disc.

\begin{figure}[tbh]
\centering
\includegraphics[scale=0.4]{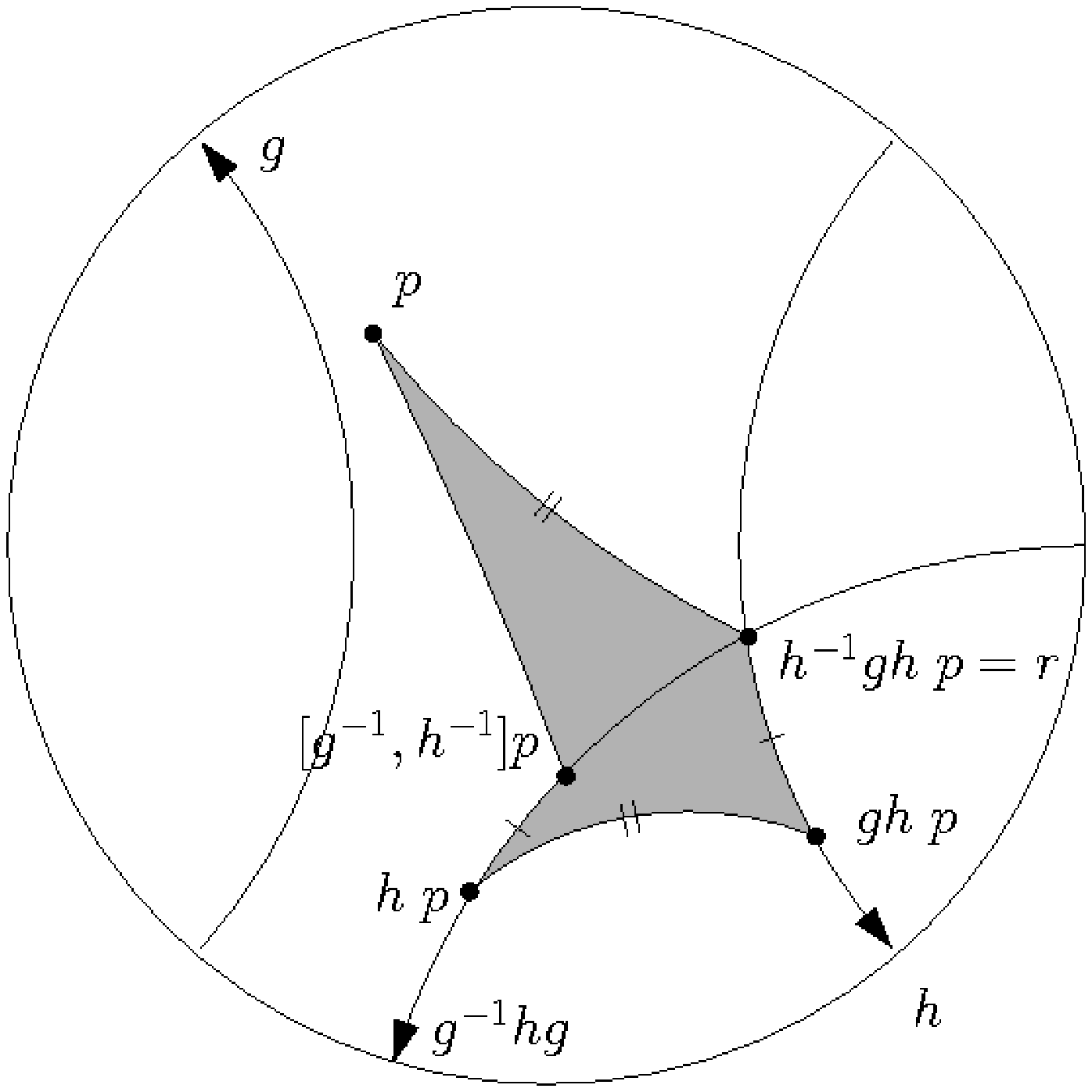}
\caption{Construction in case (iii).} \label{fig:27}
\end{figure}

By the same argument as in the previous case, $\theta \in (\pi, 3\pi)$ and $\theta = 3\pi \pm \Twist([g^{-1},h^{-1}],p)$, according to the orientation of $S$.
\end{itemize}

By lemma \ref{construction_lemma}, we conclude in each case that $\rho$ is the holonomy of a hyperbolic cone-manifold structure on $S$ with no interior cone points and at most one corner point. This completes the proof of proposition \ref{explicit_construction}, and indeed of theorem \ref{punctured_torus_theorem}.

Having completed the proof, we note that all pentagons we have constructed, which by lemma \ref{construction_lemma} were only required to be immersed, turned out to be embedded. Moreover, $p$ can be perturbed and the pentagon $\Pent(g,h;p)$ remains embedded. We now say more about this geometric flexibility.

\section{Non-uniqueness of geometric structures}
\label{rigidity section}

The geometric structures we have constructed are highly non-rigid.  For a given representation $\rho$, there may be many non-isometric structures on $S$, and even more non-isometric pentagons $\Pent(g,h;p)$.

The pentagon $\Pent(g,h;p)$ is the object containing the most information: not only does it encode a hyperbolic cone-manifold structure on the punctured torus, it also encodes a choice of basis curves $G,H$, and a particular location in $\hyp^2$. The hyperbolic cone-manifold structure on the torus $S$ encodes less information: it does not encode any choice of basis curves, but it does include particular locations in $\hyp^2$ via its developing map. (Here we take the view that a hyperbolic cone-manifold structure is a particular developing map, rather than an equivalence class of developing maps determined up to isometry.) The representation $\rho$ encodes less information again: it determines no basepoint from which to begin a developing map or a pentagon; and no choice of basis. A Markoff triple $(x,y,z)$ encodes even less information, since (for irreducible $\rho$) it encodes a conjugacy class of representatios.

The diagram below illustrates the situation schematically: solid arrows denote a complete determination of one object by another; broken arrows denote that some choice is involved. 
\[
\begin{array}{ccccccc}
    \left\{\begin{array}{c}
    \text{Pentagon} \\
    \Pent(g,h;p)
    \end{array}
    \right\}
    &
    \begin{array}{c}
    \rightarrow \\
    \dashleftarrow \\
    \text{choose} \\
    \text{basis}
    \end{array}
    &
    \left\{ \begin{array}{c}
    \text{Hyp. cone} \\
    \text{manifold} \\
    \text{structure on $S$} \\
    \text{(developing map)}
    \end{array} \right\}
    &
    \begin{array}{c}
    \rightarrow \\
    \dashleftarrow \\
    \text{choose} \\
    \text{basepoint}
    \end{array}
    &
    \left\{ \begin{array}{c}
    \text{Rep.} \\
    \rho \\
    \end{array} \right\}
    &
    \begin{array}{c}
    \rightarrow \\
    \dashleftarrow \\
    \text{choose} \\
    \text{conj. class}
    \end{array}
    &
    \left\{ \begin{array}{c}
    \text{Markoff triple} \\
    (x,y,z) \\
    \end{array} \right\}
\end{array}
\]
We consider the effect of choosing diffeent basepoints $p$; then the effect of choosing different bases for $\pi_1(S)$. 

Take $\rho$ as given, fix a basis $G,H$ of $\pi_1(S)$, and consider different choices of basepoint $p$. If we aleady have a pentagon $\Pent(g,h;p)$ bounding an immersed open disc, then with a small perturbation of $p$ to $p'$, the pentagon $\Pent(g,h;p')$ will still bound an immersed disc. The two pentagons will in general not be isometric. It is possible that different choices of $p$ can give non-isometric pentagons $\Pent(g,h;p)$, $\Pent(g,h;p')$ but isometric cone-manifold structures on $S$.

For instance, if $\rho$ is a discrete holonomy representation of a complete hyperbolic structure on $S$ with totally geodesic boundary, then the complete hyperbolic surface $S_0$ is the quotient of the \emph{convex core} of $\rho$, a convex subset of $\hyp^2$, by the image of $\rho$. Taking any $p$ on the axis of $[g^{-1},h^{-1}]$ gives a pentagon $\Pent(g,h;p)$ which is a fundamental domain for this complete hyperbolic structure on $S$. These pentagons are in general not isometric. Alternatively, if $p$ is chosen to lie slightly inside the convex core; then $\Pent(g,h;p)$ is a fundamental domain for a submanifold of $S_0$, which is obtained by truncating the hyperbolic punctured torus with totally geodesic boundary along a geodesic arc parallel to the boundary. It is a hyperbolic cone-manifold with corner angle greater than $\pi$. If instead $p$ lies outside the convex core, then we obtain a hyperbolic cone-manifold which contains $S_0$, with a cone angle less than $\pi$. Proposition \ref{orientation_1} shows that the cone angle depends only on the twist of $[g^{-1},h^{-1}]$ are $p$; hence only on the distance of $p$ from $\Axis[g^{-1}, h^{-1}]$: see figure \ref{fig:28}. Figure \ref{fig:29} shows partial developing maps for various choices of $p$.

\begin{figure}[tbh]
\begin{center}
$\begin{array}{c}
\includegraphics[width=4 cm, height=4 cm, angle=-90]{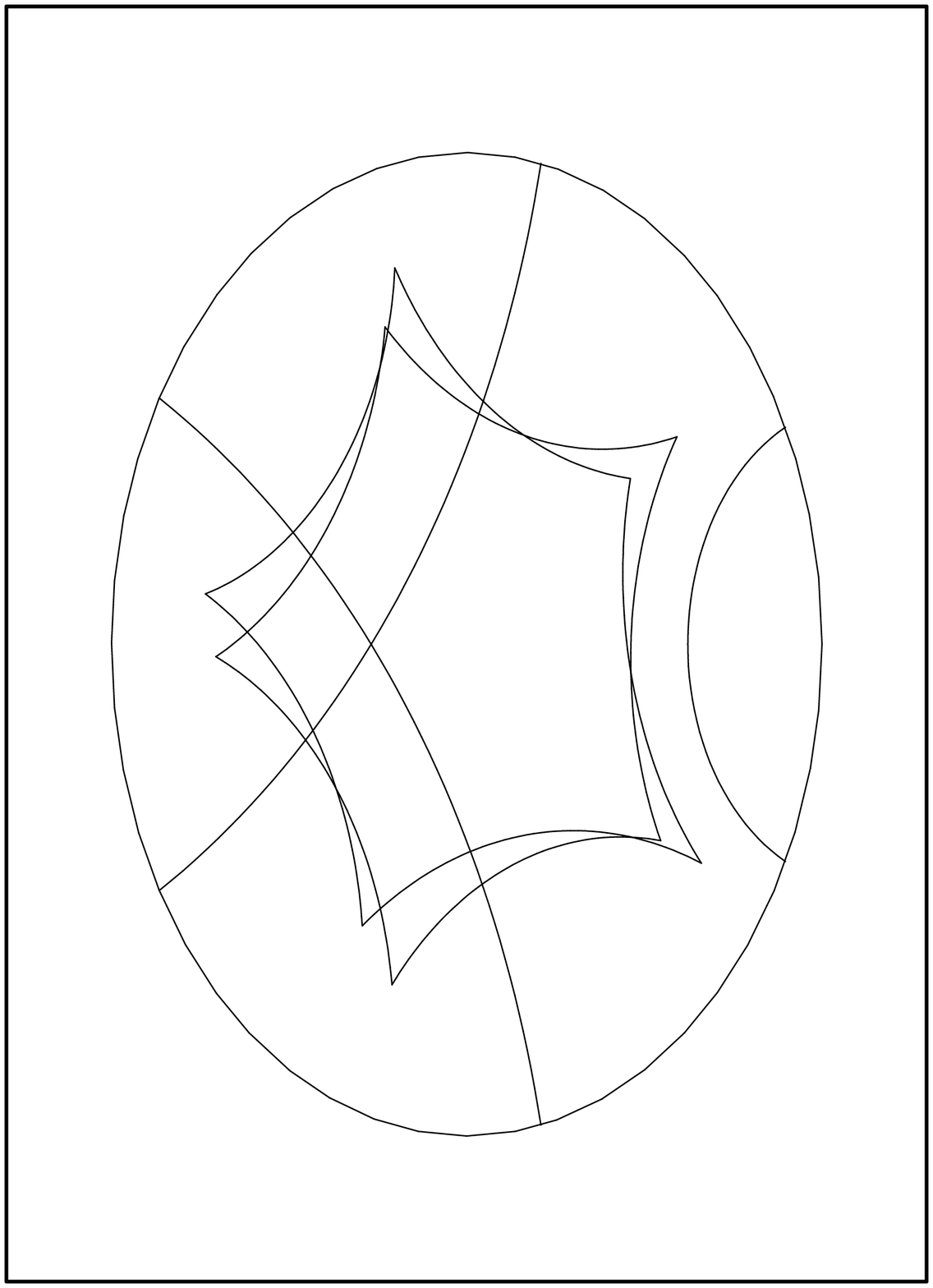}
\end{array}$
\hspace{1 cm}
$\begin{array}{c}
\includegraphics[scale=0.4]{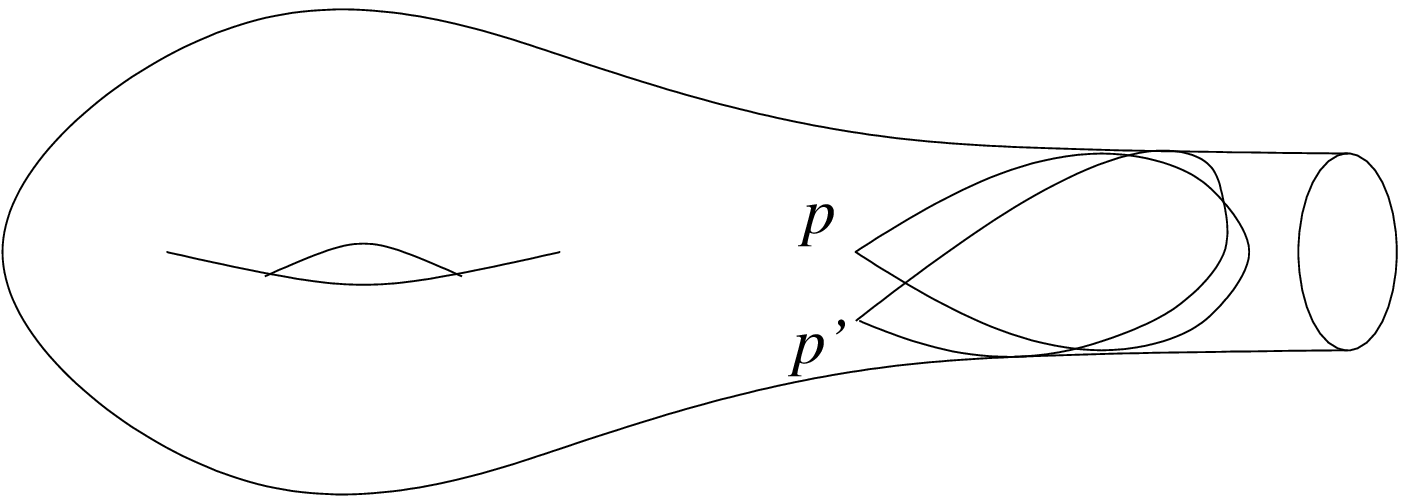}
\end{array}$
\end{center}
\caption[Distinct basepoints at the same distance from the
axis]{Distinct basepoints $p,p'$ at the same distance from the axis,
corresponding to a complete hyperbolic structure truncated at
different points the same distance from the boundary, creating corner points with the same angle.}
\label{fig:28}
\end{figure}

\begin{figure}[tbh]
\begin{center}
$\begin{array}{c}
\includegraphics[width=6.6 cm, height=6.6 cm, angle=-90]{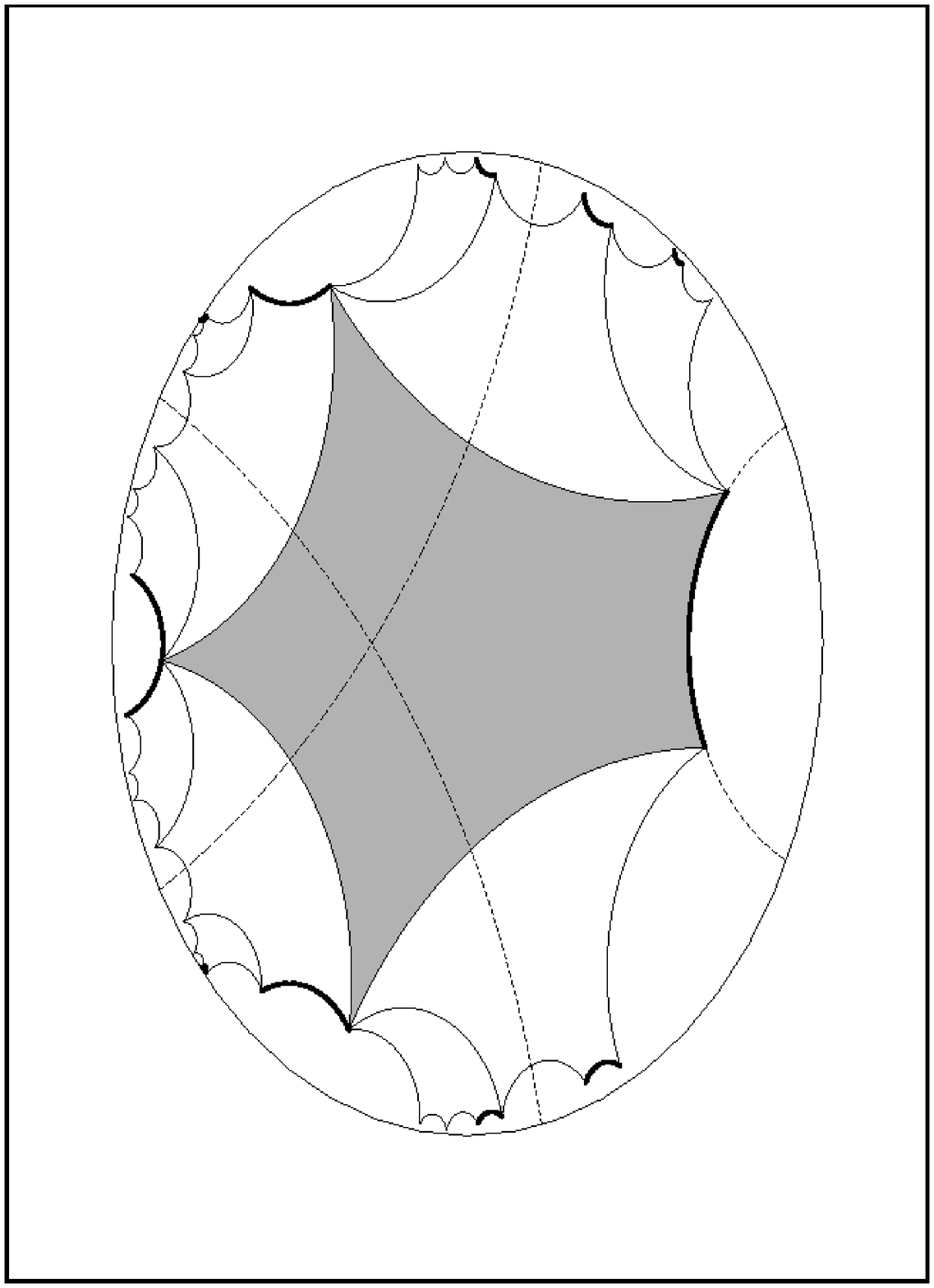}
\end{array}$
$\begin{array}{c}
\includegraphics[width=6.6 cm, height=6.6 cm, angle=-90]{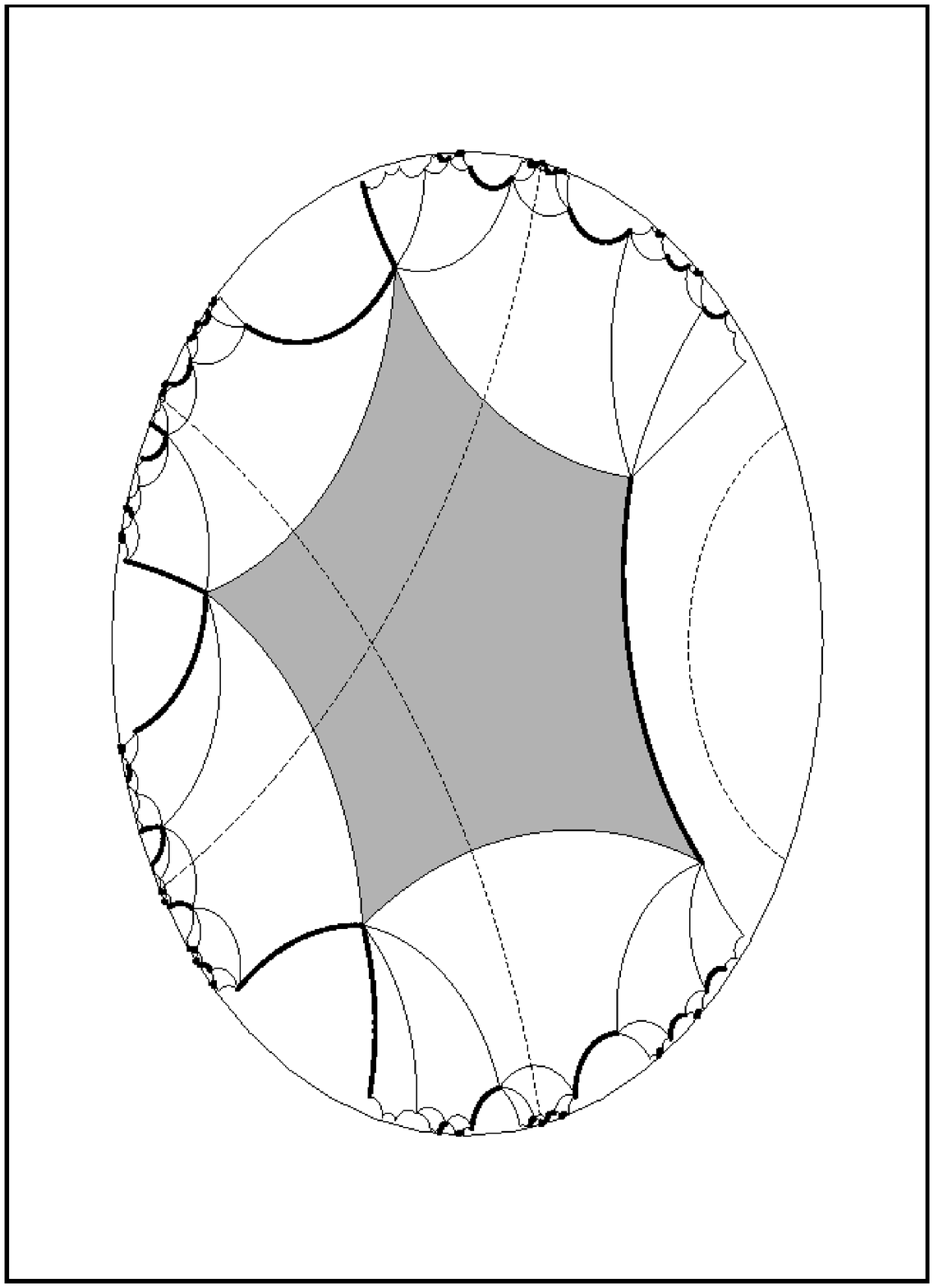}
\end{array}$
$\begin{array}{c}
\includegraphics[width=6.6 cm, height=6.6 cm, angle=-90]{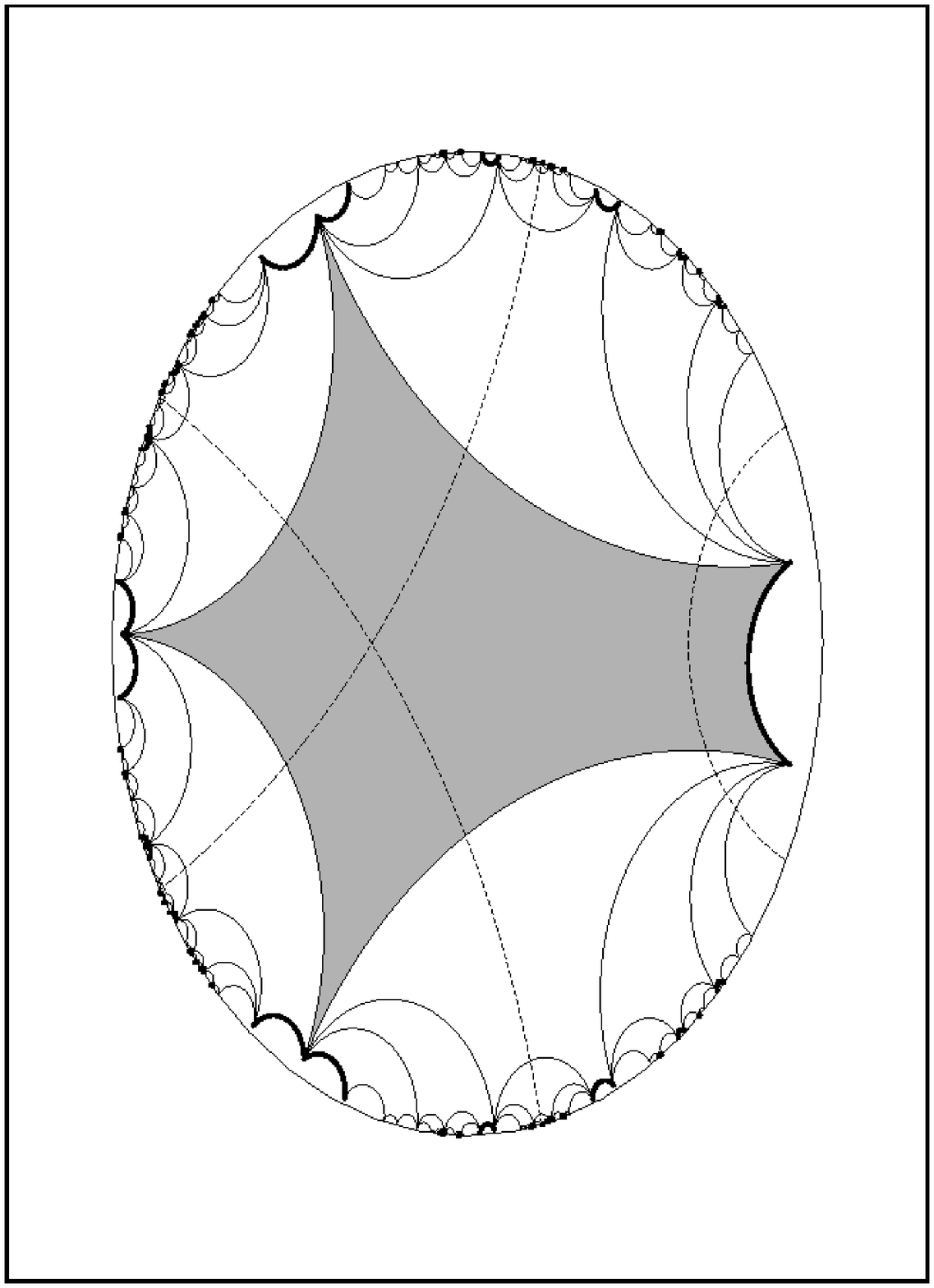}
\end{array}$
\caption[Part of the developing map]{Part of the developing map. Boundary edges are thickened. Axes of
$g,h,[g^{-1},h^{-1}]$ are dashed. Top left: geodesic boundary. Top right: truncated. Below: extended.} \label{fig:29}
\end{center}
\end{figure}

In this discrete case, the quotient of the entire hyperbolic plane by the image of $\rho$ is metrically a punctured torus which flares out past the geodesic boundary, and all of the above punctured tori are submanifolds thereof. Thus we may extend our hyperbolic cone-manifold arbitrarily far outwards from the complete structure $S_0$, and obtain corner angle arbitrarily close to $0$. In the other direction, it is possible to truncate $S_0$ with a geodesic loop based arbitrarily far from $\partial S_0$, but we must choose our basepoint judiciously. For instance it is possible to choose such a sequence of basepoints in $\hyp^2$ converging to the point at infinity which is an endpoint of $\Axis(h)$: see figure \ref{fig:30}. This gives a corner angle arbitrarily close to $2\pi$; but for a general choice of basepoint, the geodesic loop will not be simple, and the pentagonal fundamental domain will no longer bound an immersed disc.

\begin{figure}[tbh]
\begin{center}
$\begin{array}{c}
\includegraphics[width=6.6 cm, height=6.6 cm, angle=-90]{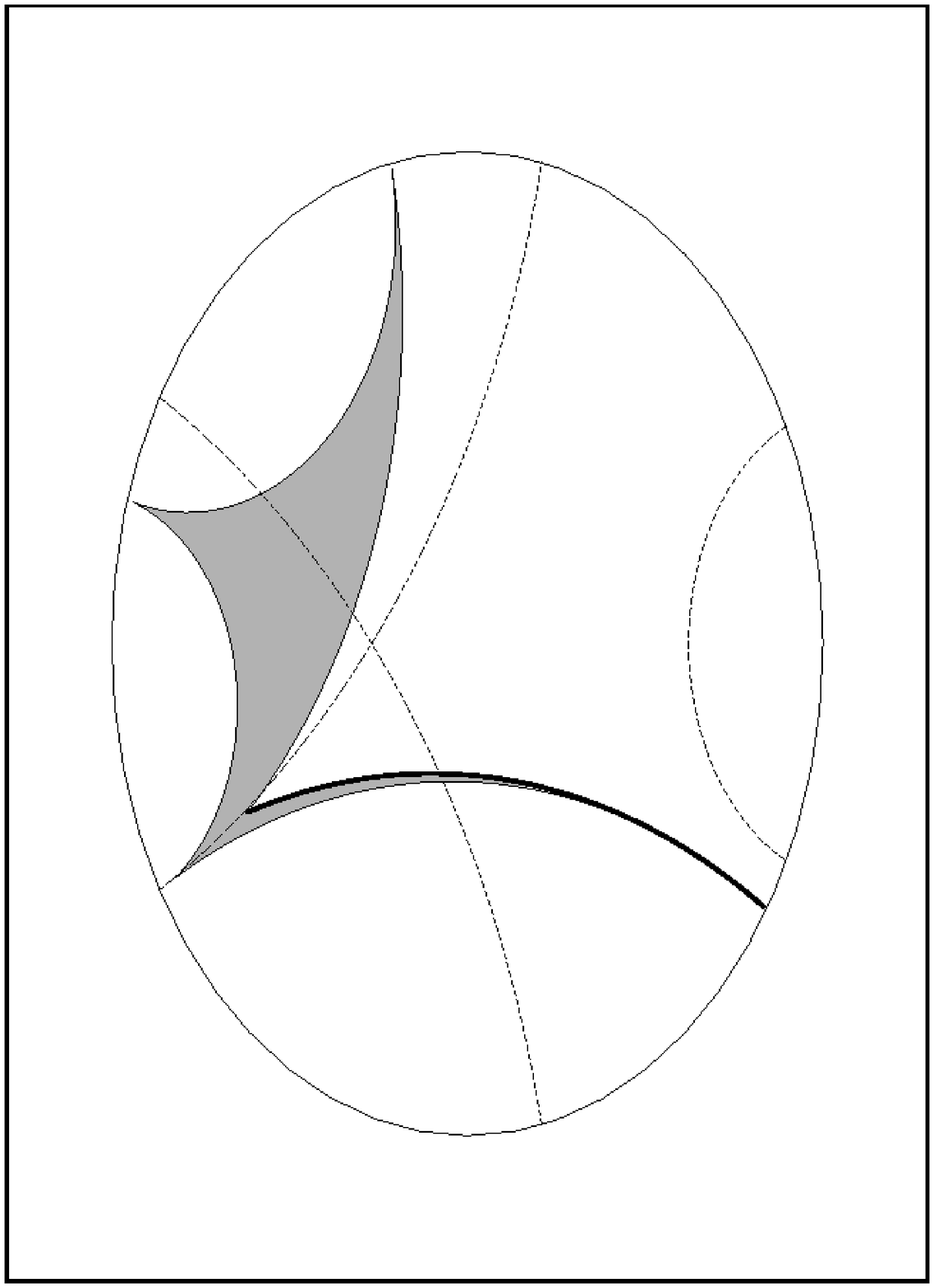}
\end{array}$
$\begin{array}{c}
\includegraphics[width=6.6 cm, height=6.6 cm, angle=-90]{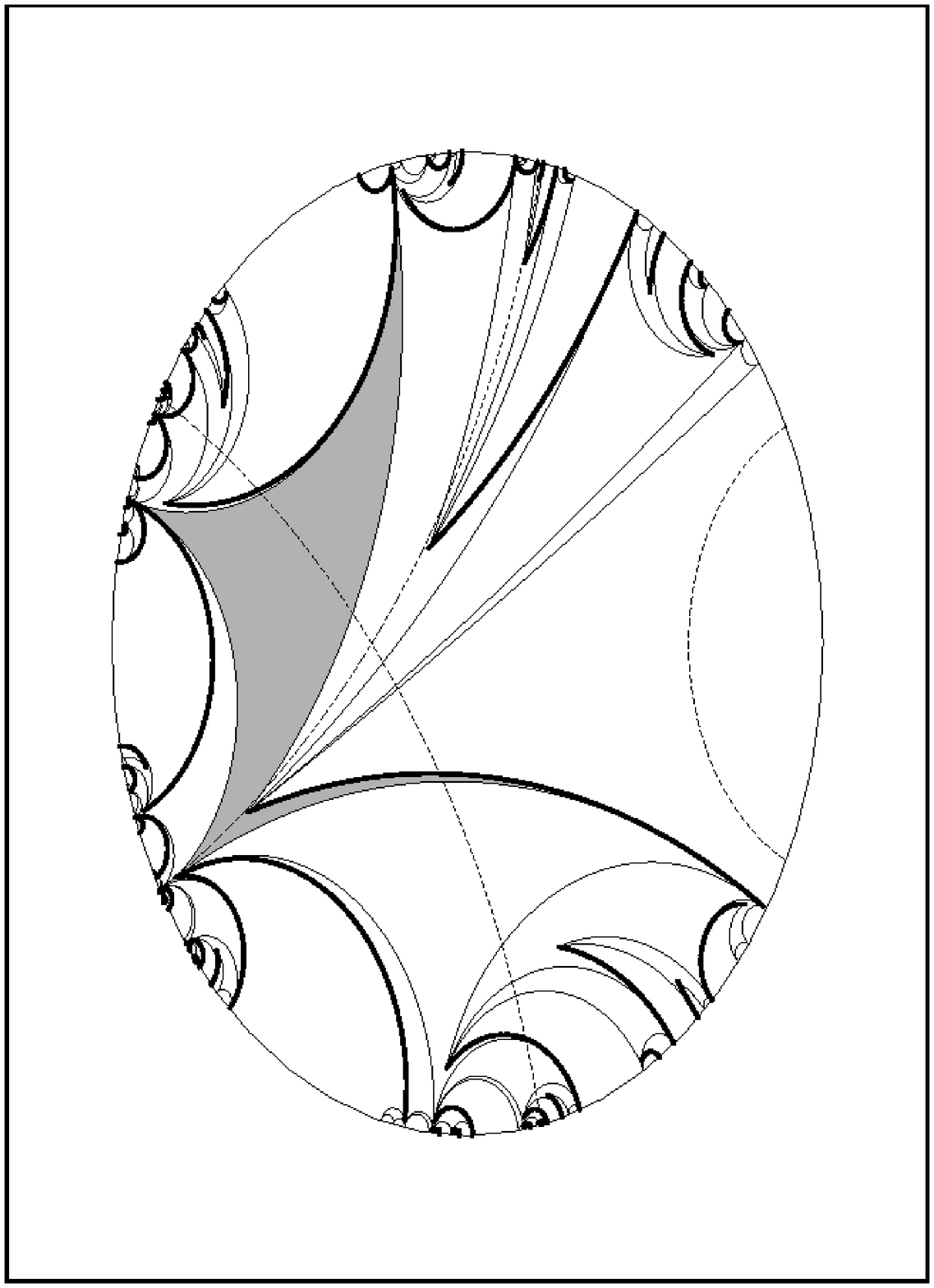}
\end{array}$
\caption{A well-chosen basepoint far inside $S_0$; corresponding partial developing map, with corner angle close to $2\pi$.} \label{fig:30}
\end{center}
\end{figure}

Given a hyperbolic surface with totally geodesic boundary, there is a fixed width $w$, depending only on the length of the boundary, called the \emph{collar width}, such that the set of all points within $w$ of the boundary forms a topological cylinder called the \emph{collar}. (See e.g. \cite{Buser} for details.) Given any point inside the collar, we may take a boundary-parallel geodesic arc from that point to itself, which lies in the collar, and cut along it, truncating the surface. The collar region develops to a convex region in the convex core of $\rho$ consisting of points at distance $\leq w$ from $\Axis[g^{-1},h^{-1}]$. Choosing $p$ in the convex core within a distance of $w$ of $\Axis[g^{-1},h^{-1}]$ gives $\Pent(g,h;p)$ simple bounding an embedded disc. That is, choice of $p$ need not be judicious in this region.

When $\rho$ is not discrete, we cannot think of a cone-manifold structure on $S$ as a complete structure which has been truncated or extended; the developing map is in general not a tessellation. But similar non-rigidity exists: perturbing $p$ gives non-isometric structures on $S$ with holonomy $\rho$. Section \ref{sec:-2_to_2} presents a detailed example, when $-2<\Tr[g,h]<2$. Proposition \ref{elliptic_construction} shows that perturbing $p$ from certain directions from $\Fix [g^{-1},h^{-1}] = r$ produces a desirable punctured torus; and taking $p = r$ can be considered the holonomy of a hyperbolic cone-manifold structure on a (non-punctured) torus $T$ with a single cone point $s$ with a quadrilateral fundamental domain. Figure \ref{fig:31} illustrates such a developing map.

In fact, the relationship between $T$ and $S$ is explicit in our construction. Let the cone point $s$ on $T$ have angle $\varphi$; $\varphi$ is $2\pi$ minus the area of the quadrilateral fundamental domain, which by a limiting version of proposition \ref{prop:twist_area} is $\Twist([g^{-1},h^{-1}],r)$; so $\varphi = 2\pi - \Twist([g^{-1},h^{-1}],r)$ as in section \ref{sec:-2_to_2}. When $\varphi < \pi$, i.e. $\Twist([g^{-1},h^{-1}],r) \in (\pi,2\pi)$, then for $q$ on $T$ close to $s$, there is a geodesic arc from $q$ to itself travelling around $s$; cutting along this arc and removing the piece containing $s$ gives a punctured torus $S$ with corner angle $\theta$. This is precisely the geometric effect of perturbing our quadrilateral into a pentagon in the first case of section \ref{sec:-2_to_2}, and as we saw in this case $\theta \in (\pi,2\pi)$. On the other hand, when $\varphi > \pi$, i.e. $\Twist([g^{-1}],h^{-1}],r) \in (0,\pi)$, then for $q$ on $T$ close to $s$ we may slice $T$ along the geodesic segment $qs$ and then glue on an isosceles triangle, with two sides of length $qs$, so that the third side becomes the boundary of the punctured torus $S$. This is the geometric effect of perturbing our quadrilateral into a pentagon in the second case of section \ref{sec:-2_to_2}; the developing map swallows the point $r$, and $\theta \in (2\pi, 3\pi)$.

The cases $\Tr[g,h]=2$ and $\Tr[g,h]>2$, as we have seen, are somewhat more complicated, with no nice underlying punctured or closed surface; but similar non-rigidity, through perturbation of pentagons, still exists.

\begin{figure}[tbh]
\begin{center}
$\begin{array}{c}
\includegraphics[width=6.6 cm, height=6.6 cm, angle=-90]{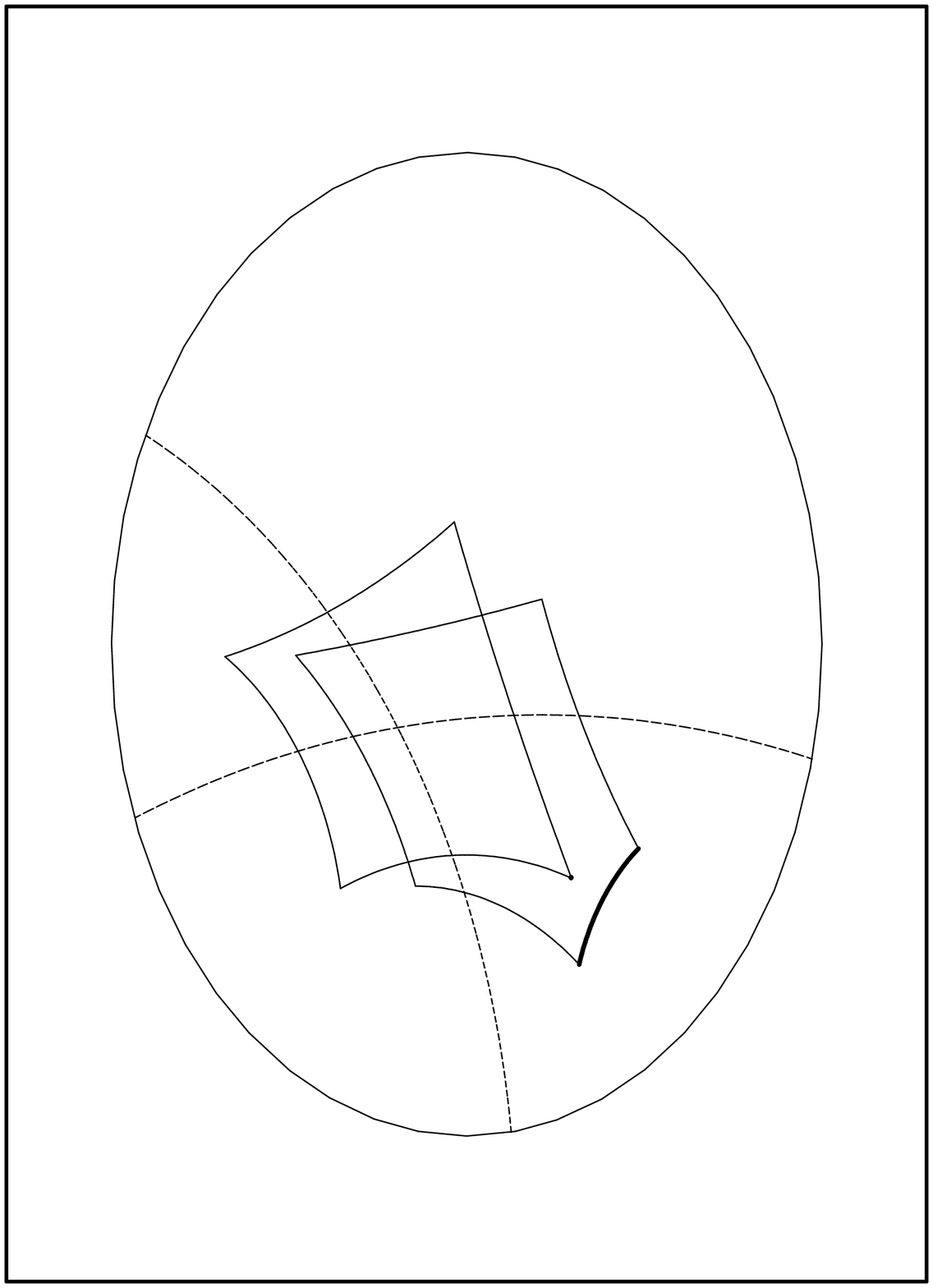}
\end{array}$
$\begin{array}{c}
\includegraphics[width=6.6 cm, height=6.6 cm, angle=-90]{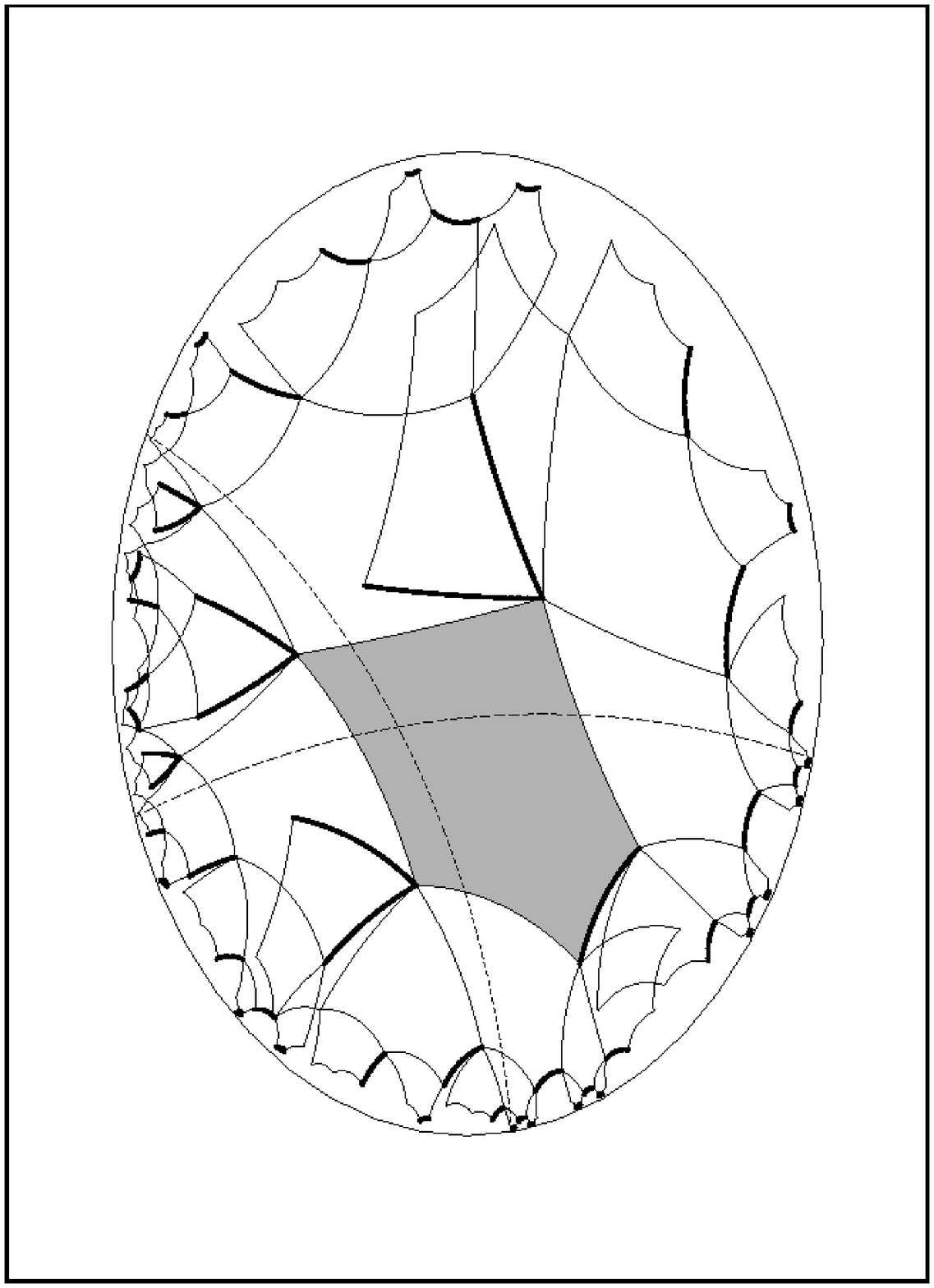}
\end{array}$
\caption{Fundamental domains for $T$ and $S$ and partial developing map for $S$; in this case $\theta > 2 \pi$.} \label{fig:31}
\end{center}
\end{figure}

\begin{figure}[tbh]
\centering
\includegraphics[scale=0.3]{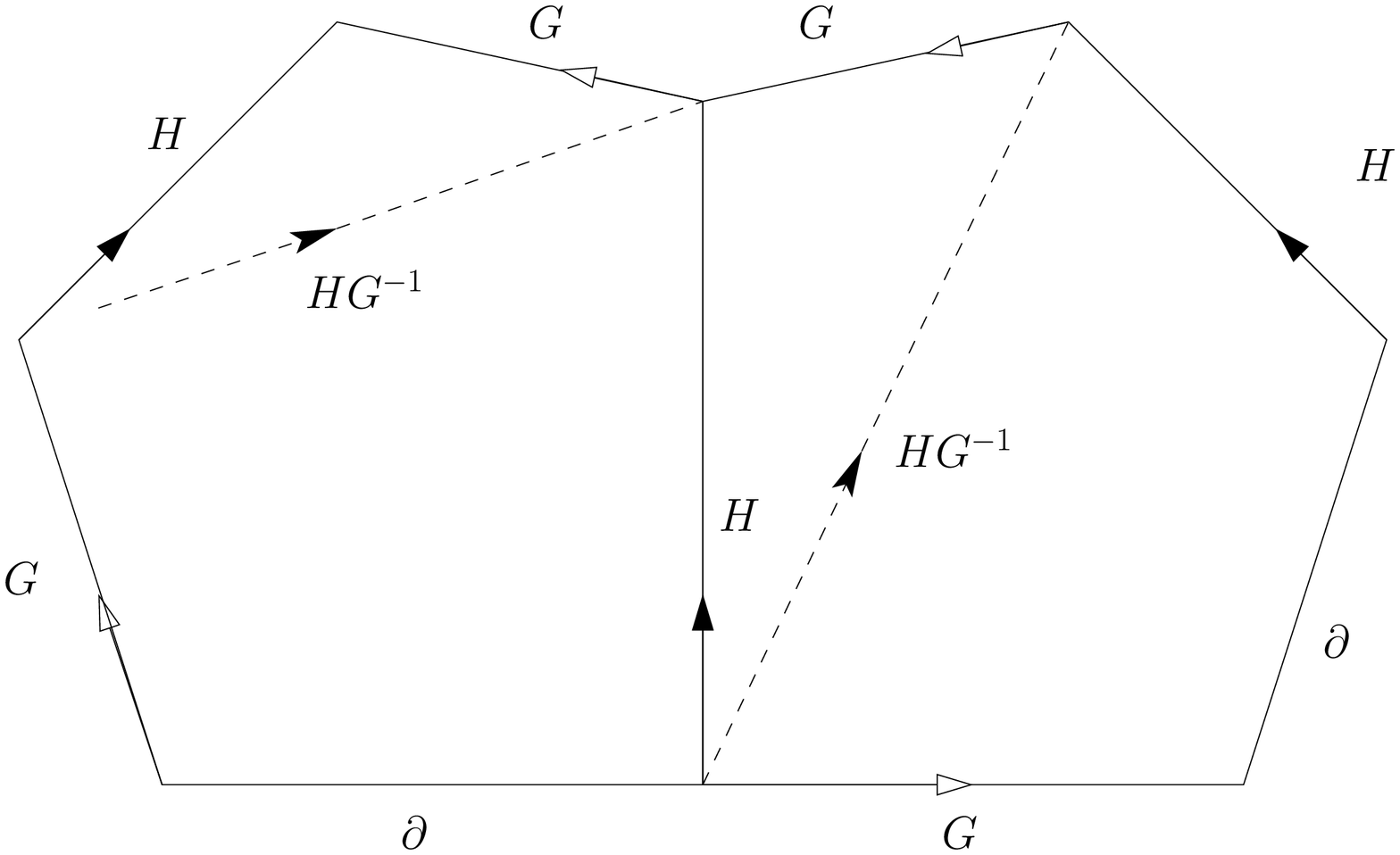}
\caption{A change of basis corresponding to a simple cut and paste
of pentagons.} \label{fig:32}
\end{figure}

The other aspect of non-rigidity is the choice of basis. If we change basis in $\pi_1(S, q)$ from $G,H$ to $G',H'$, we obtain another pentagon $\Pent(g',h';p')$ describing a hyperbolic cone-manifold structure on $S$. The change of basis $(G,H) \mapsto (G, HG^{-1})$ (figure \ref{fig:32}) for instance has a simple geometric interpretation as cutting our pentagon along a diagonal and regluing, arising from a Dehn twist in $S$. Any basis change has such a cut-and-paste interpretation. Our constructions have given, starting from $\rho$ alone, judicious changes of basis and judicious choices of basepoint demonstrating explicit hyperbolic cone-manifold structures.

\section*{Acknowledgements}
This paper forms one of several papers arising from the author's Masters thesis \cite{Mathews05}, completed at the University of Melbourne under Craig Hodgson, whose advice and suggestions have been highly valuable. It was completed during the author's postdoctoral fellowship at the Universit\'{e} de Nantes, supported by a grant ``Floer Power'' from the ANR. Thanks also to the referee for an extensive list of comments.

\addcontentsline{toc}{section}{References}

\small

\bibliography{danbib}
\bibliographystyle{amsplain}

\end{document}